\numberwithin{equation}{section}
\newtheorem{thm}{Theorem}
\newtheorem{conj}{Conjecture}
\newtheorem{lem}{Lemma}[section]
\newtheorem{prop}[lem]{Proposition}
\newtheorem{claim}[lem]{Claim}
\newtheorem{cor}[lem]{Corollary}
\theoremstyle{remark}
\newtheorem{remark}[lem]{Remark}
\theoremstyle{definition}
\newtheorem{defn}[lem]{Definition}
\newcommand{\Marking}{marking}
\newcommand{\Markings}{markings}
\newcommand{\Marked}{marked}
\newcommand{\Marker}{marker}
\newcommand{\Markers}{markers}
\newcommand{\Mark}{mark}
\newcommand{\gen}[1]{\langle #1 \rangle}
\newcommand{\Z}{\mathbb{Z}}
\newcommand{\R}{\mathbb{R}}
\newcommand\supt{\operatorname{supt}}
\renewcommand\exp{\operatorname{exp}}
\newcommand{\infl}[2]{#1^{\underaccent{\,\rightharpoondown}{#2}}}
\newcommand\strip{\angle }
\newcommand\Homeo{\operatorname{Homeo}}
\newcommand{\HomeoI}{\Homeo_+(I)}
\newcommand\Bscr{\mathscr{B}}
\newcommand\Jscr{\mathscr{J}}
\newcommand\Pscr{\mathscr{P}}
\newcommand\Rscr{\mathscr{R}}
\newcommand\Sscr{\mathscr{S}}
\newcommand\Scal{\mathcal{S}}
\newcommand\Wcal{\mathcal{W}}
\newcommand\Xcal{\mathcal{X}}
\newcommand\Ycal{\mathcal{Y}}
\newcommand{\cut}{{}^\circ}
\newcommand{\Ffrak}{\mathfrak{F}}
\newcommand{\Sfrak}{\mathfrak{S}}
\newcommand\emb{\hookrightarrow}
\newcommand\PLoI{\mathrm{PL}_+(I)}
\renewcommand\Z{\mathbf{Z}}
\renewcommand\R{\mathbf{R}}
\newcommand\formula[1]{{}^\ulcorner #1 {}^\urcorner}
\newcommand\EA{\operatorname{EA}}
\newcommand\As{\mathsf{A}}
\newcommand\Bs{\mathsf{B}}
\newcommand\Cs{\mathsf{C}}
\newcommand\Ds{\mathsf{D}}
\newcommand\Es{\mathsf{E}}
\newcommand\Fs{\mathsf{F}}
\newcommand\Ks{\mathsf{K}}
\newcommand\Ps{\mathsf{P}}
\newcommand\Rs{\mathsf{R}}
\newcommand\Xs{\mathsf{X}}
\newcommand\Ys{\mathsf{Y}}
\newcommand\Zs{\mathsf{Z}}
\newcommand\zero{\mathsf{0}}
\newcommand\one{\mathsf{Z}}
\newcommand\Bsig{\Bscr}
\newcommand\Rsig{\Rscr}
\newcommand\Ssig{\Sscr}
\newcommand\Psig{\Pscr}
\newcommand\Sgen{\Scal}
\renewcommand\epsilon{\varepsilon}
\newcommand\Ssigp{\Ssig'}
\newcommand{\varbump}[3]
{
\xy
(0,0); (#3,0)**\crv{(#1,#1)&(#2,#1)};
\endxy
}
\newcommand{\varnbump}[3]
{
\xy
(0,0); (#3,0)**\crv{(#1,-#1)&(#2,-#1)};
\endxy
}
\newcommand{\vardotbump}[3]
{
\xy
(0,0); (#3,0)**\crv{~*=<3pt>{.}(#1,#1)&(#2,#1)};
\endxy
}
\newcommand{\varndotbump}[3]
{
\xy
(0,0); (#3,0)**\crv{~*=<3pt>{.}(#1,-#1)&(#2,-#1)};
\endxy
}
\newcommand\mand{\textrm{ and }}
\title[Subgroups of Thompson's group]
{Complexity among the finitely generated subgroups of Thompson's group}
\keywords{elementary amenable,
elementary group,
geometrically fast,
homeomorphism group,
ordinal,
Peano Arithmetic,
piecewise linear,
Thompson's group,
transition chain}
\subjclass[2010]{
20E22, 	
20B07, 	
20B10, 	
20E07}
\thanks{
The authors would also like to thank the referee for their very careful and thurough reading of the paper.
This publication is in part a product of a visit of the first and third author to
the \emph{Mathematisches Forschungsinstitut Oberwolfach}, Germany in December 2016
as part of their \emph{Research In Pairs} program.
The third author was partially supported by
NSF grants DMS--1600635 and DMS-1854367.
}
\author[Bleak]{Collin~Bleak}
\author[Brin]{Matthew~G.~Brin}
\author[Moore]{Justin~Tatch~Moore}
\address{
Collin~Bleak \\
School of Mathematics and Statistics \\
University of St. Andrews \\
St. Andrews, Fife KY16 9SS \\
}
\email{{\tt cb211@st-andrews.ac.uk}}
\address{
Matthew~G.~Brin \\
Department of Mathematical Sciences \\
Binghamton University \\
Binghamton, NY 13902-6000 \\
}
\email{{\tt matt@math.binghamton.edu}}
\address{
Justin~Tatch~Moore \\
Department of Mathematics \\
Cornell University \\
Ithaca, NY 14853-4201 \\
}
\email{{\tt justin@math.cornell.edu}}
\begin{document}

\begin{abstract}
We demonstrate the existence of a family of finitely generated
subgroups of Richard Thompson's group \(F\) which is strictly well-ordered
by the embeddability relation of type \(\epsilon_0 +1\).
All except the maximum element of this family (which is \(F\)
itself) are elementary amenable groups.
In fact we also obtain, for each \(\alpha < \epsilon_0\), a finitely generated
elementary amenable subgroup of \(F\) whose EA-class is \(\alpha + 2\).
These groups all have simple, explicit descriptions and can be
viewed as a natural continuation of the progression which starts
with \(\Z + \Z\), \(\Z \wr \Z\), and the Brin-Navas group \(B\). 
We also give an example of a pair of finitely generated elementary
amenable subgroups of \(F\) with the property that neither is
embeddable into the other.
\end{abstract}

\maketitle

\section{Introduction}

\label{intro:sec}

Subgroups of \(PL_+(I)\), the group of order preserving, piecewise
linear self homeomorphisms of the unit interval, have been a source
of groups with interesting properties in which calculations are practical.
There is increasing evidence that all countable, or at
least finitely generated, such subgroups will eventually be
understood.
Among these groups is Richard Thompson's group \(F\).
It is extremely easy for a subgroup of \(PL_+(I)\) to contain an
isomorphic copy of \(F\) as a subgroup \cite{BrinU}.
Thus not containing a subgroup isomorphic to \(F\) (being \emph{\(F\)-less}) is a
severe restriction on subgroups of \(PL_+(I)\).
This has led to the following conjectured dichotomy of Brin and Sapir.

\begin{conj} \label{BrinSapir} \cite{BrinEG} \cite{SapirProblems}
If \(G\) is a subgroup of \(PL_+(I)\), then either \(G\) is
elementarily amenable or else \(G\) contains a copy of \(F\).
\end{conj}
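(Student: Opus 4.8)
The plan is to prove the contrapositive for finitely generated subgroups, which suffices since $EG$ is closed under directed unions: if every finitely generated subgroup of $G\le F$ lies in $EG$ then so does $G$, so it is enough to show that a finitely generated $G\le F$ which is not elementary amenable contains a copy of $F$, and such a copy, living inside a single finitely generated subgroup, then witnesses $F\emb G$ for any larger $G$ as well. Fix then a finitely generated $G\le F$; the goal is to produce either an embedding $F\emb G$ or a certificate that $G\in EG$.

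The first step is to attach to $G$ an ordinal complexity by iterating a canonical dynamical decomposition. For an orbital $J=(a,b)$ of $G$ (a component of $\supt(G)$) one has the two one-sided germ homomorphisms from the setwise stabilizer of $J$ into $(\R,+)$ recording the logarithms of the slopes at $a^+$ and at $b^-$; being PL, an element in the kernel of the combined germ map is the identity near $\partial J$, so this kernel is the directed union of its restrictions to the proper closed subintervals of $J$, and on each such subinterval one again has a finite family of orbitals on which to repeat the analysis. Recording at each stage whether one is extracting an abelian quotient (the germ image) or forming a directed union (over subintervals) produces a tree; let the rank of $G$ be a suitable refinement of the foundation rank of that tree. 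The ``easy'' direction — that a well-founded tree forces $G\in EG$ with $\EA$-class bounded in terms of the rank — should follow by combining the germ-theoretic analysis of elementary amenable subgroups of $F$ in \cite{BrinEG} with the arithmetic of $\Sfrak$ developed here; morally, such a $G$ embeds into one of the groups $B_\alpha$.

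The substance is the converse: when the decomposition fails to terminate, one must exhibit $F\emb G$. An infinite branch of the tree yields a descending sequence of orbitals carrying, level by level, genuine local complexity — a nontrivial one-sided slope, or a pair of non-commuting bumps with overlapping supports. The aim is to use the finiteness of the generating set to convert this infinitary data into a single \emph{transition chain} of length at least $2$ realized by finitely many elements of $G$ sitting in geometrically fast, ``staircase'' position; by the results imported from \cite{fast_gen}, any two bumps in such a configuration generate a copy of $F$ (and not merely a free-like or wreath-like group, precisely because the support combinatorics is that of the standard generating pair of $F$). One therefore wants the sharper statement that if no finite subconfiguration of the generators of $G$ already displays the $F$-pattern, then the decomposition tree is well-founded of rank below $\epsilon_0$, so that the two alternatives really are exhaustive.

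The main obstacle is exactly this bridge, because ``infinitely nested dynamics'' is by itself far from enough to produce $F$: the groups $B_\alpha\in\Sfrak$ already have arbitrarily deeply nested orbital structure and are nonetheless elementary amenable. The invariant separating ``large $\EA$-class'' from ``contains $F$'' must therefore be delicate — roughly, whether the nesting can be organized so that every finitely generated subgroup of $G$ embeds into some $B_\alpha$, or whether two orbitals are forced to overlap in a way that no $B_\alpha$ admits. Pinning this down amounts to proving that $\Sfrak$, enlarged by the finite and abelian groups and closed under the operations identified in this paper, exhausts the finitely generated elementary amenable subgroups of $F$ up to biembeddability; I expect this to be the genuine difficulty, with the germ bookkeeping of the second step and the extraction of the fast staircase pair of the third step being comparatively routine once the correct invariant is in hand.
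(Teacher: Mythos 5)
The statement you are trying to prove is Conjecture \ref{BrinSapir}, which the paper does not prove and explicitly treats as open: the authors present their results as motivation and evidence, and note only that Conjecture \ref{univ_conj} (also open) would imply it. So there is no ``paper's proof'' to compare against, and your proposal must stand on its own as a complete argument --- which it does not. Your opening reduction to finitely generated subgroups is correct ($EG$ is closed under directed unions, and a copy of $F$ inside a finitely generated subgroup persists in any overgroup), and the intended endgame is sound in outline: Brin's Ubiquity Theorem \cite{BrinU} does convert a suitable overlapping-orbital configuration into a copy of $F$. But the entire content of the conjecture lives in the bridge you yourself flag as unresolved: showing that a finitely generated $G \le F$ which fails to be elementary amenable must actually exhibit such a configuration, rather than merely having an ill-founded or very deep orbital decomposition tree.

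Concretely, two steps of your program are each equivalent to (or stronger than) open problems. First, the claim that a well-founded decomposition tree forces $G \in EG$ with class controlled by the rank, ``morally'' because such a $G$ embeds into some $G_\alpha \in \Sfrak$, is essentially Conjecture \ref{univ_conj}; nothing in \cite{BrinEG} or in this paper establishes it, and Theorem \ref{AntiChain} shows the embeddability order on finitely generated subgroups is not linear, so one cannot expect every elementary amenable finitely generated subgroup to sit below the chain $\Sfrak$ without further argument. Second, the dichotomy ``either every finite subconfiguration of the generators embeds into some $G_\alpha$, or some pair of elements of $G$ realizes the $F$-pattern'' is precisely the invariant you say ``must be delicate'' and ``the genuine difficulty''; declaring it the remaining obstacle is an honest assessment but leaves the proof with no argument for its central step. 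As written, the proposal is a reasonable research program aligned with the authors' stated thesis, not a proof.
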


The elementary amenable groups form a class \(EG\) and are those groups
that can be built recursively from finite and abelian groups by a
(possibly transfinite) process using extension and directed union.
The elementary amenability class (\(EA\)-class) of a group \(G\) in
\(EG\) is an ordinal valued measure of the complexity of the recursive
construction of \(G\).
(Details are given in Section \ref{prelim:sec}.)
Thompson's group \(F\) is not elementary amenable; 
it is finitely generated and every nontrivial normal subgroup of
\(F\) contains isomorphic copies of
\(F\) (see \cite{CFP}).
Thus an elementary amenable group must be \(F\)-less. 

Our basic thesis is that Conjecture \ref{BrinSapir} will eventually be a
corollary of a more complete understanding of the partial order
\((\Ffrak, \emb)\) where \(\Ffrak\) is the set of biembeddability classes of
finitely generated subgroups of \(F\) and \(A \emb B\) asserts that members of the class
\(A\) embed into members of the class \(B\). 
While we do not settle Conjecture \ref{BrinSapir}, this paper
explores the universe of \(F\)-less subgroups of \(PL_+(I)\) and
finds a complex collection \(\Sfrak\) of elementary amenable
subgroups of Thompson's group \(F\) itself.
The collection \(\Sfrak\) is likely to play an important role in settling
Conjecture \ref{BrinSapir} and more generally in understanding the class of finitely
generated $F$-less subgroups of $F$.

There are two main features of this paper.
The first is the shift of attention away from the usual ``isomorphism type and containment
relation'' (the Hasse diagram) of subgroups, and toward 
the coarser ``biembeddability class and embeddability relation'' where
two groups are biembeddable if each embeds in the other.
A finer analysis of the
isomorphism types of subgroups of \(F\) does not seem feasible at
this time.

The second feature is the discovery of a rich arithmetic that lives
on \(\Sfrak\) that greatly facilitates transfinite induction and
recursion.  The usual ingredients of transfinite recursion are base,
successor, and limit stage: a base object \(A_0\) must be built, an
object \(A_{\alpha+1}\) must be built from the object \(A_\alpha\),
and for a limit \(\alpha\), an object \(A_\alpha\) must be built
from the objects \(A_\beta\) with \(\beta<\alpha\).
We show that \(\Sfrak\) can be equipped with
arithmetic operations that allow us to
easily build from \(B_\alpha\in \Sfrak\) not only \(B_{\alpha+1}\),
but also
\(B_{\alpha\cdot\omega}\) and even
\(B_{\omega^\alpha}\) with equal ease.  This has two consequences.
First, our groups are remarkably easy to ``write down.'' 
This gives a set of groups that are remarkably simple to describe in spite
of having extremely complex constructions (high \(EA\)-class) as elementary
amenable groups.
Second,
the bulk of the work in the paper is shifted from construction to
analysis.  In fact, it is still a wonder to the authors that these
groups can be analyzed at all.

\subsection{The results}

We now state and discuss our results in somewhat more detail.  We
give indication of the meaning of terminology in what follows;
full definitions are given in Sections \ref{objects:sec} and
\ref{prelim:sec} and as noted.

The complex nature of \((\Ffrak,\emb)\) is demonstrated by our main result:
\begin{figure}
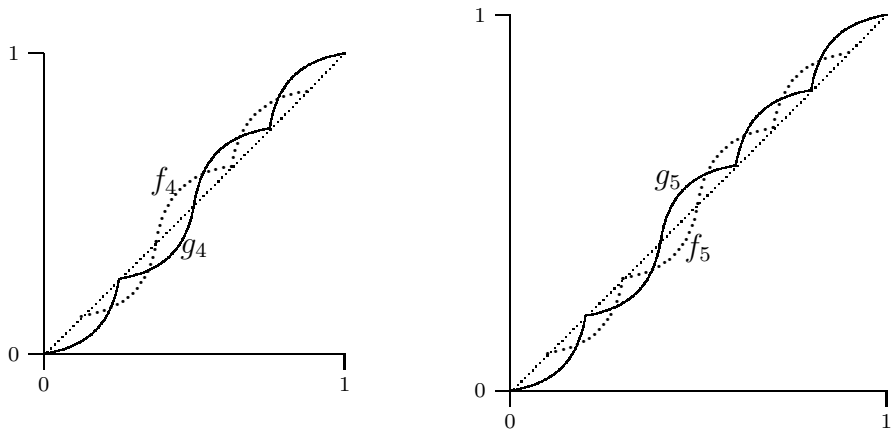

\[
\xy
(45,5); (5,5)**@{-}; (5,45)**@{-};
(5,5); (45,45)**@{.};   
(5,5); (15,15)**\crv{(11,6)&(14,9)};
(15,15); (25,25)**\crv{(21,16)&(24,19)};
(25,25); (35,35)**\crv{(26,31)&(29,34)};
(35,35); (45,45)**\crv{(36,41)&(39,44)};
(25,19)*{g_4};
(10,10); (20,20)**\crv{~*=<3pt>{.}(16,11)&(19,14)};
(20,20); (30,30)**\crv{~*=<3pt>{.}(21,26)&(24,29)};
(30,30); (40,40)**\crv{~*=<3pt>{.}(31,36)&(34,39)};
(21,28)*{f_4};
(45,5); (45,3)**@{-};
(45,1)*{\scriptstyle 1};
(5,45); (3,45)**@{-};
(1,45)*{\scriptstyle 1};
(5,5); (5,3)**@{-};
(5,1)*{\scriptstyle 0};
(5,5); (3,5)**@{-};
(1,5)*{\scriptstyle 0};
\endxy
\qquad\qquad
\xy
(50,0); (0,0)**@{-}; (0,50)**@{-};
(0,0); (50,50)**@{.};   
(0,0); (10,10)**\crv{(6,1)&(9,4)};
(10,10); (20,20)**\crv{(16,11)&(19,14)};
(20,20); (30,30)**\crv{(21,26)&(24,29)};
(30,30); (40,40)**\crv{(31,36)&(34,39)};
(40,40); (50,50)**\crv{(41,46)&(44,49)};
(21,28)*{g_5};
(5,5); (15,15)**\crv{~*=<3pt>{.}(11,6)&(14,9)};
(15,15); (25,25)**\crv{~*=<3pt>{.}(21,16)&(24,19)};
(25,25); (35,35)**\crv{~*=<3pt>{.}(26,31)&(29,34)};
(35,35); (45,45)**\crv{~*=<3pt>{.}(36,41)&(39,44)};
(25,19)*{f_5};
(50,0); (50,-2)**@{-};
(50,-4)*{\scriptstyle 1};
(0,50); (-2,50)**@{-};
(-4,50)*{\scriptstyle 1};
(0,0); (0,-2)**@{-};
(0,-4)*{\scriptstyle 0};
(0,0); (-2,0)**@{-};
(-4,0)*{\scriptstyle 0};
\endxy
\]
\caption{\(G_{\tau_4} := \gen{f_4,g_4}\) and \(G_{\tau_5} :=
\gen{f_5,g_5}\).  The EA-classes of these groups are \(\omega^\omega
+ 2\) and \(\omega^{\omega^\omega}+2\), respectively.}
\label{Gt4Gt5_fig}   \end{figure}

\begin{thm} \label{main_thm}
There is a transfinite sequence \((G_\xi \mid \xi < \epsilon_0)\) of finitely
generated elementary amenable subgroups of \(F\) such that:
\begin{itemize}

\item 
\(G_0\) is the trivial group and \(G_{\xi+1} \cong G_\xi + \Z\);

\item 
\(G_\xi\) embeds into \(G_\eta\) if and only if \(\xi \le \eta\);

\item 
Given \(0 \le \alpha < \epsilon_0\) and \(n<\omega\), let
\(\xi = \omega^{(\omega^\alpha) \cdot (2^n)}\).  If \(\alpha>0\), then
the EA-class of \(G_{\xi}\) is \(\omega \cdot \alpha + n + 2\).
If \(\alpha=0\), then the EA-class of \(G_\xi\) is \(n+1\).

\end{itemize}
\end{thm}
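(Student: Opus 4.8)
The plan is to build the sequence $(G_\xi\mid\xi<\epsilon_0)$ by a transfinite recursion following Cantor normal form, using a few operations on $\Sfrak$ that internalize the ordinal operations $\gamma\mapsto\gamma+1$, $\gamma\mapsto\gamma\cdot\omega$ and $\gamma\mapsto\omega^\gamma$, and then to read the three itemized properties off the structural analysis of $\Sfrak$. Throughout I would work through the correspondence of \cite{fast_gen}: a ``geometrically fast'' tuple of bumps in $\HomeoI$ generates a finitely generated group whose isomorphism type is determined by the combinatorics of its transition chains, and every group specified this way embeds into Thompson's group $F$. So a member of $\Sfrak$ is encoded by a finite diagram, and building the $G_\xi$ becomes the assembly of diagrams.

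\textbf{Construction.} Starting from $G_0$ the trivial group (empty diagram) and $G_1=\Z$ (a single bump), introduce: a \emph{sum} $S+T$, realized by placing geometrically fast diagrams for $S$ and $T$ on disjoint subintervals of $I$ (on groups, the direct product), used to assemble the Cantor normal form $\xi=\omega^{\xi_1}+\dots+\omega^{\xi_k}$ with $\xi_1\ge\dots\ge\xi_k$ of an arbitrary $\xi<\epsilon_0$; a \emph{wreathing} operation $S\mapsto S\wr\Z$, realized by adjoining one bump carrying the support of the $S$-diagram properly into itself and iterating, effecting $\gamma\mapsto\gamma\cdot\omega$; and an \emph{inflation} operation --- the construction written $\infl{A}{B}$ below --- converting a diagram for $G_\gamma$ into a still-finite diagram whose transition structure is a $\gamma$-indexed nesting of copies of the earlier data, effecting $\gamma\mapsto\omega^\gamma$. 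One checks that each operation returns a geometrically fast diagram, so $G_\xi\in\Sfrak$, and that recursing on the strictly smaller exponents $\xi_i$ makes $\xi\mapsto G_\xi$ well defined; by construction $G_{\xi+1}\cong G_\xi+\Z$, which is the first bullet. (Inflation together with the sum and $\Z$ already reach every $G_\xi$; wreathing is kept because, as the introduction stresses, it costs nothing.)

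\textbf{Embeddability.} That $\xi\le\eta$ implies $G_\xi\emb G_\eta$ I would get by transfinite induction, using that each of the three operations preserves embeddings and that a comparison $\omega^{\xi_1}+\dots+\omega^{\xi_k}\le\omega^{\eta_1}+\dots+\omega^{\eta_\ell}$ can be matched, essentially termwise, by fitting the $G_\xi$-diagram into the $G_\eta$-diagram. The converse --- the heart of the theorem --- is where the analysis of $\Sfrak$ is needed: I would extract an ordinal-valued invariant $\rho$ on $\Sfrak$ recording the depth and length of the nested transition-chain structure a group carries, prove $\rho(G_\xi)=\xi$, and prove that $\rho$ is monotone under embeddings between members of $\Sfrak$; then $G_\xi\emb G_\eta$ forces $\xi=\rho(G_\xi)\le\rho(G_\eta)=\eta$. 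Note that the EA-class is far too coarse for this by itself: the groups $G_n$ ($n<\omega$) are all free abelian of finite rank, hence of one and the same EA-class, yet pairwise non-biembeddable, so the finer invariant $\rho$ is essential.

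\textbf{EA-class, and the main obstacle.} For the EA-class I would place $G_\xi$ at the claimed level of the elementary-amenable hierarchy by following the recursion --- each of the sum, wreathing and inflation steps moves the level in the manner dictated by its ordinal operation, with inflation contributing the $\omega\cdot\alpha$ part and the remaining steps the finite remainder --- obtaining the upper bound $\omega\cdot\alpha+n+2$ (resp. $n+1$) for $\xi=\omega^{(\omega^\alpha)\cdot 2^n}$; the matching lower bound, that $G_\xi$ lies at no earlier level, again comes from the structure theory, by computing the elementary-amenability rank of $G_\xi$ from its transition data. The genuinely hard part of the whole argument is this structural analysis of $\Sfrak$ --- a classification of the finitely generated subgroups of a group in $\Sfrak$ via the dynamics of its geometrically fast generating sets, strong enough to supply both the monotone invariant $\rho$ and the exact EA-rank. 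Writing the $G_\xi$ down, by contrast, is effortless --- which is precisely the point of having an arithmetic on $\Sfrak$.
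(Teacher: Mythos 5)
Your construction and the forward direction of the embeddability claim match the paper's: the groups are assembled by recursion on Cantor normal form from $\Z$ using sum, a wreath-type product, and an exponentiation-like operation realized by inflation of fast generating sets, and $\xi\le\eta\Rightarrow G_\xi\emb G_\eta$ is obtained by fitting diagrams (in the paper: Lemmas \ref{R_ordering} and \ref{rank_to_reduction} together with Proposition \ref{reduction_to_embedding}). But the proposal has a genuine gap exactly where you locate ``the heart of the theorem.'' You propose an ordinal invariant $\rho$ ``recording the depth and length of the nested transition-chain structure a group carries'' and assert it is ``monotone under embeddings between members of $\Sfrak$.'' No mechanism for that monotonicity is given, and a dynamically defined invariant cannot supply one directly: an abstract group embedding $G_\eta\hookrightarrow G_\xi$ need not carry generators to anything resembling a fast or standard generating set, so it need not respect any transition-chain structure. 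The entire difficulty of the theorem is to convert the combinatorial rank $\rho$ into something an arbitrary homomorphic embedding must respect, and the proposal does not do this.

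It is also worth noting that the paper's actual solution runs directly against your parenthetical dismissal of the EA-class as ``far too coarse.'' For indecomposable signatures the paper shows (Lemma \ref{proper_successor}) that $\rho(\As)<\rho(\Bs)$ forces $E(\As)*E(\As)\le E(\Bs)$, hence $\gen{E(\As)}\wr\gen{E(\As)}$ embeds into $\gen{E(\Bs)}$; since $G\mapsto G\wr G$ strictly increases EA-class for these groups (Proposition \ref{EAGWrG}), the monotone invariant that does the work is precisely $\Xs\mapsto\EA(\gen{E(\Xs)})$ --- the EA-class becomes injective on ranks after composing with the doubled exponential $E$, which is why it suffices. You are right that EA-class alone cannot separate the finite-rank and the ``$+\,\Z^n$ tail'' cases; the paper handles those by an entirely separate argument (Lemma \ref{FullWidth} and Lemma \ref{RsigEmbedOrder}) using centralizers --- arranging slopes in $3\gen{2}$ versus $\gen{2}$ so that centralizers of elements outside the distinguished normal subgroup are cyclic --- which your proposal does not anticipate. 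The exact EA-class computation in the third bullet likewise needs the lower bounds coming from Proposition \ref{EAGWrG} applied along the chain $\Rs_{\xi'}*\Rs_{\xi'}$, not just the upper bounds from following the recursion.
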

\noindent
In particular, for each \(\alpha < \epsilon_0\), there is a \(\xi\) such that
the EA-class of \(G_\xi\) is \(\alpha + 2\).
(If the EA-class of a finitely generated group is infinite, it is always of the form \(\alpha+2\).)
Thus Theorem \ref{main_thm} improves
previous work of the second author \cite{BrinEG},
who demonstrated that there are finitely generated subgroups of
\(F\) in \(EG\) of class \(\xi + 2\) for each \(\xi < \omega^2\). 
With \(\omega\) the smallest infinite ordinal, the ordinal
\(\epsilon_0\) is the smallest ordinal solution to the equation
\(\omega^{x}= x\). 
If we define a sequence \((\tau_k)_{k\in \omega}\) of
ordinals recursively by \(\tau_0:=2\), \(\tau_1:=\omega\) and
\(\tau_{k+1}:=\omega^{\tau_k}\) for \(k>1\), then \(\epsilon_0\) can
be described as 
\[
\epsilon_0 = \sup\{\tau_k\mid k\in\omega\} =
\omega^{\omega^{\omega^{\omega^{\cdot^{\cdot^\cdot}}}}}.
\]
This countable ordinal is well known to play a central role in proof theory
and in particular in understanding the limitations and consistency
of Peano Arithmetic (see e.g.  \cite[\S D8]{MR0491063} \cite[\#\! 4]{Gentzen_collected} \cite{MR0491157}  \cite{MR663480}). 

The groups in \(\Sfrak:=\{G_\xi \mid \xi < \epsilon_0\}\) are built
from \(\Z\) using certain familiar group-theoretic operations ---
direct sums and wreath products --- as well as a new operation which
is analogous to ordinal exponentiation base \(\omega\).
Whether this new operation
is meaningful in a broader setting
is unclear but even in our rather restrictive setting,
it already yields a wealth of examples. 
The operations also make the construction of the groups in \(\Sfrak\)
straightforward and highly analogous to the construction of ordinals below
\(\epsilon_0\) from \(0\) using exponentiation base \(\omega\) and addition.
Specifically, given the Cantor normal form for an ordinal \(\xi<\epsilon_0\) 
there is an efficient algorithm that lets one write down a finite number of generators
(explicitly as words in the generators of \(F\) if desired)
for a group with EA-class \(\omega \cdot \xi +2\).

While the results of this paper concern groups, the focus of the
analysis is on generating sets.  
The groups in \(\Sfrak\) are specified by a family of generating
sets \(\Sgen\). 
This collection has the property that \(A\) is in \(\Sgen\) if and only if
each of its two element subsets is in \(\Sgen\).
The 2-element sets in \(\Sgen\) generate precisely the groups
\(G_{\tau_k}\) in the family \(\Sfrak = \{G_\xi \mid \xi <
\epsilon_0\}\);
this is the reason for setting \(\tau_0 := 2\).
Theorem \ref{main_thm} implies, in particular, that the
\(G_{\tau_k}\) are an infinite
family of elementary amenable \(2\)-generated subgroups of \(F\)
which are pairwise not biembeddable.
Two of these generating pairs
are illustrated in Figure \ref{Gt4Gt5_fig}.

The isomorphism types of the \(G_{\tau_k}\) are 
parametrized by the nonnegative integer \(k\) which we refer to as 
the \emph{oscillation} of the generating pair from \(\Sgen\).
Figure \ref{Gt4Gt5_fig} illustrates pairs with
oscillation 4 and 5.
The function giving  the oscillations of the pairs from an 
\(A\in\Sgen\) is the \emph{signature} of \(A\). 
Each generating set in \(\Sgen\) is equipped with a total order, and
the signature serves as a complete invariant for all of
\(\Sgen\).

\begin{thm} \label{sig_thm}
If \(A,B \in \Sgen\) have the same
signature, then the order preserving bijection from \(A\) to \(B\)
extends to an isomorphism from \(\gen{A}\) to \(\gen{B}\).
\end{thm}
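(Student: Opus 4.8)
The plan is to attach to each generating set \(A\in\Sgen\) a finite piece of combinatorial data, its \emph{combinatorial type} \(\kappa(A)\), and to prove two things about it: first, that \(\kappa(A)\) is recoverable from the signature of \(A\), and second, that \(\kappa(A)\) determines \(\gen{A}\) together with its generating set up to the evident isomorphism. Granting both, if \(A\) and \(B\) have the same signature then \(\kappa(A)=\kappa(B)\), and so the (unique) order preserving bijection \(A\to B\) extends to an isomorphism \(\gen{A}\to\gen{B}\), which is the content of Theorem~\ref{sig_thm}.

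The data \(\kappa(A)\) is the following. Write \(A=\{a_1<_A\cdots<_A a_m\}\) in its given order, decompose each \(a_i\) into its bumps, and record the combinatorial arrangement along \(I\) of the open supports of all of these bumps: concretely, the linear order on the set of their endpoints, each endpoint tagged with the index \(i\) of the generator it belongs to, the position of its bump among the bumps of \(a_i\), and whether it is a left or a right endpoint. That \(\kappa(A)\) determines \(\gen{A}\) with its generators up to isomorphism has two complementary proofs. From an identification \(\kappa(A)\cong\kappa(B)\) one can build an order preserving self-homeomorphism \(h\) of \(I\) carrying each bump of \(a_i\) onto the corresponding bump of \(b_i\) --- a finite, interval-by-interval interpolation over the common arrangement, using that the individual elements of members of \(\Sgen\) are in the standard form supplied by \cite{fast_gen} --- and then conjugation by \(h\) is the desired isomorphism. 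Alternatively, and more robustly, one invokes the structure theory of \cite{fast_gen} directly: a member of \(\Sgen\) is geometrically fast, its group has a presentation whose relators are read off from the transition chains among its bumps, and those chains --- hence the presentation --- are exactly encoded by \(\kappa(A)\), so equal combinatorial type yields the canonical isomorphism.

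The substance of the argument is the first point: recovering \(\kappa(A)\) from the signature. Write \(\sigma(i,j)\) for the oscillation of the pair \(\{a_i,a_j\}\), so that the signature of \(A\) is the function \((i,j)\mapsto\sigma(i,j)\). By the classification of the two-generated members of \(\Sgen\) --- they generate precisely the groups \(G_{\tau_k}\), and the pairs in Figure~\ref{Gt4Gt5_fig} exhibit the generic shape --- the number \(\sigma(i,j)\) completely determines the way the supports of the bumps of \(a_i\) interleave along \(I\) with those of \(a_j\). One then reconstructs \(\kappa(A)\) by induction on \(m\) (the base case \(m\le 2\) being this classification): having recovered the part of the arrangement coming from \(a_1,\dots,a_{m-1}\), insert the bumps of \(a_m\) using only \(\sigma(1,m),\dots,\sigma(m-1,m)\). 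It is here that the hypothesis ``\(A\in\Sgen\) if and only if each two-element subset of \(A\) is in \(\Sgen\)'' does its work: membership in \(\Sgen\) is a \(2\)-local condition on configurations, so the \(2\)-local invariant --- the signature --- is already complete, and no genuinely \(3\)-generator phenomenon can either obstruct the reconstruction or make it ambiguous.

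The step I expect to be the main obstacle is precisely this insertion. The oscillations \(\sigma(i,m)\) tell us, for each fixed \(i<m\), how the bumps of \(a_m\) interleave with those of \(a_i\); but a priori the way these interleavings are synchronised with one another --- the relative order along \(I\) of the ``crossings'' of \(a_m\) with \(a_1\), with \(a_2\), and so on --- need not be visibly determined. Ruling out this ambiguity is exactly the purpose of the normal-form conditions defining \(\Sgen\) in Section~\ref{objects:sec}: they force the bumps of the various generators into a single rigid pattern governed entirely by the oscillation numbers, so that once the first \(m-1\) generators have been placed there is a unique admissible way to insert \(a_m\). With that in hand the induction closes, the two reductions compose, and Theorem~\ref{sig_thm} follows.
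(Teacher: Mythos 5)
Your overall strategy --- encode the arrangement of bumps as combinatorial data (your \(\kappa(A)\) is essentially the dynamical diagram \(D_A\) of Section \ref{FastGenSec}), show this data determines the marked isomorphism type via Theorem \ref{comb_to_iso}, and then argue that the signature determines the data --- is the same skeleton the paper uses. But the pivotal claim, that \(\kappa(A)\) is recoverable from the signature, is false as stated, and this is a genuine gap rather than a detail. The oscillation \(o(f,g)\) only counts \emph{active} orbitals, i.e.\ orbitals of \(g\) containing a transition point of \(f\). An element of \(\Sgen\) may have \emph{extraneous} bumps --- isolated bumps whose supports contain no transition points of the other generators --- and these are invisible to the signature while very much present in the dynamical diagram. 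Already for a single standard pair \((f,g)\) with \(f\ll g\) the signature records only \(o(f,g)=0\), yet \(f\) and \(g\) may each have arbitrarily many bumps; similarly a pair with \(f\sqsubset g\) and \(o(f,g)=2\) allows \(g\) to carry inactive middle orbitals sitting inside the rightmost orbital of \(f\). So two sets with equal signatures can have non-isomorphic dynamical diagrams, and your reconstruction cannot get off the ground, even in the base case \(m\le 2\). The paper's proof spends most of its effort on exactly this point: Lemmas \ref{find_extraneous} and \ref{excise_inactive} show that one can excise a set of extraneous bumps so as to land back in \(\Sgen\) with the same signature, and Theorem \ref{excision} (a nontrivial input from \cite{fast_gen}) guarantees the excision does not change the marked isomorphism type. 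Only \emph{after} this normalization does the signature determine the dynamical diagram. Your argument needs this reduction and does not contain it.

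The second weakness is that the step you yourself identify as ``the main obstacle'' --- synchronising the interleavings of \(a_m\) with \(a_1,\dots,a_{m-1}\) --- is asserted rather than proved: saying that the \(\Sgen\) conditions ``force the bumps into a single rigid pattern governed entirely by the oscillation numbers'' is a restatement of the theorem, not an argument for it. The paper closes this by a different induction, on the complexity \(\bigl(|A|,\sum_{i<j}o(a_i,a_j)\bigr)\), peeling off the outer bumps of \(a_{\max}\) via the rotation \(A\mapsto A\cut\) and using the identity \(o(a_{\max}\cut,a_i)=o(a_i,a_{\max})-1\) together with Lemma \ref{no_extraneous_pres} to keep the excised-form hypothesis intact; the case analysis (decomposable; \(a_{\max}\) a single bump; \(a_{\max}\) with several orbitals) then shows the diagram of \(A\) is obtained from that of \(A\cut\) in a way readable from the signature. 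Some such mechanism is needed in place of your appeal to ``\(2\)-locality,'' which by itself does not rule out a three-generator ambiguity.
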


\noindent
Thus one may analyze \(\Sfrak\) by analyzing the set \(\Ssig\) of all signatures of \(\Sgen\).

The family \(\Sgen\) is 
robust at a group-theoretic level:
if \(A \in \Sgen\), then \(\gen{A}\) is an HNN extension of a group which is itself an increasing union of
subgroups of the form \(\gen{B}\) for \(B \in \Sgen\).
On the other hand, while the closure properties of \(\Sgen\) --- and thus of \(\Ssig\) ---
are important in the group-theoretic analysis of \(\Sfrak\), they introduce redundancies which 
obscure the structure that the embeddability order induces on these classes.
This is resolved by introducing a transitive relation \(\leq\) on \(\Ssig\) as well as algebraic operations \(+\), \(*\), and \(\exp\) and using them to define
a subclass \(\Rsig\) of \(\Ssig\).

\(\Rsig\) provides a
notion of ``normal form'' for \(\Ssig\) and consequently for \(\Sgen\).
If \(A,B \in \Sgen\) have signatures \(\As\) and \(\Bs\), the relation \(\As \leq \Bs\) implies \(\gen{A} \emb \gen{B}\);
the operations \(+\) and \(*\) correspond to the group-theoretic operations of forming direct sums
and wreath products.
Moreover, on \(\Rsig\) the relation \(\leq\) is antisymmetric and
exactly coincides with group-theoretic embeddability;
it is generated from two elements \(\zero\) and \(\one\) using \(+\), \(*\), and \(\exp\).

The next theorem is at the core of the proof of Theorem \ref{main_thm}.
We write \(\As \equiv \Bs\) to denote \(\As \leq \Bs \leq \As\).
While direct sum on groups is commutative, the operation + on \(\Ssig\) is not.
This will require care in certain statements and for that
reason we say that \(\widetilde{\As}\) is a \emph{reordering} of \(\As = \sum_{i < n} \As_i\) if
\(\widetilde{\As} = \sum_{i < n} \As_{\sigma(i)}\) for some permutation \(\sigma\).

\begin{thm} \label{R_thm}
For each \(\As\) in \(\Ssig\) there is a unique \(\Bs\) in \(\Rsig\)
such that \(\Bs \equiv \widetilde{\As}\) for some reordering \(\widetilde{\As}\) of \(\As\).
Moreover there is an order isomorphism \(\rho\) between \((\Rsig,\leq)\) and the ordinals below 
\(\epsilon_0\) which satisfies
\[
\rho(\As + \Bs) = \rho(\As) + \rho(\Bs) \qquad \textrm{ and } \qquad \rho(\exp(\As)) = \omega^{-1 + \rho(\As)}
\]
whenever \(\As,\Bs \in \Rsig\) and \(\As+\Bs \in \Rsig\).
\end{thm}

Thus each biembeddability class in \(\Sfrak\) has a distinguished representative
--- unique up to marked isomorphism --- identified by the form of its signature.
We will show this representative can be built up from \(\Z\) using simple
algebraic operations which are analogs of the fundamental operations of ordinal arithmetic.

A consequence of the proof of Theorem \ref{R_thm} is that
\((\Ssig,\leq)\) is \emph{well-founded} --- 
every nonempty set of signatures has a \(\leq\)-minimal element (see Section \ref{S_WF:subsec}).
This is a subtle matter and likely to be of independent interest.
In fact while it can be phrased in the language of arithmetic, the well-foundedness of \((\Ssig,\leq)\) is not
provable in Peano Arithmetic.
This is a consequence of Theorem \ref{R_thm},
Gentzen's analysis of the consistency of Peano Arithmetic \cite[\#\! 4]{Gentzen_collected}, and
G\"odel's second incompleteness theorem \cite{incompleteness} \cite{Godel_collected1}.
At a more pragmatic level, future methods of proof
may lend themselves more naturally to induction on
\(\Ssig\) than to induction on \(\epsilon_0\).

Extending the chain \(\Sfrak\) by setting \(G_{\epsilon_0} = F\), we
make the following conjectures.    Recall that \(\PLoI\) is the group
of all piecewise linear elements of \(\HomeoI\).

\begin{conj} \label{univ_conj}
If \(H\) is a finitely generated subgroup of \(\PLoI\), then either
\(F\) embeds into \(H\) or else there is an \(\eta < \epsilon_0\)
such that \(H\) embeds into \(G_\eta\).
\end{conj}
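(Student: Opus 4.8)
The plan is to prove the dichotomy by reading off from the dynamics of a finite generating set \(X\) of \(H\) on \(I\) an ordinal rank that either exhibits an embedding into some \(G_\eta\) or forces a copy of \(F\). First I would use that each \(x \in X\), being piecewise linear, has finitely many fixed points and breakpoints; the union of these over \(X\) partitions \(I\) into finitely many open orbital intervals on which the support configuration of \(X\) is combinatorially uniform. The invariant I want to extract is the nesting-and-transition data of the orbitals \(\supt(x)\), \(x \in X\): how the connected components of the supports overlap and chain together. The goal is to convert this data, via a geometrically fast model in the sense of \cite{fast_gen}, into a signature so that Theorem \ref{R_thm} and the algebraic characterization of \(\leq\) on \(\Ssig\) become available. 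The dichotomy then turns on whether this transition data is \emph{well-founded} or \emph{saturated}.

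Suppose first the data is saturated, meaning that within some orbital there is an infinite self-similar nest of overlapping supports — the dynamical hallmark of Thompson's group rather than of any finite-oscillation pair in \(\Sgen\). In this case I would produce two elements \(f, g \in H\), as suitable products and conjugates of the generators, whose attracting and repelling fixed-point configuration matches that of the standard self-similar pair. A ping-pong argument on the nested fundamental domains, using the geometrically fast technology of \cite{fast_gen}, then identifies \(\gen{f, g} \cong F\), so \(F \emb H\). The conceptual content is that an unbounded transition chain inside a single orbital is exactly the feature that escapes every signature in \(\Ssig\) (whose members correspond to ordinals below \(\epsilon_0\)), and this is precisely the feature realizing \(F = G_{\epsilon_0}\).

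In the complementary case the transition data is well-founded, and the fastening step yields a geometrically fast set \(A \in \Sgen\) with a genuine signature \(\As \in \Ssig\) into whose group \(H\) embeds, \(H \emb \gen{A}\). By Theorem \ref{R_thm} there is a unique \(\Bs \in \Rsig\) with \(\As \equiv \Bs\), and the isomorphism \((\Rsig,<,+,\exp) \cong (\epsilon_0, \dots)\) identifies \(\Bs\) with an ordinal \(\eta < \epsilon_0\). It then remains to turn \(\As \equiv \Bs\) into an actual embedding \(\gen{A} \emb G_\eta\); here I would combine Theorem \ref{sig_thm} with the algebraic characterization of \(\leq\) on \(\Ssig\) induced by embeddability on \(\Sfrak\), realizing the comparison of signatures as an order-preserving map of generating sets that extends to a homomorphism. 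Composing gives \(H \emb \gen{A} \emb G_\eta\). The structural fact that \(\gen{A}\) is an HNN extension of an increasing union of smaller \(\gen{B}\)'s (\(B \in \Sgen\)) is what lets \(H\) be absorbed into this standard model at the fastening step.

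The hard part, and the reason the statement is posed as a conjecture, is the very first reduction: an arbitrary finitely generated subgroup of \(\PLoI\) need not come with a geometrically fast generating set, and its generators may have irrational slopes and non-dyadic breakpoints, so the signature machinery of \(\Sgen\) does not apply directly. What is needed is a \emph{fastening-or-\(F\)} theorem: that any finite \(X \subseteq \PLoI\) can, after passing to suitable powers, products, and conjugates, be replaced by a geometrically fast set generating a subgroup of \(\gen{X}\) that preserves its transition complexity — unless that complexity is already unbounded, in which case the saturated analysis produces \(F\). Establishing that such a normalization always exists, and that it neither destroys nor manufactures transition chains, is the crux; it is tightly bound to the still-open Conjecture \ref{BrinSapir}, of which the present statement is a quantitative strengthening across all of \(\PLoI\).
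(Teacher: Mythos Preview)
The statement you are attempting to prove is labeled \textbf{Conjecture} in the paper, and the paper contains no proof of it; indeed the authors present it as one of the central open problems motivating the article. So there is nothing in the paper to compare your argument against, and the relevant question is simply whether your proposal constitutes a proof. It does not, and to your credit you say so yourself in the final paragraph: the ``fastening-or-\(F\)'' reduction you need is not a known theorem but is essentially a restatement of the conjecture.

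Beyond that acknowledged gap, there are further structural problems with the outline. First, your dichotomy between ``saturated'' and ``well-founded'' transition data is not made precise, and it is unclear that any finite combinatorial invariant of a generating set \(X\) can detect whether \(F\) embeds in \(\gen{X}\); the Ubiquity Theorem \cite{BrinU} gives sufficient conditions for \(F\) to appear but not a decision procedure. Second, even when a finite generating set \emph{is} geometrically fast, it need not lie in \(\Sgen\): the group \(G=\gen{f,g,h}\) of Section~\ref{nonlinear:sec} has a fast generating set whose dynamical diagram is not that of any standard generating set, and Theorem~\ref{AntiChain} shows such groups can be \(\emb\)-incomparable with elements of \(\Sfrak\). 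Your claim that fastening yields some \(A\in\Sgen\) with \(H\emb\gen{A}\) therefore requires an additional argument even in the fast case. Third, the appeal to ``the algebraic characterization of \(\leq\) on \(\Ssig\) induced by embeddability'' in your well-founded branch is circular: Proposition~\ref{order_char_prop} shows \(\As\leq\Bs\) is equivalent to \(\gen{\As}\emb\gen{\Bs}\) only for \(\As,\Bs\in\Ssig'\), and you have not placed \(H\) inside this framework. In short, each of the three reduction steps you propose is itself an open problem of roughly the same strength as the conjecture.
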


\begin{conj} \label{WQO} The partial order \((\Ffrak,\emb)\) is a
\emph{well-quasi-order} --- it contains no infinite decreasing
sequences and no infinite antichains.
\end{conj}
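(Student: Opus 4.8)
The plan is to deduce Conjecture \ref{WQO} from a quantitative strengthening of the Brin--Sapir conjecture (Conjecture \ref{BrinSapir}), a structural decomposition of the finitely generated subgroups of \(F\), and a minimal-bad-sequence argument in the style of Kruskal's tree theorem. The first step is to establish Conjecture \ref{univ_conj} for subgroups of \(F\) itself: since \(F \le \PLoI\), every finitely generated \(H \le F\) either contains a copy of \(F\) --- in which case \(H \equiv F\) in \(\Ffrak\), and \([F]\) is the maximum of \((\Ffrak,\emb)\), which may be deleted without affecting whether \(\Ffrak\) is a well-quasi-order --- or else \(H\) embeds into \(G_\eta\) for some \(\eta < \epsilon_0\). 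The latter assigns to each class \([H] \ne [F]\) a \emph{rank} \(\rho(H) < \epsilon_0\), namely the least such \(\eta\); \(\rho\) is monotone for \(\emb\) and takes values in the well-order \(\epsilon_0\). Since adjoining a single maximum element to a well-quasi-order yields a well-quasi-order, it suffices to show that the finitely generated subgroups of \(\bigcup_{\eta < \epsilon_0} G_\eta\), quasi-ordered by \(\emb\), form a well-quasi-order.

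The second step is a decomposition theorem generalizing the observation in the introduction that, for \(A \in \Sgen\), \(\gen{A}\) is an HNN extension over an increasing union of groups \(\gen{B}\) with \(B \in \Sgen\). Using the tower and transition-chain machinery of \cite{fast_gen} --- which analyzes a finite subset of \(\HomeoI\) through the nested combinatorics of the supports and overlaps of its elements --- one attaches to a suitable generating set \(X\) of an arbitrary finitely generated \(H \le F\) a finite labeled rooted forest \(T(X)\): nodes record the maximal towers and their nesting, and each node carries a label recording the biembeddability class of the ``local'' group generated by the markers at that node, identified via Theorem \ref{R_thm} with an element of \(\Rsig\) and hence with an ordinal below \(\epsilon_0\). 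One must show, first, that \(T(X)\) determines \(\gen{X}\) up to biembeddability --- a generalization of Theorems \ref{sig_thm} and \ref{R_thm} beyond \(\Sgen\), where the signature is now spread across a tree rather than recorded by a single function --- and, second, that \(H_1 \emb H_2\) holds precisely when some \(T(X_1)\) admits a label-monotone topological embedding into some \(T(X_2)\). The antichain example announced in the abstract shows that these forests genuinely branch, so the induced quasi-order is not linear; the claim is that it is nonetheless a well-quasi-order.

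The third step is the well-quasi-order conclusion. The labels lie in \((\Rsig, \le) \cong \epsilon_0\), a well-order and therefore a well-quasi-order; by the labeled form of Kruskal's theorem the finite labeled forests under label-monotone topological embedding then form a well-quasi-order, and pulling back along \(X \mapsto T(X)\) gives that \(\Ffrak\), minus its top element, is a well-quasi-order, after which \([F]\) is restored. Equivalently one may argue intrinsically: given a bad sequence \((H_n)\) in \(\Ffrak\), pass to one that is minimal with respect to \(\rho\), apply the decomposition to express each \(H_n\) up to biembeddability in terms of components of strictly smaller rank, and invoke Higman's lemma on the tuples of components to extract a bad sequence among the components, contradicting minimality --- well-foundedness of \(\rho\) legitimizing the recursion.

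The main obstacle is the first step. Conjecture \ref{univ_conj} restricted to \(F\) already implies Conjecture \ref{BrinSapir}, since each \(G_\eta\) is elementary amenable, and is entirely open; without at least a well-founded rank function on \(\Ffrak\) there is no base for the induction. The secondary obstacle is the decomposition theorem of the second step: the present paper proves the relevant normal-form and rigidity statements only for the highly structured family \(\Sgen\), and extending them to arbitrary finitely generated subgroups of \(F\) --- isolating the correct finite combinatorial invariant and proving it complete for biembeddability --- is where most of the new work lies. One should also be prepared for the labeled forests, carrying labels up to \(\epsilon_0\), to require the stronger apparatus of better-quasi-orders (Nash-Williams, Laver) in place of Kruskal's theorem, and should verify directly that no infinite antichain is concealed among the small subgroups at the bottom of the order (inside \(\Z \wr \Z\), inside the Brin--Navas group \(B\), and so on).
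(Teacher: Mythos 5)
The statement you are addressing is Conjecture \ref{WQO}, which the paper states as an open problem and does not prove; there is no argument in the paper to compare yours against. Your proposal is a research program rather than a proof, and --- to your credit --- you say so explicitly: each of its three steps rests on results that are established neither in the paper nor elsewhere. Step one invokes Conjecture \ref{univ_conj}, which is itself open and already implies the Brin--Sapir conjecture (Conjecture \ref{BrinSapir}); without it you have no rank function \(\rho\) and no base for the minimal-bad-sequence induction. Step two posits a decomposition theorem assigning to every finitely generated subgroup of \(F\) a finite labeled forest that is complete for biembeddability and for which \(\emb\) corresponds to label-monotone forest embedding. Nothing in the paper supports this beyond the family \(\Sgen\): Theorems \ref{sig_thm} and \ref{R_thm} are proved only for standard generating sets, whose members are severely constrained (connected extended supports, balanced positive and negative bumps, the recursive standard-pair condition), and the paper's own Theorem \ref{AntiChain}, together with the group \(G\) of Section \ref{nonlinear:sec} --- which embeds into \(G_{\tau_4}\) yet is biembeddable with no \(G_\xi\) --- shows that genuinely new invariants are needed once one leaves \(\Sgen\). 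Step three (Kruskal or Higman) is routine once steps one and two are granted, but granting them is the entire problem.

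The concrete gap is therefore not a single faulty step but the absence of the two load-bearing inputs: a proof of Conjecture \ref{univ_conj} (or at least of well-foundedness of \(\emb\) on \(\Ffrak\)), and a combinatorial classification of arbitrary finitely generated subgroups of \(F\) up to biembeddability. As a statement of strategy your sketch is reasonable and consistent with the heuristic the authors themselves offer --- that forbidding \(F\) as a subgroup should force a structure theory --- but it cannot be counted as a proof of the conjecture, and the paper offers none.
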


Observe that Conjecture \ref{univ_conj} immediately implies
Conjecture \ref{BrinSapir} since each \(G_\eta\) for \(\eta <
\epsilon_0\) is elementary amenable.  It also implies another
conjecture of the second author which complements Conjecture
\ref{BrinSapir}: every elementary amenable subgroup of \(\PLoI\)
embeds into \(F\).  Moreover, this would imply that \(\epsilon_0\)
is a strict upper bound for the EA-class of every finitely generated
elementary amenable subgroup of \(\PLoI\).

Conjecture \ref{WQO} is really about understanding those finitely
generated subgroups of \(F\) which do not contain \(F\).  The second
author has shown that not containing an isomorphic copy of \(F\) is
a strong restriction on subgroups of \(F\) (and more generally
subgroups of \(\PLoI\)) \cite{BrinU}.  Motivation for Conjecture
\ref{WQO} stems in part from the heuristic that forbidding a
ubiquitous substructure often portends a well developed structure
theory (see, for instance \cite{Fraisse_conj}).

There are limitations, however, as to what one can expect in the direction
of these conjectures.
We show that there is a finitely generated subgroup of \(F\)
which is not biembeddable with any \(G_\xi\) for \(\xi \leq \epsilon_0\).
Moreover, we show that \((\Ffrak,\emb)\) is not a linear ordering.

\begin{thm}\label{AntiChain}
There are finitely generated subgroups \(H_0\) and \(H_1\) of \(F\) of EA-class
\(\omega+2\) such that \(H_0\) does not embed into \(H_1\) and vice versa.
\end{thm}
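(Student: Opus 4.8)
I would realize \(H_0\) and \(H_1\) as subgroups of \(F\) given by explicit geometrically fast generating sets which do \emph{not} lie in \(\Sgen\). Each generating set is arranged so that one of its pairs of generators is a geometrically fast pair of oscillation \(2\); by Theorem \ref{sig_thm} the group generated by such a pair is isomorphic to \(B := G_{\omega^\omega}\), which pins the EA-class of \(H_i\) from below at \(\omega+2\). The substance of the construction is that \(H_0\) and \(H_1\) carry their complexity along two structurally incompatible transition-chain patterns: in \(H_0\) an unbounded tower of nested wreath-type copies of \(\Z\) is grafted onto a short oscillating chain in one way (say, the tower nested underneath the chain), while in \(H_1\) an unbounded tower of the same type is attached in a genuinely different way (say, linked in series alongside the chain). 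The generating sets are chosen so that neither pattern occurs as a subpattern of the other; this is exactly why neither set can belong to \(\Sgen\), since by Theorem \ref{R_thm} the groups arising from \(\Sgen\) are linearly preordered by embeddability, and it is what we will ultimately exploit to get mutual non-embeddability.

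Showing that each \(H_i\) is elementary amenable of EA-class exactly \(\omega+2\) is a matter of running the apparatus of the paper. Each \(H_i\) is assembled from \(\Z\) by direct sums, wreath products, and finitely many ascending HNN-type amalgamations of the kind that govern \(\Sfrak\); tracking the EA-class through these operations confines it to at most \(\omega+2\), and the embedded copy of \(B\) supplies the matching lower bound.

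The heart of the argument is the mutual non-embeddability, where the EA-class is useless since it equals \(\omega+2\) for both groups. The plan is to produce a finitary invariant that is monotone under embedding and separates \(H_0\) from \(H_1\) in both directions. The natural source is a rigidity phenomenon: a group presented by a geometrically fast set, once realized inside \(\PLoI\), carries a canonical dynamical diagram, and --- in the spirit of the rigidity results underlying \cite{fast_gen} and of the structural restrictions on \(F\)-free subgroups of \(\PLoI\) from \cite{BrinU} --- any abstract embedding between two such groups is forced to respect enough of that structure. Granting this, one reads off from the two diagrams the incompatible fingerprints built in during the construction: an embedding \(H_0 \emb H_1\) would require \(H_1\) to contain the ``unbounded nested tower over an oscillating chain'' configuration, which it provably does not, its unbounded tower being attached in the ``series'' fashion and the structural theory showing that the two configurations cannot be interconverted nor one absorbed into the other; symmetrically for \(H_1 \emb H_0\). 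A transparent way to package the fingerprint is to isolate inside each \(H_i\) a canonically defined elementary amenable subquotient of class \(\omega+1\) (its ``\(B\)-part'') and to record a finitary invariant of how it is attached --- for instance, whether a naturally associated abelian subquotient is finitely generated --- and then to verify that this invariant is inherited by finitely generated subgroups and preserved by embeddings, using the constraints that elementary amenability and the ambient PL structure impose on subgroups of \(F\).

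The main obstacle is precisely this invariance step: showing that the chosen fingerprint is preserved under \emph{arbitrary} group embeddings, not merely under those induced by homeomorphisms of \(I\). I expect this to need either a genuine rigidity theorem for embeddings of our groups into \(\PLoI\), or a self-contained combinatorial argument --- analyze the lattice of finitely generated subgroups of \(H_1\) together with their EA-classes and normal structure, enumerate the possible candidates for an isomorphic copy of \(H_0\) sitting inside it, and in each case contradict the asymmetric feature. Once the right feature is identified this collapses to a finite case check; identifying it and proving it passes to subgroups is where the real work lies.
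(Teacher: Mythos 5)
There is a genuine gap: your write-up is a plan that defers exactly the step that constitutes the theorem. You correctly identify that the EA-class is useless for distinguishing the two groups and that the whole difficulty is finding an invariant preserved by \emph{arbitrary} group embeddings rather than by embeddings induced by homeomorphisms of \(I\) --- but you then leave that step open (``I expect this to need either a genuine rigidity theorem \dots or a self-contained combinatorial argument \dots identifying it and proving it passes to subgroups is where the real work lies''). No such general rigidity theorem is available in the paper or in \cite{fast_gen}, and no candidate invariant is actually defined or verified, so neither direction of non-embeddability is established. In addition, the groups themselves are never specified, so even the EA-class computation cannot be checked.

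The paper resolves the invariance problem with two concrete, purely algebraic devices, and it is worth seeing how they substitute for the missing ``rigidity.'' It takes \(H_0 = B + \Z\) and \(H_1 = G = \gen{f,g,h}\) for an explicit fast triple. For \(B+\Z \not\emb G\): Lemma \ref{FullWidth} shows that every element of \(G\) outside the normal closure \(N_G\) of \(\{g,h\}\) has cyclic centralizer; since the generator of the \(\Z\)-factor of \(B+\Z\) has nonabelian centralizer, its image must land in \(N_G\), which then drags the image of all of \(B\) into \(N_G\) as well --- but \(\EA(N_G)=\omega+1 < \omega+2\). For \(G \not\emb B+\Z\): the paper uses quantifier-free formulas \(D(x,y)\) and \(T(x,y,z)\) in the group operations (built from commutator identities), which are automatically preserved by any homomorphism, and shows via the centralizer theory of \cite{picric} that witnesses of these formulas in \(\PLoI\) must exhibit properly nested orbitals. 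The infinite tower \(f^{h^{-i}}\) in \(N_G\) then forces an infinite properly nested chain of orbitals inside \(N_B\) below a single orbital of the image of \(g\), which is impossible because the orbitals of \(N_B\) have bounded nesting depth between any two fixed levels. These first-order ``fingerprints'' are precisely the embedding-invariant data your plan calls for; without them (or an equivalent), the argument does not close.
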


Lastly we remark, without proof, that the sequence of groups
\(G_{\tau_k}\) with the generating pairs \((f_k, g_k)\) converges in
the space of 2-marked groups to the free group on 2 generators.  We
refer the reader to \cite{MR2653970} for definitions and relevant
arguments.

\subsection{Related results}

All solvable groups are necessarily elementary amenable.  The
solvable subgroups of \(F\) have been thoroughly analyzed by the
first author in \cite{Bpasc}.  In particular, he proves that
Conjecture \ref{univ_conj} holds for the finitely generated solvable
subgroups \(H\) of \(\PLoI\) \cite{Bpasc}: every finitely generated
solvable subgroup of \(\PLoI\) embeds into \(G_{\omega^\omega}\),
which is the Brin-Navas group \(B\) (\cite{BrinEG} Fig. 5, \cite{MR2135961}
Example 6.3).

In \cite{ATaylor}, A.~Taylor finds uncountably many pairwise
nonisomorphic elementary amenable subgroups of \(F\).  These
examples are not finitely generated, are not solvable but are
locally solvable, and all have EA-class \(\omega+1\).

In \cite{MR3079257}, Ol'shanskii and Osin show that for every countable ordinal \(\alpha\),
there is a finitely generated group \(G\in EG\) with \(\EA(G)=\alpha+2\).
The construction in \cite{MR3079257} is recursive and based on HNN extensions.
The operations on \(\Sfrak\) in the current paper  accelerate the recursive process and lead to
easier descriptions of the elements of \(\Sfrak\), even though our construction only carries though \(\epsilon_0\),
not the much larger \(\omega_1\).

\subsection{Organization}

Section \ref{objects:sec} is essentially a continuation of this introduction and it gives
definitions and examples sufficient to introduce the reader to the groups
that we build and how we build them.
It does not give any hints as to their analysis.
Section \ref{prelim:sec} fixes more notation and terminology which
is used in the paper.  It also contains a review of a number of
prerequisites for the paper:
details from \cite{fast_gen};
ordinals and
their arithmetic; elementary amenable groups and EA-class;
wreath products of permutation groups.  The reader may wish to skim or skip
Section \ref{prelim:sec} and then refer back to the various subsections as needed.
In Section \ref{osc:sec}, the oscillation function is developed.
This function is further developed in the context of standard generating sets
in Section \ref{signature:sec}, where we study the signature of an element of \(\Sgen\)
and prove Theorem \ref{sig_thm}.
In Section \ref{inflation:sec} we introduce the notion of an \emph{inflation} of a standard generating
set by one of its elements and show that the result is again a standard generating set.
The interaction of the family \(\Sgen\) with wreath products is detailed
in Section \ref{wreath:sec}.
Section \ref{arithmetic:sec} develops analogs of the operations of ordinal arithmetic
for signatures of elements of \(\Sgen\) and proves Theorem \ref{R_thm}.
This analysis is then used to show that
\((\Sfrak,\emb)\) is a well-order with ordertype \(\epsilon_0\) in
Section \ref{main:sec}, where the proof of Theorem \ref{main_thm}
is completed.
Finally, Section \ref{nonlinear:sec}
contains a proof of Theorem \ref{AntiChain}, 
establishing that \((\Ffrak,\emb)\) is not a linear order.

\section{The objects of study}
\label{objects:sec}

In this section we describe the generating sets in \(\Sgen\)
and the functions that are elements of such sets.  We also
describe the signature associated to a generating set.  The class of
signatures \(\Rsig\) of Theorem \ref{R_thm} is described as
well as the isomorphism from \(\Rsig\) to \(\epsilon_0\).
An aim of this section is to indicate how, given a suitable EA class
\(\alpha<\epsilon_0\), one can write down a set of generators of a
group with EA class \(\alpha\).

\subsection{The anatomy of a homeomorphism}\label{anat:sec}

To describe the elements of \(\HomeoI\) that we work with, we set
some terminology.  We write homeomorphisms to the right of their
arguments and compose from left-to-right.  The \emph{support}
\(\supt(f)\) of \(f\in \HomeoI\) is the set \(\{t\in [0,1]\mid tf\ne
t\}\), and the \emph{e-support} of \(f\) is the interior
of the closure of \(\supt(f)\).

For \(f\in \HomeoI\), a component \(J\) of \(\supt(f)\) is an
\emph{orbital} of \(f\), and a function with exactly one orbital
is called a \emph{bump}.  The \emph{transition points} of
\(f\) are the endpoints of the orbitals of \(f\).
A bump \(f\) with
orbital \(J\) is \emph{positive} if \(tf>t\) for one
(equivalently all) \(t\in J\), and negative otherwise.  If \(g\in
\HomeoI\) has multiple orbitals one of which is \(J\), then the bump
\(f\in \HomeoI\) with \(f|_J=g|_J\) is called a \emph{bump of} \(g\).
If \(f \in \HomeoI\) and \(X \subseteq I\) is a union of orbitals
and fixed points of \(f\), then we write \(f |_X\) to denote
the homeomorphism which coincides with \(f\) on \(X\) and which is
the identity outside of \(X\).

Suppose \(f, g\in \HomeoI\) have disjoint sets of transition points.
We write \(f\ll g\) if 
the e-support of \(f\) is to the left in
\([0,1]\) of the e-support of \(g\).
We write \(f\sqsubset g\) if the closure of the e-support of \(f\) is contained in
the e-support of \(g\). 
Define $f < g$ if the greatest transition point of $f$ is less than the greatest transition point of $g$.
Observe that if either $f \ll g$ or $f \sqsubset g$, then $f < g$.

To conserve space in drawing functions, we do not use horizontal and
vertical \(x\)- and \(y\)-axes as in Figure \ref{Gt4Gt5_fig}, but
draw as if the \(x\)- and \(y\)-axes are at 45 degrees and the part
of the line \(y=x\) in the first quadrant stretches horizontally to
the right from the origin.  The axes themselves and the line \(y=x\)
are suppressed, as are intervals of fixed points.  As in the following,
\begin{equation}\label{BumpsPic}
\xy
(48,0); (64,0)**\crv{(52,4)&(60,4)}; (46,0.5)*{f};
\endxy
\qquad
\qquad
\xy
(8,0);  (24,0)*\crv{(12,-4)&(20,-4)};
(40,0)**\crv{(28,4)&(36,4)};
(56,0)**\crv{(44,4)&(52,4)};
(6,0.5)*{g};
\endxy
\end{equation}
positive bumps become arcs above the horizontal, and negative bumps
are arcs below the horizontal.  The function \(f\) is a positive
bump, and the function \(g\) has one negative bump and two positive
bumps.

\subsection{The attribute fast}\label{FastAttrSec}

To control the isomorphism type of the
groups that we generate, we need some mild controls on the
dynamics of our generating sets.  We use some concepts from
\cite{fast_gen}.

Suppose that \(A \subseteq\HomeoI\) is a set of bumps.
\begin{defn}
A \emph{\Marking{}} of \(A\) is an assignment \(a \mapsto s_a\) of an
element of the support of \(a\) to each \(a \in A\).  The
\emph{feet} of \(a\) with respect to this \Marking{} are the
intervals \((u,s_a)\) and \([t_a,v)\) where \((u,v)\) is the support
of \(a\) and \(t_a:= s_a a\) if \(a\) is positive and \(t_a:= s_a
a^{-1}\) if \(a\) is negative.
\end{defn}
This is illustrated below for
positive \(a\).
\[
\xy
(0,0); (32,0)**\crv{(10,10)&(22,10)}; (4,5.5)*{a}; 
(8.7, 0); (16,7.3)**@{.}; (23.3,0)**@{.};
(0,-2)*{u}; (8.7, -2.5)*{s_a}; (23.3, -2.5)*{t_a}; (32,-2)*{v};
\endxy
\]

An \(f\in \HomeoI\) equipped with a \Marking{} of its bumps is a
\emph{\Marked{} function}.  The expression ``a \Marked{} function
\(f \in \HomeoI\)'' always means that \(f\) comes with a fixed
\Marking{} even if the \Marking{} is not specified.

The finiteness assumptions in the next definitions are more
restrictive than in \cite{fast_gen}, but are sufficient for us and
add some conveniences.  A collection \(B\) of \Marked{} bumps is
\emph{geometrically fast} (or just \emph{fast}) if for every \(f\ne
g\) in \(B\) the feet of \(f\) are disjoint from the feet of \(g\).
If \(S \subseteq\HomeoI\) is a finite set of \Marked{} functions and
each element of \(S\) has only finitely many bumps, then we define
\(S\) to be \emph{fast} if the set of bumps of \(S\) is fast and no
bump occurs in more than one element of \(S\).  Given a fast \(S\),
we always view \(S\) as being ordered according to the order on
the maximum transition points of its elements.  Note that no two
different elements of such an \(S\) are equal under this order.

The point of these concepts is that the isomorphism type of a
subgroup of \(\HomeoI\) generated by a fast
set \(S\) of functions is determined by a very small amount of
information from \(S\).  Specifically, if \(S'\) is another
fast set and \(h:S\rightarrow S'\) is a bijection that
``preserves enough of the combinatorics'' (including the order given
above, the order of the feet, and the signs of the bumps), then \(h\)
extends to an isomorphism from \(\gen S\) to \(\gen{S'}\). 
A more detailed statement is given in Section \ref{FastGenSec}.

We exploit this in two ways.  We can specify groups up to
isomorphism by giving somewhat sloppy descriptions of the generating
sets.  Further, a result of \cite{fast_gen} says that if \(S\) is
fast, then \(\gen S\) embeds in Thompson's group \(F\).  Thus under
the assumption that the pairs \(\{f_4, g_4\}\) and \(\{f_5, g_5\}\)
in Figure \ref{Gt4Gt5_fig} are fast, we can regard the groups
\(G_{\tau_4}\) and \(G_{\tau_5}\) as subgroups of \(F\) with very
specific isomorphism types.

\subsection{Standard functions and standard pairs}

Thompson's group \(F\) is not elementary amenable, and to build a
subgroup of \(F\) that is elementary amenable one must avoid
including an isomorphic copy of \(F\) in the subgroup.  By the
Ubiquity Theorem \cite{BrinU}, this places severe restrictions on
the generating sets and the functions that can be used in the
generating sets.  The restrictions in the next definition
reflect this.

\begin{defn}
A \emph{standard function} is a \Marked{} function \(f\) in
\(\HomeoI\) with finitely many bumps satisfying all following
properties:
\begin{enumerate}

\item The e-support of \(f\) is an interval;

\item every positive bump of \(f\) is to the right of every negative bump;

\item the number of positive and negative bumps of \(f\) differ by
at most one and there are at least as many positive bumps as
negative bumps.

\end{enumerate}
\end{defn}
The functions \(f\) and \(g\) in (\ref{BumpsPic}) are both standard.
The diagram below illustrates two pairs \((f,g)\) of standard
functions, where the left  pair satisfies
\(f \ll g\) and the right pair satisfies \(f \sqsubset g\).
\begin{equation}\label{PairsInOrder}
\xy
(-14,4.0)*{f};
(-6,1.0)*{\vardotbump{4}{12}{16}};
(14, 1.0)*{\varbump{4}{12}{16}};
(23.5,3)*{g};
\endxy
\qquad
\xy
(0.0,8.0); (0.0,-8.0)**@{.};
\endxy
\qquad
\xy
(-8,3.5)*{g};
(0,-.85)*{\varnbump{4}{12}{16}};
(16,2.6)*{\varbump{4}{12}{16}};
(32,2.6)*{\varbump{4}{12}{16}};
(9,-0.8)*{\varndotbump{4}{12}{16}};
(25,2.6)*{\vardotbump{4}{12}{16}};
(0.0,4.0)*{f};
\endxy
\end{equation}

A set \(A\) of standard functions forms an element of \(\Sgen\)
if each pair of elements of \(A\) forms a \emph{standard pair}.
Standard pairs of functions are defined recursively using a
reduction operation that will be heavily used during our analysis in
later sections.  Suppose that \(f\) is a standard function.  If
\(f\) has more than two orbitals, then let \(X\) be the union of the
orbitals of \(f\) except the maximum (rightmost) and minimum
(leftmost) orbitals, and
define \(f \cut\) to be \(f|_X\).
Observe that \(f
\cut\) is again a standard function --- we \Mark{} \(f \cut\) with
the \Markers{} of \(f\) on the orbitals which remain in \(f \cut\).
If \(f\) has one or two orbitals and the left foot of the positive orbital is
\((r,s)\), define \(f\cut\) to be any positive bump with support
\((r,s)\) and an arbitrary \Marker{}.  It will not be necessary
to be  more specific.
It will be important later that in all cases each foot of \(f \cut\) is
a  subset of a foot of \(f\).

\begin{defn}
A pair \((f,g)\) of standard functions is a \emph{standard pair}
if it satisfies the following recursive definition:
\begin{enumerate}

\item The set \(\{f,g\}\) is fast, and

\item either \(f \ll g\), or else \(f\sqsubset g\) and \((g\cut,f)\) is 
a standard pair.

\end{enumerate}
\end{defn}
The following diagram  illustrates the recursive clause:
\[
\xy
(0,3)*{g};
(0,0); (8,0)**\crv{(2,-3)&(6,-3)};
(16,0)**\crv{(10,3)&(14,3)}; (24,0)**\crv{(18,3)&(22,3)};
(4,0); (12,0)**\crv{~*=<3pt>{.}(6,-3)&(10,-3)};
(20,0)**\crv{~*=<3pt>{.}(14,3)&(18,3)};
(4,3.5)*{f};
\endxy
\quad
\leadsto
\quad
\xy
(9,4)*{g\cut};
(8,0); (16,0)**\crv{(10,3)&(14,3)};
(4,0); (12,0)**\crv{~*=<3pt>{.}(6,-3)&(10,-3)};
(20,0)**\crv{~*=<3pt>{.}(14,3)&(18,3)};
(4,3.5)*{f};
\endxy
\quad
\leadsto
\quad
\xy
(13,4)*{g\cut};
(12,0); (24,0)**\crv{(15,3)&(21,3)};
(15.5,0); (20.5,0)**\crv{~*=<3pt>{.}(17.5,1.5)&(18.5,1.5)};
(20.5,-3.5)*{f\cut};
\endxy
\quad
\leadsto
\quad
\xy
(10,3.5)*{{g\cut}\cut};
(7.5,0); (12.5,0)**\crv{(9.5,1.5)&(10.5,1.5)};
(15.5,0); (20.5,0)**\crv{~*=<3pt>{.}(17.5,1.5)&(18.5,1.5)};
(18,4)*{f\cut};
\endxy
\]
It is easily checked that both pairs \((f,g)\) in
(\ref{PairsInOrder}), and the pairs \((f_4,g_4)\) and \((f_5,g_5)\)
in Figure \ref{Gt4Gt5_fig} are standard and that, up to topological
conjugacy, \((g_5 \cut,f_5)\) coincides with \((f_4,g_4)\).

\begin{defn}
A \emph{standard generating set} is a finite set of \Marked{}
functions in \(\HomeoI\) which is pairwise standard.
The collection of all
standard generating sets is denoted \(\Sgen\).
\end{defn}
It is important to note here that any standard generating set is fast.
Also observe that if \(A\) is a standard generating set then for each \(f,g \in A\),
\(f < g\) if and only if either \(f \ll g\) or \(f \sqsubset g\).

\subsection{Oscillation and signature}\label{OscSig:sec}

In our analysis, we reduce the information in a standard generating
set to a finite matrix of integers. 
\begin{defn} \label{OscDef}
If \(f, g \in \HomeoI\) have disjoint sets of transition points and either
$f \ll g$ or $f \sqsubset g$, then we define their \emph{oscillation}
\(o(f,g)\) to be the number of orbitals of \(g\) that contain at
least one transition point of \(f\). 
In order to make this a symmetric function we declare \(o(g,f) = o(f,g)\).
\end{defn}

\begin{defn}
If \(A\) is in \(\Sgen\), then the \emph{signature} of \(A\) is
the function denoted \(\As\) and defined by
\(\As(f,g) = o(f,g)\) whenever \(f < g\) are in \(A\).
We refer to \(A\) as the \emph{base} of the signature \(\As\).
\end{defn}

Figure \ref{osc_fig} shows the signature of a
standard generating set with four elements.
Since elements of \(\Sgen\) may be singletons or empty,
we include the base as part of the data of a signature.

If \(\As\) and \(\Bs\) are signatures with bases \(A\) and \(B\), then we say that \(\As\) and \(\Bs\) are
\emph{equivalent} if \(|A| = |B|\) and the order
preserving bijection \(\theta : A \to B\) 
satisfies \(\As(f,g) = \Bs(\theta(f), \theta(g))\) whenever \(f < g\) are in \(A\).
We also extend this notion of equivalence to when \(\As\) and \(\Bs\) are
just integer functions defined on pairs from ordered sets \(A\) and \(B\);
we use \emph{signature} to refer to a function which is equivalent in this way to a signature 
of an element of \(\Sgen\).
In particular, every signature is uniquely equivalent to a signature with base
\(\{0,\ldots,n-1\}\) for some \(n\).
These canonical representatives allow us to say without guilt
that a signature \(\As\) \emph{is} the signature of some \(S\in \Sgen\) when in reality \(\As\)
is only equivalent to the signature of \(S\).  

Theorem \ref{sig_thm} (proven in Section \ref{signature:sec}) says that if \(A,B \in \Sgen\) 
have the same signature, then the order preserving bijection between \(A\) and \(B\) 
extends to an isomorphism between \(\gen A\) and \(\gen B\).
If \(\As\) is a signature, we define \(\gen{ \As } := \gen{A}\) where \(A \in \Sgen\) has
signature (equivalent to) \(\As\).

\begin{figure}
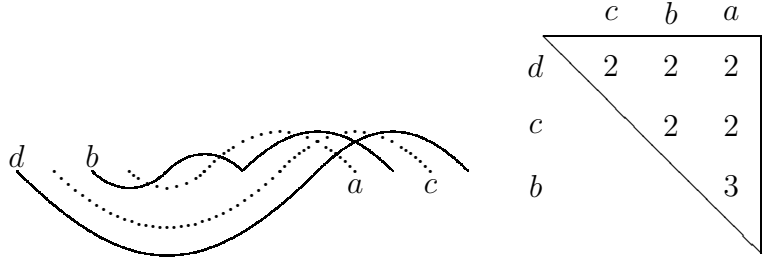

\[
\xy
(0,0); (40,0)**\crv{(15,-15)&(25,-15)};
(40,0); (60,0)**\crv{(47,7)&(53,7)};
(5,0); (35,0)**\crv{~*=<3pt>{.}(15,-10)&(25,-10)};
(35,0); (55,0)**\crv{~*=<3pt>{.}(42,7)&(48,7)};
(10,0); (20,0)**\crv{(13,-3)&(17,-3)};
(20,0); (30,0)**\crv{(23,3)&(27,3)};
(30,0); (50,0)**\crv{(37,7)&(43,7)};
(15,0); (25,0)**\crv{~*=<3pt>{.}(18,-3)&(22,-3)};
(25,0); (45,0)**\crv{~*=<3pt>{.}(32,7)&(38,7)};
(0,2)*{d};
(55,-2)*{c};
(10,2)*{b};
(45,-2)*{a};
(70,18); (99, 18)**@{-}; (99,-11)**@{-}; (70,18)**@{-};
             (79,21)*{c}; (87,21)*{b}; (95,21)*{a};
(69,14)*{d}; (79,14)*{2}; (87,14)*{2}; (95,14)*{2};
(69,6)*{c};              (87,6)*{2}; (95,6)*{2};
(69,-2)*{b}; 	                       (95,-2)*{3};
\endxy
\]
\caption{Four functions that are pairwise standard with various
oscillation numbers.
Their signature is displayed on the right.
They generate a group of 
EA-class \(\omega^2+\omega\cdot 2+2\).}
\label{osc_fig}   \end{figure}

\subsection{Arithmetic on signatures and normal forms}\label{BasicOp:sec}

We now introduce arithmetic operations on the set of
signatures which allow us to readily specify
complex standard generating sets.
The following theorem of Cantor is important motivation.
(Ordinal arithmetic is reviewed in Section \ref{ordinal:sec}.)

\begin{thm}\label{CNF} \cite[\S 19 Theorem B]{MR0045635}
If \(\alpha\) is a nonzero ordinal, there is a unique sequence
\(\beta_0\ge \beta_1 \ge \cdots \ge \beta_n\) such that
\[
\alpha = \sum_{i \leq n} \omega^{\beta_i} =
\omega^{\beta_0} + \omega^{\beta_1} + \ldots + \omega^{\beta_n}.
\]
\end{thm}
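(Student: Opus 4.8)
The plan is to argue by transfinite induction on $\alpha$, establishing both existence and uniqueness of the representation, and to rely on three standard facts about ordinal arithmetic recalled in Section \ref{ordinal:sec}: the map $\gamma \mapsto \omega^{\gamma}$ is normal, hence continuous and unbounded; the division algorithm, so that for every ordinal $\beta > 0$ and every $\alpha$ there are $q,r$ with $\alpha = \beta\cdot q + r$ and $r < \beta$; and ordinal addition is left-cancellative and (weakly) monotone in its right argument.

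\emph{Existence.} Given a nonzero $\alpha$, I would let $\beta_0$ be the largest ordinal with $\omega^{\beta_0} \le \alpha$. Such an ordinal exists: $\{\gamma : \omega^{\gamma} \le \alpha\}$ is a nonempty set of ordinals, and by continuity of $\gamma \mapsto \omega^{\gamma}$ it contains its own supremum. Then $\omega^{\beta_0} \le \alpha < \omega^{\beta_0+1} = \omega^{\beta_0}\cdot\omega$, so the division algorithm yields $\alpha = \omega^{\beta_0}\cdot m + r$ with $r < \omega^{\beta_0}$, and the displayed bounds force $1 \le m < \omega$. Hence $\omega^{\beta_0}\cdot m = \omega^{\beta_0} + \cdots + \omega^{\beta_0}$ with $m$ summands. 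If $r = 0$ this is the desired expression; otherwise $r$ is nonzero with $r < \omega^{\beta_0} \le \alpha$, so by the inductive hypothesis $r = \omega^{\gamma_0} + \cdots + \omega^{\gamma_k}$ with $\gamma_0 \ge \cdots \ge \gamma_k$, and $r < \omega^{\beta_0}$ forces $\gamma_0 < \beta_0$ (otherwise $r \ge \omega^{\gamma_0} \ge \omega^{\beta_0}$). Concatenating the two exponent lists gives a nonincreasing sequence $\beta_0 \ge \cdots \ge \beta_0 \ge \gamma_0 \ge \cdots \ge \gamma_k$ whose associated sum is $\alpha$.

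\emph{Uniqueness.} Suppose $\omega^{\beta_0} + \cdots + \omega^{\beta_n} = \omega^{\delta_0} + \cdots + \omega^{\delta_m} = \alpha$ with both exponent sequences nonincreasing. The crucial estimate is that for any nonincreasing $\beta_0 \ge \cdots \ge \beta_n$,
\[
\omega^{\beta_0} \le \omega^{\beta_0} + \cdots + \omega^{\beta_n} \le \omega^{\beta_0}\cdot(n+1) < \omega^{\beta_0}\cdot\omega = \omega^{\beta_0+1},
\]
where the middle inequality follows by induction on $n$ from $\omega^{\beta_i} \le \omega^{\beta_0}$ and monotonicity of addition. Thus $\beta_0$ is pinned down intrinsically as the unique ordinal with $\omega^{\beta_0} \le \alpha < \omega^{\beta_0+1}$, and the same characterization holds for $\delta_0$, so $\beta_0 = \delta_0$. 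Left-cancelling $\omega^{\beta_0}$ leaves $\omega^{\beta_1} + \cdots + \omega^{\beta_n} = \omega^{\delta_1} + \cdots + \omega^{\delta_m}$ with common value $\alpha' < \alpha$ (the inequality strict because $\omega^{\beta_0} > 0$). If $\alpha' = 0$ then $n = m = 0$; if $\alpha' \ne 0$, the inductive hypothesis applied to $\alpha'$ gives $n = m$ and $\beta_i = \delta_i$ for $1 \le i \le n$, completing the induction.

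I do not expect a genuine obstacle here: this is a classical result (which is why the excerpt merely cites it), and the argument is a routine transfinite induction once the ordinal-arithmetic prerequisites are in hand. The only spots demanding mild care are the existence of a largest $\beta_0$ with $\omega^{\beta_0} \le \alpha$ (which is where normality/continuity of $\gamma \mapsto \omega^\gamma$ enters) and the bookkeeping in the uniqueness step --- in particular observing that left-cancellation yields a strictly smaller ordinal, so that the induction remains valid even when the tail of the expansion again begins with the exponent $\beta_0$.
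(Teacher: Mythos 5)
The paper does not prove this statement: it is Cantor's classical normal form theorem, quoted with a citation to \cite[\S 19 Theorem B]{MR0045635} and used as motivation for the arithmetic on signatures, so there is no in-paper argument to compare against. Your proof is the standard textbook argument and is essentially correct. The one point needing repair is the parenthetical justification of $\alpha' < \alpha$ in the uniqueness step: for ordinals, $\gamma > 0$ does \emph{not} imply $\gamma + \alpha' > \alpha'$ (e.g.\ $1 + \omega = \omega$), so ``the inequality is strict because $\omega^{\beta_0} > 0$'' is not a valid reason. The conclusion is nevertheless true in your situation, and the estimate you already displayed supplies the fix: $\alpha' = \omega^{\beta_1} + \cdots + \omega^{\beta_n} \le \omega^{\beta_0}\cdot n < \omega^{\beta_0}\cdot\omega$, and since $\gamma + \delta = \delta$ can only occur when $\gamma\cdot\omega \le \delta$, it follows that $\omega^{\beta_0} + \alpha' > \alpha'$. (Equivalently: if $\alpha' = \alpha$ then $\omega^{\beta_0} + \alpha = \alpha$, whence $\omega^{\beta_0+1} \le \alpha$, contradicting the maximality of $\beta_0$.) With that one-line correction the transfinite induction goes through.
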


If \(0 < \alpha<\epsilon_0\), each positive \(\beta_i\) is less than \(\alpha\)
and may be further expanded in the form
of Theorem \ref{CNF}.
Iterating the process, one obtains an  expression of
\(\alpha\) in terms of \(0\), \(+\) and exponentiation base
\(\omega\) which is known as the \emph{Cantor normal form} of \(\alpha\).
The summands in the Cantor normal form are
\emph{additively indecomposable} in that none is a sum of finitely many smaller ordinals.
It is a standard fact that a positive ordinal is additively indecomposable
precisely if it is of the form $\omega^\beta$ for some ordinal $\beta$, including $\beta=0$.

We will define analogs of these arithmetic operations on the set of signatures.
We start by defining a partial binary relation on \(\Sgen\).
We use \(\zero\) to denote the signature with empty base
and \(\one\) to denote the signature with singleton base;
notice that if \(Z \subseteq\HomeoI\) with \(|Z| = 1\), then \(\gen{Z} \cong \Z\).

\begin{defn}
If \(A,B,C\) are in \(\Sgen\), then \(A = B + C\) means:
\begin{itemize}

\item  \(A = B \cup C\) and \(b < c\) for all \(b \in B\) and \(c \in C\), and

\item  \(o(b,c) = 0\) for all \(b \in B\) and \(c \in C\).

\end{itemize}
\end{defn}

Notice that if \(b < c\) are standard functions, \(o(b,c) = 0\) is equivalent to \(b \ll c\).
Thus if \(A = B + C\), then \(\gen{A}\) is the direct sum of \(\gen{B}\) and \(\gen{C}\).
Observe that while \(A=B+C\) expresses a partial binary operation on \(\Sgen\),
it induces a well defined binary operation \(+\) on the set \(\Ssig\) of signatures (see Lemma \ref{sig_sum_welldef}).
Additionally, we will show in Section \ref{signature:sec}
that if \(\As \in \Ssig\), then the function \(\exp(\As)(a,b) = \As(a,b) + 1\) defined for \(a < b\) in \(A\)
is equivalent to the signature of some standard generating set.

If $\alpha > 0$, define $-1 + \alpha$ to be the unique $\beta$ such that $1+\beta = \alpha$
(note that $-1 + \alpha = \alpha$ if $\alpha$ is infinite).
It will  be useful to adopt the convention that \(\omega^{-1 + 0} = 0\) so that 
\(\zeta \mapsto \omega^{-1 + \zeta}\) describes an increasing function which fixes \(0\) and \(1\).
Theorem \ref{CNF} implies that for each $1 < \alpha < \epsilon_0$ there is a unique sequence
\(\beta_0 \geq \beta_1 \geq \ldots \geq \beta_n \geq 1\) such that
\[
\alpha = \sum_{i \leq n} \omega^{-1 + \beta_i}
\]
Notice in particular that $1 \leq \beta_i < \alpha$ for all $i$.
Iterating this process, one obtains a \emph{revised Cantor normal form} expressing $\alpha$ in terms of $1$ and the operations $+$ and $\zeta \mapsto \omega^{-1 + \zeta}$; $0$ and $1$ are fixed points of
$\zeta \mapsto \omega^{-1 + \zeta}$ and are defined to be
their own revised Cantor normal forms.
Let \(\Rs_\xi \in \Ssig\) denote the result of evaluating these revised Cantor normal forms
in the structure \((\Ssig,\zero,\one,+,\exp)\) in place of \((\epsilon_0,0,1,+,\zeta \mapsto \omega^{-1 + \zeta})\).
The family \(\Rsig\) of \emph{reduced block form signatures} is the collection
\(\{\Rs_\xi \mid \xi < \epsilon_0\}\).

The groups \(G_\xi\) from Theorem \ref{main_thm} are given by \(G_\xi := \gen{\Rs_\xi}\).
To summarize, we will eventually prove Theorems \ref{main_thm} and \ref{R_thm}, showing that:
\begin{itemize}

\item every \(\Sgen\)-generated group is biembeddable with some \(G_\xi\);

\item \(G_\xi\) embeds into \(G_\eta\) if and only if \(\xi < \eta < \epsilon_0\);

\item if \(\xi = \omega^{\omega^\alpha \cdot 2^n}\) for \(0 < \alpha < \epsilon_0\) and \(0 < n < \omega\),
then the EA-class of \(G_\xi\) is 
\(\omega \cdot \alpha + n + 2\).

\end{itemize}

In fact we can take the analogy with ordinal arithmetic further.
\begin{defn}
If \(A,B,C\) are in \(\Sgen\), then \(A = B * C\) means:
\begin{itemize}

\item  \(A = B \cup C\) and \(b < c\) for all \(b \in B\) and \(c \in C\), and

\item  \(o(b,c) = 1\) for all \(b \in B\) and \(c \in C\).

\end{itemize}
\end{defn}
The partial binary operation \(*\) on \(\Sgen\) allows us to define a partial binary operation \(*\)
at the level of signatures as in the case of \(+\).
Unlike \(+\), this operation is only partially defined since if \(A = B * C\), then \(o(c,c') \geq 1\) for all \(c < c'\) in \(C\);
see Section \ref{wreath:sec} for further explanation.
We also show in Section \ref{wreath:sec} that if \(A = B* C\) for nonempty \(B,C\),
then \(\gen{A} \cong \gen{B} \wr \gen{C}\).
We caution that \(\wr\) is a \emph{permutation wreath product}
and typically not the \emph{standard wreath product}
--- see Section \ref{wreath:subsec} for further details, including a precise
definition of which permutation wreath product is being used here.
Also, it can be shown that if \(1 < \beta \leq \alpha < \epsilon_0\) are additively indecomposable ordinals,
then \(\Rs_{\alpha \cdot \beta} \equiv \Rs_\alpha * \Rs_\beta\) with equality holding if \(\Rs_\alpha * \Rs_\beta \in \Rsig\).

Finally, if \(\As \in \Ssig\), define \(E(\As) := \exp(\exp((\As))\).
Unlike \(\exp\), \(E\) also comes from a natural binary relation on standard generating sets:
if \(A,B \in \Sgen\) then \(A = E(B)\) 
asserts that \(o(f,g) \geq 2\) for all \(f < g \in A\) and
\(B = \{f \cut \mid f \in A\}\).
The operations \(+\), \(*\), and \(E\) generate \(\Rsig\) in the following strong sense.

\begin{prop} \label{+*_form}
If \(\As \in \Rsig\), then exactly one of the following hold:
\begin{itemize}

\item \(\As = E(\Bs)\) for some \(\Bs \in \Rsig\);

\item there are \(\Bs,\Cs \ne \zero\) in \(\Rsig\) such that
\(\As = \Bs + \Cs\), in which case \(\gen{\As} = \gen{\Bs} + \gen{\Cs}\);

\item there are \(\Bs,\Cs \ne \zero\) in \(\Rsig\) such that
\(\As = \Bs * \Cs\), in which case \(\gen{\As}\) is the permutation wreath product
\(\gen{\Bs} \wr \gen{\Cs}\)

\end{itemize}
In particular, each \(\As \ne \zero\) in \(\Rsig\) can be uniquely
expressed as a term in the operations \(+\), \(*\), and \(E\) and
the constant \(\one\) so that \(E(\one)\) does not occur as a subterm.
\end{prop}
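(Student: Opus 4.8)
The plan is to push the statement down to the ordinals below $\epsilon_0$, either through the isomorphism of Theorem~\ref{R_thm} or, equivalently, through the very definition of $\Rs_\xi$ as the value of the canonical Cantor–normal–form term of $\xi$ with $\zeta\mapsto\omega^{-1+\zeta}$ replaced by $\exp$. The two group‑theoretic assertions then become quotations: $\gen{\As}=\gen{\Bs}+\gen{\Cs}$ is the remark made immediately after the definition of $+$, and $\gen{\As}\cong\gen{\Bs}\wr\gen{\Cs}$ is the wreath‑product theorem of Section~\ref{wreath:sec} (which, recall, needs $o(f,g)>0$ for all $f<g$ in the base).

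The first step is a little bookkeeping on oscillations. Straight from the definitions, if $\Bs,\Cs\neq\zero$ then $\Bs+\Cs$ has at least one pair of oscillation $0$ (a ``cross'' pair), $\Bs*\Cs$ has at least one pair of oscillation exactly $1$, and the remaining pairs of $\Bs*\Cs$ retain their oscillations from $\Bs$ and from $\Cs$; moreover $\exp$ raises every oscillation by $1$, so every pair of $\exp(\As)$ has oscillation $\geq 1$ when $\As\neq\zero$, and every pair of $E(\As)=\exp(\exp(\As))$ has oscillation $\geq 2$. Comparing base sizes and oscillation values also gives the identity $\exp(\Bs+\Cs)=\exp(\Bs)*\exp(\Cs)$ (together with $\exp(\zero)=\zero$ and $\exp(\one)=\one$); this is the device that converts a ``$+$ sitting under an $\exp$'' into a ``$*$'', and it drives the rest of the argument.

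Next I would induct on $\xi<\epsilon_0$, examining the outermost operation of its canonical term. If $\xi=1$ then $\Rs_\xi=\one$, whose base is a singleton, so only the $E$‑alternative can hold, witnessed by $\Bs=\one$. If the Cantor normal form of $\xi$ has at least two terms, write $\xi=\omega^{\beta_0}+\gamma$; since no ordinal absorption occurs the sum $\Rs_{\omega^{\beta_0}}+\Rs_\gamma$ stays in $\Rsig$ and equals $\Rs_\xi$, giving the $+$‑alternative together with $\gen{\Rs_\xi}=\gen{\Rs_{\omega^{\beta_0}}}+\gen{\Rs_\gamma}$. If $\xi=\omega^{\delta}$ is additively indecomposable with $\delta\geq 1$, use $\Rs_\xi=\exp(\Rs_{1+\delta})$: when $\delta$ is an infinite power of $\omega$, say $\delta=\omega^{\delta'}$ with $\delta'\geq 1$, then $1+\delta=\delta$ and $\Rs_\xi=\exp(\exp(\Rs_{1+\delta'}))=E(\Rs_{1+\delta'})$, landing in the $E$‑alternative with a strictly smaller reduced witness; otherwise $1+\delta$ has a normal form with at least two terms, so by the inductive analysis $\Rs_{1+\delta}$ is a nontrivial sum $\Rs_{\eta_1}+\Rs_{\eta_2}$, whence the displayed identity rewrites $\Rs_\xi$ as the nontrivial product $\exp(\Rs_{\eta_1})*\exp(\Rs_{\eta_2})$ of two reduced signatures (each strictly smaller), and since $\Rs_\xi$ is an $\exp$ all of its oscillations are $\geq 1$, so the wreath‑product theorem applies and yields $\gen{\Rs_\xi}\cong\gen{\exp(\Rs_{\eta_1})}\wr\gen{\exp(\Rs_{\eta_2})}$.

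Mutual exclusivity of the three alternatives for a fixed $\As$ then drops out of the oscillation facts: the minimum oscillation over the pairs of $\As$ is $\geq 2$ in the $E$‑case (or the base is a singleton), is $0$ in the $+$‑case, and is $1$ in the $*$‑case, so at most one can hold; with the case analysis above this gives ``exactly one''. The ``in particular'' clause is obtained by reading the alternatives as a recursion for the term of $\As$: $\one$ at a singleton base, $E$ applied to the recursively built term of the witness, and in the $+$‑ and $*$‑cases the decomposition made canonical by demanding that the left summand be additively indecomposable and the right factor of a product be ``$*$‑indecomposable'' (equivalently $\one$ or an $E$‑value), which pins each decomposition down uniquely; $E(\one)$ never occurs because $E(\one)=\one$ is already the constant. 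The step I expect to demand the most care is precisely this translation — verifying that substituting $\exp$ for $\omega^{-1+(\cdot)}$ in the (modified) Cantor normal form, then collapsing $\exp\circ\exp$ into $E$ and $\exp$ of a sum into a $*$‑product, reproduces exactly the three syntactic shapes of the statement with the oscillation profiles above, and in particular that at every stage an additively indecomposable $\xi=\omega^{\delta}$ falls in the $E$‑case exactly when $\delta$ is an infinite power of $\omega$ and in the $*$‑case otherwise.
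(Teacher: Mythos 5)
Your proof is correct. Note first that the paper never writes out a proof of this proposition: it is stated in the overview (Section \ref{BasicOp:sec}) and its justification is left implicit in the definitions of \(\Bsig\), \(\Rsig\) and \(\rho\) in Section \ref{Bcal_Rcal:subsec}, together with Lemma \ref{RhoIsIso}, the identity \(\exp(\As+\Bs)=\exp(\As)*\exp(\Bs)\), and the quoted facts about \(+\) and Proposition \ref{wreath_prop}. Your route through the ordinal side is the mirror image, under \(\rho\), of the more direct argument one could give purely syntactically: every nonzero \(\As\in\Rsig\) is by definition a \(\rho\)-decreasing sum \(\sum_{i<n}\exp(\Xs_i)\) with \(\Xs_i\in\Rsig\); if \(n\ge 2\) split off the first summand (\(+\)-case), and if \(n=1\) look at whether \(\Xs_0\) is \(\one\) or a single \(\exp\) (\(E\)-case, since \(\one=E(\one)\)) or itself a sum of \(m\ge 2\) blocks, in which case \(\exp(\Xs_0)=E(\Ys_0)*\exp\bigl(\sum_{1\le j<m}\exp(\Ys_j)\bigr)\) gives the \(*\)-case with both factors visibly in \(\Rsig\). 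Your case split on whether the exponent \(\delta\) of an additively indecomposable \(\xi=\omega^\delta\) is an infinite power of \(\omega\) is exactly the translation of this trichotomy through \(\rho\), and correctly accounts for the \(-1+\zeta\) shift (e.g.\ \(\Rs_\omega=\exp(\one+\one)=\one*\one\) lands in the \(*\)-case). One small imprecision: in arguing exclusivity you say the minimum oscillation of a \(*\)-decomposition is \(1\); in general a product \(\Bs*\Cs\) could inherit a \(0\)-oscillation pair from \(\Bs\) (such products simply fail to lie in \(\Bsig\)). What your argument actually needs — and what is true — is only that a nontrivial product always contains a cross pair of oscillation exactly \(1\) while every pair of an \(E\)-value has oscillation \(\ge 2\), and that a nontrivial product has \(\As(i,n)\ge 1\) for all \(i\) below the top element \(n\), so by Lemma \ref{top_positive} it cannot also be a nontrivial sum. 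With that rephrasing the exclusivity is airtight, and the rest of your write-up, including the conventions pinning down uniqueness of the term modulo associativity, is fine.
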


\noindent
For instance, the example in Figure \ref{osc_fig} is \(E(\one * \one+\one+\one)\).

In this paper, four systems are brought up and related: the ordinals below \(\epsilon_0\) with
their usual order, addition (and its restriction, successor), multiplication,
and exponentiation; standard generating sets with the operation
\(f \mapsto f^\circ\) and operations induced from operations on their signatures;
signatures of standard generating sets and operations induced on these from
\(f \mapsto f^\circ\) as well as operations \(+\), \(*\), and \(\exp\) to parallel those on ordinals;
and finally biembeddability classes of the groups generated by standard generating sets.

The technical details that arise in this paper derive from the multitude of
correspondences that must be established between the four systems.
Some correspondences require care or restriction since a general correspondence cannot work.
Ordinal sums ``correspond'' to direct sums of groups, but the
first is not commutative, while the second is.
Problems are avoided by restricting which direct sums are allowed.

The combinatorial relationships between the ordinals below \(\epsilon_0\), standard generating sets, and signatures
are quite strong; there is a weaker relationship between these three and the standard-generated groups.
For instance while the operations of \(+\) (and successor) and \(*\) carry over with some work
to the setting of groups, there is no group operation that we can envision that
corresponds to exponentiation.
In spite of this weaker relation to groups,
one important relation that can be expressed between the ordinals below \(\epsilon_0\) and
signatures in purely combinatorial terms, with no mention of any group
structures, is only established in this paper through group-theoretic methods. 
We know of no other argument.

The structures and basic properties are developed mostly in Sections \ref{osc:sec}
through \ref{wreath:sec}.
The technicalities of the relationships are mostly developed in
the lengthy Section \ref{arithmetic:sec} and wrapped up in Section \ref{main:sec}.
The ordering on signatures and its relationship to group embeddings is fully developed in Section \ref{main:sec}.
Section \ref{nonlinear:sec} is almost pure group theory.

\section{Intermission}

\label{prelim:sec}

\subsection{Background and conventions}

We adopt the convention that the natural numbers include \(0\)
and in particular all counting starts at \(0\).  We use
\(\Z\) to denote the set of integers.  Unless otherwise stated,
\(i,j,k,l,m,n\) range over the natural numbers and \(p,q,r\)
range over the integers. 
For instance we write
\(i < n\) to mean that \(i\) is a natural number less than \(n\).

If \(G\) and \(H\) are two groups, we use \(G + H\) to denote
their direct sum.
Even though we work primarily with nonabelian groups,
this additive notation and terminology fits better with the
correspondence we develop between 
\(\Sfrak\) and the ordinals below \(\epsilon_0\).

Given \(f,g\in \HomeoI\), we denote by \(f^g\) the
conjugate \(g^{-1}fg\) and remark that under this notation
\(\supt(f^g)=\supt(f)g\).
Given
\(X\subseteq I\) we write \(Xf\) for 
\(
\left\{xf\mid x\in X\right\}.
\)

In this paper we think of \(F\) as being a subgroup of
\(\PLoI\), although there is typically no need to fix a
specific model.  Representing \(F\) in \(\PLoI\) gives us access to
results about subgroups of \(\PLoI\) (specifically results about
centralizers).  This representation also guarantees that any element
\(f\) of \(F\) has only finitely many components of its support and
hence is a product of the bumps of \(f\).

\subsection{Fast generating sets}\label{FastGenSec}

We now give more details to some of the claims in Section
\ref{FastAttrSec} and more information from
\cite{fast_gen}.
If \(S \subseteq \HomeoI\) is a fast set of
\Marked{} functions, 
then
the \emph{dynamical diagram} of \(S\) is the edge labeled, ordered,
directed graph \(D_S\) defined as follows: \begin{itemize}

\item the vertices of \(D_S\) are the feet of \(S\) with the order
induced from \(I\);

\item the directed edges of \(D_S\) are the bumps \(a\) which occur
in \(S\) with the endpoints of \(a\) being the feet of \(a\);

\item positive bumps are directed to the right and negative bumps
are directed to the left;

\item the label of a directed edge \(a\) in \(D_S\) is the unique 
\(f\in S\) for which \(a\) is a bump of \(f\).

\end{itemize}
Note that such ordered directed graphs do not have multiple edges.
We sometimes refer to the  constituents of a dynamical
diagram --- the vertices, the directed edges, the labels --- in
terms of the objects they are intended to represent --- the feet,
the bumps, the functions in \(S\).

For two fast sets of \Marked{} functions \(S_0\) and \(S_1\), an
isomorphism from \(D_{S_0}\) to \(D_{S_1}\) is an isomorphism
\(\theta:D_{S_0}\rightarrow D_{S_1}\) of ordered directed graphs
such that for bumps \(a\) and \(b\) which occur in \(S_0\), the
labels of \(a\) and \(b\) are equal if and only if the labels of
\(\theta(a)\) and \(\theta(b)\) are equal.  Note that the choice of
\Marking{} affects the dynamical diagram but not its isomorphism type.
Also observe that the isomorphism \(\theta\) induces a well defined
order preserving bijection from \(S_0\) to \(S_1\).  Lastly, there
is at most one isomorphism between any two dynamical diagrams.

If \(A\) is a totally ordered finite subset of a group, then the pair
\((\gen{A},A)\) is a \emph{marked group} that is \emph{marked by}
\(A\).  A \emph{marked isomorphism} \(\theta:(\gen{A},A)\rightarrow
(\gen{B},B)\) between marked groups is an isomorphism from
\(\gen{A}\) to \(\gen{B}\) that restricts to an order preserving
bijection from \(A\) to \(B\).  The next theorem asserts that the
dynamical diagram determines the marked isomorphism type of
\(\gen{S}\) whenever \(S \subseteq \HomeoI\) is a fast set of
\Marked{} functions.  (The two uses of ``marked'' in the previous
sentence are not the same.)

\begin{thm} \cite{fast_gen} \label{comb_to_iso} If \(S_0\) and
\(S_1\) are fast sets of \Marked{} functions in \(\HomeoI\), and the
dynamical diagrams of \(S_0\) and \(S_1\) are isomorphic, then the
order preserving bijection from \(S_0\) to \(S_1\) extends to an
isomorphism \(\gen{S_0} \cong \gen{S_1}\).  \end{thm}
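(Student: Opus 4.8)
The plan is to prove Theorem~\ref{comb_to_iso} in two stages: first reduce it to the case in which $S_0$ and $S_1$ consist of single bumps, and then settle that case by a ping-pong argument driven by the geometry of the feet.

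\textbf{Reduction to bumps.} If $S$ is a fast set of \Marked{} functions, then its set of bumps $B_S$ is itself fast, and $D_S$ is exactly $D_{B_S}$ together with the labeling of each edge by the element of $S$ containing it. An isomorphism $\theta\colon D_{S_0}\to D_{S_1}$ therefore restricts to an isomorphism of the underlying diagrams $D_{B_{S_0}}\cong D_{B_{S_1}}$ and at the same time induces an order preserving bijection $S_0\to S_1$ which is compatible with it through the labels. Granting the bump case, $\theta$ extends to an isomorphism $\phi\colon\gen{B_{S_0}}\to\gen{B_{S_1}}$. Since the bumps of a fixed $f\in S_0$ have pairwise disjoint supports, $f$ is their product; hence $\phi(f)$ is the product of their $\theta$-images, which are precisely the bumps of $\theta(f)$, so $\phi(f)=\theta(f)$. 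Thus $\phi$ carries $\gen{S_0}$ isomorphically onto $\gen{S_1}$ and restricts to the order preserving bijection $S_0\to S_1$, which is what is required.

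\textbf{The bump case.} For a fast set $B$ of bumps the crucial local fact is that a positive bump $a$ with feet $F_a^-=(u,s_a)$ and $F_a^+=[t_a,v)$ satisfies $(\supt(a)\setminus F_a^-)a^n\subseteq F_a^+$ and $(\supt(a)\setminus F_a^+)a^{-n}\subseteq F_a^-$ for every $n\ge 1$, while $a$ fixes $I\setminus\supt(a)$ pointwise (symmetrically for negative bumps). Because fastness makes all feet of all bumps pairwise disjoint, this is a ping-pong setup on $I$ with the feet as the attracting and repelling regions. I would use it to produce a normal form for $\gen{B}$: each nontrivial element has a canonical reduced expression, and whether a word in $B^{\pm1}$ evaluates to the identity is decidable from $D_B$ alone, by tracking syllable by syllable which foot (or which fixed interval) a well-chosen test point is driven into and checking that a combinatorially reduced word moves that point. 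Equivalently, one reads off from $D_B$ a presentation of $\gen{B}$ whose relations all assert that two conjugates of generators commute because the diagram forces their supports to be disjoint; since $D_{B_0}\cong D_{B_1}$ these presentations coincide, and the bump bijection extends to $\gen{B_0}\cong\gen{B_1}$.

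\textbf{The main obstacle} is the ping-pong step when supports are nested or interleaved: if $a\sqsubset b$ then high powers of $b$ are invisible to $a$, the classical table-tennis conclusion (a free product) is false, and in fact $\gen{a,b}\cong\Z\wr\Z$. Dealing with this requires grouping the bumps into transition chains according to the nesting and disjointness pattern of their supports and running the ping-pong by induction on nesting depth, so that each chain contributes a lamplighter-type factor whose internal commutation relations are exactly those visible in the order of the feet in $D_B$. The remaining ingredients --- the reduction to bumps, the verification that the induced maps are order preserving, and the bookkeeping of orbits of feet under $\gen{B}$ --- are routine once this combinatorial core is in hand.
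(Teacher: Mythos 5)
First, a point of reference: the paper does not prove Theorem \ref{comb_to_iso} at all --- it is imported verbatim from \cite{fast_gen} --- so there is no internal proof to compare against. Judged on its own terms, your reduction to the case of single bumps is correct and essentially cost-free: each \(f \in S_0\) is the product of its pairwise-disjointly-supported bumps, the diagram isomorphism matches the bumps of \(f\) with the bumps of \(\theta(f)\) via the labels, and so any isomorphism of the bump groups extending the bump bijection restricts to an isomorphism of \(\gen{S_0}\) onto \(\gen{S_1}\). The difficulty is that everything after this reduction is where the theorem actually lives, and your treatment of the bump case asserts the conclusion rather than arguing for it. The claims that ``whether a word in \(B^{\pm 1}\) evaluates to the identity is decidable from \(D_B\) alone'' by tracking a test point through the feet, and that one can ``read off from \(D_B\)'' a presentation all of whose relations are commutations of conjugates of generators, are jointly equivalent to the statement being proved. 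Constructing the normal form and, crucially, proving that the tracked combinatorial data faithfully detects nontriviality of a reduced word is the substance of the (lengthy) argument in \cite{fast_gen}; it cannot be deferred as routine bookkeeping.

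Second, the structural picture you offer to repair the failure of classical ping-pong is wrong in a way that matters. You propose grouping the bumps into transition chains and claim that ``each chain contributes a lamplighter-type factor.'' That is the right picture only for nested or disjoint supports. A geometrically fast transition chain of two or more bumps whose supports genuinely overlap (feet disjoint, neither support contained in the other) generates a copy of Thompson's group \(F\), not a wreath product of copies of \(\Z\) --- this is one of the main points of \cite{fast_gen} and of the ubiquity phenomenon \cite{BrinU} that the present paper's notion of standard pair is specifically designed to avoid. Since Theorem \ref{comb_to_iso} is stated (and used) for arbitrary fast sets, an induction on nesting depth that only ever produces lamplighter-type factors cannot be correct. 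Repairing the sketch would require the full history/normal-form machinery of \cite{fast_gen}, at which point one is reproving that paper rather than outlining a proof of this theorem.
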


Notice that if \(A\) is a fast set of marked functions and
\(B\) is obtained by replacing some element \(a\) of \(A\) with 
one of its positive powers, then \(B\) is also fast and the dynamical diagram
of \(B\) is isomorphic to that of \(A\).
In particular \(\gen{B} \cong \gen{A}\).
This immediate consequence of Theorem \ref{comb_to_iso} will
implicitly play an important role in our arguments starting in Section \ref{inflation:sec}.

We also need the following proposition which is closely related
to the results of \cite{fast_gen}.

\begin{prop} \label{faithful_prop}
Suppose that \(A\) is a finite geometrically fast set of bumps and \(X
\subseteq I\) intersects each orbital of \(A\).  If \(g \in
\gen{A}\) is not the identity, then there is an \(x \in X \gen{A}\)
such that \(xg \ne x\).
\end{prop}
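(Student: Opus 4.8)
The plan is to exploit the ``geometrically fast'' hypothesis through its characteristic ping-pong dynamics. The key structural fact from \cite{fast_gen} is that a finite fast set of bumps $A$ acts on $I$ in a way governed by its dynamical diagram, and in particular that the feet of distinct bumps are disjoint. I would first reduce to the case where $g$ is written as a reduced word $a_k^{n_k} \cdots a_1^{n_1}$ in the generators $a_i \in A$ with each $n_i \neq 0$ and $a_{i+1} \neq a_i$; since $\gen{A}$ acts faithfully (this itself follows from the fast hypothesis via \cite{fast_gen}), a nontrivial $g$ admits such an expression, though one must be slightly careful because the $a_i$ need not be free — they only ping-pong in the appropriate sense.

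First I would establish the core dynamical lemma: if $a \in A$ is a bump with orbital $J$, marked point $s_a$, and feet $(u, s_a)$ and $[t_a, v)$, then for any point $y$ lying in $J$ but \emph{outside} the open foot on the side toward which $a$ pushes, the iterate $y a^n$ (for $n \neq 0$ of the correct sign, or indeed any $n\neq 0$) lands strictly inside the corresponding closed foot of $a$, and moreover stays in $J$. The disjointness of feet across different bumps then gives the ping-pong alternative: once a point has been pushed into the foot of $a_i$, applying $a_{i+1} \neq a_i$ cannot push it back out of $J_{a_i}$ in a way that returns it to where it started, because the feet of $a_{i+1}$ miss the feet of $a_i$. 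I would make this precise by tracking, for a fixed starting point $x$, the sequence of partial products $x, x a_1^{n_1}, x a_2^{n_2}a_1^{n_1}, \dots$ and showing by induction on the word length that if $x$ is chosen to lie in the orbital $J_{a_1}$ of the innermost letter but outside all feet involved, then the partial products march monotonically and the final point $xg$ is separated from $x$.

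The role of the hypothesis ``$X$ intersects each orbital of $A$'' and of replacing $X$ by the orbit $X\gen{A}$ is exactly to guarantee we can find such a good starting point $x$: pick any $x_0 \in X$ in the orbital $J_{a_1}$ of the innermost letter $a_1$ of the reduced word for $g$; if $x_0$ happens to lie in one of the finitely many feet that would obstruct the ping-pong argument, apply a suitable element of $\gen{A}$ (e.g.\ a power of $a_1$) to move it to a harmless position while staying in $J_{a_1}$ and hence in $X\gen{A}$. Then run the dynamical lemma with this $x$ to conclude $xg \neq x$.

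The main obstacle I anticipate is that the generators of $A$ are bumps but need not generate a free group, so a naive ping-pong on a partition of $I$ into disjoint ``attracting'' regions is not available verbatim — one genuinely has to use the finer geometry of feet (not just orbitals) and the monotonicity of a single bump's action within its orbital. Handling a letter $a_i$ that appears with both signs of exponent across the word, and ensuring the ``no immediate backtracking'' property $a_{i+1}\neq a_i$ suffices to prevent cancellation of the net displacement, will require care: the cleanest route is probably to show that each step moves the running point strictly closer to (and eventually into) the closed foot of the current letter, a foot which the next letter's action cannot fully undo because its own feet are disjoint. Once that monotone-progress invariant is set up correctly, the conclusion is immediate. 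This is, in essence, the argument underlying Theorem \ref{comb_to_iso}, specialized to detect nontriviality of a single element, so I would lean on \cite{fast_gen} for the dynamical bookkeeping wherever possible rather than reproving it.
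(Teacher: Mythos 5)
Your overall instinct---that the content of the proposition is the ping--pong dynamics of feet---is right, but the argument you sketch has a genuine gap at its core. Your engine is: write \(g\) as a reduced word \(a_k^{n_k}\cdots a_1^{n_1}\) with no immediate backtracking, start at a point in the orbital of the innermost letter, and show the partial products make monotone progress. Since \(\gen{A}\) is typically not free (if \(a,b\in A\) have disjoint supports, \(a^{-1}b^{-1}ab\) is a reduced word representing the identity), no invariant of the form ``every reduced word displaces a well-chosen point'' can hold, and your sketch supplies no mechanism for distinguishing the reduced words that are trivial in \(\gen{A}\) from those that are not. That distinction is exactly the hard content of the normal-form/``history'' machinery of \cite{fast_gen}, so ``leaning on \cite{fast_gen} for the bookkeeping'' at this point amounts to citing the very faithfulness statement you are trying to establish. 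Your core dynamical lemma is also misstated: for a positive bump \(a\) with feet \((u,s_a)\) and \([t_a,v)\), a point of \(J\) to the left of \(s_a\) need not land in \([t_a,v)\) after applying \(a\) once; the correct table is that positive powers map \(J\setminus(u,s_a)\) into \([t_a,v)\) and negative powers map \(J\setminus[t_a,v)\) into \((u,s_a)\).

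Even granting the \cite{fast_gen} faithfulness result, it certifies that \(g\) moves a point in the orbit of the \emph{markers}, not a point of \(X\gen{A}\), and your proposal does not bridge that gap. The paper does so in three short steps: the closure of \(X\gen{A}\) contains all transition points of \(A\); Proposition 4.3 of \cite{fast_gen} then produces a new marking witnessing fastness with every marker in \(\overline{X\gen{A}}\), so the \cite{fast_gen} result yields a \(t\in\overline{X\gen{A}}\) with \(tg\ne t\); and continuity of \(g\) converts this into an \(x\in X\gen{A}\) with \(xg\ne x\). Your substitute---nudge \(x_0\) by a power of \(a_1\) to a ``harmless position''---never identifies what harmless means for an arbitrary nontrivial \(g\), and it misses the closure-plus-continuity step, which is needed precisely because the good starting points (the markers) may only be limits of points of \(X\gen{A}\) rather than members of it.
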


\begin{proof} Observe that the closure of \(X \gen{A}\) contains
the transition points of \(A\).  The proof of Proposition 4.3 of
\cite{fast_gen} yields a \Marking{} of \(A\) which witnesses that it
is geometrically fast and has the property that every \Marker{} is in
the closure of \(X\gen{A}\); let \(M\) denote the set of these
\Markers{}.  From \cite{fast_gen}, if \(g \in \gen{A}\) is not the
identity, then there is a 
\(t \in M \gen{A} \subseteq \overline{X \gen{A}}\) such that \(tg \ne t\).
Continuity of \(g\) implies that there is an \(x \in X \gen{A}\) such that \(xg \ne x\).
\end{proof}

It is convenient to develop some conventions for drawing dynamical
diagrams.  First, we arrange the vertices horizontally from left to
right in increasing order.  We draw right directed edges as
over-arcs and left directed edges as underarcs, suppressing the
arrows.  If \(f\) is a generator and a right foot \(J\) of \(f\) is
immediately followed by a left foot \(J'\) of \(f\), then the pair
of vertices \(\{J,J'\}\) is contracted to a single vertex when
drawing the diagram.  This has the effect of simplifying the
dynamical diagram visually.  It also has the feature that if \(f \in
S\) has connected e-support, then the edges with label \(f\)
form a connected component of the contracted diagram.  Thus it is
sufficient to label only one bump of each such component.

This graphical representation of \(D_S\) can be derived from the
graphs of the elements of \(S\) drawn as in Section \ref{anat:sec}. 
The drawing 
\[
\xy
(0,0); (16,0)**\crv{(4,-4)&(12,-4)}; (32,0)*\crv{(20,-4)&(28,-4)};
(48,0)**\crv{(36,4)&(44,4)}; (64,0)**\crv{(52,4)&(60,4)}; (0,2.5)*{g_4};
(8,0);  (24,0)*\crv{~*=<3pt>{.}(12,-4)&(20,-4)};
(40,0)**\crv{~*=<3pt>{.}(28,4)&(36,4)};
(56,0)**\crv{~*=<3pt>{.}(44,4)&(52,4)};
(8,3)*{f_4};
\endxy
\]
is the graph of \(\{f_4, g_4\}\) from Figure
\ref{Gt4Gt5_fig} as drawn in  Section \ref{anat:sec}
and it is also a drawing of the dynamical diagram for \(\{f_4,
g_4\}\).  In general, the dynamical diagram of a fast set of standard
functions \(S\) is a sketch of the graphs of the functions in \(S\).

Suppose that \(S \subseteq \HomeoI\) is a finite fast set of \Marked{}
functions with connected e-supports.  A bump \(b\)
of \(S\) is \emph{isolated} in \(S\) if its support contains no
transition points of \(S\).
If \(E\) is a set of isolated bumps of \(S\) and for each \(f \in S\) there is a bump
of \(f\) which is not in \(E\), then
we say that \(E\) is an \emph{extraneous set of bumps} of \(S\). 
We need a result of \cite{fast_gen} which
says that extraneous sets of bumps can be \emph{excised} without affecting
the marked isomorphism type of \(S\).
This is made precise as follows.
For \(g \in \HomeoI\) and \(E\) a set of bumps
(not necessarily bumps of \(g\)), we define \(g/E\in\HomeoI\) to be
the function which agrees with \(g\) on
\[
I\setminus\bigcup\{\supt(b) \mid b\in E\ \text{and}\ b\ \text{is a bump of }\
g\} 
\]
and is the identity elsewhere.
We extend the definition above to a set \(S \subseteq \HomeoI\) by:
\[
S/E := \{g/E \mid g \in S\}.
\]
The next theorem is a special case of Theorem \(9.1\) of \cite{fast_gen}.

\begin{thm}\label{excision} \cite{fast_gen} If \(S \subseteq \HomeoI\)
is a fast set of \Marked{} functions with connected e-support 
and \(E\) is an extraneous set of
bumps of \(S\), then the map \(g \mapsto g/E\) extends to an
isomorphism from \(\gen{S}\) to \(\gen{S/E}\).  \end{thm}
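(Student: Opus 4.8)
The plan is to deduce this from the general excision result, Theorem 9.1 of \cite{fast_gen}, by checking that our situation is a special case, and to explain the mechanism that makes it work. First I would record the routine structural facts. Since the feet of $S/E$ form a subset of the feet of $S$ and no bump occurs in more than one element of $S$, the set $S/E$ is again fast. Since $E$ is extraneous, each $f \in S$ retains a bump outside $E$, so no generator is collapsed to the identity and $g \mapsto g/E$ is a well-defined bijection from $S$ onto $S/E$; thus it suffices to show that this bijection extends to a group isomorphism. The essential point to isolate is that an isolated bump carries no combinatorial information: because the support of each $b \in E$ contains no transition point of $S$, it lies inside a single orbital of every other generator or is disjoint from that generator's support, so the members of $E$ are exactly the kind of combinatorially inert bumps that the cited theorem permits one to delete.

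Conceptually the isomorphism is realized by an equivariant collapse, and I would first reduce to the case $|E| = 1$ by induction on $|E|$. If $b \in E$ is a single isolated bump, then deleting it only removes transition points, so the remaining members of $E$ stay isolated, and each $f$ keeps its reserved bump; hence $E \setminus \{b\}$ is extraneous for $S/\{b\}$ and the isomorphisms compose. For a single isolated bump $b$ of some $g_0 \in S$, I would construct a continuous surjection $\pi : I \to I$ that collapses each interval in the $\gen{S}$-orbit of $\overline{\supt(b)}$ to a point and is otherwise injective. Isolation of $b$ together with fastness of $S$ should guarantee that this orbit is a pairwise disjoint family of intervals which the action permutes as blocks, so that $\pi$ is well defined and intertwines the two actions in the sense that $(t\,g)\pi = (t\pi)(g/\{b\})$ for every generator $g$. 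This intertwining relation is precisely the assertion that $g \mapsto g/\{b\}$ extends to a homomorphism $\gen{S} \to \gen{S/\{b\}}$.

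Finally I would establish bijectivity. Surjectivity is immediate, as the homomorphism hits each generator $g/\{b\}$ of $\gen{S/\{b\}}$. For injectivity I would argue that a nontrivial $w \in \gen{S}$ in the kernel would act trivially on the image of $\pi$, and hence would move points only within the collapsed intervals; combined with Proposition \ref{faithful_prop} applied to the bumps of $S/\{b\}$, whose orbitals meet a set transverse to the collapse, this forces $w$ to be trivial. The hard part will be the careful construction of $\pi$ and the verification that the $\gen{S}$-orbit of $\overline{\supt(b)}$ is a disjoint family permuted blockwise by the action; this is the one place where the isolation of $b$ and the fastness of $S$ must genuinely be combined, and it is exactly the content abstracted into Theorem 9.1 of \cite{fast_gen}. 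Once that geometric fact is in hand, the homomorphism property and the faithfulness argument are routine.
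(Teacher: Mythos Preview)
The paper does not prove this theorem itself; it simply records it as a special case of Theorem~9.1 of \cite{fast_gen}. Your proposal does the same at its core, so the approaches agree.

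Your additional collapse-map sketch goes beyond what the paper offers and is a reasonable heuristic, but two points deserve caution if you intend it as more than motivation. First, the claim that the $\gen{S}$-translates of $\overline{\supt(b)}$ are pairwise disjoint is plausible---fastness forces $\supt(b)$ to sit strictly between the feet of any generator whose orbital contains it, so a single generator does push $\supt(b)$ off itself---but for arbitrary words in $\gen{S}$ this disjointness is essentially the full ping-pong analysis of \cite{fast_gen}, not a routine check. Second, your injectivity step via Proposition~\ref{faithful_prop} is underspecified: that proposition requires a set $X$ meeting every orbital of the bumps of $S$, including $\supt(b)$ itself, yet your argument needs the relevant orbit to avoid the interiors of the collapsed intervals, and these two demands are in tension. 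The machinery in \cite{fast_gen} (as reflected in the paper's use of point histories in Lemma~\ref{ComplIntLem}) is more combinatorial, showing directly that a word is trivial in $\gen{S}$ if and only if the corresponding word is trivial in $\gen{S/E}$, rather than constructing a topological semiconjugacy.
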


\subsection{Ordinals and their arithmetic}\label{ordinal:sec}

Recall that an \emph{ordinal} is the isomorphism type of a well-ordered set.
If \(\alpha\) and \(\beta\) are ordinals, then
\(\alpha < \beta\) is defined to mean that there is well-order of
type \(\alpha\) which is a proper initial part of a well-ordering of
type \(\beta\).  For any two ordinals \(\alpha\) and \(\beta\),
precisely one of the following is true: \(\alpha < \beta\), \(\beta
< \alpha\), or \(\alpha = \beta\).  We adopt von Neumann's convention that an
ordinal is the set of its predecessors and that \(\alpha < \beta\) means \(\alpha \in \beta\).
The least ordinal is \(0 :=\emptyset\) and the least infinite ordinal
is \(\omega\), which can be thought of as coinciding with the
natural numbers.  If \(A\) is a set of ordinals, then there is
always a least ordinal \(\sup(A):= \bigcup A\) which is an upper bound for \(A\). 
If \(\alpha\) is an ordinal, then \(\alpha+1:= \alpha \cup \{\alpha\}\) is
the least ordinal greater than \(\alpha\).  Ordinals of the form
\(\alpha + 1\) are said to be \emph{successor ordinals}; all other
nonzero ordinals are said to be \emph{limit ordinals}.

It is possible to extend the usual arithmetic operations on the finite ordinals to all ordinals as follows:
\[
\alpha + \beta :=
\begin{cases}
\alpha & \textrm{ if } \beta =0 \\
(\alpha + \gamma) + 1 & \textrm{ if } \beta = \gamma + 1 \\
\sup_{\gamma < \beta} (\alpha + \gamma) & \textrm{ if } \beta \textrm{ is limit}
\end{cases}
\]
\[
\alpha \cdot \beta :=
\begin{cases}
0 & \textrm{ if } \beta =0 \\
(\alpha \cdot \gamma) + \alpha & \textrm{ if } \beta = \gamma + 1 \\
\sup_{\gamma < \beta} (\alpha \cdot \gamma) & \textrm{ if } \beta \textrm{ is limit}
\end{cases}
\]
\[
\alpha^\beta :=
\begin{cases}
1 & \textrm{ if } \beta =0  \\ 
(\alpha^\gamma) \cdot \alpha & \textrm{ if } \beta = \gamma + 1 \\
\sup_{\gamma < \beta} (\alpha^\gamma) & \textrm{ if } \beta \textrm{ is limit}
\end{cases}
\]
The reader is cautioned that while \(+\) and \(\cdot\) are associative,
neither \(+\) nor \(\cdot\) are commutative.
For instance: 
\[
2 \cdot \omega = \sup_{n \in \omega}\ ( 2 \cdot n) = \omega < \omega \cdot 2  = \omega+\omega.
\]
Further, ordinal addition is not right cancellative, but is left
cancellative: \(\alpha+\beta = \alpha+\gamma\) implies \(\beta=\gamma\).
We adopt the standard binding conventions from ordinary
arithmetic (e.g.  \(\alpha \cdot \beta + \gamma = (\alpha \cdot
\beta) + \gamma\)) and associate exponentiation to the right (as one
does in ordinary arithmetic): \(\alpha^{\beta^\gamma} =
\alpha^{(\beta^\gamma)}\), which typically does not coincide with
\((\alpha^\beta)^\gamma = \alpha^{\beta \cdot \gamma}\).  

The ordinal \(\epsilon_0\) is the least ordinal solution to \(\omega^x = x\).
It also has the property that if \(\alpha,\beta < \epsilon_0\), then
\(\alpha+\beta\), \(\alpha\cdot \beta\), and \(\alpha^\beta\)
are all less than \(\epsilon_0\).
Further details on
ordinal arithmetic can be found in article II of \cite{MR0045635}.

\subsection{Elementary amenable groups}\label{EAClass:sec}

Consider the smallest class \(EG\) containing the finite and
abelian groups and closed under the following operations:
\begin{enumerate}

\item \label{extension}
taking an extension of one group 
by another group; 

\item \label{directed_union}
taking a directed union of a set of groups; 

\item \label{subgroup}
taking a subgroup of a group; 

\item \label{quotient}
taking a quotient of a group by a normal subgroup;

\end{enumerate}
This class of groups was first considered by Day \cite[P. 520]{MR19:1067c} under the name
\emph{elementary groups};
it is more common in the current literature to refer to them as the \emph{elementary amenable groups}.
This class was later studied by Chou \cite{Chou} who worked out much of the basic theory and showed that the operations
of extension and directed union are all that are needed to generate \(EG\).
Chou stratified \(EG\) by subclasses \(EG_\alpha\) with \(\alpha\) from the ordinals by setting:
\begin{itemize}

\item \(EG_0\) to be the class of all abelian and finite groups;

\item \(EG_{\alpha+1}\) to be those groups obtainable from
groups in \(EG_\alpha\) by a single application of operation (\ref{extension}) or
(\ref{directed_union}) above;

\item
\(EG_\alpha :=\bigcup_{\beta<\alpha}EG_\beta\) if \(\alpha\) is a limit ordinal.

\end{itemize}
It is proven in \cite{Chou} that each \(EG_\alpha\) is closed under
taking subgroups and quotients and that
every element of \(EG\) is in \(EG_\alpha\) for some ordinal \(\alpha\).

For \(G\in EG\), it has become customary to define the
\emph{elementary amenability class} \(\EA(G)\) of \(G\) as the smallest
\(\alpha\) for which \(G\in EG_\alpha\).  It follows from the
definitions that for every limit \(\alpha\) there is no \(G\in EG\)
with \(\EA(G)=\alpha\), and there is no finitely generated \(G\in EG\)
with \(\EA(G)=\alpha+1\).
From Chou's result that the \(EG_\alpha\) are closed under
taking subgroups and quotients, it follows that for \(G\) and \(H\) in
\(EG\), if \(G\) is either a subgroup or a quotient of \(H\), then
\(\EA(G)\le \EA(H)\).

\subsection{Wreath products}
\label{wreath:subsec}

Given a group of permutations \(G\) of \(X\) and a group of
permutations \(H\) of \(Y\), the 1937 article
\cite{polya:wreath} defines \(G\wr H\), the wreath product of \(G\)
and \(H\), as a group of permutations of \(X\times Y\).
Since the 1964 paper \cite{neumann:wreath}, the \emph{standard wreath product}
obtained from \(G\) and \(H\) by regarding \(H\) as
permuting itself by (e.g.) right multiplication has become standard
and ``wreath product'' often means ``standard wreath product'';
``permutation wreath product'' has been used for the older notion.
Our focus is primarily on permutation wreath products in this
article and we proceed with the definition.

Given pairs \((G,X)\) and \((H,Y)\) where \(G\) is a group acting on
\(X\) and \(H\) is a group acting on \(Y\),  we write
\(G\wr H\), the \emph{permutation wreath product}
of \(G\) and \(H\) as shorthand for the pair
\((G\wr H, X\times Y)\) where the group and the action are defined below.
We regard \(G^Y\), the set of functions from \(Y\) to \(G\), as a
group by multiplying pointwise.  With 1 the identity of \(G\) and
for \(\phi\in G^Y\), we use \(\supt(\phi)\) to denote \(\{y\in
Y\mid \phi(y)\ne 1\}\), the \emph{support} of \(\phi\).
We use \(\sum_{Y}G\) to denote the direct sum of copies of \(G\)
indexed over \(Y\) which can be viewed concretely as
the group of finitely supported elements of \(G^Y\).

The group \(H\) also acts on 
\(\sum_Y G\) on the
right by \(\phi^h(y) = \phi(yh^{-1})\).
We use this action to form the semidirect product \(\sum_Y G\rtimes H\)
on the set \((\sum_Y G)\times H\) with multiplication
\((\phi,h)(\theta,j) = (\phi \theta^{h^{-1}}, hj)\).
This semidirect product is the \emph{wreath product}
of \(G\) and \(H\) and is denoted \(G\wr H\).
The action of \(G\wr H\) on \(X\times Y\) is given by
\((x,y)(\phi,h) = (x\phi(y), yh)\).

In our setting, wreath products arise as in the next lemma.
A proof is given in the proof of \cite[Proposition 2.5]{BrinEG}.
The following definitions make the lemma easier to state.
Let \(H\) be a group acting on a set \(A\), let \(Y\) be a subset of \(A\),
and let \(\Ycal = \{Yh\mid h\in H\}\);
note that \(H\) also acts on \(\Ycal\).
We say that the action of \(H\) on \(\Ycal\) is
\emph{consistent} to mean that for all \(h\in H\) if
\(Yh\cap Y\ne\emptyset\), then \(h\) fixes \(Y\) pointwise.
We say that the action of \(H\) on \(\Ycal\) is \emph{faithful}
to mean that the only element of \(H\)
that fixes all elements of \(\Ycal\) is the identity of \(H\).
The lemma is now stated as follows (see Figure \ref{WreathDiag}):

\begin{figure}
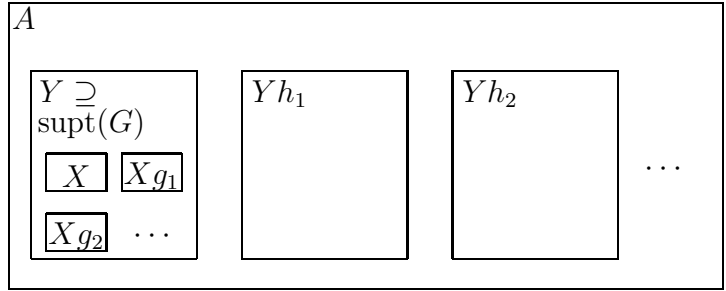

\[
\xy
(11,11); (106,11)**@{-}; (106,49)**@{-}; (11,49)**@{-};
(11,11)**@{-};
(13,47)*{A};
(14,15); (36,15)**@{-}; (36,40)**@{-}; (14,40)**@{-}; (14,15)**@{-};
(19,37)*{Y\supseteq}; (22,33)*{\supt(G)}; 
(16,24); (24,24)**@{-}; (24,29)**@{-}; (16,29)**@{-}; (16,24)**@{-};
(20,26)*{X};
(26,24); (34,24)**@{-}; (34,29)**@{-}; (26,29)**@{-}; (26,24)**@{-};
(30,26)*{Xg_1};
(16,16); (24,16)**@{-}; (24,21)**@{-}; (16,21)**@{-}; (16,16)**@{-};
(20,18)*{Xg_2};  (30,18)*{\cdots};
(42,15); (64,15)**@{-}; (64,40)**@{-}; (42,40)**@{-}; (42,15)**@{-};
(47,37)*{Yh_1}; 
(70,15); (92,15)**@{-}; (92,40)**@{-}; (70,40)**@{-}; (70,15)**@{-};
(75,37)*{Yh_2}; 
(98,27)*{\cdots};
\endxy
\]
\caption{An illustration of the sets in Lemma \ref{PreWrLem}}\label{WreathDiag}
\end{figure}

\begin{lem}\label{PreWrLem}
Suppose that \(G\) and \(H\) act on a set \(A\) on the right.
Assume there are sets \(X \subseteq \supt(G) \subseteq Y \subseteq A\)
such that the action of \(G\) on
\(\Xcal = \{Xg\mid g\in G\}\) and the action of \(H\) on
\(\Ycal = \{Yh\mid h\in H\}\) are both consistent and faithful.
Then the
action of \(W=\langle G, H\rangle\) on \(\Wcal = \{Xw\mid w\in W\}\)
is also consistent and faithful and is isomorphic to the action of
the permutation wreath product \(G\wr H\) on \(\Xcal \times \Ycal\). 
\end{lem}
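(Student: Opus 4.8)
The plan is to realize $W=\langle G,H\rangle$ explicitly as an internal semidirect product $N\rtimes H$, where $N$ is a restricted direct sum of copies of $G$ indexed by $\Ycal$ and $H$ permutes the summands exactly as it permutes $\Ycal$, and then to write down a bijection $\Wcal\to\Xcal\times\Ycal$ intertwining the two actions. I would begin with the elementary consequences of consistency: if $Xg_1\cap Xg_2\neq\emptyset$ then $g_1g_2^{-1}$ fixes $X$ pointwise, so $Xg_1=Xg_2$; hence $\Xcal$ is a family of pairwise disjoint sets on which $G$ acts faithfully, and likewise $\Ycal$ is a family of pairwise disjoint subsets of $A$ on which $H$ acts faithfully. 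I would also record that every $g\in G$ fixes $A\setminus\supt(G)$ pointwise, that $\supt(G)$ is $G$-invariant, and that since $\supt(G)\subseteq Y$ these two facts force $Yg=Y$ for every $g\in G$.

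The heart of the argument --- and the step I expect to be the main obstacle --- is a rigidity lemma: \emph{if $h_1,h_2\in H$ satisfy $Yh_1=Yh_2$, then $k:=h_1h_2^{-1}$ centralizes $G$ inside $\operatorname{Sym}(A)$.} Here $k$ fixes $Y$ pointwise and hence stabilizes $A\setminus Y$ setwise, and a short case analysis --- splitting on whether a point $a$ or its image $ak^{-1}$ lies in $Y$, and using $\supt(G)\subseteq Y$ together with $Yg=Y$ --- shows $k^{-1}gk=g$ for all $g\in G$. The payoff is that the subgroup $G^h:=h^{-1}Gh$, and indeed the conjugation map $g\mapsto g^h$, depends only on the block $Z:=Yh\in\Ycal$; I will write $G_Z$ for this common subgroup and $\psi_Z\colon G\to G_Z$ for this common isomorphism (so $G_Y=G$ and $\psi_Y=\mathrm{id}$), and the same computation yields the compatibility $\psi_{Zh}(g)=\psi_Z(g)^h$.

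Next I would assemble the semidirect product. Since $\supt(g^h)=\supt(g)h\subseteq Yh$, distinct blocks $Z\neq Z'$ yield subgroups $G_Z,G_{Z'}$ with disjoint supports; hence they commute and $G_Z\cap\langle G_{Z'}\mid Z'\neq Z\rangle=1$, so $N:=\langle G_Z\mid Z\in\Ycal\rangle$ is the internal restricted direct sum $\bigoplus_{Z\in\Ycal}G_Z$. Conjugation by $H$ carries $G_Z$ to $G_{Zh}$, so the $\psi_Z$ assemble into an $H$-equivariant isomorphism $\psi\colon\sum_{\Ycal}G\to N$. Because $G=G_Y\le N$ and $H$ normalizes $N$, we have $N\trianglelefteq W$ and $W=NH$; and $N\cap H=1$ because every element of $N$ fixes each block of $\Ycal$ setwise, so faithfulness of the $H$-action on $\Ycal$ kills $N\cap H$. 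Thus $W=N\rtimes H$, and $\psi$ promotes to an isomorphism $\Phi\colon G\wr H\to W$, $(\phi,h)\mapsto(\psi(\phi),h)$, matching the wreath multiplication $(\phi,h)(\theta,j)=(\phi\theta^{h^{-1}},hj)$ with multiplication in $W$ by a routine check using $H$-equivariance of $\psi$.

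Finally, for the action: given $w\in W$, write $w=nh$ with $n\in N$, $h\in H$, and let $g\in G$ be the $G_Y$-component of $n$; since the summands $G_Z$ with $Z\neq Y$ fix $X\subseteq Y$ pointwise, $Xw=(Xg)h\subseteq Yh$, and I would set $\beta(Xw):=(Xg,\,Yh)\in\Xcal\times\Ycal$. Well-definedness and injectivity come from reading off coordinates: if $X(n_1h_1)=X(n_2h_2)$, the common set lies in $Yh_1\cap Yh_2$, so $Yh_1=Yh_2$ by consistency, whereupon $h_1h_2^{-1}$ fixes $Y$ pointwise and $Xg_1=Xg_2$ follows; surjectivity is immediate. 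Equivariance, $\beta((Xw)v)=\beta(Xw)\cdot\Phi(v)$ relative to the wreath action $(x,Z)(\phi,h)=(x\phi(Z),Zh)$, is a direct computation: one unravels $wv=(n_1 n_2^{h_1^{-1}})(h_1h_2)$ and uses $\psi_{Zh}(g)=\psi_Z(g)^h$ to identify the $G_Y$-component of $n_2^{h_1^{-1}}$ with $\phi_2(Yh_1)$, where $\Phi(v)=(\phi_2,h_2)$. Consistency and faithfulness of $(W,\Wcal)$ then transport from $(G\wr H,\Xcal\times\Ycal)$, whose faithfulness is the standard fact that a permutation wreath product acts faithfully whenever both factors do. Besides the rigidity lemma, the main hazard is simply keeping the conjugation conventions straight through the equivariance computation.
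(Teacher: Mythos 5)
The paper does not actually contain its own proof of this lemma: it defers entirely to \cite[Proposition 2.5]{BrinEG}, so there is no internal argument to compare against. Your proposal is a correct and complete self-contained proof, and it follows what is essentially the standard route for such statements (and the one taken in the cited reference): show the conjugates \(G^h\) depend only on the block \(Yh\), have pairwise disjoint supports across distinct blocks, and hence generate an internal restricted direct sum \(N\) on which \(H\) acts by permuting summands, giving \(W = N \rtimes H \cong G \wr H\); then match up the block structure of \(\Wcal\) with \(\Xcal \times \Ycal\). The step you rightly flag as the crux --- that \(Yh_1 = Yh_2\) forces \(k = h_1h_2^{-1}\) to centralize \(G\) --- is correct: \(k\) fixes \(Y\) pointwise by consistency, hence preserves \(A \setminus Y\) setwise, and since \(\supt(G) \subseteq Y\) and \(Yg = Y\) for all \(g \in G\), the computation \(ak^{-1}gk = ag\) goes through in both cases \(a \in Y\) and \(a \notin Y\). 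The remaining verifications (well-definedness and bijectivity of \(\beta\), the equivariance computation, and the transport of consistency and faithfulness) are all sound; the one blemish is purely notational, namely that \(\Phi\) is first defined as a map \(G \wr H \to W\) and later used to denote its inverse.
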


We now detail how the EA-classes of groups
interact with certain permutation wreath products.
For convenience, we let \(\Sigma G\) denote the direct sum of
countably many copies of the group \(G\).
It is clear that \(\EA(\Sigma G)\ge \EA(G)\).
It is a straightforward inductive exercise to show that for all \(G\),
we have \(\EA(G + G)=\EA(G)\).
It follows that \(\EA(\Sigma G)\le \EA(G)+1\).
In the special case that \(\EA(\Sigma G)=\EA(G)\), we say that \(G\) has
property \(\Sigma\).
Notice, for instance, that every abelian group has \(\Sigma\) and that the groups with \(\Sigma\) are
closed under the elementary operations (\ref{extension})--(\ref{directed_union}).
The family of groups \(\Sfrak\) which we construct all satisfy \(\Sigma\).

The next proposition, mostly proven in \cite{BrinEG}, is very fruitful in
calculating and estimating the EA-classes of the groups we 
consider later in the article.

\begin{prop}\label{EAGWrG}
For an infinite, finitely generated group \(G\in EG\)
acting faithfully on an infinite set \(Y\),
we have
\[
\EA(G)+1 \le \EA(G\wr G) \le \EA(G)+2.
\]
Further, if \(\EA(\Sigma G)=\EA(G)\), then
\(\EA(G\wr G) = \EA(G)+1\).
\end{prop}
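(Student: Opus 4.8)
The plan is to treat the two inequalities separately; the upper bounds and most of the lower bound are routine extension arguments, while one case of the lower bound carries the real content and is the part taken from \cite{BrinEG}. Write $\alpha = \EA(G)$ and realize $W := G \wr G$ as the semidirect product $B \rtimes G$, where $B = \sum_Y G$ is the base group. In the situations where this proposition is applied the action of $G$ on $Y$ has finitely many orbits and $Y$ is countable, so $B \cong \Sigma G$ and $W$ is finitely generated; I will work under these assumptions (in general $W$ is a directed union of such finitely generated wreath subproducts and the argument is the same).

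\emph{Upper bounds.} Iterating $\EA(G+G) = \EA(G)$ shows that every finite direct sum of copies of $G$ has EA-class $\alpha$; since $B$ is a directed union of such groups, $\EA(B) \le \alpha + 1$. As $W$ is an extension of $B$ by $G$ and both $B$ and $G$ lie in $EG_{\alpha+1}$, we get $W \in EG_{\alpha + 2}$, that is $\EA(W) \le \alpha + 2$. If in addition $\EA(\Sigma G) = \alpha$, then $\EA(B) = \EA(\Sigma G) = \alpha$, so $B$ and $G$ both lie in $EG_\alpha$ and the same extension argument gives $W \in EG_{\alpha + 1}$; this is the upper bound in the ``further'' clause.

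\emph{Lower bound.} It suffices to show $W \notin EG_\alpha$. If $G$ does not have property $\Sigma$, this is immediate: then $\EA(\Sigma G) = \alpha + 1$, and $\Sigma G$ embeds into $B \le W$, so $\EA(W) \ge \alpha + 1$. If $G$ has property $\Sigma$ and $\alpha = 0$, then $G$ is infinite abelian and the action of $G$ on the infinite set $Y$ is faithful, hence nontrivial, so the induced conjugation action of $G$ on $B$ is nontrivial and $W = B \rtimes G$ is a nonabelian infinite group; thus $W \notin EG_0$. The remaining case --- $G$ has property $\Sigma$ and $\alpha = \beta + 1 \ge 1$ (note $\alpha$ is a successor since $G$ is finitely generated and infinite) --- is the heart of the matter. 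Assume for contradiction $W \in EG_{\beta + 1}$; since $W$ is finitely generated it is not a proper directed union, so it fits in an extension $1 \to M \to W \to N \to 1$ with $M \trianglelefteq W$ and $M, N \in EG_\beta$. One then analyzes how $M$ meets the base $B = \bigoplus_{y \in Y} G_y$: because $\EA(G) = \beta + 1 > \beta \ge \EA(M)$, no factor $G_y \cong G$ can lie in $M$, and since $M$ is $W$-invariant the subgroups $M \cap G_y \trianglelefteq G_y$ are permuted by the faithful infinite action of $G$ on the index set $Y$. The goal is to combine these two facts with property $\Sigma$ to manufacture a decomposition of $G$ as an extension of an $EG_\beta$ group by an $EG_\beta$ group that is incompatible with $\EA(G) = \beta + 1$.

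I expect this last step to be the main obstacle: a general extension of an $EG_\beta$ group by an $EG_\beta$ group need not leave $EG_\beta$, so one cannot argue from the bare extension $1 \to B \to W \to G \to 1$ alone --- the argument must genuinely exploit that $G$ acts on the factors of $B$ with infinite faithful image and that each factor is a full copy of $G$. This is precisely the content supplied by \cite{BrinEG}. Granting it, the lower bound $\EA(W) \ge \alpha + 1$ holds in all cases, and combining with the ``further'' upper bound yields $\EA(W) = \alpha + 1$ whenever $\EA(\Sigma G) = \alpha$, which completes the proof.
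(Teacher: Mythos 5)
Your upper bounds are correct and complete, and so are the two easy cases of your lower bound (when \(G\) fails property \(\Sigma\), and when \(\EA(G)=0\)). For what it is worth, the paper gives no proof of this proposition at all --- it is cited to \cite{BrinEG} --- so there is no argument of the authors' to compare against line by line. Nevertheless, the remaining case of your lower bound (property \(\Sigma\), \(\EA(G)=\beta+1\ge 1\)), which is the only case relevant to the applications in this paper, is genuinely incomplete, and the route you sketch for it cannot close the gap. You propose to reach a contradiction by exhibiting \(G\) ``as an extension of an \(EG_\beta\) group by an \(EG_\beta\) group that is incompatible with \(\EA(G)=\beta+1\).'' No such decomposition can be incompatible with \(\EA(G)=\beta+1\): for a finitely generated group that is exactly what \(\EA(G)=\beta+1\) asserts. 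And your analysis of how \(M\) meets the base factors produces precisely this and nothing more: \(M\cap G_y\in EG_\beta\) as a subgroup of \(M\), while \(G_y/(M\cap G_y)\) embeds in \(W/M=N\in EG_\beta\), so \(G_y\in EG_{\beta+1}\) --- no contradiction. To get a contradiction you must place \(G\) itself (as a subquotient of \(M\) or of \(N\)) inside \(EG_\beta\), and inspecting the subgroups \(M\cap G_y\) will never do that.

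The missing idea is to look at how \(M\) meets the \emph{acting} copy of \(G\) and then use commutators to push back into the base. Since \(G/(G\cap M)\) embeds in \(N\in EG_\beta\) and \(\EA(G)=\beta+1\), the intersection \(G\cap M\) is nontrivial; pick \(1\ne g\in G\cap M\). Faithfulness of the action on \(Y\) gives \(y_0\) with \(y_0g\ne y_0\). For every \(h\in G_{y_0}\) the commutator \([h,g]=h^{-1}h^{g}\) lies in \(M\) (because \(g\in M\trianglelefteq W\)) and in \(B\), and as an element of \(B=\sum_Y G\) it has coordinate \(h^{-1}\) at \(y_0\) and \(h\) at \(y_0g\). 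Hence the subgroup of \(M\cap B\) generated by these commutators maps \emph{onto} the full factor \(G_{y_0g}\cong G\) under the coordinate projection \(B\to G_{y_0g}\). Since \(EG_\beta\) is closed under subgroups and quotients (Chou), this forces \(G\in EG_\beta\), the desired contradiction. Note that this argument uses neither property \(\Sigma\) nor your three-way case division: the lower bound holds uniformly, and property \(\Sigma\) enters only in the upper bound of the ``further'' clause, which you do handle correctly.
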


\section{The oscillation function}
\label{osc:sec}

In this section, we establish some further facts about the oscillation function first introduced in Section \ref{OscSig:sec}.
If \(A\) is a fast set of
\Marked{} functions, then we say that an orbital is \emph{active
with respect to \(A\)} if it contains a transition point of an
element of \(A\).  Notice that an orbital \(J\) is active with
respect to \(A\) if and only if it contains a foot of \(A\) other
than those of \(J\).

If \(f\) is a standard function with at least
one negative bump, we will call the unique common transition point
of a positive bump and a negative bump of \(f\) the \emph{expansion point} of \(f\).
The following is a sketch of a standard function
where we have highlighted the expansion point with a bullet:
\[
\xy
(-2.3,3.5)*{f};
(15,0.8)*{\bullet};
(7,-0.85)*{\varnbump{4}{12}{16}};
(23,2.6)*{\varbump{4}{12}{16}};
(39,2.6)*{\varbump{4}{12}{16}};
\endxy
\]
The next two lemmas provide a number of useful characterizations of the oscillation function.
We omit the routine proofs.

\begin{lem}\label{Std_Base_Facts} Let \(f, g \in \HomeoI\) be standard
functions so that \(\{f,g\}\) is fast with \(f\sqsubset g\).  Then
the following hold:
\begin{enumerate}

\item The pair \((f,g)\) is standard and \(o(f,g)=1\) if and only if
the support of \(f\) is contained in the rightmost orbital of \(g\).

\item The pair \((f,g)\) is standard and \(o(f,g)=2\) if and only if
\(g\) has an expansion point, and for the rightmost orbital
\((a,b)\) of \(f\), the endpoint \(a\) is contained in the leftmost
orbital of \(g\) and the endpoint \(b\) is contained in the
rightmost orbital of \(g\).

\item If the pair \((f,g)\) is standard and \(o(f,g)>2\), then \(f\)
has an expansion point, and the leftmost orbital of \(g\) and the
rightmost orbital of \(g\) each contain exactly one transition point
of \(f\).

\end{enumerate}
\end{lem}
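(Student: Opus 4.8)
The plan is to prove all three parts by unwinding the recursive definition of a standard pair, using a handful of elementary facts about standard functions that follow directly from the definitions: the orbitals of a standard function occur left to right as some number of negative orbitals followed by at least as many positive orbitals, consecutive orbitals share an endpoint (because the extended support is an interval), and the function has an expansion point exactly when it has a negative orbital, i.e. at least two orbitals. Moreover $h\cut$ is again standard, each of its feet lies inside a foot of $h$ (so fastness is inherited at every stage), and when $h$ has at least three orbitals the orbitals of $h\cut$ are precisely the orbitals of $h$ other than the leftmost and rightmost; the degenerate cases leave $h\cut$ a single positive bump supported in a left foot of $h$.

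The key reformulation is this. Since $\{f,g\}$ is fast and $f\sqsubset g$, the pair $(f,g)$ is standard iff $(g\cut,f)$ is standard, which holds iff $g\cut\ll f$, or else $g\cut\sqsubset f$ and $(f\cut,g\cut)$ is standard; iterating, one obtains a recursion that alternates cuts between the $f$-side and the $g$-side and whose successive relations are $\sqsubset$ until the first time a $\ll$ appears. The assertion at the heart of the lemma is that this first $\ll$ occurs at stage $o(f,g)$, and that the chain of $\sqsubset$-relations records exactly which orbital of $g$ (or of $f$) each transition point of the inner function lies in. Part (1) is the stage-one instance: if $\supt(f)$ lies in the rightmost orbital $J$ of $g$ then $g\cut$ is supported to the left of $J$ (using fastness to rule out $\supt(f)$ meeting a foot of $g$ in the one- and two-orbital cases), so $g\cut\ll f$, whence $(g\cut,f)$ and so $(f,g)$ are standard, and only $J$ meets the transition points of $f$, so $o(f,g)=1$; conversely $o(f,g)=1$ forces all of $\supt(f)$ into a single orbital $J$ of $g$ (the extended support of $f$ being an interval), and the alternative $g\cut\ll f$ or $g\cut\sqsubset f$ coming from $(g\cut,f)$ standard forces $J$ to be the rightmost orbital of $g$.

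For part (2) one unwinds one step further. When $o(f,g)\ge2$ the previous analysis gives $g\cut\sqsubset f$, hence $(f\cut,g\cut)$ standard with $\supt(g\cut)$ inside the extended support of $f$; now $o(f,g)=2$ holds precisely when the recursion stops at the next stage, i.e. $f\cut\ll g\cut$, which unwinds to say that $\supt(g\cut)$ --- the union of the interior orbitals of $g$ --- lies in the rightmost orbital $(a,b)$ of $f$ with $a$ to its left and $b$ to its right. Because $f\sqsubset g$ and $g\cut\sqsubset f$, the point $a$ then lies in the leftmost orbital of $g$ and $b$ in the rightmost, and $o(f,g)=2$ forces $g$ to have at least two orbitals, hence an expansion point; the converse runs the same computation backwards. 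For part (3), $o(f,g)>2$ first forces $f$ to have more than two transition points, hence at least two orbitals, hence an expansion point; the recursion now passes through $g\cut\sqsubset f$ and $f\cut\sqsubset g\cut$, so the closure of the extended support of $f\cut$ --- which contains every transition point of $f$ except its extreme two, and in particular the expansion point of $f$ --- sits inside the union of the interior orbitals of $g$. Thus the leftmost and rightmost orbitals of $g$ avoid all of these and (using $f\sqsubset g$ to place the extreme transition points of $f$ strictly outside the interior orbitals of $g$) each contains exactly one transition point of $f$, namely one of the extreme ones, and neither contains the expansion point of $f$.

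The step I expect to cost the most care is the handling of the degenerate configurations: when $f$ or $g$ has only one or two orbitals, so that $h\cut$ is only a not-quite-canonical bump in a left foot, and the boundary cases where a transition point of the inner function coincides with a transition point of the outer function. In both situations the clean ``which orbital does this point belong to'' bookkeeping has to be re-derived from the explicit fastness hypothesis (disjointness of feet), and it is presumably this that the authors have in mind in calling the proofs routine but omitting them.
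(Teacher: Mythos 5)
The paper does not actually prove this lemma: it is one of the two ``routine proofs'' explicitly omitted at the start of Section \ref{osc:sec}, so there is no argument of the authors' to compare yours against. Your proposal is essentially correct and supplies the verification in the way the surrounding machinery suggests: unwind the recursion defining a standard pair, use the identity \(o(f,g)=o(g\cut,f)+1\) of Lemma \ref{o_lem} to identify the oscillation with the depth at which the recursion first produces \(\ll\), and reduce parts (2) and (3) to part (1) applied to the pair \((g\cut,f)\). You also correctly isolate the two points where the real work lies: the degenerate one- and two-orbital forms of \(h\cut\), and the fact that fastness forces every transition point of \(f\) to lie in an actual orbital of \(g\) rather than at an interior transition point \(p\) of \(g\) (if they coincided, the foot of \(f\) abutting \(p\) would overlap the foot of \(g\) abutting \(p\) on the same side, contradicting disjointness of feet). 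That fact is what makes the ``which orbital does this point belong to'' bookkeeping go through in all three parts.

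One justification in part (3) is misattributed and needs repair. To conclude that the leftmost orbital \(L\) and rightmost orbital \(R\) of \(g\) each contain exactly one transition point of \(f\), it is not enough to know that \(L\) and \(R\) avoid the non-extreme transition points of \(f\); you must also place one of the two extreme transition points in each of \(L\) and \(R\), and the hypothesis you cite for this, \(f\sqsubset g\), does not do it (it only puts those points somewhere in the extended support of \(g\)). What does it is the relation \(g\cut\sqsubset f\) that your recursion has already produced: it gives \([\sup L,\inf R]\subseteq\big(\inf\supt(f),\sup\supt(f)\big)\), hence \(\inf\supt(f)<\sup L\); since \(\inf\supt(f)\) lies in some orbital of \(g\) and \(L\) is the only orbital of \(g\) containing points below \(\sup L\), we get \(\inf\supt(f)\in L\), and symmetrically \(\sup\supt(f)\in R\). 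This is a local fix using a fact already in your possession, not a structural flaw; with it the outline is complete.
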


\begin{lem}  \label{o_lem}
If \(f,g \in \HomeoI\) are marked functions with finitely many bumps and
with disjoint sets of transition points and \(f \sqsubset g\), then
the following are true:
\begin{enumerate}

\item\label{o_sym_fact} \(o(f,g)\) is one greater than the number of
active orbitals of \(f\) with respect to \(\{f,g\}\);

\item\label{o_cut_one} if \((f,g)\) is a standard pair, then
\(o(f,g) = o(g\cut,f)+1\); 

\item \label{o_cover_char} if \(n\) is the cardinality of the
smallest cover of the feet of \(\{f,g\}\) by disjoint intervals
each of which intersects the feet of at most one of \(f\) or \(g\),
then \(o(f,g) = (n-1)/2\);

\end{enumerate}
\end{lem}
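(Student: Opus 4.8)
The plan is to reduce all three items to combinatorics of the transition points and feet, prove item (1) by a direct counting argument, and then deduce (3) and (2) from it. First I would record some structural preliminaries. For a standard function $h$ the extended support is an interval, so $h$ has no macroscopic gaps between consecutive orbitals; hence, writing $T_h$ for the set of transition points of $h$, if $h$ has $\ell$ orbitals then $|T_h| = \ell+1$ and the orbitals of $h$ are exactly the open intervals between consecutive points of $T_h$ (in particular no orbital of $h$ meets $T_h$). From $\{f,g\}$ being fast and $f \sqsubset g$, all via the single observation that two feet abutting a common point from opposite sides must overlap, one gets: (i) all feet of $\{f,g\}$ are pairwise disjoint; (ii) $T_f \cap T_g = \emptyset$; (iii) writing $[p,q]$ for the closure of the extended support of $f$, each of $p$ and $q$ lies in the interior of an orbital of $g$. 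Writing the orbitals of $f$ as $(r_{j-1},r_j)$, $0 \le j \le \ell$, with feet $L_j = (r_{j-1},m_j)$ and $R_j = [m_j',r_j)$, one final fastness argument shows that \emph{every foot of $g$ lies either to the left of $r_0$, or to the right of $r_\ell$, or inside a single ``gap'' $[m_j,m_j')$ situated between the two feet of one orbital of $f$}, and that the $j$th orbital of $f$ is active with respect to $\{f,g\}$ precisely when some foot of $g$ lies in that $j$th gap. I expect this confinement statement to be the delicate point --- it must exclude markers sitting on endpoints of feet, and orbitals of $g$ that span several orbitals of $f$ --- so I would isolate it as a sublemma before any counting.

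For item (1): list $T_f \cup (T_g \cap (p,q))$ in increasing order, colouring the points of $T_f$ red and the rest blue. By the preliminaries every red point lies in an orbital of $g$, every blue point lies in an orbital of $f$, and the list begins and ends with a red point; say it has $\ell+1$ red and $\beta$ blue points. An orbital $(r_{j-1},r_j)$ of $f$ is active iff it contains a blue point, i.e.\ iff $r_{j-1},r_j$ are not adjacent in the list; so the number of active orbitals of $f$ is $\ell$ minus the number of red--red adjacencies. There are $\beta+1$ orbitals of $g$ that meet $[p,q]$, and such an orbital fails to contain a point of $T_f$ exactly when its two endpoints are blue and adjacent in the list; so $o(f,g)$ is $\beta+1$ minus the number of blue--blue adjacencies. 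Finally count adjacencies by runs: since the list begins and ends red, if it has $k$ maximal blue runs it has $k+1$ maximal red runs, giving $\ell-k$ red--red and $\beta-k$ blue--blue adjacencies. Hence the number of active orbitals of $f$ is $k$ and $o(f,g)=k+1$, proving (1). (When $\beta=0$ --- equivalently $o(f,g)=1$, with $p,q$ in one orbital of $g$ --- the identity is immediate.)

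For item (3): by the preliminaries the feet of $\{f,g\}$, read left to right, form a nonempty block of $g$-feet, then $L_1$, then a possibly-empty block of $g$-feet, then $R_1$ immediately followed by $L_2$, then a block of $g$-feet, then $R_2$ followed by $L_3$, and so on, ending with $R_{\ell-1}$ followed by $L_\ell$, a block of $g$-feet, $R_\ell$, and a final nonempty block of $g$-feet; the two flanking blocks are nonempty because $r_0$ and $r_\ell$ lie strictly inside the extended support of $g$. A minimal cover by disjoint intervals, each meeting feet of at most one of $f,g$, uses exactly one interval per maximal monochromatic run of feet, so $n$ equals the number of such runs. The feet of $f$ form $A+1$ runs (broken only at the $A$ active gaps) and the feet of $g$ form $A+2$ runs ($A$ nonempty interior blocks plus the two flanks), where $A$ is the number of active orbitals of $f$; hence $n = 2A+3$, and with (1) this gives $o(f,g) = A+1 = (n-1)/2$. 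For item (2): since $(f,g)$ is a standard pair with $f \sqsubset g$, the pair $(g\cut,f)$ is a standard pair, and in it $o(g\cut,f)$ equals the number of orbitals of $f$ containing a point of $T_{g\cut}$. By (1) it suffices to show this equals the number of orbitals of $f$ containing a point of $T_g$. If $g$ has at least three orbitals, $T_{g\cut}$ is $T_g$ with its two extreme points deleted, and those two points lie outside $[p,q]$, hence outside every orbital of $f$, so the counts agree. If $g$ has one or two orbitals then $o(f,g) \le 2$, and one finishes by inspection using Lemma \ref{Std_Base_Facts}: if $o(f,g)=1$ then fastness places the support of $f$ to the right of the foot of $g$ that constitutes the support of $g\cut$, so $g\cut \ll f$ and $o(g\cut,f)=0$; if $o(f,g)=2$ then the rightmost orbital of $f$ straddles the expansion point of $g$, which forces the support of $g\cut$ inside that orbital, so by Lemma \ref{Std_Base_Facts} again $o(g\cut,f)=1$.

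The only real work is in the structural preliminaries, above all the confinement of the feet of $g$ to the gaps of the orbitals of $f$ and the disjointness $T_f \cap T_g = \emptyset$, both of which must be wrung out of geometric fastness together with the ``standard'' hypotheses. Once these are established, item (1) is the short run-counting argument, item (3) is a matter of reading off the block structure of the feet, and item (2) reduces to (1) plus a finite case check via Lemma \ref{Std_Base_Facts}.
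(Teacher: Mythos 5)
The paper does not actually prove this lemma --- it is explicitly omitted as routine --- so there is no argument of the authors' to compare yours against. Your proposal is a correct and complete proof: the confinement sublemma (each foot of \(g\) lies in a single component of the complement of the union of the feet of \(f\), the components with nonempty interior being the two flanks and the gaps \([m_j,m_j')\)), the red/blue run-counting for item (1), the block count \(n=2A+3\) for item (3), and the reduction of item (2) to item (1) together with the finite check via Lemma \ref{Std_Base_Facts} all go through.

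One sentence should be repaired before this is written up: the ``single observation'' you invoke is stated backwards. Two feet abutting a common point \(r\) from \emph{opposite} sides --- one of the form \((\cdot,r)\) or \([\cdot,r)\) and the other of the form \((r,\cdot)\) --- are disjoint. What actually drives (ii), (iii) and the confinement sublemma is that two feet abutting \(r\) from the \emph{same} side must overlap: two left feet \((r,s)\) and \((r,s')\) intersect, as do two right feet \([t,r)\) and \([t',r)\). Every use you make of the observation is of this same-side form (e.g.\ a common transition point of \(f\) and \(g\) would force two right feet, or two left feet, to share an endpoint), so this is a slip of phrasing rather than a gap, but as literally stated the observation is false. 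A second, purely cosmetic point: the orbitals of \(f\) should be indexed \(1\le j\le \ell\), not \(0\le j\le \ell\), to be consistent with \(|T_f|=\ell+1\).
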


Many arguments about \(\Sgen\) are inductive and take advantage of the recursive
nature of the definition of a standard pair.
Note that if \((f,g)\) is standard and \(o(f,g) >1\), then \(f \sqsubset f^g\).
Also observe that even without an assumption that
\(f\sqsubset g\), we have \(o(f,g) = o(f^p,g^q)\) for any nonzero
\(p,q \in \Z\) and that for every homeomorphism \(h\), \(o(f^h,g^h)
= o(f,g)\).  The next lemma is useful in calculating values of
the oscillation function in subsequent sections.

\begin{lem} \label{o_conj_bound}
If \((f,h)\) and \((g,h)\) are standard pairs with \(o(g,h) \ge 1\) and \(\{f,g,h\}\) is fast (possibly \(f=g\)), then the following
are true:
\begin{enumerate}

\item 
either \(f \ll g^h\) or \(f \sqsubset g^h\);

\item \(o(f,g^h) \leq \min\big(o(f,h) , o(g,h)-1\big)\).
\end{enumerate}
\end{lem}

\begin{proof} Observe that since the feet of \(g\) and \(h\) are
disjoint, every foot of \(g^h\) is contained in a foot of \(h\).  By
(\ref{o_cover_char}) of Lemma \ref{o_lem}, \(o(f,g^h) \leq o(f,h)\).
If \(o(g,h)=1\), then Lemma \ref{Std_Base_Facts}
and the assumption that \(\{g,h\}\) is fast implies that the
support of \(g^h\) is contained in the rightmost foot of \(h\) which
is to the right of the support of \(f\).
Thus \(f \ll g^h\) and
\(o(f,g^h) = 0 \le \min\big(o(f,h) , o(g,h)-1\big)\).

Now assume that \(o(g,h)>1\).
Let \(J\) be the union of the feet of \(h\)
other than the leftmost and rightmost feet of \(h\).
Notice that since \(o(g,h) > 1\), \(J\) is nonempty.
If \(f \ll h\), then since \(g \sqsubset h\), it follows that \(f \ll g^h\) and hence
\[o(f,g^h) = 0 \leq \min\big(o(f,h) , o(g,h)-1\big).\]
If \(f \sqsubset h\), then the feet of \(f^{h^{-1}}\) are contained in \(J\)
which in turn is contained in the support of \(g\) since \(o(g,h)>1\).
Therefore \(f^{h^{-1}}\sqsubset g\), \(f \sqsubset g^h\),
and all the feet, and thus all the
transition points, of \(f^{h^{-1}}\) are contained in the orbitals
of \(g\) that are active with respect to \(\{h\cut, g\}\).
By (\ref{o_cut_one}) of Lemma \ref{o_lem}, the number of these
orbitals is \(o(h \cut,g) = o(g,h)-1\), and so
\(o(f,g^h) = o(f^{h^{-1}},g) \le o(g,h)-1\). 
\end{proof}

For the following, recall that if \(A\) is in \(\Sgen\), then
\(A\) is linearly ordered by \(<\).  If \(|A|=n\), we let
\(
a_0<a_1<\cdots <a_{n-1}
\)
be the elements of \(A\).
We also use \(a_{\max}\) to denote the
greatest element of \(A\).

\begin{defn}
If there are no nonempty \(B\) and \(C\) such that
\(A=B+C\), then we say that \(A\) is \emph{indecomposable}.
\end{defn}
Many proofs that follow argue the decomposable and indecomposable cases
separately.  The next lemma gives a useful characterization of when
an element of \(\Sgen\) is decomposable.

\begin{lem} \label{dir_sum_char}
An \(A \in \Sgen\) is decomposable
if and only if there is an \(i < |A|-1\) such that
\(o(a_i,a_{\max}) = 0\).
\end{lem}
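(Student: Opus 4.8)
The plan is to prove the two directions separately, with the ``if'' direction being essentially immediate from the definitions and the ``only if'' direction requiring a small amount of work to promote a single oscillation-zero pair into a genuine direct-sum decomposition of all of $A$.

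First I would dispose of the easy direction. Suppose $A = B + C$ with $B, C$ nonempty. Then by definition $b \ll c$ for every $b \in B$ and $c \in C$; in particular $a_{\max} \in C$, and choosing any $b \in B$ (which exists since $B \ne \emptyset$) we have $b \ll a_{\max}$, so $o(b, a_{\max}) = 0$. Since $b = a_i$ for some $i < |A| - 1$, this witnesses the right-hand side.

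For the converse, suppose $o(a_i, a_{\max}) = 0$ for some $i < |A| - 1$, and let $i$ be \emph{least} with this property. Set $B := \{a_0, \ldots, a_i\}$ and $C := \{a_{i+1}, \ldots, a_{n-1}\}$; both are nonempty since $i < n - 1$. The claim is that $A = B + C$, i.e.\ that $a_j \ll a_k$ whenever $j \le i < k$. Fix such $j, k$. Since $A \in \Sgen$, the pair $(a_j, a_k)$ is standard, so either $a_j \ll a_k$ (which is what we want) or $a_j \sqsubset a_k$. In the latter case I would derive a contradiction: we would have $o(a_j, a_k) \ge 1$ since $a_j \sqsubset a_k$ means the extended support of $a_j$ lies inside the extended support of $a_k$ and hence inside some orbital of $a_k$, giving at least one active orbital. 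Now I want to propagate this to $o(a_j, a_{\max})$. Here is where Lemma~\ref{o_conj_bound} or a direct geometric argument enters: because the $a_k$'s are pairwise standard and $<$ is a total order on $A$, the extended supports are nested-or-disjoint in a controlled way, and from $a_j \sqsubset a_k$ together with $o(a_i, a_{\max}) = 0$ one should be able to show that the extended support of $a_j$ is disjoint from that of $a_{\max}$ — forcing $o(a_j, a_{\max}) = 0$ with $a_j \ll a_{\max}$, contradicting $a_j \sqsubset a_k \le a_{\max}$ (so in fact $a_j \sqsubset a_{\max}$, whence $o(a_j,a_{\max})\ge 1$). Unwinding: either $j = i$, giving a direct contradiction with $a_i \ll a_{\max}$ being incompatible with $a_i \sqsubset a_{\max}$; or $j < i$, and I use the minimality of $i$ together with the nesting structure to conclude $a_i \ll a_{\max}$ forces everything to the left of $a_i$ (in particular $a_j$) also to be $\ll a_{\max}$, again contradicting $a_j \sqsubset a_k$.

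The main obstacle is the bookkeeping in this last step: translating ``$o(a_i, a_{\max}) = 0$'' into a clean statement about how the extended supports of \emph{all} of $a_0, \ldots, a_i$ sit relative to $a_{\max}$, and ruling out the possibility that some $a_j$ with $j < i$ satisfies $a_j \sqsubset a_k$ for an intermediate $k \le i$ while still having $o(a_j, a_{\max}) = 0$ in a way consistent with $a_j \ll a_{\max}$. The cleanest route is probably to observe directly from the definition of $\ll$ and $\sqsubset$ that for $a < b < c$ in a standard generating set, $a \sqsubset b$ and $o(a,c)=0$ together are impossible when $b \sqsubset c$ or $b \ll c$ — i.e.\ to prove the contrapositive as a small structural lemma about triples — and then the result for $A$ follows by applying it to the triple $(a_j, a_k, a_{\max})$ after first checking $a_k \le a_{\max}$ gives $a_k \sqsubset a_{\max}$ or $a_k \ll a_{\max}$. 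I expect this triple lemma to reduce to inspecting the few cases in the definition of a standard pair and is short.
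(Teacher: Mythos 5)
Your ``only if'' direction is fine and is exactly the paper's observation that \(b \ll c\) is equivalent to \(b<c\) together with \(o(b,c)=0\). The converse, however, contains a genuine error: you cut \(A\) at the \emph{least} \(i\) with \(o(a_i,a_{\max})=0\), but the correct choice is the \emph{greatest} such index. Indeed, since the order on \(A\) is the order of rightmost transition points, if \(o(a_j,a_{\max})=0\) (so all of \(\supt(a_j)\) lies to the left of \(\supt(a_{\max})\)) and \(i<j\), then \(\sup\supt(a_i)<\sup\supt(a_j)\) forces \(o(a_i,a_{\max})=0\) as well; hence the set of indices witnessing the hypothesis is an initial segment, your ``least \(i\)'' is always \(0\), and your \(B\) is always the singleton \(\{a_0\}\). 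This fails already for \(A=\{a_0,a_1,a_2\}\) with \(a_0\sqsubset a_1\) (say \(o(a_0,a_1)=2\)) supported in \((0,1/2)\) and \(a_2\) a single positive bump supported in \((1/2,1)\): here \(o(a_0,a_2)=o(a_1,a_2)=0\), the correct decomposition is \(\{a_0,a_1\}+\{a_2\}\), but your proposed \(\{a_0\}+\{a_1,a_2\}\) is not a decomposition because \(o(a_0,a_1)\ne 0\).

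The precise point where your sketched contradiction breaks is the step ``\(a_j\sqsubset a_k\le a_{\max}\), so in fact \(a_j\sqsubset a_{\max}\)'': the order \(\le\) on \(A\) does not entail \(\sqsubset\), and in the example above \(a_0\sqsubset a_1\ll a_2\) yields \(a_0\ll a_{\max}\), so no contradiction arises. The paper's proof takes \(j\) \emph{maximal} with \(o(a_j,a_{\max})=0\), notes as above that \(o(a_i,a_{\max})=0\) for all \(i<j\), and then for \(j<k<|A|-1\) uses \(o(a_k,a_{\max})>0\) (maximality of \(j\)) to conclude \(a_k\sqsubset a_{\max}\), so that \(\supt(a_k)\) lies inside the extended support of \(a_{\max}\) and therefore entirely to the right of \(\supt(a_i)\) for every \(i\le j\); this gives \(a_i\ll a_k\) for all \(i\le j<k\) and hence the decomposition \(\{a_i\mid i\le j\}+\{a_k\mid j<k<|A|\}\). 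Replacing ``least'' by ``greatest'' and running this argument repairs your proof; no appeal to Lemma~\ref{o_conj_bound} is needed.
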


\begin{proof}
Since \(A \in \Sgen\), we know that whenever \(b < c\) are in \(A\), \(b \ll c\) is equivalent to \(o(b,c) = 0\).
The forward implication in the lemma follows immediately from this equivalence.
To see the reverse implication, let \(j < |A|-1\) be maximal such that \(o(a_j,a_{\max}) = 0\).
Observe that every element of the support of \(a_j\) is less than every element of the support of \(a_{\max}\).
If \(i < j\), then \(\sup \supt(a_i) < \sup \supt(a_j)\) and consequently \(o(a_i,a_{\max}) = 0\).
We claim that if \(i \leq j < k < |A|-1\), then \(o(a_i,a_k) = 0\).
Since \(o(a_k,a_{\max}) > 0\), it follows that the e-support of \(a_k\) is contained in the e-support
of \(a_{\max}\).
Since \(\sup \supt(a_i) < \inf\supt(a_{\max})\), it follows that \(\sup \supt(a_i) < \inf \supt(a_k)\)
and hence that \(o(a_i,a_k) = 0\).
By the equivalence noted at the start of the proof, we therefore have that
if \(i \leq j < k <|A|\), then \(a_i \ll a_k\). 
If we take \(B = \{a_i \mid i \leq j\}\) and \(C = \{a_k \mid j < k < |A|\}\),
then we've shown that \(A = B+C\). 
\end{proof}

The following operation is useful in analyzing \(\Sgen\).
\begin{defn}
If \(A \in \Sgen\), the \emph{rotation of \(A\)} is defined by
\(A\cut := A \cup \{a_{\max} \cut\} \setminus \{a_{\max}\}\) if there is an \(i < |A|-1\) such
that \(o(a_i,a_{\max}) > 0\);
otherwise set \(A \cut := A \setminus \{a_{\max}\}\).
\end{defn}
Notice that it follows immediately from the definition of standard pair that \(A\cut\) is again in \(\Sgen\).

\begin{lem} \label{rotation_lem}
If \(A \in \Sgen\), then the following are true:
\begin{enumerate}

\item \label{sum_cut}
If \(A = B + C\) and \(C \ne \emptyset\), then \(A \cut = B + (C \cut)\).

\item \label{indecomp_cut}
If \(A\) is indecomposable,
then the least element of \(A\cut\) is \(a_{\max} \cut\).

\end{enumerate}
\end{lem}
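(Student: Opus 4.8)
The plan is to unwind the two-clause definition of the rotation \(A\cut\) and reduce everything to the recursive clause in the definition of a standard pair: if \((f,g)\) is a standard pair with \(f\sqsubset g\), then \((g\cut,f)\) is again a standard pair, and in particular \(g\cut<f\). Throughout I would use the observation from Section \ref{BasicOp:sec} that \(b\ll c\) is equivalent to \(b<c\) together with \(o(b,c)=0\), the characterization of decomposability from Lemma \ref{dir_sum_char}, and the elementary fact --- immediate from both branches of the definition of \(f\cut\) --- that \(\supt(f\cut)\subseteq\supt(f)\), hence that the extended support of \(f\cut\) is contained in that of \(f\).

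For part (\ref{sum_cut}) I would first note that, since \(A=B+C\) with \(C\ne\emptyset\), the maximum \(a_{\max}\) of \(A\) lies in \(C\) and is also the maximum of \(C\), and that every \(b\in B\) has \(o(b,a_{\max})=0\) because \(b\ll a_{\max}\). The key point is then that the dichotomy deciding which clause of the definition produces \(A\cut\) --- whether some element below \(a_{\max}\) has positive oscillation with \(a_{\max}\) --- involves only elements of \(C\), and so coincides with the dichotomy deciding \(C\cut\). Splitting into these two cases, I would check the set identity \(A\cut=B\cup C\cut\) in each (taking the same choice of the bump \(a_{\max}\cut\) when forming \(A\cut\) and \(C\cut\)), and then verify the order condition \(b\ll x\) for \(b\in B\) and \(x\in C\cut\): for \(x\in C\setminus\{a_{\max}\}\) this is part of the hypothesis \(A=B+C\), and for \(x=a_{\max}\cut\) it follows from \(b\ll a_{\max}\) and the containment of extended supports noted above. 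Together these give \(A\cut=B+(C\cut)\).

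For part (\ref{indecomp_cut}) I would argue as follows, assuming \(|A|\ge2\) (the cases \(|A|\le1\) being degenerate). By Lemma \ref{dir_sum_char}, indecomposability means \(o(a_i,a_{\max})>0\) for every \(i<|A|-1\), so in particular the first clause of the definition applies and \(A\cut=\{a_0,\dots,a_{|A|-2}\}\cup\{a_{\max}\cut\}\). Fixing \(i<|A|-1\), from \(a_i<a_{\max}\) we get \(a_i\ll a_{\max}\) or \(a_i\sqsubset a_{\max}\), and the first is ruled out by \(o(a_i,a_{\max})>0\), so \(a_i\sqsubset a_{\max}\). Then the recursive clause, applied to the standard pair \((a_i,a_{\max})\), shows \((a_{\max}\cut,a_i)\) is a standard pair, hence \(a_{\max}\cut<a_i\). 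Since \(i\) was arbitrary, \(a_{\max}\cut\) lies strictly below every other element of the linearly ordered set \(A\cut\), i.e. it is the least element of \(A\cut\).

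I do not anticipate a real obstacle; the argument is essentially bookkeeping around the case split in the definition of \(A\cut\). The one place demanding a little care is consistency of choices: when \(a_{\max}\) has at most two orbitals the symbol \(a_{\max}\cut\) denotes an arbitrary positive bump on a prescribed interval with an arbitrary marker, so in part (\ref{sum_cut}) one must form \(A\cut\) and \(C\cut\) with the same such choice (which is harmless). A minor related point --- needed so that the two clauses of the definition genuinely produce different sets --- is that \(a_{\max}\cut\) is never already an element of \(A\setminus\{a_{\max}\}\); this is clear since \(\supt(a_{\max}\cut)\) is a proper subset of \(\supt(a_{\max})\), and in part (\ref{indecomp_cut}) it also follows a posteriori from \(a_{\max}\cut<a_i\).
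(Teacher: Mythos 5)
Your proof is correct and follows essentially the same route as the paper's: part (\ref{sum_cut}) via the containment of the support of \(a_{\max}\cut\) in that of \(a_{\max}\), and part (\ref{indecomp_cut}) via Lemma \ref{dir_sum_char} together with the recursive clause of the definition of a standard pair. You are merely more explicit about the case split in the definition of the rotation (the paper checks only \(a_0\) in part (\ref{indecomp_cut}) and leaves the rest to linearity), which is harmless.
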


\begin{proof}
To see (\ref{sum_cut}), observe first that \(a_{\max} = c_{\max}\).
It remains to show that for all \(i < |B|\), \(b_i \ll c_{\max} \cut\).
Because \(A = B + C\), we have that for all \(i < |B|\), \(b_i \ll c_{\max}\).
Since the support of \(c_{\max} \cut\) is contained in the support of \(c_{\max}\),
it follows that \(b_i \ll c_{\max} \cut\) for every \(i < |B|\).
To see (\ref{indecomp_cut}), observe that 
\(o(a_0,a_{\max}) > 0\) by Lemma \ref{dir_sum_char}.
Thus \((a_{\max}\cut,a_0)\) is standard and hence \(a_{\max}\cut < a_0\).
\end{proof}

The following lemma gives a useful criteria for membership in \(\Sgen\).

\begin{lem} \label{Sgen_char}
If \(A\) is a fast set of standard functions,
then \(A\) is in \(\Sgen\) provided that the following conditions are satisfied:
\begin{itemize}

\item for all \(i < |A| - 1\), either \(a_i \ll a_{\max}\) or \(a_i \sqsubset a_{\max}\);

\item \(A\cut\) is in \(\Sgen\);

\item for all \(i < |A|-1\) if \(a_i \sqsubset a_{\max}\), then \(a_{\max}\cut < a_i\).

\end{itemize}
\end{lem}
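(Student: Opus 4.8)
The plan is to verify directly that every two-element subset of \(A\) forms a standard pair, which by the definition of \(\Sgen\) is exactly what has to be shown. Since \(A\) is fast, every such pair is fast, so in each case only the recursive clause of the definition of standard pair requires checking. Write the elements of \(A\) in increasing order as \(a_0 < a_1 < \cdots < a_{n-1}\) with \(a_{\max} = a_{n-1}\), and split the pairs to be checked into those that do not involve \(a_{\max}\) and those of the form \((a_i, a_{\max})\).

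For a pair \((a_i, a_j)\) with \(j < n-1\), I would observe that \(a_i\) and \(a_j\) both lie in \(A \setminus \{a_{\max}\}\), and that \(A \setminus \{a_{\max}\} \subseteq A\cut\) whichever clause of the definition of rotation applies: either \(A\cut = A \setminus \{a_{\max}\}\), or \(A\cut = (A \setminus \{a_{\max}\}) \cup \{a_{\max}\cut\}\). Since \(A\cut \in \Sgen\), every pair of its elements is a standard pair, so in particular \((a_i, a_j)\) is standard.

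For a pair \((a_i, a_{\max})\) with \(i < n-1\), the fact that \(A\) is fast and totally ordered by \(<\) gives either \(a_i \ll a_{\max}\) or \(a_i \sqsubset a_{\max}\); in the first case \((a_i, a_{\max})\) is a standard pair by definition. In the second case I would first invoke the equivalence ``\(b \ll c\) iff \(b < c\) and \(o(b,c) = 0\)'' recorded in the proof of Lemma \ref{dir_sum_char}: since \(a_i < a_{\max}\) holds but \(a_i \ll a_{\max}\) fails, we must have \(o(a_i, a_{\max}) > 0\), so the non-degenerate clause of the rotation definition is the one in force and \(a_{\max}\cut \in A\cut\). The second hypothesis then gives \(a_{\max}\cut < a_i\), so \(a_{\max}\cut\) and \(a_i\) are elements of \(A\cut \in \Sgen\), whence \((a_{\max}\cut, a_i)\) is a standard pair. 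Feeding this into the recursive clause — \(\{a_i, a_{\max}\}\) is fast, \(a_i \sqsubset a_{\max}\), and \((a_{\max}\cut, a_i)\) is standard — yields that \((a_i, a_{\max})\) is a standard pair. Having checked all pairs, we conclude \(A \in \Sgen\).

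There is really no induction to run: the hypothesis \(A\cut \in \Sgen\) already packages all of the inductive content, so the lemma is a single unwinding of the recursive definition of standard pair against the two stated conditions. The only point demanding a moment's care is the bookkeeping around the rotation — confirming that \(A \setminus \{a_{\max}\} \subseteq A\cut\) in both branches of its definition, and that the existence of some \(i\) with \(a_i \sqsubset a_{\max}\) forces the branch in which \(a_{\max}\cut\) actually belongs to \(A\cut\). I expect this to be the subtlest part of the argument, and it is dispatched immediately by the definition of rotation together with the \(\ll\)/oscillation equivalence.
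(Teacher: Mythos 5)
Your proposal is correct and follows essentially the same route as the paper: reduce to checking the pairs \((a_i,a_{\max})\) since all other pairs already lie in \(A\setminus\{a_{\max}\}\subseteq A\cut\in\Sgen\), then split according to \(a_i\ll a_{\max}\) versus \(a_i\sqsubset a_{\max}\) and feed the hypothesis \(a_{\max}\cut<a_i\) (together with \(A\cut\in\Sgen\)) into the recursive clause of the definition of standard pair. The only cosmetic difference is that the paper organizes the dichotomy via the minimal \(j\) with \(o(a_j,a_{\max})>0\), whereas you invoke the equivalence of \(\ll\) with vanishing oscillation pair by pair; the content is identical.
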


\begin{proof} Let \(A\) be a fast set of standard functions.
If \(|A| \leq 1\) there is nothing to show, so assume
that \(|A| > 1\).  Observe that \(A \setminus \{a_{\max}\} \subseteq
A\cut\) is in \(\Sgen\) by assumption and therefore in order to
verify \(A \in \Sgen\), we need only to show that \((a_i,a_{\max})\)
is standard whenever \(i < |A|-1\).  Let \(j < |A|-1\) be minimal
such that \(o(a_j,a_{\max}) > 0\); observe that this
implies \(o(a_i,a_{\max}) > 0\) for all \(j \leq i < |A|-1\).
Since the support of \(a_{\max}
\cut\) is contained in the support of \(a_{\max}\), \(a_i \ll
a_{\max}\) whenever \(i < j\).  If \(j \leq i\), then \(a_{\max}
\cut < a_i\) by hypothesis.  Furthermore, \((a_{\max} \cut,a_i)\) is standard since 
\(A\cut\) is assumed to be in \(\Sgen\). 
Since \(o(a_i, a_{\max})>0\) and thus
\(a_i \sqsubset a_{\max}\), it follows that \((a_i,a_{\max})\) is
standard as desired.  \end{proof}

\section{Signatures}

\label{signature:sec}

In this section we expand on the notion of signature as defined in
Section \ref{OscSig:sec} and prove several properties.
We show that the signature \(\As\) of an \(A \in \Sgen\) completely encodes
the marked isomorphism type of \(\gen{A}\).  Moreover, we give
a simple description of the family of all signatures.
Before
proving the main results, it will be helpful to define some
terminology and prove some lemmas.
\begin{defn}
If \(A \in \Sgen\), the \emph{complexity of \(A\)} is the
pair \[
(|A|,\sum_{i < j} o(a_i,a_j)).\]
The set of all complexities is ordered lexicographically.
\end{defn}
Observe that if \(A \in \Sgen\),
then \(A \cut\) has strictly smaller complexity than \(A\).  In what
follows, it is frequently useful to prove statements about the
elements of \(\Scal\) by induction on their complexity.
Theorem \ref{excision} allows us to remove extraneous bumps without changing the
isomorphism type of the group which is generated.
However if \(A\in \Sgen\) has extraneous bumps,
Theorem \ref{excision} does not ensure the modified generating set remains in \(\Sgen\).
This is addressed by Lemmas \ref{find_extraneous} and \ref{excise_inactive} below.

\begin{lem} \label{no_extraneous_pres}
If \(A \in \Sgen\) and \(E\) is a set of extraneous bump for \(A\cut\), then \(E\) is a set of extraneous bumps of \(A\).
In particular, if \(A \in \Sgen\) and no bump of \(A\) is extraneous in \(A\), then
no bump of \(A\cut\) is extraneous in \(A\cut\).
\end{lem}

\begin{proof}
Let \(E\) be a set of extraneous bumps for \(A\cut\).
Observe that the only situation in which a bump of \(A\cut\) is not a bump of \(A\) occurs when
\(a_{\max}\) has at most two orbitals and the bump in question is \(a_{\max}\cut\).
In this situation \(a_{\max}\cut\) has only one orbital and hence its only bump is not in \(E\).
Second, observe that since no element of \(A\cut\) is comprised only of bumps in \(E\),
no element of \(A\) is comprised only of bumps in \(E\).
Thus it suffices to show that every element of \(E\) is isolated in \(A\).
This follows from the observation that the only transition points of \(A\) which are not
transition points of \(A\cut\) are the greatest and least transition points of \(a_{\max}\),
neither of which are in the support of any bump of \(A\).
\end{proof}

\begin{lem} \label{find_extraneous}
If \(A \in \Sgen\) has an
extraneous bump, then there is a nonempty set \(E\) of extraneous
bumps of \(A\) such that \(A/E\) has the same dynamical diagram as a
member of \(\Sgen\).
\end{lem}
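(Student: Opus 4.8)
The plan is to reduce everything to a single generator. By the definition of an extraneous set, the hypothesis produces an isolated bump of $A$ whose host generator has at least two bumps; after possibly passing to a generator nested inside it, I will exhibit a nonempty set $E$ of isolated bumps of a single generator $f$ such that (i) deleting the orbitals of $E$ from $f$ yields a standard function, and (ii) no foot of any generator other than $f$ meets the support of a member of $E$. Granting (i) and (ii): since the extended support of a standard function is an interval its orbitals are consecutive, and the deleted supports contain no feet of the remaining generators, so the surviving orbitals of $f$ close up --- in the dynamical diagram the feet of $f/E$ sit exactly in the pattern of the feet of the standard function described in (i). The remaining pairwise-standard relations are inherited from $A$: deleting isolated orbitals of $f$ does not disturb how the other generators lie relative to one another, and the structural description of standard pairs (Lemma \ref{Std_Base_Facts}) shows each $g\ne f$ still forms a standard pair with $f/E$. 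Hence $D_{A/E}$ is isomorphic to the dynamical diagram of a standard generating set, which is what is wanted; the desired member of $\Sgen$ is then obtained by realizing $D_{A/E}$ geometrically with connected supports.

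Point (ii) holds for every isolated bump, by a fastness argument. If $c$ is an isolated bump of $f$ and some bump $d$ of a generator $g\ne f$ had a foot meeting $\supt(c)$, then $\supt(d)\not\subseteq\supt(c)$, since otherwise an endpoint of $\supt(d)$ --- a transition point of $A$ --- would lie in $\supt(c)$; hence $\supt(d)$ contains an endpoint $x$ of $\supt(c)$. Because $x$ is a transition point of $f$ and the extended support of $f$ is an interval, $f$ has a foot immediately on each side of $x$ (if $x$ is an extreme transition point of $f$ there is no such foot, but then $g\sqsubset f$ confines $\supt(d)$ to the interior of the extended support of $f$, and a short case check on the position of $g$ relative to $f$ together with fastness disposes of this case). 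But then neither foot of $d$ can avoid the feet of $f$ at $x$ while also meeting $\supt(c)$, a contradiction. Thus the region vacated by any set of isolated bumps of $f$ is automatically free of the other generators' feet.

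The delicate point, and the main obstacle, is (i): preserving the balance between positive and negative bumps. A standard function stays standard when one deletes a single negative orbital exactly if its numbers of positive and negative bumps are equal, and when one deletes a single positive orbital exactly if it has one more positive bump than negative; in each case one may use the leftmost, respectively rightmost, orbital, keeping the extended support connected. So it suffices to find a generator $h$ (nested inside, or equal to, the one furnished above) with an isolated orbital of the sign dictated by the balance of $h$ --- or with isolated orbitals of both signs. If $f$ already has such an orbital, take $h=f$. Otherwise every isolated orbital of $f$ has the wrong sign, which forces nontrivial nesting inside $f$, and one descends in the nesting order, using Lemma \ref{Std_Base_Facts} to see that a generator hosting no extraneous bump has every orbital active and that its active orbitals of a given sign are confined (by parts (2)--(3) of that lemma) to the extremes. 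This descent reaches either a generator whose non-extreme orbitals of the needed sign are all isolated, or a $\sqsubset$-minimal generator, all of whose orbitals are isolated and which, having at least two bumps, is handled outright by deleting its leftmost or rightmost orbital. Verifying that the descent always terminates at such a usable $h$ --- rather than bottoming out at a single positive bump, which being $\sqsubset$-minimal cannot host an extraneous bump and so cannot be the obstruction --- is the technical core of the argument; with $h$ and the correctly-signed isolated orbital (or a balanced pair of them) in hand, $E$ is that set of orbitals.
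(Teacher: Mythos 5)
Your proposal is incomplete by its own admission: you identify the termination of your descent at a ``usable'' generator \(h\) as ``the technical core of the argument'' and then do not carry it out. But the difficulty lies earlier than that. Your requirement (i) --- that deleting \(E\) from a single generator \(h\) literally yields a standard function --- is generally unachievable, because for a standard pair \(g \sqsubset h\) the active orbitals of \(h\) are always the \emph{outermost} ones (this falls out of the recursive definition of standard pair together with Lemma \ref{o_lem}(\ref{o_cut_one}): each rotation strips one active orbital from each end). Consequently, whenever some \(g\sqsubset h\) has \(o(g,h)\ge 2\), both extreme orbitals of \(h\) are active and every isolated orbital of \(h\) is \emph{interior}; deleting an interior orbital disconnects the extended support, so \(h/E\) is not a standard function and your ``use the leftmost, respectively rightmost, orbital'' prescription is unavailable. (A concrete instance: \(h\) with orbitals \(N_1N_2P_1P_2\) and a single \(g\sqsubset h\) with \(o(g,h)=3\) has exactly one isolated orbital, namely \(N_2\).) Meanwhile the case your descent is designed to handle --- every isolated orbital of \(h\) having the wrong sign --- in fact never occurs, again because the active orbitals form outer blocks of nearly equal length on the two sides.

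The honest version of your plan must therefore retreat to the combinatorial reading: only the \emph{dynamical diagram} of \(A/E\) need match a member of \(\Sgen\), so the gap left by an interior deleted orbital ``closes up.'' But then the burden is to verify that the closed-up function still forms a \emph{standard pair} with every other generator, and for oscillation greater than \(2\) standardness is a recursive condition that Lemma \ref{Std_Base_Facts} does not certify --- your one-sentence appeal to that lemma does not suffice. That verification is exactly what the paper's proof supplies: it inducts on complexity, splitting off direct summands \(A=B+C\) and otherwise passing to the rotation \(A\cut\) (whose outer orbitals of \(a_{\max}\) have been removed), so that the recursive clause of the definition of standard pair is checked level by level, with the only genuine base cases being a single function and the configuration in which every \(a_i\) sits in the rightmost orbital of \(a_{\max}\). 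Your single-generator excision heuristic and your fastness argument for (ii) are sound ingredients, but without the inductive verification of pairwise standardness after closing the gap, the proof does not go through.
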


\begin{proof} This is proved by induction on the complexity of
\(A\).  First suppose that \(A = \{f\}\).  Observe that \(f\) must have at least 2 bumps, and has
either the same number of positive and negative bumps or one more
positive bump.  If \(f\) has more positive bumps than negative
bumps, then let \(b\) be the rightmost bump of \(f\) and observe
that \(f/\{b\}\) is still a standard function.  Similarly, if \(f\)
has the same number of positive and negative bumps and \(b\) is the
leftmost bump of \(f\), then \(f/\{b\}\) is a standard function.  In
both cases \(A/\{b\}\) is in \(\Sgen\). 
If \(A = B + C\) for \(B,C \ne \emptyset\), then either \(B\) or \(C\) has an extraneous bump
and we are finished by our induction hypothesis and the observation that
\(A/E = B/E + C/E\).

Suppose that \(|A| > 1\) is indecomposable with an extraneous
bump.  If \(o(a_i, a_{\max})=1\) for all \(i<|A|-1\), then the
support of every \(a_i<a_{\max}\) is in the rightmost bump of
\(a_{\max}\).  If the number of bumps of \(a_{\max}\) is at least 2,
we let \(E\) consist of all bumps of \(a_{\max}\) but the rightmost.
If \(a_{\max}\) has only one bump, then there is a bump extraneous
in \(A'=A\setminus \{a_{\max}\}\).
By induction there is an \(E\) consisting of extraneous bumps of \(A'\) such that \(A'/E\) has the same dynamical diagram as
an element of \(\Sgen\).
Notice that every element of \(E\) is also extraneous in \(A\) and
\(A/E=(A'/E)\cup \{a_{\max}\}\) has the same dynamical diagram as an element of
\(\Sgen\).

If there is an \(i<|A|-1\) with \(o(a_i, a_{\max})>1\), then both of
the outer orbitals of \(a_{\max}\) are active.
If \(a_{\max}\) has only three bumps and the central bump \(b\)
of \(a_{\max}\) is extraneous in \(A\), then \(a_{\max}/b\) consists of a negative bump to the left of
a positive bump with no transition points from \(A \setminus \{a_{\max}\}\) in between these bumps.
Consequently, \(A/\{b\}\) has the same dynamical diagram as an element of \(\Sgen\).

In the remaining cases, one of the following must hold:
\(a_{\max}\) has exactly two bumps,  \(a_{\max}\) has more than three bumps,
or the central bump of \(a_{\max}\) is active.
In each case, \(A \cut\) has
an extraneous bump and we can apply our induction hypothesis to find
a nonempty set of extraneous bumps \(E\) of \(A \cut\) such that \(A \cut / E\) has the same
dynamical diagram as a member of \(\Sgen\).
By Lemma \ref{no_extraneous_pres}, \(E\) is also a set of extraneous bumps of \(A\). 
Finally, it is easily checked that \(A/E\) has the same
dynamical diagram as a member of \(\Sgen\).
\end{proof}

\begin{lem} \label{excise_inactive}
If \(A\) is an element of \(\Sgen\), then there is an \(A' \in
\Sgen\) such that: 
\begin{enumerate}

\item \(\gen{A'}\) is marked isomorphic to \(\gen{A}\);

\item \(A'\) and \(A\) have the same signature;

\item \(A'\) has no extraneous bumps.

\end{enumerate}

\end{lem}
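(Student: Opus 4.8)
The plan is to peel off extraneous bumps one batch at a time, using Lemma~\ref{find_extraneous} to locate a batch that can be excised while remaining inside \(\Sgen\) and Theorem~\ref{excision} to see that the excision does not change the marked isomorphism type. Formally I would argue by induction on the number of bumps occurring in \(A\). If \(A\) has no extraneous bumps, set \(A' := A\); conditions (1)--(3) hold trivially. Otherwise, Lemma~\ref{find_extraneous} produces a nonempty set \(E\) of extraneous bumps of \(A\) such that \(A/E\) has the same dynamical diagram as some \(B \in \Sgen\). Theorem~\ref{excision} supplies an isomorphism \(\gen{A} \to \gen{A/E}\) extending \(g \mapsto g/E\), and Theorem~\ref{comb_to_iso}, applied to the common dynamical diagram of \(A/E\) and \(B\), supplies a marked isomorphism \(\gen{A/E} \cong \gen{B}\); composing, \(\gen{B}\) is marked isomorphic to \(\gen{A}\). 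Since \(B\) and \(A/E\) have the same dynamical diagram they have the same signature, and \(B\) has strictly fewer bumps than \(A\) (it has lost exactly the edges of \(E\), and acquired none). Granting that \(A\) and \(A/E\) have the same signature --- discussed below --- the signature of \(B\) equals that of \(A\), and applying the induction hypothesis to \(B\) yields the required \(A'\).

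The crux is that excising the extraneous set \(E\) leaves every oscillation value \(o(a_i,a_j)\) unchanged. The relevant feature of the bumps in \(E\) is that they are \emph{isolated}: the support of each \(b \in E\) contains no transition point of any generator in \(A\). For a pair with \(a_i \ll a_j\) this is immediate, since excising bumps only shrinks supports and the relation \(a_i \ll a_j\) --- equivalently \(o(a_i,a_j)=0\) with \(a_i < a_j\) --- persists. For a pair with \(a_i \sqsubset a_j\) I would use Lemma~\ref{o_lem}(\ref{o_sym_fact}), which expresses \(o(a_i,a_j)\) as one more than the number of orbitals of \(a_i\) that are active with respect to \(\{a_i,a_j\}\): an orbital of \(a_i\) lying in \(E\) is non-active (its support meets no transition point of \(a_j\)), so deleting it does not change the count, and an orbital of \(a_j\) lying in \(E\) carries no transition point of \(a_i\), so it does not contribute to \(o(a_i,a_j)\) either. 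The point that requires care is that removing the two transition points at the ends of an excised bump never deactivates an orbital that survives --- equivalently, that an active orbital is never rendered active solely by a transition point sitting at the end of an isolated bump. I would verify this by a short case analysis according to whether \(o(a_i,a_j)\) is \(0\), \(1\), \(2\), or larger, invoking Lemma~\ref{Std_Base_Facts} and the positions of the outer orbitals forced by the definition of a standard function. I expect this bookkeeping to be the only real obstacle; the remainder of the argument is formal.

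A last small point is that the map \(g \mapsto g/E\) must carry the order on \(A\) to the order on \(A/E\), so that the isomorphism it induces is genuinely a marked isomorphism. This holds because for each pair \(a_i < a_j\) excision preserves the value \(o(a_i,a_j)\) and, when that value is positive, also the direction of the containment \(a_i \sqsubset a_j\); for standard pairs these data determine the ordering, so the order-preserving bijection \(A \to A/E\) is respected, and with it the hypotheses needed to invoke the induction hypothesis on \(B\).
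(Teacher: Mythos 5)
Your proposal is correct and follows essentially the same route as the paper: the paper's proof is an induction using Lemma~\ref{find_extraneous} together with Theorems~\ref{comb_to_iso} and~\ref{excision}, with the signature-preservation step dispatched by the observation that extraneous (isolated) bumps are simply not counted by the oscillation function. You spell out that last point in more detail than the paper does (and your induction on the total number of bumps is a slightly cleaner decreasing measure than the paper's count of extraneous bumps), but the substance is the same.
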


\begin{proof}
Observe that extraneous bumps are not counted by signatures.
In particular, if \(E\) is a set of extraneous bumps of \(A \in \Sgen\),
then the signature of \(A/E\) coincides with the signature of \(A\).
The proof of the lemma is now by induction on the number of extraneous
bumps of \(A\), using Lemma \ref{find_extraneous} and Theorems
\ref{comb_to_iso} and \ref{excision}.
\end{proof}

\subsection{The signature is a complete invariant for \(\Sgen\)-generated groups}

We are now ready to prove Theorem \ref{sig_thm} which asserts that 
if \(A\) and \(B\) are elements of \(\Sgen\) which have the same signature, then
the order preserving bijection between \(A\) and \(B\) extends to an isomorphism \(\gen{A} \cong \gen{B}\).

\begin{proof}[Proof of Theorem \ref{sig_thm}]
The proof is by induction on the complexity of the common signature of \(A\) and \(B\);
let \(n\) denote \(|A| = |B|\).
If \(n=1\), then \(\gen{A} \cong \Z \cong \gen{B}\).
Suppose now that \(n > 1\).
By Lemma \ref{excise_inactive}, we may assume that \(A\) and \(B\) have no extraneous bumps.
By Theorem \ref{comb_to_iso} it suffices to show that \(A\) and \(B\) have isomorphic dynamical diagrams.

If \(A = A' + A''\) for some nonempty \(A'\) and \(A''\),
then \(B = B' + B''\) for some \(B'\) and \(B''\) having the same signatures as \(A'\) and \(A''\) respectively.
Thus we can apply our induction hypothesis to conclude that
the dynamical diagrams of \(A'\) and \(B'\) are isomorphic and similarly for
\(A''\) and \(B''\).
Since the dynamical diagram of \(A\) is obtained by putting the diagram for \(A'\) to the left of the diagram for \(A''\)
--- and similarly for \(B\) --- we have that the dynamical diagrams of \(A\) and \(B\) are isomorphic.

Now suppose that neither \(A\) nor \(B\) decomposes as a sum.  By
Lemma \ref{dir_sum_char}, this means that \(o(a_i,a_{\max}) =
o(b_i,b_{\max}) > 0\) for all \(i < n-1\).  If \(a_{\max}\) has a
single orbital, then \(a_{\max}\) is a positive bump, and
\(o(b_i,b_{\max}) = o(a_i,a_{\max}) = 1\) for all \(i < n-1\).
Notice that the definition of standard pair implies that if \(i <
n-1\), then the support of \(b_i\) is contained in the rightmost
orbital of \(b_{\max}\).  Since no bump of \(b_{\max}\) is
extraneous, this must mean that \(b_{\max}\) has only one orbital
and must be a positive bump.
By our induction hypothesis, \(A \setminus \{a_{\max}\}\) and
\(B \setminus \{b_{\max}\}\) have isomorphic dynamical diagrams; let
\(D\) denote the common isomorphism type.  Notice that the dynamical
diagram for \(A\) and for \(B\) are both obtained by adding a pair of new
vertices to \(D\) --- one to the far left and one to the far right
--- as well as a right directed edge between these new vertices.
This new edge is given a label distinct from the other labels.
Hence \(A\) and \(B\) have isomorphic dynamical diagrams.

Finally, we may now assume that both \(a_{\max}\) and \(b_{\max}\) have more than one orbital.
Observe that \(A\cut\) and \(B\cut\) have the same signature and lower complexity than \(A\) and \(B\):
if \(i < n-1\), then 
\[
o(a_{\max} \cut,a_i) = o(a_i,a_{\max})-1 = o(b_i,b_{\max})-1 = o(b_{\max}\cut,b_i).
\]
By Lemma \ref{no_extraneous_pres}, \(A\cut\) and \(B\cut\) have no
extraneous bumps.  By our induction hypothesis, \(A\cut\) and
\(B\cut\) have dynamical diagrams which are isomorphic to some common
\(D\).  Notice that since every orbital of \(a_{\max}\) is active in
\(A\), \((a_{\max}) \cut\) is an isolated bump in \(A\cut\) if and
only if \(a_{\max}\) has exactly two orbitals.  Since the former
equivalent condition is a property of the dynamical diagram of
\(A\cut\), it follows that \(a_{\max}\) has exactly two orbitals if and
only if \(b_{\max}\) has exactly two orbitals.  It is now easily
checked that in both cases, \(A\) and \(B\) must have isomorphic
dynamical diagrams.  \end{proof}

\subsection{Admissible triples}

We now turn to our characterization of the set of signatures.  We start with the following proposition.
The function \(\varrho\) defined in its proof models the effects on oscillation numbers when
an indecomposable \(A \in \Sgen\) is replaced by \(A\cut\).

\begin{prop} \label{!_equiv}
The following are equivalent for an ordered triple \((p,q,r)\) of integers:
\begin{enumerate}

\item \label{rFocPoint} 
\(r \geq \min (p-1,q)\), with equality holding if \(p \ne q\);

\item \label{qFocPoint}
\(q \geq \min (p,r)\), with equality holding if \(p \ne r+1\);

\item \label{pFocPoint}
\(p \geq \min (q,r+1)\), with equality holding if \(q \ne r\);

\item \label{pqrConjunction} all of the following three inequalities
hold: 
\[
p \geq \min (q,r+1), \quad
q \geq \min (p,r), \quad
r \geq \min (p-1,q).
\]

\end{enumerate}
\end{prop}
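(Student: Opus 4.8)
The plan is to treat this as an elementary statement about triples of integers and to cut down the casework by exploiting a cyclic symmetry among conditions (1), (2), and (3). Consider the map $T\colon \Z^3 \to \Z^3$ given by $T(p,q,r) = (r+1,p,q)$. Using the identity $\min(p,q+1) = \min(p-1,q)+1$, a direct substitution shows that condition (2) for $(p,q,r)$ is literally condition (1) for $T(p,q,r)$, that condition (3) for $(p,q,r)$ is condition (1) for $T^2(p,q,r) = (q+1,r+1,p)$, and that condition (4) for $(p,q,r)$ is equivalent to condition (4) for $T(p,q,r)$ (its three inequalities are merely permuted among themselves). Since $T^3$ is translation by $(1,1,1)$, under which all four conditions are manifestly invariant, it then suffices to prove the single equivalence (1)$\Leftrightarrow$(4): applying it at $T(p,q,r)$ and at $T^2(p,q,r)$ and invoking the $T$-invariance of (4) delivers (2)$\Leftrightarrow$(4) and (3)$\Leftrightarrow$(4) as well.

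For (4)$\Rightarrow$(1) I would assume the three inequalities $p \ge \min(q,r+1)$, $q \ge \min(p,r)$, $r \ge \min(p-1,q)$ and show that whenever the equality clause of (1) fails---that is, $r > \min(p-1,q)$, equivalently $r \ge p$ or $r > q$---one must have $p = q$. In the case $r \ge p$, the second inequality collapses to $q \ge p$ and the first forces $p \ge q$ (otherwise $\min(q,r+1) = r+1 > p$); in the case $r > q$, the second inequality forces $p < r$ and hence $q \ge p$, while the first again forces $p \ge q$. Conversely, for (1)$\Rightarrow$(4) one verifies the three inequalities by hand: if $p = q$ the first two are immediate since $\min(q,r+1)$ and $\min(p,r)$ are at most $p = q$, and the third is part of (1); if $p \ne q$ then (1) gives $r = \min(p-1,q)$, and splitting on whether $p - 1 \le q$ (so that $r = p-1$ and $p \le q+1$) or $p - 1 > q$ (so that $r = q$ and $q < p$) reduces all three inequalities to these inequalities on $p$ and $q$.

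I expect the only delicate points to be the careful bookkeeping in these two small case analyses and the verification of the conjugation identities relating $T$ to the shift of the conditions; the substitution $\min(p,q+1) = \min(p-1,q)+1$ is exactly what makes the symmetry exact rather than merely approximate, and once it is in hand there is essentially no case explosion. In the course of the argument I would also isolate the auxiliary function $\varrho$---which records, for a valid triple, the triple of oscillation numbers produced when an indecomposable $A \in \Sgen$ is replaced by $A\cut$---so that it is available for the inductions in later sections; but this is a matter of packaging the equivalence just proved, not of any further proof.
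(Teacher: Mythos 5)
Your proof is correct and follows essentially the same strategy as the paper's: both exploit the cyclic transformation (your $T(p,q,r)=(r+1,p,q)$ is the paper's $\varrho(p,q,r)=(r,p-1,q-1)$ composed with the harmless translation by $(1,1,1)$) to reduce the four-way equivalence to one or two hand-checked implications, followed by short case analyses. The only cosmetic difference is which base implications you verify directly --- you do $(1)\Leftrightarrow(4)$ and use the $T$-invariance of $(4)$, whereas the paper proves $(1)\Rightarrow(2)$ and $(4)\Rightarrow(1)$ --- and your case analyses all check out.
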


\begin{proof}
We first show that (\ref{rFocPoint}) implies (\ref{qFocPoint})
and then argue that (\ref{qFocPoint}) and (\ref{pFocPoint})
are equivalent to (\ref{rFocPoint}) by symmetry.
Let us assume that \(p,q,r\in\Z\) with \(r \geq \min (p-1,q)\) and with equality holding if \(p \ne q\).
If \(p>q\) then we have \(r=q\) so that \(p>q=r\).
Therefore \(q=\min(p,r)\) and (\ref{qFocPoint}) holds.
If \(p<q\) then we have \(r=p-1<p<q\) so in particular \(p=r+1\) and \(q>\min(p,r)=r\) and again (\ref{qFocPoint}) holds.
Finally, if \(p=q\), then \(\min(p,r) \leq p = q\). 
To see that (\ref{qFocPoint}) holds, suppose that \(p \ne r+1\).
By our assumption \(r \geq p-1 = q-1\) and since \(r\) is an integer, \(r \geq p\).
Thus \(\min (p,r) = p = q\) as desired.

Now consider the transformation \(\varrho:\Z^3 \to \Z^3\) defined by \(\varrho(p,q,r) = (r,p-1,q-1)\).
Observe that \((p,q,r)\) satisfies (\ref{rFocPoint}) if and only if \((\bar p, \bar q, \bar r):= \varrho(p,q,r)\) satisfies
(\ref{pFocPoint}):
the assertion
``\(r \geq \min (p-1,q)\), with equality holding if \(p \ne q\)''
is the same as ``\(\bar p \geq \min (\bar q,\bar r + 1)\), with equality holding if \(\bar q \ne \bar r\).''
Similarly, \((p,q,r)\) satisfies (\ref{qFocPoint}) if and only if \(\varrho(p,q,r)\) satisfies (\ref{rFocPoint}).
Similarly, \((p,q,r)\) satisfies (\ref{pFocPoint}) if and only if \(\varrho(p,q,r)\) satisfies (\ref{qFocPoint}).
It follows that (\ref{rFocPoint}), (\ref{qFocPoint}), and (\ref{pFocPoint}) are equivalent.

Next observe that the equivalence of (\ref{rFocPoint}), (\ref{qFocPoint}), and (\ref{pFocPoint}) immediately yields that
each implies (\ref{pqrConjunction}).
Lastly, we assume (\ref{pqrConjunction}) and show that
(\ref{rFocPoint}) holds.
If \(p=q\), then (\ref{rFocPoint}) just asserts \(r \geq \min (p-1,q)\) and there is nothing to show.
If \(p < q\), then since \(p \geq \min (q,r+1)\), it must be that \(q > r+1\) and hence
\(p \geq r+1\).
Since \(r \geq \min (p-1,q) = p-1\) we have \(r = p-1 = \min(p-1,q)\).
Similarly if \(q < p\), then \(q \geq \min (p,r)\) implies that \(q \geq r\).
Taken with \(r \geq \min (p-1,q)\), this implies
\(r = q = \min(p-1,q)\).
\end{proof}

\begin{defn}
\((p,q,r) \in \Z^3\) is an \emph{admissible triple} if it satisfies any of the equivalent
relationships stated in Proposition \ref{!_equiv}.
\end{defn}

Proposition \ref{!_equiv} and the equality $\min (a-1,b-1) = \min (a,b) - 1$ immediately yield
the following corollary.

\begin{cor}\label{!_equivCor} For all \((p,q,r)\in \Z^3\) the following are equivalent:
\begin{enumerate}

\item \((p,q,r)\) is admissible.

\item \((q,r,p-1)\) is admissible.

\item \((r,p-1,q-1)\) is admissible.

\end{enumerate}
\end{cor}

\subsection{Characterizing the set of signatures}

In this section we will define a collection \(\Ssig\) and show that it coincides
with the collection of signatures of elements of \(\Sgen\).
In the process, we will introduce
certain algebraic operations on \(\Ssig\) and develop some basic facts about them.

\begin{defn}
\(\Psig\) is the collection of pairs \((\Ps,P)\)
such that \(P\) is a finite linearly ordered set and \(\Ps\) is a function from the unordered pairs of 
elements of \(P\) into the nonnegative integers.
The set \(P\) is called the \emph{base} of \(\Ps\).
If \(\{a,b\} \in P\) with \(a < b\), we will write \(\Ps(a,b)\) for \(\Ps(\{a,b\})\).
\end{defn}

The next definition and its relationship to \(\Sgen\)
will be central to much of the rest of the paper.

\begin{defn}
\(\Ssig\) is the set of all \(\As \in \Psig\) such that 
whenever \(a < b < c\) are in \(A\), the triple \((\As(b,c),\As(a,c),\As(a,b))\) is admissible.
\end{defn}

Note that vacuously \(\zero,\one \in \Ssig\).
Formally we view \(\Ssig\) as a set by
using the choice of canonical representatives from each equivalence
class noted in Section \ref{OscSig:sec}, although sometimes it will be convenient to work with different representatives.
We often write that a function on pairs is
in \(\Ssig\) when we really mean that its equivalence class is in \(\Ssig\).
Notice that just as \(\Sgen\) was defined as those finite sets of marked functions all of whose pairs
are standard, \(\Ssig\) is defined as those elements of \(\Psig\) all of whose triples are admissible.

Anticipating \(\Ssig\)'s relation to \(\Sgen\), we will often confuse the
distinction between an \(\As \in \Ssig\) and its base, writing things such as
``the cardinality of \(\As\)'' or ``the elements of \(\As\)'' when we are really referring
to the cardinality or elements of the base of \(\As\).

If \(\Bs\) and \(\Cs\) are in \(\Ssig\), define an element \(\Bs+\Cs \in \Ssig\)
with cardinality \(|\Bs| + |\Cs|\) by
\[
(\Bs + \Cs)(i,j) := 
\begin{cases}
\Bs(i,j) & \textrm{ if } i < j < |B| \\
\Cs(i-|B|,j-|B|) & \textrm{ if } |B| \leq i < j < |B| + |C| \\
0 & \textrm{ if } b \in B \textrm{ and } c \in C
\end{cases}
\]
The next lemma formalizes what
was meant in Section \ref{BasicOp:sec} when we said that \(+\) defines an operation
at the level of signatures; the proof is left to the reader.

\begin{lem} \label{sig_sum_welldef}
If \(\Bs\) and \(\Cs\) are in \(\Ssig\), then so is \(\Bs + \Cs\).
Moreover, if \(A,B,C \in \Sgen\) satisfy that \(A = B + C\), then 
the signature of \(A\) is \(\Bs + \Cs\).
\end{lem}

The following property of elements of $\Ssig$ will be used often.

\begin{lem}\label{ZeroToEnd}
Suppose \(\As\in \Ssig\).
If \(i<j<k  < |A|\) and \(\As(j,k)=0\), then
\(\As(i,k)=0\).
\end{lem}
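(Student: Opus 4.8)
The plan is to apply the condition (!) directly to the triple $i<j<k$ and then read off the conclusion from Proposition \ref{!_equiv}. Since $\As$ satisfies (!), we have $\As(i,j) = \As(j,k)\mathbin{!}\As(i,k)$. Setting $p := \As(j,k)$, $q := \As(i,k)$, and $r := \As(i,j)$, this is precisely the relation $r = p\mathbin{!}q$ in the sense of Proposition \ref{!_equiv}; the hypothesis is $p = 0$ and the goal is $q = 0$.

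To isolate $q$, I would invoke characterization (\ref{qFocPoint}) of Proposition \ref{!_equiv}: from $r = p\mathbin{!}q$ we get $q \geq \min(p,r)$, with equality holding unless $p = r+1$. Since $\As$ takes values in $\omega$, we have $r \geq 0$ and hence $r+1 \geq 1 > 0 = p$, so the exceptional case $p = r+1$ does not occur. Therefore $q = \min(p,r) = \min(0,r) = 0$, which is exactly the assertion $\As(i,k) = 0$.

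The only point requiring care is the bookkeeping of which value of $\As$ plays the role of $p$, $q$, or $r$ in the ternary relation $r = p\mathbin{!}q$ --- in particular that it is $\As(j,k)$, not $\As(i,j)$ or $\As(i,k)$, which occupies the first slot --- together with the observation that nonnegativity of the values of $\As$ makes both $\min(0,r) = 0$ and the failure of the ``unless $p = r+1$'' clause automatic. Beyond this there is no genuine obstacle: the lemma is an immediate consequence of Proposition \ref{!_equiv}. (Alternatively, one could run the same argument through characterization (\ref{rFocPoint}): if $q \neq 0$ then $p = 0 \neq q$ forces $r = \min(p-1,q) = \min(-1,q) = -1$, which is impossible since $r \in \omega$.)
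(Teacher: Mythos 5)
Your proposal is correct and takes essentially the same route as the paper: apply (!) to the triple \(i<j<k\) and read the conclusion off Proposition \ref{!_equiv}. The paper argues by contradiction via characterization (\ref{rFocPoint}) --- which is precisely your parenthetical alternative --- while your main argument uses characterization (\ref{qFocPoint}) directly; the two clauses are equivalent, so the difference is cosmetic.
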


\begin{proof}
If \(\As(i,k)\ne0\), then
\[
\As(i,j) = \min(\As(j,k)-1, \As(i,k)) = -1
\]
which is not possible.
\end{proof}

\begin{defn}
An element \(\As\) of \(\Ssig\) is \emph{indecomposable} if there do
not exist \(\Bs,\Cs \ne \zero\) such 
that \(\As = \Bs + \Cs\).
\end{defn}
We need the following analog of Lemma \ref{dir_sum_char}.

\begin{lem} \label{top_positive}
If \(\As \in \Ssig\) is indecomposable and  \(i < n = |\As|-1\),
then \(\As(i,n) > 0\). 
\end{lem}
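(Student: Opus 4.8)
The plan is to prove the contrapositive: if some \(i < n\) satisfies \(\As(i,n) = 0\), then \(\As\) decomposes as \(\Bs + \Cs\) with \(\Bs,\Cs \ne \zero\). This is the signature-level analog of Lemma \ref{dir_sum_char}, but it must be carried out purely combinatorially in terms of property (!), since at this point in the paper it is not yet known that every element of \(\Ssig\) is realized by a member of \(\Sgen\).

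First I would consider the set \(S = \{a < n : \As(a,n) = 0\}\), which is nonempty by hypothesis. Lemma \ref{ZeroToEnd} shows that \(S\) is an initial segment of \(\{0,\ldots,n-1\}\), so \(S = \{0,\ldots,j\}\) for \(j = \max S\). The heart of the argument is the claim that \(\As(a,b) = 0\) whenever \(a \le j < b \le n\). When \(b = n\) this is immediate from \(a \in S\). When \(j < b < n\), the choice of \(j\) forces \(b \notin S\), hence \(\As(b,n) \ge 1\); applying (!) to the triple \(a < b < n\) gives \(\As(a,b) = \As(b,n) ! \As(a,n) = \As(b,n) ! 0\), and condition (\ref{rFocPoint}) of Proposition \ref{!_equiv} --- using \(\min(\As(b,n)-1,0) = 0\), with equality holding because \(\As(b,n) \ne 0\) --- yields \(\As(a,b) = 0\). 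This proves the claim.

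Finally I would let \(\Bs\) and \(\Cs\) be the restrictions of \(\As\) to the bases \(\{0,\ldots,j\}\) and \(\{j+1,\ldots,n\}\); a restriction of a map satisfying (!) again satisfies (!), so \(\Bs,\Cs \in \Ssig\), and both have nonempty base since \(0 \le j < n\). The claim says precisely that \(\As(b,c) = 0\) for every \(b\) in the base of \(\Bs\) and \(c\) in the base of \(\Cs\), so \(\As = \Bs + \Cs\) by the definition of \(+\) on \(\Ssig\) from Section \ref{BasicOp:sec}; this contradicts the indecomposability of \(\As\). I do not anticipate a genuine obstacle: the only delicate point is the bookkeeping with Proposition \ref{!_equiv} to evaluate \(p ! 0 = 0\) for \(p \ge 1\), and in particular making sure the maximality of \(j\) is actually used so that \(\As(b,n) > 0\) in the remaining case --- without it the same computation would instead return the impossible value \(-1\), exactly as in the proof of Lemma \ref{ZeroToEnd}.
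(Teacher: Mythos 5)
Your proposal is correct and follows essentially the same route as the paper's proof: argue the contrapositive, take \(j\) maximal with \(\As(j,n)=0\), invoke Lemma \ref{ZeroToEnd} to see that \(\{a : \As(a,n)=0\}\) is an initial segment, and then evaluate \(\As(a,b) = \As(b,n)\,!\,0 = 0\) for \(a \le j < b\) via Proposition \ref{!_equiv} to exhibit the decomposition \(\As = \Bs + \Cs\). If anything you are slightly more careful than the paper, which states the key computation only for \(i < j < k \le n\) and leaves the case of left index equal to \(j\) implicit.
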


\begin{proof}
We prove the contrapositive.
Let \(j\) be maximal such that \(\As(j,n) = 0\).  By Lemma
\ref{ZeroToEnd}, \(\As(i,n)=0\) for all \(i<j\).
If \(i<j<k\le n\), we have \(\As(i,n)=0<\As(k,n)\) so
\[
\As(i,k) = \min(\As(k,n)-1, \As(i,n)) = 0.
\]
Thus setting \(\Bs := \{0,\ldots,j\}\) 
and \(\Cs := \{j+1,\ldots,n\}\) we have \(\As=\Bs+\Cs\).
\end{proof}

We now define the analog of the rotation operation on \(\Ssig\).
Just as in the case of \(\Sgen\), we define the \emph{complexity} of an element \(\As \in \Ssig\)
to be the pair \((|\As|,\sum_{i < j} \As (i,j))\).
Set \(\zero \cut = \zero\) and \(\one \cut = \zero\).
If \(\As = \Bs + \Cs\) for \(\Bs,\Cs \ne \zero\) and \(\Cs\cut\) has been defined,
then we set \(\As \cut :=\Bs + (\Cs \cut)\).
If \(\As \in \Ssig\) is indecomposable and has
\(\{0,\ldots,n\}\) as its base for some \(n > 0\),
define \(n \cut := -1\) and \(A \cut := \{n\cut,0,\ldots,n-1\} = \{-1,0,\ldots,n-1\}\).
For \(n \cut <  i  < n\), set \(\As \cut(n\cut,i) = \As(i,n)-1\);
if \(n\cut < i < j < n\), set \(\As\cut(i,j) = \As (i,j)\).
By Lemma \ref{top_positive},
the values taken by \(\As \cut\) are nonnegative.

\begin{lem} \label{sig_rotation_lem}
The following are true:
\begin{enumerate}

\item \label{rotation_preserves_Ssig}
If \(\As\) is in \(\Ssig\), then \(\As\cut\) is in \(\Ssig\).

\item
The map \(\As \mapsto \As\cut\) is one-to-one on the indecomposable
elements of \(\Ssig\).

\item If \(\As \ne \zero\) is in \(\Ssig\), the complexity of \(\As\cut\) is strictly less than that of \(\As\).

\end{enumerate}
\end{lem}

\begin{proof}
We will only verify (\ref{rotation_preserves_Ssig}) and leave the remainder to
the intersested reader.
In order to verify that \(\As \cut\) is in \(\Ssig\), it suffices to show that
if \(n \cut < i < j < n\), then
\[
\As \cut(n\cut,i) \geq \min \big(\As \cut(i,j)-1, \As \cut(n\cut,j)\big)
\]
with equality holding if
\(\As\cut(i,j) \ne \As\cut(n\cut,j)\).
But this is equivalent to
\(\As(i,n) \geq \min \big(\As(i,j),\As(j,n)\big)\) with equality if
\(\As(j,n) \ne \As(i,j) + 1\).
Since this is true by the equivalence of (\ref{rFocPoint}) and
(\ref{qFocPoint}) in Proposition \ref{!_equiv}, we have
that \(\As\cut\) is in \(\Ssig\).
\end{proof}

\begin{thm}\label{sig_exclam}
\(\Ssig\) is the set of signatures of elements of \(\Sgen\).
Moreover,  if \(A \in \Sgen\), then
the signature of the rotation of \(A\) is the rotation of the signature of \(A\). 
\end{thm}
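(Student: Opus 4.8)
The plan is to prove the two inclusions ``every signature of an element of \(\Sgen\) lies in \(\Ssig\)'' and ``every element of \(\Ssig\) is realized by an element of \(\Sgen\)'' separately, obtaining the ``moreover'' clause as a byproduct of the first. Both arguments proceed by induction on complexity, using that \(A\mapsto A\cut\) on \(\Sgen\) and \(\As\mapsto\As\cut\) on \(\Ssig\) each strictly decrease complexity.

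For the first inclusion together with the ``moreover'' clause, I would induct on the complexity of \(A\in\Sgen\). If \(|A|\le 1\) the signature is \(\zero\) or \(\one\) and the claims are immediate. If \(A=B+C\) with \(B,C\) nonempty then \(\As=\Bs+\Cs\); the inductive hypothesis gives \(\Bs,\Cs\in\Ssig\), and for a triple straddling the two blocks at least two of the three relevant oscillation values vanish, so relation (!) holds by inspection of Proposition \ref{!_equiv}(\ref{rFocPoint}); the ``moreover'' clause in this case follows from Lemma \ref{rotation_lem}(\ref{sum_cut}), the inductive hypothesis for \(C\), and the definition of \(\cut\) on \(\Ssig\). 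If \(A\) is indecomposable, then Lemma \ref{dir_sum_char} yields \(o(a_i,a_{\max})>0\), hence \(a_i\sqsubset a_{\max}\) and \((a_i,a_{\max})\) standard, for every \(i<|A|-1\); by Lemma \ref{rotation_lem}(\ref{indecomp_cut}) the least element of \(A\cut\) is \(a_{\max}\cut\), and Lemma \ref{o_lem}(\ref{o_cut_one}) gives \(o(a_{\max}\cut,a_i)=o(a_i,a_{\max})-1\) while the oscillations among \(A\setminus\{a_{\max}\}\) are unchanged. Comparing this with the definition of the rotation of a table shows at once that the signature of \(A\cut\) equals \(\As\cut\) (this is the ``moreover'' clause), and applying the inductive hypothesis to \(A\cut\) shows \(\As\cut\in\Ssig\). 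Relation (!) for triples inside \(A\setminus\{a_{\max}\}\) holds by the inductive hypothesis for that set, and (!) for a triple \(a_i<a_j<a_{\max}\) is deduced from the instance of (!) for \(a_{\max}\cut<a_i<a_j\) in \(\As\cut\) via Corollary \ref{!_equivCor}; hence \(\As\in\Ssig\).

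For the second inclusion I would again induct on complexity, now of \(\As\in\Ssig\). The cases \(\zero\) (empty set) and \(\one\) (a single positive bump) are trivial, and if \(\As=\Bs+\Cs\) with \(\Bs,\Cs\neq\zero\) one realizes each summand by the inductive hypothesis and conjugates the realization of \(\Cs\) into an interval to the right of that of \(\Bs\). Suppose then \(\As\) is indecomposable with \(|\As|\ge 2\), so \(\As(i,|\As|-1)>0\) for all \(i\) by Lemma \ref{top_positive}. Let \(A'\in\Sgen\) realize \(\As\cut\) (possible by the inductive hypothesis, since \(\As\cut\) has strictly smaller complexity), with least element \(c\) and other elements \(a_0<a_1<\cdots\). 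The heart of the construction is to adjoin to \(c\) a new leftmost negative orbital and a new rightmost positive orbital so that the resulting marked function \(b\) is standard, satisfies \(b\cut=c\), has extended support engulfing those of all the \(a_i\), and keeps \(\{a_0,a_1,\ldots,b\}\) geometrically fast; here one uses that a standard function with at least two bumps automatically has a negative bump, so the parity constraints are met, and that \(b\) then has more than two orbitals, so that \(b\cut\) is literally the restriction of \(b\) to the orbitals of \(c\). Setting \(A:=\{a_0,a_1,\ldots,b\}\), each \((a_i,b)\) is a standard pair because \(a_i\sqsubset b\) and \((b\cut,a_i)=(c,a_i)\) is a pair from \(A'\in\Sgen\), and since \(A\cut=A'\in\Sgen\) with \(b\cut=c<a_i\) for all \(i\), Lemma \ref{Sgen_char} gives \(A\in\Sgen\). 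Its signature is \(\As\): the oscillations among the \(a_i\) are inherited from \(A'\), while \(o(a_i,b)=o(c,a_i)+1\) by Lemma \ref{o_lem}(\ref{o_cut_one}), which equals the value of \(\As\) on the matching pair by the definition of \(\As\cut\).

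The step I expect to be the main obstacle is the construction of \(b\) in the last paragraph: one must build an honest standard function with \(b\cut=c\) while preserving geometric fastness, and the subtle case is an \(a_i\) with \(c\sqsubset a_i\), so that the support of \(a_i\) surrounds that of \(c\) and must be swallowed by the two new outer orbitals of \(b\) without any feet of \(a_i\) meeting the feet of those orbitals. After a preliminary conjugation of \(A'\) that spreads out its transition points and markers, the two new orbitals can be chosen (with suitable markers, and ``moving points far'' so that their feet are pushed against the orbital endpoints) to avoid the finitely many feet of the \(a_i\); verifying this, together with the routine checks that \(b\) is standard and \(b\cut=c\), is the only nontrivial part, everything else reducing to Lemmas \ref{o_lem}, \ref{dir_sum_char}, \ref{rotation_lem}, \ref{Sgen_char} and Corollary \ref{!_equivCor}.
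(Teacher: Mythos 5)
Your proposal is correct and follows essentially the same route as the paper: both directions are proved by induction on complexity, with the forward direction (and the commutation of signature with rotation) reduced via Lemmas \ref{o_lem} and \ref{rotation_lem} and Corollary \ref{!_equivCor} to the instance of (!) in $\As\cut$, and the reverse direction realized by first constructing a base for $\As\cut$ and then adjoining a new maximum standard function $b$ with $b\cut$ equal to the least element, checked via Lemma \ref{Sgen_char}. The only cosmetic differences are that you verify closure of $\Ssig$ under $+$ explicitly and spell out the fastness bookkeeping for the new outer orbitals of $b$, which the paper handles by a ``without loss of generality'' rescaling.
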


\begin{proof}
First we prove that every signature of an element \(A\) of \(\Sgen\) is admissible and
hence is in \(\Ssig\).
The proof is by induction on the complexity of \(A\).
If \(|A| \leq 2\), there is nothing to show, so suppose that \(|A| \geq 3\).
Also, if \(A = B+C\) for nonempty \(B,C \in \Sgen\), then \(\Bs\) and \(\Cs\) are in \(\Ssig\)
by our induction hypothesis and since \(\As = \Bs + \Cs\), \(\As \in \Ssig\) by the closure
of \(\Ssig\) under sums.
Now suppose that \(A\) is indecomposable.
Then \(A\cut\) is in \(\Sgen\) and by Lemma \ref{rotation_lem},
\(a_{\max} \cut < a_i\) for all \(i < |A|\).
Since it has lower complexity than \(A\),
\(A\cut\) is subject to the induction hypothesis and hence its signature
is in \(\Ssig\).
It suffices to show that if \(i < j< k = |A|-1\),
then \((o(a_j,a_k) , o(a_i,a_k), o(a_i,a_j))\) is admissible.
By our inductive assumption we know that
\[
 (o(a_i,a_j) , o(a_k \cut, a_j) , o(a_k \cut,a_i))
\]
is admissible
and thus
\[
(o(a_i,a_j) , o(a_j , a_k) -1, o(a_i,a_k)-1 )
\]
is admissible.
By Corollary \ref{!_equivCor}, this is equivalent to 
\[
(o(a_j,a_k) , o(a_i,a_k), o(a_i,a_j) )
\]
being admissible. 
This completes the proof that signatures of elements of \(\Sgen\) are in \(\Ssig\).

Suppose now that \(\As\) is in \(\Ssig\); we need to prove that there is an \(A \in \Sgen\) whose signature is \(\As\).
This is proved by induction on the complexity of \(\As\).
We may assume \(\As\) has base \(\{0,\ldots,n\}\).
If  \(n = 0\), there is nothing to show.
Also, if \(\As = \Bs + \Cs\), then let \(B\) and \(C\) be elements of \(\Sgen\) which have
\(\Bs\) and \(\Cs\) as their respective signatures.
By scaling and translating if necessary, we may assume that the elements of \(B\)
are supported on \((0,1/2)\) and the elements of \(C\) are supported on \((1/2,1)\).
It is now easy to check that every pair from \(A = B + C = B \cup C\) is standard
and thus \(A\) is in \(\Sgen\) and has \(\As = \Bs + \Cs\) as its signature.

Now suppose that \(n > 0\) and \(\As\) is indecomposable.
By Lemma \ref{top_positive}, \(\As(i,n) > 0\) for all \(i < n\).
Since the complexity of \(\As\cut\) is less than that of \(\As\),
our induction hypothesis implies that \(\As \cut\) is the signature of some ordered
sequence \(a_{n\cut},a_0, \ldots, a_{n-1}\) comprising an element of \(\Sgen\).
Without loss of generality, we may assume that the supports of each of these functions
is contained in \((1/3,2/3)\) and that moreover the greatest and least transition points of \(a_{n \cut}\) are
not in the closures of the feet of the \(a_i\)'s.
Let \(a_n\) be a standard function with e-support \((0,1)\)
such that \(a_n \cut = a_{n\cut}\) and whose feet are disjoint from those of \(a_i\) for all \(i < n\).
It follows that \((a_i,a_n)\) is standard for all \(i < n\).
Since the signature of \(A\) is \(\As\) by (\ref{o_cut_one}) of Lemma \ref{o_lem},
we are finished with the proof of the first conclusion of the theorem.
That the signature and rotation maps commute follows from their definitions and 
Lemmas \ref{o_lem} and \ref{rotation_lem}.
\end{proof}

\section{The inflation operation}

\label{inflation:sec}

In this section we introduce a fundamental operation on \(\Sgen\) and
establish how it influences signatures.
This operation plays a central role in subsequent sections.
Let us begin with the observation that if \(A \in \Sgen\),
then
\[
N:=\gen{ (a_i)^{a_{\max}^p} \mid i < |A|-1 \textrm{ and } p \in \Z}
\]
is a normal subgroup of \(\gen{A}\), and \(\gen{A}\) is an extension of
\(N\) by \(\Z\).
If we define, for each \(k\),
\[
A_k := \{ (a_i)^{a_{\max}^p} \mid i < |A|-1 \textrm{ and } |p| \leq k \},
\]
then \(A_k\) is fast and \(N=\bigcup \gen{A_k}\).  Each \(A_k\)  has the
same dynamical diagram as 
\[
B_k:=\{ (a_i)^{a_{\max}^p} \mid i < |A|-1 \textrm{ and } 0 \leq p
\leq 2k \}. 
\]

The need to understand the relationships between the groups
\(\gen{A}\), \(N\), and the groups \(\gen{A_k}\) motivates a family of primitive
transformations which we term \emph{inflations}.
\begin{defn}
If \(A\) is in \(\Sgen\) and \(a \in A\), then the \emph{inflation} of
\(A\) by \(a\) is the set
\[
\infl{A}{a} := A \cup \{a^2\} \cup \{b^a \mid b \in A \textrm{ and }
b < a\} \setminus \{a\}. 
\]
\end{defn}
Observe that if \(a = a_{\max}\), then by iterating this procedure we
obtain supersets of the \(B_k\).
Note that clearly \(\gen{\infl{A}{a}} \subseteq \gen{A}\) and since \(A\) is fast,
\(\gen{A}\) embeds into \(\gen{\infl{A}{a}}\);
see the end of the proof of Lemma \ref{inflate_biembedd} for details.

For \(A\), \(B\) in \(\Sgen\), we write \(A \leq B\)
if there is a sequence \((B_i \mid i \leq n)\) such that
\(B_0 = B\), \(B_n\) has the same dynamical diagram as  \(A\), and such
that if \(i < n\), then \(B_{i+1}\) is contained in an inflation of \(B_i\).
In particular for any \(a\) in \(A\), \(A \leq \infl{A}{a} \leq A\).
Allowing \(n=0\) makes \(\le\) reflexive and \(\le\) is clearly transitive. 
Define an equivalence relation \(\equiv\) on \(\Sgen\) by
\(A \equiv B\) if and only if \(A \le B \le A\).
A major aim of the rest of the paper is to show
that the relation \(\le\) coincides with
the embeddability relation on the indecomposable elements of \(\Sgen\) and
that moreover the \(\Sgen\)-generated groups are pre-well-ordered by the
embeddability relation with order type \(\epsilon_0\).

Now we assign a \Marking{} to \(\infl{A}{a}\).  The functions in
\(\infl{A}{a} \cap A = A \setminus \{a\}\) maintain their \Markings{}.
The \Markers{} of \(b^a\) are of the form \(s a\) where \(s\) is a
\Marker{} of \(b\).  Finally, if \(s\) is a \Marker{} of a positive bump
of \(a\), then \(s\) is a \Marker{} of \(a^2\); if \(s\) is a \Marker{} of
a negative bump of \(a\), then \(sa\) is a \Marker{} of \(a^2\).  This
\Marking{} has the property that if \(t\) is in the support of \(a\)
but not in one of its feet, then \(ta\) is not in a foot of \(a^2\).
In particular, \(\infl{A}{a}\) is fast.  Notice that if \(A = B+C\),
then \(\infl{A}{b} = \infl{B}{b} + C\) for all \(b \in B\), and
\(\infl{A}{c} = B+ \infl{C}{c}\) for all \(c \in C\).

\begin{lem} \label{inflate_biembedd}
If \(A\) is in \(\Sgen\) and \(a \in A\), then \(\infl{A}{a}\) is in \(\Sgen\) and  
\(\gen{\infl{A}{a}}\) is biembeddable with \(\gen{A}\).
In particular, if \(A \leq B\) are in \(\Sgen\) then \(\gen{A}\) embeds into \(\gen{B}\).
\end{lem}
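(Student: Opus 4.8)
The plan is to prove the two assertions separately, noting that the biembeddability half does not actually require $\infl{A}{a} \in \Sgen$; I would treat membership first and record the biembeddability at the end, in keeping with the forward reference made when inflations were introduced.

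For membership, I would induct on the complexity of $A$, that is, on the pair $(|A|,\sum_{i<j} o(a_i,a_j))$ ordered lexicographically. If $A = B + C$ with $B,C$ nonempty, then, as noted just before the lemma, $\infl{A}{a}$ is $\infl{B}{a}+C$ or $B+\infl{C}{a}$; the inductive hypothesis applies to the summand containing $a$, so $\infl{A}{a}$ is then a sum of two elements of $\Sgen$ and hence lies in $\Sgen$. If $A$ is indecomposable with $|A|=1$, then $\infl{A}{a}=\{a^2\}$ is a single standard function. If $A$ is indecomposable with $|A|>1$, then $a_i \sqsubset a_{\max}$ for all $i<|A|-1$ by Lemma \ref{top_positive}. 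When $a \ne a_{\max}$ we have $\infl{A}{a} = \infl{A\setminus\{a_{\max}\}}{a} \cup \{a_{\max}\}$, so the inductive hypothesis applies to $A\setminus\{a_{\max}\}$ and it remains to check that each pair $(x,a_{\max})$ with $x$ in the inflated part is standard; this uses that $(a,a_{\max})$ is standard, an auxiliary fact (proved by a parallel induction) that $(f,g)$ standard implies $(f,g^2)$ standard for a suitably \Marked{} $g^2$, and Lemma \ref{o_conj_bound} for the pairs involving the conjugates $b^a$.

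The heart of the matter is the indecomposable case with $a = a_{\max}$. Here Lemma \ref{o_conj_bound} gives $a_i < a_j^{a_{\max}}$ for all $i,j<|A|-1$, so $\infl{A}{a_{\max}}$ is totally ordered by $<$ with greatest element $a_{\max}^2$, and it is fast by the \Marking{} assigned to it; the plan is to invoke Lemma \ref{Sgen_char}. Its second hypothesis --- that $(a_{\max}^2)\cut$ lies to the left of every other element --- follows from Lemmas \ref{Std_Base_Facts} and \ref{rotation_lem}, since $(a_{\max}^2)\cut$ has the same extended support as $a_{\max}\cut$, which by indecomposability and Lemma \ref{rotation_lem} is the least element of $A\cut$, and since $a_{\max}$ carries each of its orbitals into itself. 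The real obstacle is the first hypothesis, that the rotation $(\infl{A}{a_{\max}})\cut$ lies in $\Sgen$: that set consists of the $a_i$, their conjugates $a_i^{a_{\max}}$, and $(a_{\max}^2)\cut$, and so has \emph{strictly larger} complexity than $A$ --- the induction on $A$ does not reach it. I would resolve this by strengthening the induction so that all of the iterated rotations of $\infl{A}{a}$ are handled at once (each rotation strictly lowers complexity, and the process terminates at a single bump), proving membership by repeated application of Lemma \ref{Sgen_char}. Throughout, the facts that conjugation by a homeomorphism commutes with the rotation operation (up to the choice of \Markers{} and the base case of the $\cut$ operation) and carries standard pairs to standard pairs reduce every pair involving a conjugate to one already known to be standard in $A$ or in $A\cut$, while Lemma \ref{o_conj_bound} bounds the oscillation numbers of the mixed pairs $(a_i,a_j^{a_{\max}})$.

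For biembeddability, $\gen{\infl{A}{a}} \subseteq \gen{A}$ holds trivially since every generator of $\infl{A}{a}$ lies in $\gen{A}$. For the reverse embedding, set $\tilde{A} := (A\setminus\{a\}) \cup \{a^2\}$, with $a^2$ carrying the \Marking{} it received as part of the \Marking{} of $\infl{A}{a}$. From that \Marking{} one reads off at once that each foot of $a^2$ lies inside the corresponding foot of $a$ and that no foot of any other element of $A$ separates them; hence $\tilde{A}$ is a fast set of \Marked{} functions with the same dynamical diagram as $A$, and by Theorem \ref{comb_to_iso} the order preserving bijection $A \to \tilde{A}$ extends to an isomorphism $\gen{A} \cong \gen{\tilde{A}}$. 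Since $\tilde{A} \subseteq \infl{A}{a}$, this realizes $\gen{A}$ as a subgroup of $\gen{\infl{A}{a}}$; together with the trivial inclusion $\gen{\infl{A}{a}} \subseteq \gen{A}$, these two embeddings give the biembeddability, completing the proof.
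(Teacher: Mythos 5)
Your overall architecture matches the paper's: induction on complexity, immediate reduction via decomposability, the split into the cases \(a \ne a_{\max}\) and \(a = a_{\max}\), the appeal to Lemma \ref{Sgen_char} and Lemma \ref{o_conj_bound}, and a biembeddability argument at the end. That last part is correct and is essentially verbatim the paper's: \(\gen{\infl{A}{a}} \subseteq \gen{A}\) trivially, while \(A' = (A\setminus\{a\})\cup\{a^2\}\) has the same dynamical diagram as \(A\), so Theorem \ref{comb_to_iso} embeds \(\gen{A}\) into \(\gen{\infl{A}{a}}\). The sum case and the singleton case are also fine.

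The genuine gap is in the case you yourself flag as the heart of the matter: \(A\) indecomposable and \(a = a_{\max}\). You correctly observe that \((\infl{A}{a_{\max}})\cut\) has strictly larger complexity than \(A\), so the induction does not reach it, but your proposed remedy --- ``strengthen the induction so that all iterated rotations of \(\infl{A}{a}\) are handled at once, proving membership by repeated application of Lemma \ref{Sgen_char}'' --- is announced rather than executed, and it is precisely in executing it that all the work lies. Each application of Lemma \ref{Sgen_char} to an iterated rotation requires verifying that the set is totally ordered by \(<\) and that the rotated maximum sits below every element it nests inside; unwinding these conditions through the successive rotations amounts exactly to establishing that the mixed pairs \((a_i, a_j^{a_{\max}})\) are standard, which is the one thing your argument never actually does. (Lemma \ref{o_conj_bound} gives only \(a_i < a_j^{a_{\max}}\) and an upper bound on \(o(a_i,a_j^{a_{\max}})\); it does not give standardness, which requires chasing the recursion in the definition of standard pair.) The paper closes this by first reducing to \(|A|=3\), say \(A=\{b,c,a\}\), and then doing a case analysis on \(o(b,a)\) and \(o(c,a)\): when both are at least \(3\) it shows \((b^a)\cut = (b\cut)^{a\cut}\) and applies the inductive hypothesis to the rotated triple (so the higher-complexity pair \((c,b^a)\) is reduced, via one application of the recursive definition, to a pair living in an inflation of a \emph{lower}-complexity set), and when one of the oscillations is \(1\) or \(2\) it argues directly from Lemma \ref{Std_Base_Facts}. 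Some version of this analysis --- or a worked-out verification of the \(\Sgen\_char\) hypotheses along the whole rotation sequence --- is needed; as written, your proof of membership is incomplete at exactly the step that makes the lemma nontrivial. (A smaller instance of the same omission occurs in your \(a \ne a_{\max}\) case, where the pairs \((b^a, a_{\max})\) are asserted but not argued; the paper handles them by routing the induction through \(A\cut\) and \(\infl{(A\cut)}{a}\) rather than through \(A\setminus\{a_{\max}\}\).)
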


\begin{proof}
The proof that \(\infl{A}{a}\) is in \(\Sgen\) is
by induction on the complexity of \(A\).  There is nothing to show
if \(|A|\le1\).  We have noted that if \(A \in \Sgen\) and \(a \in
A\), then \(\infl{A}{a}\) is fast.  Furthermore, if \(b \in A\),
then \(\{b^a,a^2\} = \{b^a,(a^2)^a\}\) is a standard pair.  Also, by
the observation made just prior to 
the statement of this lemma,
we may assume that \(A\) is indecomposable.

If \(a < a_{\max}\), then observe that \((\infl{A}{a}) \cut \subset
\infl{(A\cut)}{a}\).  By our inductive hypothesis, \(\infl{(A
\cut)}{a}\) is in \(\Sgen\) and thus \((\infl{A}{a}) \cut\) is in
\(\Sgen\).  From the definition of standard pairs and the
indecomposability of \(A\), we have that \((a_{\max}\cut, b)\) is
standard for all \(b\in A\setminus\{a_{\max}\}\).  We know \(j:=o(b,
a_{\max})>0\).  If \(j=1\), then \(\supt(b)\) is contained in the
rightmost orbital of \(a_{\max}\) which implies that \(\supt(b^a)\)
is contained in that same orbital.  Thus \(a_{\max}\cut\ll b^a\).
If \(j>1\), then the extreme orbitals of \(a_{\max}\) are active for
both \(\{a_{\max}, b\}\) and \(\{a_{\max}, b^a\}\), so
\(a_{\max}\cut \sqsubset b^a\). 
By Lemma \ref{Sgen_char}, \(\infl{A}{a}\) is in \(\Sgen\).

Now suppose that \(a = a_{\max}\).  Observe that it suffices to
assume that \(A = \{a,b,c\}\) with \(b<a\) and \(c < a\) (we allow \(b = c\)).  Since
\(A\) is indecomposable, Lemma \ref{o_conj_bound} gives that
\(c<b^a\) and \(b<c^a\).  
We start by verifying that 
\((c,b^a)\) and \((b,c^a)\) are standard pairs. 

Assume first that \(o(c,a)\ge3\) and \(o(b,a)\ge3\), and consider
pairs \((c, b^a)\) and \((b, c^a)\).
By two applications of Lemma
\ref{o_lem}, we have \(o(c\cut, a\cut)\ge1\) and \(o(b\cut,
a\cut)\ge1\).  Thus \(c\cut \sqsubset a\cut\) and \(b\cut\sqsubset
a\cut\) from which \((b\cut)^{a\cut} = (b\cut)^a = (b^a)\cut\) and
\((c\cut)^{a\cut} = (c\cut)^a = (c^a)\cut\) follows.  By our inductive
assumption applied to the pairwise standard set \(\{a \cut,b\cut,c\cut\}\),
\((c\cut, (b\cut)^{a\cut}) = (c\cut, (b^a)\cut)\) is a
standard pair.
Since \((b^a) \cut \sqsubset c\), it follows
from the definition that \((c, b^a)\) is standard.
Similarly, \((b, c^a)\) is standard as well.

Next suppose that either \(o(c,a)\le2\) or \(o(b,a)\le2\).  By the
symmetry of \(b\) and \(c\), we can assume \(o(b,a)\le o(c,a)\).
Since we assume \(A\) is indecomposable, we have \(1\le o(b,a)\le
2\).

We first assume \(o(b,a)=2\).
From Lemma \ref{Std_Base_Facts}, we
have that the extreme orbitals of \(a\) are active in both
\(\{a,c\}\) and \(\{a,b\}\).  Further the extreme orbitals of \(a\)
contain all the transition points of \(b\) with only one transition
point of \(b\) in the rightmost orbital of \(a\).  This puts the
support of \(c\) in the rightmost orbital of \(b^a\).  From Lemma
\ref{Std_Base_Facts}, this makes \((c, b^a)\) standard.  If
\(o(c,a)=2\), a similar argument makes \((b,c^a)\) standard.  If
\(o(c,a)\ge3\), then \(o(a\cut,c)\ge 2\) and
it follows from Lemma \ref{Std_Base_Facts} that
the extreme orbitals of \(c\) each contain at least one transition
point of \(a\).  This puts all of the transition points of
\(b^{a^{-1}}\) into the extreme orbitals of \(c\) with only one
transition point of \(b^{a^{-1}}\) in the rightmost orbital of
\(c\).  From Lemma \ref{Std_Base_Facts}, this implies \((b^{a^{-1}},
c)\) standard, and thus \((b,c^a)\) is also standard.

Now assume \(o(b,a)=1\).  From Lemma \ref{Std_Base_Facts}, this puts
the support of \(b\) in the rightmost orbital of \(a\).  If
\(o(c,a)=1\), then \(c\ll b^a\) and \(b\ll c^a\) making both
\((c,b^a)\) and \((b,c^a)\) standard.  If \(o(c,a)\ge2\), then from
Lemma \ref{Std_Base_Facts}, the only transition point of \(c\) in the
rightmost orbital of \(a\) is the rightmost transition point of
\(c\).  We still have \(c\ll b^a\) making \((c, b^a)\) standard.
But now the support of \(b^{a^{-1}}\) is in the rightmost orbital of
\(c\) which makes \((b^{a^{-1}}, c)\) standard by Lemma
\ref{Std_Base_Facts}.

Observe that the other pairs such as \((b^a,a)\) and \((c^a,b^a)\) are conjugates
of standard pairs and therefore are standard as well.
This completes the proof that \(\Sgen\) is closed under inflation.

To see that \(\gen{\infl{A}{a}}\) is biembeddable in \(\gen{A}\), first note that
\(\gen{\infl{A}{a}} \subseteq \gen{A}\).
Also, since \(A':=A\setminus \{a\} \cup \{a^2\}\) has the same dynamical diagram as
\(A\), \(\gen{A'} \subseteq \gen{\infl{A}{a}}\) is isomorphic to \(\gen{A}\) by Theorem \ref{comb_to_iso}.
\end{proof}

We can also define an operation of inflation and a relation \(\le\)
on \(\Ssig\) that corresponds to inflation and \(\le\) on \(\Sgen\).
\begin{defn}
If \(\As\) is in \(\Ssig\) and \(m \in \As\), then the
\emph{inflation} of \(\As\) by \(m\), denoted \(\infl{\As}{m}\)
has as its base (where \(i^m\) is a formal symbol)
\begin{equation}\label{InflUnderlying}
A \cup \{i^m \mid (i \in A) \mand ( i < m) \mand (\As(i,m) > 0)\}
\end{equation}
to which the linear order on \(A\) is extended by declaring
\(j < i^m < j^m  < m \leq k\) whenever \(i < j < m \leq k < |A|\) and \(\As(i,m) > 0\) (in which case \(\As(j,m) > 0\) as well).
The function \(\infl{\As}{m}\) extends that of \(\As\) by defining
for each \(i,j < m \leq k < |\As|\):
\begin{align*}
\infl{\As}{m}(i^m,j^m) & := \As(i,j) \\
\infl{\As}{m}(i,j^m) & := \min\big(\As(j,m)-1,\As(i,m)\big) \\
\infl{\As}{m}(i^m,k) & := \min\big(\As(i,m),\As(m,k)\big).
\end{align*}
Here we adopt the notational convention that \(\As(m,m) = \infty\).
\end{defn}

From the provision \(\As(i,m)>0\) in (\ref{InflUnderlying}), we get
the following fact that parallels the behavior of inflations in
\(\Sgen\): if \(\As=\Bs+\Cs\), then \(\infl{\As}{b} = \infl{\Bs}{b}
+ \Cs\) for all \(b\in \Bs\), and \(\infl{\As}{c} =
\Bs+\infl{\Cs}{c}\) for all \(c\in \Cs\).

\begin{lem} \label{infl_sig_commute}
If \(A\) is in \(\Sgen\) and \(m < |A|\), then the signature of
the inflation of \(A\) by \(a_m\) is the inflation of \(\As\) by \(m\).
\end{lem}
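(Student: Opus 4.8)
The plan is to prove the identity by induction on the complexity of $A$, following the inductive pattern of Lemma~\ref{inflate_biembedd} and Theorem~\ref{sig_exclam}; write $a := a_m$ and $n := |A|$. The case $n \le 1$ is immediate. If $A = B + C$ with $B, C$ nonempty, then, as observed just before Lemma~\ref{inflate_biembedd}, $\infl{A}{a}$ equals $\infl{B}{a} + C$ or $B + \infl{C}{a}$ according to which summand contains $a$; since the signature of a sum is the sum of the signatures and, by the remark following its definition, $\infl{\As}{m}$ splits the same way, this case reduces to the inductive hypothesis. So we may assume $A$ is indecomposable, whence $o(a_i, a_{\max}) > 0$ for all $i < n-1$ by Lemma~\ref{top_positive} (equivalently Lemma~\ref{dir_sum_char}).

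First I would match the ordered base of $\infl{A}{a}$ with the base~(\ref{InflUnderlying}) of $\infl{\As}{m}$. Its elements are the $a_i$ with $i \ne m$, the function $a^2$ (which has exactly the orbitals, hence the transition points, of $a$), and the conjugates $a_i^{\,a}$ with $i < m$; when $o(a_i, a) = 0$ one has $a_i \ll a$, so $a_i^{\,a} = a_i$ contributes nothing, matching the restriction $\As(i,m) > 0$ in~(\ref{InflUnderlying}). That the order is $a_j < a_i^{\,a} < a_j^{\,a} < a^2 \le a_k$ for $i < j < m \le k$ follows from Lemma~\ref{o_conj_bound}, which yields the strict inequality $f < g^h$ under the relevant hypotheses, together with the fact that conjugation by $a$ pushes supports to the right inside $\supt(a)$.

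Next I would compute oscillation numbers. Pairs not involving a conjugate keep the values of $\As$: $o(a_i, a_j) = \As(i,j)$, and $o(a_i, a^2) = o(a_i, a) = \As(i,m)$ since $a$ and $a^2$ have the same orbitals. For pairs of conjugates, conjugation-invariance of $o$ gives $o(a_i^{\,a}, a_j^{\,a}) = o(a_i, a_j) = \As(i,j)$ and $o(a_i^{\,a}, a^2) = o(a_i^{\,a}, a) = o(a_i, a) = \As(i,m)$. For the two new types of pair, Lemma~\ref{o_conj_bound} gives $o(a_i, a_j^{\,a}) \le \min(\As(j,m)-1, \As(i,m))$ for $i, j < m$, and, using in addition $a \sqsubset a_k$ (valid since $\As(m,k) > 0$), $\supt(a_j^{\,a}) \subseteq \supt(a)$, Lemma~\ref{Std_Base_Facts}, and~(\ref{o_cover_char}) of Lemma~\ref{o_lem}, the bound $o(a_j^{\,a}, a_k) \le \min(\As(j,m), \As(m,k))$ for $j < m < k$. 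These upper bounds are exactly the values prescribed for $\infl{\As}{m}(i, j^m)$ and $\infl{\As}{m}(j^m, k)$.

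The essential remaining point — which I expect to be the main obstacle — is to upgrade these inequalities to equalities. Rather than argue this dynamically, I would use that $\infl{A}{a}$ lies in $\Sgen$ by Lemma~\ref{inflate_biembedd}, so its signature lies in $\Ssig$ by Theorem~\ref{sig_exclam} and in particular satisfies~(!); hence it is determined by its values on pairs containing its greatest element. When $m = n-1$, that element is $a^2$ and every pair $(x, a^2)$ was already evaluated exactly above, so~(!) determines all remaining oscillation numbers, and it remains only to check that the values it forces agree with those of $\infl{\As}{m}$ — which, via Corollary~\ref{!_equivCor}, reduces to the fact that $\As$ itself satisfies~(!) together with Proposition~\ref{!_equiv}. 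When $m < n-1$, that element is $a_{\max}$; the pairs $(a_i, a_{\max})$ and $(a^2, a_{\max})$ are exact above, the pairs $(a_j^{\,a}, a_{\max})$ are the instances $k = n-1$ of the bound just described, and~(!) (applied with $a_j^{\,a} < a^2 < a_{\max}$) together with that bound pins them down as well; once the full column above $a_{\max}$ is known,~(!) again determines everything else, and one concludes as before. (An alternative, slightly more structural, treatment of the case $m < n-1$ relates $(\infl{A}{a})\cut$ to $\infl{(A\cut)}{a}$ as in the proof of Lemma~\ref{inflate_biembedd}, uses the inductive hypothesis on $A\cut$ — whose signature is $\As\cut$ by Theorem~\ref{sig_exclam} — and appeals to the injectivity of rotation on indecomposable elements of $\Ssig$.) In all of this the only genuinely laborious ingredient is a finite verification that the operations $+$, $!$, and $\min$ interact as required, governed entirely by Proposition~\ref{!_equiv} and its corollary.
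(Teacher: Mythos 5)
Your proposal is correct and follows essentially the same route as the paper: the central mechanism in both is the complementary pair of inequalities --- the lower bound coming from the fact that $\infl{A}{a_m}$ lies in $\Sgen$ and hence its signature satisfies (!), and the upper bound coming from Lemma~\ref{o_conj_bound} --- which together force equality. For the case $m<|A|-1$ the paper uses the rotation/induction argument you describe as your ``alternative, slightly more structural'' treatment, reducing everything to the single computation of $o(a_i^m,a_{\max})$ rather than pinning down the full top column directly.
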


\begin{proof}
The proof is by induction on the complexity of \(A\).
Since the lemma is vacuous if \(|A| \leq 1\) and
by the remarks made just before Lemma \ref{inflate_biembedd},
we may assume that \(A\) is indecomposable.

First suppose that \(m = |A|-1\).
Notice that the cardinality of the inflation of the signature and the signature of the inflations are equal and
moreover that the order preserving bijection pairs each conjugate with its symbolic conjugate in the inflated
signature.
Since \(\infl{A}{{a_m}}\) is in \(\Sgen\), Theorem \ref{sig_exclam} implies that if \(i,j < m\) then
\begin{align*}
o(a_i,a_j^{a_m}) & \geq \min\big( o(a_j^{a_m},a_m)-1 , o(a_i,a_m) \big) \\
 &  = \min\big( o(a_j,a_m)-1 , o(a_i,a_m) \big). 
\end{align*}
On the other hand, Lemma \ref{o_conj_bound} implies
\[
o(a_i,a_j^{a_m}) \leq \min\big( o(a_j,a_m)-1 , o(a_i,a_m) \big).
\]
Thus \(o(a_i,a_j^{a_m}) = \min\big( o(a_j,a_m)-1 , o(a_i,a_m) \big) = \infl{\As}{m}(i,j^m)\),
as desired.

If \(m < |A|-1\), then by our induction hypothesis the signature of the inflation \(\infl{(A\cut)}{m}\) is 
the corresponding inflation \(\infl{(\As\cut)}{m}\) of the signature \(\As \cut\).
Since the signature and rotation maps commute, it therefore suffices to verify that whenever \(i < m\)
\[
o(a_i^m, a_{\max}) = \min \big( o(a_i,a_m),o(a_m,a_{\max}) \big).
\]
By our induction hypothesis
\begin{align*}
o(a_i^m,a_{\max}) & = o(a_{\max} \cut , a_i^m) + 1 \\
& = 
\min
\big( 
o(a_{\max} \cut ,a_m),
o(a_i, a_m) -1
\big) + 1 \\
&
= \min \big( 
o(a_m, a_{\max}) -1,
o(a_i, a_m) -1
\big) + 1  \\
&
= \min \big( o(a_i,a_m),o(a_m,a_{\max}) \big).
\end{align*}
\end{proof}

\begin{remark}
Note the complementary role which the inequalities Proposition \ref{!_equiv} and those in
Lemma \ref{o_conj_bound} play in this proof.
\end{remark}

We now define the relation \(\le\) on \(\Ssig\) just as with
\(\Sgen\): \(\As \leq \Bs\) if and only if there is a sequence
\(\Bs_0 = \Bs\), \(\Bs_1\), ..., \(\Bs_n =\As\) such that if \(i <
n\) then \(\Bs_{i+1}\) is contained in (i.e., is a restriction of)
an inflation of \(\Bs_i\).

\begin{prop} \label{reduction_to_embedding}
If \(A, B \in \Sgen\) then
the following are true:
\begin{enumerate}

\item if \(A \leq B\), then \(\As \leq \Bs\);

\item if \(\As \leq \Bs\), then there exist \(A' \in \Sgen\) with
signature \(\As\) such that \(A' \leq B\).

\end{enumerate}
In particular, if \(\As \leq \Bs\) are in \(\Ssig\), then \(\gen{A}\) embeds into \(\gen{B}\).
\end{prop}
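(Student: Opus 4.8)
The plan is to show that the signature map $A \mapsto \As$ intertwines the relation $\le$ on $\Sgen$ with the relation $\le$ on $\Ssig$, and then to read the embedding statement off the chain of subgroup inclusions that accompanies any witness of $A' \le B$. The two substantive inputs, Lemmas \ref{inflate_biembedd} and \ref{infl_sig_commute}, have already done the real work; what remains is bookkeeping, namely that inflation stays inside $\Sgen$, changes $\gen{A}$ only up to biembeddability, and corresponds under the signature map to the combinatorial inflation on $\Ssig$.

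For the first part, suppose $A \le B$, witnessed by a sequence $B_0 = B, B_1, \ldots, B_n$ in which $B_n$ has the same dynamical diagram as $A$ and each $B_{i+1}$ is contained in an inflation $\infl{B_i}{a}$ of $B_i$. I would pass to signatures termwise: let $\Bs_i$ be the signature of $B_i$. The signature of a fast set of standard functions is a function of its dynamical diagram, so $\Bs_n = \As$. Writing $a = a_m$ in the increasing enumeration of $B_i$, Lemma \ref{infl_sig_commute} gives that the signature of $\infl{B_i}{a}$ is $\infl{\Bs_i}{m}$; and since a subset of an element of $\Sgen$ is again in $\Sgen$, with signature the evident restriction, $\Bs_{i+1}$ is a restriction of $\infl{\Bs_i}{m}$. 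Thus $\Bs_0 = \Bs, \Bs_1, \ldots, \Bs_n = \As$ witnesses $\As \le \Bs$.

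For the second part I would induct on the length $n$ of a witness $\Bs = \Bs_0, \Bs_1, \ldots, \Bs_n = \As$ of $\As \le \Bs$ in $\Ssig$, proving the statement uniformly in the choice of $B \in \Sgen$ with signature $\Bs$. If $n = 0$ then $\As = \Bs$ and $A' := B$ works. The crucial case is $n = 1$: here $\As$ is a restriction of $\infl{\Bs}{m}$ for some $m$, so taking $a = a_m$ in the enumeration of $B$, Lemmas \ref{inflate_biembedd} and \ref{infl_sig_commute} give that $\infl{B}{a}$ lies in $\Sgen$ with signature $\infl{\Bs}{m}$; letting $A'$ be the subset of $\infl{B}{a}$ corresponding to the base of $\As$, we get $A' \in \Sgen$ with signature $\As$, and the two-term sequence $B, A'$ witnesses $A' \le B$. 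For $n > 1$, the initial segment $\Bs_0, \Bs_1$ together with the $n=1$ case produces $B_1 \in \Sgen$ with signature $\Bs_1$ and $B_1 \le B$; the tail $\Bs_1, \ldots, \Bs_n = \As$ has length $n-1$, so the induction hypothesis applied with $B_1$ in place of $B$ yields $A' \in \Sgen$ with signature $\As$ and $A' \le B_1$, whence $A' \le B$ by transitivity of $\le$ on $\Sgen$.

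Finally, for the embedding claim, given $\As \le \Bs$ in $\Ssig$ choose $A, B \in \Sgen$ realizing them (using Theorem \ref{sig_exclam}) and apply the second part to get $A' \in \Sgen$ with signature $\As$ and $A' \le B$. Each step $B_{i+1} \subseteq \infl{B_i}{a}$ of a witnessing sequence gives $\gen{B_{i+1}} \subseteq \gen{\infl{B_i}{a}} \subseteq \gen{B_i}$, so $\gen{B_n} \subseteq \gen{B}$; since $B_n$ has the same dynamical diagram as $A'$, Theorem \ref{comb_to_iso} gives $\gen{A'} \cong \gen{B_n}$, and since $A$ and $A'$ share the signature $\As$, Theorem \ref{sig_thm} gives $\gen{A} \cong \gen{A'}$, so $\gen{A} \emb \gen{B}$. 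I expect the only points needing care to be the two bits of bookkeeping flagged at the outset: that passing to a subset of an element of $\Sgen$ is legal and restricts the signature in the obvious way, and that the induction in the second part must be phrased uniformly in the ambient generating set so that it can be re-applied to the intermediate $B_1$ rather than to $B$.
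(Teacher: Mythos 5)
Your proposal is correct and follows the same route as the paper: part (1) from Lemma \ref{infl_sig_commute} applied termwise, part (2) by induction on the length of the witnessing sequence with the single-inflation case handled by Lemmas \ref{inflate_biembedd} and \ref{infl_sig_commute}, and the embedding claim from part (2) together with Theorems \ref{sig_thm} and \ref{comb_to_iso} and the containments $\gen{B_{i+1}} \subseteq \gen{\infl{B_i}{a}} \subseteq \gen{B_i}$. The paper's proof is a three-sentence sketch of exactly this argument; your write-up just supplies the bookkeeping it leaves implicit.
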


\begin{proof}
The first assertion is an immediate consequence of Lemma
\ref{infl_sig_commute}.  The second assertion in the case \(\As =
\infl{\Bs}{m}\) for some \(m\) is also an immediate consequence of
Lemma \ref{infl_sig_commute}; the general case follows by induction.
The final assertion follows from the second assertion 
and Lemma \ref{inflate_biembedd}.  \end{proof}

\section{Wreath products of \(\Sgen\)-generated groups}

\label{wreath:sec}

In this section we show that the partial binary operation \(*\) on \(\Sgen\) introduced in Section \ref{BasicOp:sec}
corresponds to forming a permutation wreath product of the associated groups.
We also explain why the operation is only partially defined.
Recall that if \(A,B,C \in \Sgen\) then \(A = B * C\) asserts that \(A = B
\cup C\) and both \(b < c\) and \(o(b,c) =1\) hold whenever \(b \in
B\) and \(c \in C\).
Observe that if \(A \in \Sgen\) and \(A = B*C\) for
nonempty \(B,C\), then there is an open interval \(J \subseteq I\)
such that: \begin{itemize}

\item
\(J\) contains the supports of all elements of \(B\);

\item
\(J\) is contained in the rightmost orbital of each element of \(C\)
and is disjoint from the feet of \(C\).

\end{itemize}
Notice that this implies in particular that \(\Cs(i,j) > 0\) for all \(i < j < |C|\) and hence that
\(\Cs = \exp(\Xs)\) for some \(\Xs\).
By our characterization of \(\Ssig\), \(\Xs\) is in \(\Ssig\).
If moreover \(\Bs(i,j) > 0\) for all \(i < j < |B|\),
then there is a \(t_0\) which is in the rightmost orbital of each \(b_i\).
Fix such \(J\) and \(t_0\) and define 
\[
\Xcal := \{t_0 f \mid f \in \gen{B}\}, \qquad
\qquad
\Ycal := \{ J f \mid f \in \gen{C}\}.
\]
The goal of this section is to prove the following proposition.

\begin{prop} \label{wreath_prop}
Suppose that \(A \in \Sgen\) and \(\As(i,j) > 0\) for all \(i < j <
|A|\).  If \(A = B * C\) for nonempty \(B,C \subseteq A\), then the
actions of \(\gen{B}\) on \(\Xcal\) and \(\gen{C}\) on \(\Ycal\) are
faithful and consistent.  In particular the action of \(\gen{A}\) on
\(\Xcal \times \Ycal\) is the permutation wreath product of these
actions: \(\gen{B * C} \cong \gen{B} \wr \gen{C}\).
\end{prop}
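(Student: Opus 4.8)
The plan is to verify the hypotheses of Lemma \ref{PreWrLem} with the ambient set taken to be \(I\), the two groups taken to be \(G = \gen{B}\) and \(H = \gen{C}\), and with \(X = \{t_0\}\) and \(Y = J\). First I would record the inclusions \(\{t_0\} \subset \supt(\gen{B}) \subset J \subset I\): the point \(t_0\) lies in an orbital of \(b_0\) and so is moved by \(b_0\); and \(\supt(\gen{B}) = \bigcup_{b \in B}\supt(b) \subseteq J\), since a point fixed by every generator is fixed by \(\gen B\) and \(J\) contains the support of each element of \(B\). Because \(X\) is a singleton, the \(\gen B\)-action on \(\Xcal\) is automatically consistent (if \(t_0 g = t_0\) then \(g\) fixes the only point of \(X\)). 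Thus the real content is three claims: (a) \(\gen{C}\) acts consistently on \(\Ycal\); (b) \(\gen{C}\) acts faithfully on \(\Ycal\); (c) \(\gen{B}\) acts faithfully on \(\Xcal\). Granting (a)--(c), Lemma \ref{PreWrLem} immediately yields that the action of \(\gen{A} = \gen{B \cup C}\) on \(\Xcal \times \Ycal\) is the permutation wreath product of the \(\gen B\)- and \(\gen C\)-actions, which is \(\gen{B} \wr \gen{C}\).

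For (a) I would use the explicit geometry of \(J\). Fix \(c \in C\); its rightmost orbital \((p_c,q_c)\) is positive (positive bumps lie to the right of negative ones in a standard function), with feet \((p_c,s_c)\) and \([t_c,q_c)\) and \(t_c = s_c c\). Since \(J \subset (p_c,q_c)\) is disjoint from the feet of \(C\), we get \(J \subseteq [s_c,t_c)\), hence \(Jc \subseteq [t_c,q_c)\) and \(Jc^{-1} \subseteq (p_c,s_c]\); in particular \(Jc^{\pm 1} \cap J = \emptyset\), and iterating shows the translates \(Jc^{p}\) (\(p \in \Z\)) are pairwise disjoint and ordered within \((p_c,q_c)\). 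Promoting this to the statement that \(Jh \cap J \neq \emptyset\) forces \(h\) to fix \(J\) pointwise is the ``ping-pong'' bookkeeping carried out in the proof of \cite[Proposition 2.5]{BrinEG}; here I would run it by induction on \(|C|\), peeling off \(c_{\max}\) and using the recursive definition of standard pair (together with Lemma \ref{Std_Base_Facts}) to control how the remaining elements of \(C\) sit among the orbitals of \(c_{\max}\). Claim (b) then reduces, via (a), to showing that an \(h \in \gen{C}\) fixing \(J\gen{C}\) pointwise is trivial --- i.e. that the orbit \(J\gen{C}\) meets the support of every nonidentity element of \(\gen{C}\).

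The crux is the remaining faithfulness assertion --- claim (c), and the reduced form of claim (b) --- which I would isolate as one lemma: \emph{if \(S \in \Sgen\) has \(o(s,s') > 0\) for all \(s < s'\) in \(S\) (so \(s \sqsubset s'\) and the extended supports are nested), and a point \(x_0\) lies in the rightmost orbital of every element of \(S\), then \(\gen{S}\) acts faithfully on the orbit \(x_0\gen{S}\)}; and similarly with a feet-avoiding interval in place of \(x_0\). This is proved by induction on the complexity of \(S\). If every \(o(s_i,s_{\max}) = 1\), then Lemma \ref{Std_Base_Facts}(1) puts \(S \setminus \{s_{\max}\}\) inside the rightmost orbital of \(s_{\max}\), so \(S = (S \setminus \{s_{\max}\}) * \{s_{\max}\}\); using the inflation picture of Section \ref{inflation:sec} one sees directly that the conjugates \(s_i^{s_{\max}^{p}}\) have pairwise disjoint supports in the disjoint translates of that orbital, and faithfulness on \(x_0\gen{S}\) follows from the inductive faithfulness of \(\gen{S \setminus \{s_{\max}\}}\) on its own orbit together with the fact that \(s_{\max}\) moves \(x_0\) through infinitely many of those translates. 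If instead some \(o(s_i,s_{\max}) > 1\), I would pass to the rotation \(S\cut\): Lemmas \ref{o_conj_bound} and \ref{Std_Base_Facts} pin down how the transition points of \(S \setminus \{s_{\max}\}\) lie among the orbitals of \(s_{\max}\), which lets one show that \(\overline{x_0\gen{S}}\) contains the relevant transition points of \(S\) and then invoke Proposition \ref{faithful_prop}, applied to the (finite, geometrically fast) set of bumps of \(S\), to detect any nontrivial \(g \in \gen{S}\).

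I expect this faithfulness lemma to be the main obstacle. Proposition \ref{faithful_prop} is stated for sets of \emph{bumps} and for a point-set meeting \emph{every} orbital, whereas here the generators are genuine multi-bump standard functions and we are handed only the single orbit \(x_0\gen{S}\); the substance is the combinatorial argument --- using crucially that all oscillation numbers are positive --- that this one orbit is already spread out enough to be faithful for \(\gen{S}\). Everything else (the inclusions, the automatic consistency of the \(\gen B\)-action, and the final application of Lemma \ref{PreWrLem}) is routine.
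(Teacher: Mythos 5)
Your overall strategy is exactly the paper's: reduce to Lemma \ref{PreWrLem} by establishing consistency and faithfulness of the two actions, note that consistency is free for the singleton \(\{t_0\}\), and isolate as the crux a faithfulness lemma for \(\gen{S}\) acting on the orbit of a point or interval sitting in the common rightmost orbital. The paper proves precisely the two lemmas you identify (Lemma \ref{ComplIntLem} for consistency and Lemma \ref{FaithfulLem} for faithfulness), though it gets the interval case of consistency from the ``trivial history'' machinery (Lemma 5.6 of \cite{fast_gen}) rather than from a bespoke ping-pong induction, and its faithfulness induction is a single uniform argument rather than your two-case split: by induction the orbit of the smaller elements has closure containing all of their transition points, hence meets every \emph{active} orbital of the top element, and since every orbital is active and open the orbit meets every orbital, which is all that Proposition \ref{faithful_prop} requires.

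The one genuine gap is the clause ``since every orbital is active'': this is false in the presence of extraneous bumps, and your proposal never removes them. Proposition \ref{faithful_prop} demands that the set \(X\) meet \emph{every} orbital of the generating bumps, and neither your first case (where \(s_{\max}\) may carry bumps besides its rightmost one) nor your second case gives you any handle on an isolated orbital containing no transition points of \(S\) --- your phrase ``the relevant transition points'' concedes as much, but \ref{faithful_prop} as stated does not let you ignore the irrelevant orbitals, and a priori a nontrivial element of \(\gen{S}\) could act trivially on the orbit while being supported on the extraneous part. The paper disposes of this at the very first line of its proof: by Theorem \ref{excision} (together with Theorem 7.1 of \cite{fast_gen}, which ensures excision respects the decomposition \(A = B*C\)) one may assume \(A\) has no extraneous bumps, after which every orbital is active and the induction closes. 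You need either to insert that excision step at the outset or to prove a variant of Proposition \ref{faithful_prop} that sees only active orbitals; as written, your faithfulness argument does not go through.
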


\begin{proof}
Observe that by Lemma \ref{excise_inactive},
we may assume that \(A\) has no extraneous
bumps.  The proposition is an immediate consequence of
Lemma \ref{PreWrLem} and 
the next two lemmas
using the assignments \(K = \{t_0\}\) with \(S=B\) for Lemma \ref{ComplIntLem} and
\(K=J\) with \(S=C\) for Lemma \ref{FaithfulLem}.
\end{proof}

\begin{lem}\label{ComplIntLem}
Let \(S \in \Sgen\) and \(K\) be a singleton or an open interval
disjoint from the feet of \(S\).  If \(g \in \gen{S}\) and \(K g\cap
K\ne \emptyset\), then \(g |_K\) is the identity.
\end{lem}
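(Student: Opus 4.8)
The plan is to dispose of the singleton case at once and then reduce the open-interval case to the combinatorics of the bumps of $S$. If $K=\{x_0\}$, then $Kg\cap K\ne\emptyset$ says exactly $x_0g=x_0$, which is the conclusion $g|_K=\mathrm{id}$; so assume $K$ is an open interval. Passing from $S$ to the finite, geometrically fast set $A$ of all bumps of the elements of $S$, we have $\gen{S}\subseteq\gen{A}$ and the feet of $S$ are precisely the feet of $A$; so it suffices to prove that for every $g\in\gen{A}$, if $g|_K\ne\mathrm{id}$ then $Kg\cap K=\emptyset$ (the lemma being the contrapositive).

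First I would record the geometry of $K$ relative to $A$, using only that $K$ is an interval disjoint from every foot of $A$. (i) $K$ contains no transition point of $A$: a transition point is an endpoint of an orbital $(u,v)$ of some bump $a$, and if $K$ contained it then $K$, being an interval meeting $(u,v)$, would meet the foot of $a$ adjacent to that endpoint. Hence for every orbital $O$ of $A$, either $\overline K\subseteq O$ or $\overline K\cap\overline O=\emptyset$. (ii) If a bump $a$ has $\overline K\subseteq\supt(a)$, then disjointness from the two feet of $a$ forces $\overline K$ into the core $[s_a,t_a]$ of $\supt(a)$, and a short computation shows that $Ka$ and $Ka^{-1}$ lie inside the right and left foot of $a$ respectively, hence are disjoint from $K$. (iii) A bump whose orbital misses $\overline K$ fixes $K$ pointwise. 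Thus every single generator either fixes $K$ pointwise or carries $K$ entirely off itself into one of its own feet; moreover all the feet of $A$, together with $K$, form a pairwise disjoint family of intervals.

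The remaining step, which I expect to carry essentially all the difficulty, is to promote the one-generator statement (ii)+(iii) to arbitrary $g\in\gen{A}$. The obstacle is that writing $g$ as a word in the bumps and applying the letters to $K$ one at a time does not work naively: the partial images of $K$ need not remain disjoint from the feet of $A$ — they slide \emph{into} feet — so there is no immediate ping-pong. To get past this I would invoke the orbit-tree / normal-form description of $\gen{A}$ for fast $A$ from \cite{fast_gen} (the same machinery underlying Proposition \ref{faithful_prop}): $K$ occupies a well-defined location in the orbit tree, and one tracks, as the letters of a reduced word for $g$ are applied, which foot and at what depth each partial image of $K$ currently sits in, running an induction on that depth whose base case is exactly (ii). The conclusion is that $Kg$ is either equal to $K$ with $g|_K=\mathrm{id}$, or lies inside a foot of $A$ and is therefore disjoint from $K$; the hypothesis $Kg\cap K\ne\emptyset$ then forces the first alternative. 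Restricting to $g\in\gen{S}\subseteq\gen{A}$ gives the lemma.
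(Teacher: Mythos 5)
Your preliminary reductions and the observations (i)--(iii) about how a single bump moves $K$ are all correct, and the singleton case is indeed immediate. But the proof has a genuine gap at exactly the point you yourself flag as carrying ``essentially all the difficulty'': the promotion from single generators to an arbitrary $g\in\gen{A}$ is never carried out. What you offer there is a plan (``invoke the orbit-tree / normal-form description \dots one tracks \dots running an induction on that depth''), not an argument, and the dichotomy you want it to yield --- either $Kg=K$ with $g|_K=\mathrm{id}$, or $Kg$ lies inside a foot of $A$ --- is precisely the nontrivial content of the lemma. Note in particular that the first alternative requires more than tracking the \emph{set} $K$ through the letters of a word: you must also rule out that some word returns $K$ to itself by a nontrivial homeomorphism, and nothing in (i)--(iii) or in the proposed depth induction addresses that. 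Completing your route amounts to re-deriving a substantial piece of the history/ping-pong analysis of \cite{fast_gen} inside this proof.

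The paper's proof sidesteps all of this with a single citation. Since $K$ is disjoint from the feet of $S$, every point of $K$ has trivial history in the sense of \cite{fast_gen}, and Lemma 5.6 of that paper then states that every orbit of $\gen{S}$ meets $K$ in at most one point. Hence $t\in K$ and $tg\in K$ force $tg=t$; so if $Kg\cap K\ne\emptyset$ the fixed-point set of $g$ in $K$ is nonempty, and a short continuity argument shows it is both open and closed in $K$, hence equals $K$. If you want your write-up to go through, you should either quote that orbit-counting fact directly or prove it; as written, the heart of the lemma is asserted rather than proved.
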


\begin{proof}
In the language of \cite{fast_gen}, every point in \(K\) has trivial
history.  Lemma 5.6 of \cite{fast_gen} implies that every orbit of
\(\gen{S}\) intersects \(K\) in at most one point.
This gives the conclusion if \(K\) is a singleton.  If \(K\) is open, observe that
if \(K g\cap K\ne\emptyset\), then some \(t\in K\) has \(tg\in K\)
which implies \(tg=t\).  Let \(x\) be any fixed point of \(g\) in
\(K\).  By continuity, there is an open subset \(U\) of \(K\) about
\(g\) with \(Ug\subseteq K\) implying that \(g\) is the identity on
\(U\).  Thus the fixed set of \(g\) in \(K\) is open in \(K\).
Again by continuity, the fixed set of \(g\) in \(K\) is closed in
\(K\) and must equal \(K\).
\end{proof}

\begin{lem}\label{FaithfulLem}
Let \(S \in \Sgen\) have no extraneous bumps and satisfy that
\(\mathsf{S}(i,j) > 0\) for all \(i < j < |S|\).  If \(J\) is a
singleton or an open interval contained in the rightmost orbital of
every \(f \in S\) and disjoint from the feet of \(S\), then the
action of \(\gen{S}\) on \(\{J g \mid g \in \gen{S} \}\) is
faithful.
\end{lem}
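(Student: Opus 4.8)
The plan is to reduce, using Lemma \ref{ComplIntLem}, to a statement about a single orbit of \(\gen{S}\), and then to apply Proposition \ref{faithful_prop} to the set of bumps of \(S\). First I would take \(w\in\gen{S}\) in the kernel of the action on \(\Ycal=\{Jg\mid g\in\gen{S}\}\) and show \(w=1\). For each \(g\in\gen{S}\) we have \((Jg)w=Jg\), which rewrites as \(J(gwg^{-1})=J\); since \(J\) is disjoint from the feet of \(S\), Lemma \ref{ComplIntLem} forces \(gwg^{-1}\) to restrict to the identity on \(J\), i.e.\ \(w\) fixes \(Jg\) pointwise. Hence \(w\) fixes \(J\gen{S}\) pointwise. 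Fixing a point \(x\in J\) --- which lies in the rightmost orbital of every element of \(S\) and is disjoint from the feet of \(S\) --- it then suffices to show that no nonidentity element of \(\gen{S}\) fixes \(x\gen{S}\) pointwise.

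Next I would introduce \(A\), the (finite, geometrically fast) set of all bumps of elements of \(S\), so that \(\gen{S}\subseteq\gen{A}\), and prove the orbit identity \(x\gen{S}=x\gen{A}\). The inclusion \(\subseteq\) is immediate, and \(x\gen{A}\subseteq x\gen{S}\) follows by induction on word length in \(A\cup A^{-1}\): if \(y=xg\) lies in \(x\gen{S}\) and we apply \(b^{\pm1}\), where \(b\) is a bump of \(f'\in S\) with orbital \(P\), then \(yb^{\pm1}=y\) when \(y\notin P\) (because \(b\) is the identity off \(P\) and fixes the endpoints of \(P\)), and \(yb^{\pm1}=y(f')^{\pm1}\) when \(y\in P\) (because \(b\) and \(f'\) agree on \(P\)); either way \(yb^{\pm1}\in x\gen{S}\).

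The substantial step is to show that \(x\gen{A}\) meets every orbital of \(A\). This is where the two hypotheses enter. Because \(S\) has no extraneous bumps, every orbital of \(A\) contains a transition point of \(S\) --- an endpoint of some other orbital --- and because \(\mathsf S(i,j)>0\) for all \(i<j\) the extended supports of the \(f_i\) are nested; an analysis with Lemma \ref{Std_Base_Facts} then shows that the orbitals of \(A\) form a connected configuration. The orbit \(x\gen{A}\) can navigate it: \(x\) lies in the rightmost orbital of each \(f_i\), and from any point of \(x\gen{A}\) lying in an orbital \(Q\) one reaches every orbital whose interior contains an endpoint of \(Q\) by applying a high power of the bump on \(Q\) (for instance, large negative powers of \(f_i\) push \(x\) toward the left endpoint of its rightmost orbital, which by the standard-pair structure sits inside an orbital of the larger generators). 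Starting from the orbitals already containing \(x\) and using connectivity, the property ``meets \(x\gen{A}\)'' spreads to all of \(A\).

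Finally I would apply Proposition \ref{faithful_prop} with \(X=x\gen{A}\): as \(X\) meets every orbital of \(A\) and \(X\gen{A}=x\gen{A}=x\gen{S}\), every nonidentity element of \(\gen{A}\) moves a point of \(x\gen{S}\); so the element \(w\in\gen{S}\subseteq\gen{A}\), which fixes \(x\gen{S}\) pointwise, must be trivial. The hard part is the third step --- confirming that the orbit of \(x\) already reaches every orbital of \(A\) --- which is exactly where ``no extraneous bumps'' and the nested structure of \(S\) are indispensable; the remaining steps are routine given Lemma \ref{ComplIntLem} and Proposition \ref{faithful_prop}.
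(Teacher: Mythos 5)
Your steps (1), (2) and (4) are sound and amount to the same reduction the paper makes: Lemma \ref{ComplIntLem} converts a kernel element of the action on \(\{Jg \mid g \in \gen{S}\}\) into an element fixing \(J\gen{S}\) pointwise, the orbit identity \(x\gen{S}=x\gen{A}\) for the set \(A\) of bumps of \(S\) is correct (a bump moves a point either trivially or exactly as the generator it came from), and Proposition \ref{faithful_prop} then finishes provided the orbit of \(J\) meets every orbital of \(S\). The paper compresses all of this into a single sentence and devotes its proof entirely to that last claim --- which is where your argument has a genuine gap.

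The gap is in step (3), precisely where you locate ``the hard part.'' Your spreading mechanism is directional: from an orbital \(Q\) that already meets the orbit you reach exactly those orbitals \(P\) whose interior contains an endpoint of \(Q\), by pushing with high powers of the bump on \(Q\). The hypothesis that \(S\) has no extraneous bumps says that every orbital contains an endpoint of some \emph{other} orbital, i.e.\ that every vertex of this reachability digraph has positive in-degree; it does not say that every vertex is reachable by a \emph{directed path} from the rightmost orbitals where \(x\) begins, and a digraph in which every vertex has positive in-degree can perfectly well fail to be reachable from a prescribed source set. So the assertion that ``the orbitals form a connected configuration'' and that the property ``meets \(x\gen{A}\)'' therefore ``spreads to all of \(A\)'' is not justified by anything you have established. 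The paper addresses exactly this point by an induction on \(|S|\): with \(f\) the top element, the orbit \(X\) of \(J\) under \(\gen{S\setminus\{f\}}\) meets every orbital of \(S\setminus\{f\}\) by the inductive hypothesis, hence its closure contains every transition point of \(S\setminus\{f\}\), and since every orbital of \(f\) is active and open it already meets \(X\subseteq J\gen{S}\). To complete your write-up you would need either such an induction or an explicit directed-reachability argument driven by the interleaved structure of standard pairs (Lemma \ref{Std_Base_Facts} and the recursion \(g\mapsto g\cut\)); connectivity plus ``every orbital has an incoming edge'' is not enough.
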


\begin{proof} 
By Proposition \ref{faithful_prop} and Lemma \ref{ComplIntLem},
it suffices to show that \(\bigcup \{J g \mid g \in \gen{S}\}\)
intersects each orbital of \(S\).
We prove this by induction on the complexity of \(S\).
Let \(f\) be the maximum element of \(S\).
If \(|S| \leq 1\), there is nothing to show.
By our inductive assumption,
\[
X:=\bigcup \{J g \mid g \in \gen{S \setminus \{f\}}\}
\]
intersects every orbital of \(S \setminus \{f\}\).
Moreover the closure of \(X\) contains the set of all transition points of \(S \setminus \{f\}\)
and hence intersects every active orbital of \(f\).
Since every orbital of \(f\) is active and open, \(X\) intersects every orbital of \(f\).
\end{proof}

\section{The reduction relation on signatures}

\label{arithmetic:sec}

Our next task is to analyze the structure of the transitive,
reflexive relation \((\Ssig,\leq)\) and show that it closely resembles a well ordering;
this will be made precise in Section \ref{S_WF:subsec} below. 
We introduce a subcollection \(\Bsig \subseteq \Ssig\) of signatures in \emph{block form}.
We show that elements of \(\Ssig\)
are equivalent to elements of \(\Bsig\) and that, modulo permuting summands, elements of \(\Bsig\)
are equivalent to unique elements of \(\Rsig\).
Our goal is to analyze the structure of \((\Rsig,\leq)\) and prove
Theorem \ref{R_thm} from the introduction.

\subsection{Block form and reduced block form signatures}

\label{Bcal_Rcal:subsec}
We use the notation
\[
\sum_{i < k+1} \As_i := \big(\sum_{i < k} \As_i
\big) + \As_k,
\quad
\text{and}
\quad
\As\cdot m := \sum_{i<m}\As.
\]
Recall that if \(\As\) is in \(\Ssig\), then \(\exp(\As)\)
has the same base as \(\As\) and
\(\exp(\As)(i,j) = \As(i,j)+1\) for all \(i < j\) in \(A\).
Note that \(\exp(\one) = \one\).
Observe that \(\exp\) maps \(\Ssig\) injectively into \(\Ssig\) and
that the range of \(\exp\) is exactly those elements of \(\Ssig\)
which take only positive values.
As noted in Section \ref{wreath:sec}, if \(A = B * C\) for \(A,B,C \in \Sgen\), then \(\Cs = \exp(\Xs)\)
for some \(X \in \Sgen\).
This partial operation on \(\Sgen\) induces a partial operation on \(\Ssig\), which can be described as follows:
\[
\Bs * \Cs (i,j) :=
\begin{cases}
\Bs(i,j) & \textrm{ if } i < j < |B| \\
\Cs(i-|B|,j-|B|) & \textrm{ if } |B| \leq i < j < |C| \\
1 & \textrm{ if } i < |B| \leq j < |C| 
\end{cases}
\]
with the operation defined precisely when 
\(\Cs = \exp(\Xs)\) for some \(\Xs \in \Ssig\) (if this condition is not met, then the result will not be in \(\Ssig\)).
Observe that \(+\) and \(*\) are associative
operations on \(\Ssig\) and that \(\exp(\As+\Bs) =
\exp(\As)*\exp(\Bs)\).

We now give a more detailed description of the family
\(\Rsig\) from Section \ref{BasicOp:sec}.
It is easiest to define \(\Rsig\) if we first define a class \(\Bsig\)
(signatures in block form), and an ordinal function \(\rho\) on \(\Bsig\).
\begin{defn}
The class \(\Bsig\) is the smallest class containing \(\{\zero,
\one\}\) so that whenever \((\Xs_i \mid i < n)\) is a sequence of
elements of \(\Bsig\), then \(\sum_{i<n} \exp(\Xs_i)\) is in
\(\Bsig\).
\end{defn}

It should be noted that, unlike \(\Ssig\), \(\Bsig\) is not hereditary with respect to \(\leq\).
Even if we start with the signature \(\As\) of a pair with oscillation 3, it is easily checked that there are
\(\As' \leq \As\) which cannot be generated from \(\one\) using \(+\), \(*\), and \(\exp\).
This can be achieved by iterating the procedure of inflating by the maximum element twice and then removing
the maximum element.

Observe that if \(\Bs\ne \zero\) is in \(\Bsig\), then
there exists a unique sequence \((\As_i \mid i < n)\) of nonzero
elements of \(\Bsig\) such that \(\Bs = \sum_{i<n} \exp(\As_i)\).
Define \(\rho\) recursively on \(\Bsig\) by \(\rho(\zero) = 0\),
\(\rho(\one) = 1\), \(\rho\big(\exp(\As)\big) =
\omega^{-1+\rho(\As)}\), and \(\rho(\sum_{i<n} \exp\big(\As_i)\big)
= \sum_{i<n} \rho\big(\exp(\As_i)\big)\).  Here \(-1 + \alpha =
\beta\) if \(\alpha = 1 + \beta\) (so \(-1 + \alpha = \alpha\) if
\(\alpha\) is infinite).  This technicality exists because \(\one\)
is a fixed point of \(\exp\), but the ordinal 1 is not a fixed point
of \(\alpha\mapsto \omega^\alpha\).
Note that \(\rho\) is not one-to-one: for instance
\[
\rho(\Zs + \exp(\Zs + \Zs)) = 1 + \omega = \omega = \rho(\exp(\Zs + \Zs)).
\]
\begin{defn}
\(\Rsig\) (signatures in reduced block form) is the smallest subfamily of
\(\Bsig\) which contains \(\zero\) and \(\one\) so that
if \((\Xs_i \mid i < n)\) is a sequence of nonzero elements of \(\Rsig\) and
\(\rho(\Xs_{i+1}) \leq \rho(\Xs_i)\) for all \(i < n-1\), then
\(\sum_{i<n} \exp(\Xs_i)\) is in \(\Rsig\).
\end{defn}

The reader can verify that if \(\Xs\) is the signature in Figure \ref{osc_fig}, then
\(\Xs\in \Rsig\) and
\[
\rho(\Xs) = \omega^{\omega^{(\omega+2)}}.
\]

Observe that if \(\Bs \in \Bsig\) and \(\As \in \Ssig\)
is contained in \(\Bs\), then \(\As \in \Bsig\) and moreover
\(\rho(\As) \leq \rho(\Bs)\) (both facts are established by
induction on the complexity of \(\Bs\)).  The next lemma has a
straightforward inductive proof and is left to the reader
(recall that we formally define \(\omega^{-1+0} = 0\)).

\begin{lem}\label{RhoIsIso} The restriction of \(\rho\) to \(\Rsig\)
is a bijection between \(\Rsig\) and the ordinals below \(\epsilon_0\).
Moreover, if \(\As,\Bs \in \Rsig\) with \(\As + \Bs \in \Rsig\), then
\[
\rho(\As + \Bs) = \rho(\As) + \rho(\Bs) \qquad \qquad \rho(\exp(\As)) = \omega^{-1 + \rho(\As)}.
\]
\end{lem}

Our first task in proving Theorem \ref{R_thm}
is to show \(\rho\) preserves the order which \(\Rsig\) inherits from \(\Ssig\) --- 
together with Lemma \ref{RhoIsIso}, this is what is needed
to establish the second assertion in Theorem \ref{R_thm}.
This will be completed by the end of Section \ref{Rcal:subsec}.
The remainder of the proof of Theorem \ref{R_thm} will be given in
Sections \ref{block:subsec}--\ref{Scal_reduction:subsec}.

The next lemma gives basic facts about the algebraic
operations and 
their interaction with the relation \(\le\) on \(\Ssig\);
the proof is straightforward and left to the reader.

\begin{lem} \label{monotone}
Suppose that \(\As \leq \As'\) and \(\Bs \leq \Bs'\) are in \(\Ssig\).
The following are true:
\begin{enumerate}

\item \(\exp(\infl{\As}{m}) \leq \infl{(\exp(\As))}{m}\) for \(m < |A|\).

\item \(\exp (\As) \leq \exp (\As')\).

\item \label{sums:monotone} \(\As+\Bs \leq \As' + \Bs'\).

\item \(\As * \exp(\Bs) \leq \As' * \exp(\Bs')\).

\end{enumerate}
\end{lem}

Recall that \(\As\equiv\Bs\) means \(\As\le \Bs\le \As\).

\begin{lem} \label{duplication}
The following are true:
\begin{enumerate}

\item \label{gen_sum}
If \((\As_i \mid i < n)\) and \(\Bs\neq\zero\) are in \(\Ssig\) and
\(j < n\) is such that \(\As_i \leq \As_j\) for all \(i < n\), then
\((\sum_{i<n} \As_i) * \exp (\Bs) \equiv \As_j * \exp (\Bs)\).

\item \label{prod_case}
If \(\As \in \Ssig\) then for all \(m\), \(\exp(\As)\cdot m \le \exp(\As+\one)\).

\end{enumerate}
\end{lem}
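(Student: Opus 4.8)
Both statements follow the same template: repeatedly inflate by the least element of the ``top block'' and then pass to a restriction. I will use throughout the elementary fact --- immediate from the definition of inflation, since $\infl{\Cs}{k}$ agrees with $\Cs$ on pairs from the base of $\Cs$ --- that any restriction of a signature is $\le$ it. For assertion~(\ref{prod_case}), note that $\exp(\As+\one)=\exp(\As)*\exp(\one)=\exp(\As)*\one$, so it suffices to show $\exp(\As)\cdot m\le\exp(\As)*\one$. I would prove by induction on $\ell$ that $(\exp(\As)\cdot 2^\ell)*\one\le\exp(\As)*\one$; the claim then follows since $\exp(\As)\cdot m$ is a restriction of $(\exp(\As)\cdot 2^\ell)*\one$ whenever $2^\ell\ge m$. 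The base case is trivial. For the step, set $\Ds:=(\exp(\As)\cdot 2^\ell)*\one$ and let $m$ index the unique element of its $\one$-block, so that $\Ds(i,m)=1$ for all $i<m$. The formula for $\infl{\Ds}{m}$ --- together with the convention $\Ds(m,m)=\infty$ --- then shows that $\infl{\Ds}{m}$ adjoins a copy $i^m$ of every $i<m$, with $\infl{\Ds}{m}(i,j^m)=\min(\Ds(j,m)-1,\Ds(i,m))=0$, with $\infl{\Ds}{m}(i^m,j^m)=\Ds(i,j)$, and with $\infl{\Ds}{m}(i,m)=\infl{\Ds}{m}(i^m,m)=1$; in other words $\infl{\Ds}{m}$ is equivalent to $(\exp(\As)\cdot 2^{\ell+1})*\one$. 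Since $\infl{\Ds}{m}$ is an inflation of $\Ds$, we get $(\exp(\As)\cdot 2^{\ell+1})*\one\le\Ds\le\exp(\As)*\one$ by the inductive hypothesis.

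For assertion~(\ref{gen_sum}), the inequality $\As_j*\exp(\Bs)\le(\sum_{i<n}\As_i)*\exp(\Bs)$ is immediate from Lemma~\ref{monotone}(4), since $\As_j$ is a restriction of $\sum_{i<n}\As_i$ and hence $\As_j\le\sum_{i<n}\As_i$. For the reverse inequality, iterating Lemma~\ref{monotone}(3) with the hypothesis $\As_i\le\As_j$ gives $\sum_{i<n}\As_i\le\As_j\cdot n$, and $\As_j\cdot n$ is a restriction of $\As_j\cdot 2^\ell$ for $2^\ell\ge n$; so by Lemma~\ref{monotone}(4) it suffices to prove $(\As_j\cdot 2^\ell)*\exp(\Bs)\le\As_j*\exp(\Bs)$, which I would establish by induction on $\ell$ exactly as above. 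With $\Ds:=(\As_j\cdot 2^\ell)*\exp(\Bs)$ and $m$ the index of the least element of the $\exp(\Bs)$-block of $\Ds$ --- so that the elements of index $<m$ are precisely the $2^\ell$ consecutive copies of the base of $\As_j$ --- the inflation formula shows that $\infl{\Ds}{m}$ adjoins a further $2^\ell$ copies of the base of $\As_j$, each lying $\ll$-below the earlier copies (because $\infl{\Ds}{m}(i,j^m)=0$) and each sitting below the $\exp(\Bs)$-block with oscillation $1$ (here one uses $\Ds(m,k)=\exp(\Bs)(0,k')\ge 1$, whence $\infl{\Ds}{m}(i^m,k)=\min(\Ds(i,m),\Ds(m,k))=1$). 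Thus $\infl{\Ds}{m}$ is equivalent to $(\As_j\cdot 2^{\ell+1})*\exp(\Bs)$, and the step closes as before; combining the two inequalities gives the stated $\equiv$.

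The only genuine work is the verification, in each inductive step, that $\infl{\Ds}{m}$ has the asserted form: one must track the three cases of the inflation formula, the placement convention $j<i^m<j^m<m\le k$ for the new base, and the convention $\As(m,m)=\infty$, and must confirm that the chosen $m$ --- the least element of the top block --- lies above exactly the part of $\Ds$ to be duplicated, so that the top block is left essentially untouched. As an alternative to working with the $\Ssig$-formula one can carry out the same bookkeeping in $\Sgen$ via Lemma~\ref{infl_sig_commute}, using Lemma~\ref{o_conj_bound} to see that the conjugated bumps acquire disjoint supports and land in $\ll$-position; the two approaches are interchangeable. I expect this verification --- keeping the indices and the conventions straight --- to be the main obstacle.
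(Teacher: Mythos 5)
Your proof is correct and rests on exactly the same key computation as the paper's: inflating \(\As*\exp(\Bs)\) at the least element of the \(\exp(\Bs)\)-block produces \((\As+\As)*\exp(\Bs)\), so the left factor can be doubled at will. The only differences are organizational --- the paper reduces part (1) to the case \(n=2\) by a short induction and then deduces part (2) from part (1) via \(\exp(\As)\cdot m\le(\exp(\As)\cdot m)*\one\), whereas you iterate the doubling up to \(2^\ell\ge n\) and prove part (2) directly --- and your one slip (the copies adjoined by the inflation sit above, not \(\ll\)-below, the original ones) is harmless because the two blocks carry identical signatures.
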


\begin{proof} For (\ref{gen_sum}), first observe that either \(j > 0\) and
\(\As_0 \leq \sum_{k=1}^{n-1} \As_k\) or else \(\As_{n-1} \leq \As_0
\leq \sum_{k=0}^{n-2} \As_k\).  Thus, by induction, it is sufficient
to prove the lemma when \(n=2\).  Furthermore, by Lemma
\ref{monotone}, it suffices to prove that \((\As+\As) * \exp(\Bs)
\leq \As * \exp(\Bs)\).
In fact we show that \((\As+\As)*
\exp(\Bs) = \infl{\big(\As* \exp(\Bs)\big)}{m}\) where \(m\) is the
minimum element of \(\exp(\Bs)\) regarded as a suborder of \(\As*
\exp(\Bs)\).  To see this, let \(i,j\) be elements of \(\As\) and
\(k\) be an element of \(\exp(\Bs)\) above \(m\).  We have that
\begin{align*}
\infl{(\As* \exp(\Bs))}{m}(i,j^m) & = \min \big(\As*
\exp(\Bs)(i,m),\As* \exp(\Bs)(j,m)-1 \big)
\\
& =\min (1,0) = 0 \\
\infl{\big(\As* \exp(\Bs)\big)}{m}(j^m,k) & = \min\big(\As*
\exp(\Bs)(j,m),\As* \exp(\Bs)(m,k) \big)
\\
& =
\min\big(1,\exp(\Bs)(m,k)\big) = 1
\end{align*} which coincides
with the definition of \((\As+\As)*\exp(\Bs)\).

To see that (\ref{prod_case}) is true, let \(\As\) and \(m\) be given.
By (\ref{gen_sum}) we have \((\exp(\As)\cdot m)*\one \equiv \exp(\As)*\one\).
The conclusion follows by observing
\[
(\exp(\As)\cdot m)\le (\exp(\As)\cdot m)*\one \equiv
\]
\[
\exp(\As)*\one = \exp(\As)*\exp(\one) = \exp(\As+\one).
\]
\end{proof}

Lemma \ref{duplication} is an early hint that the
arithmetic on \(\Ssig\) imitates the behavior of arithmetic on the ordinals.
For instance, in analogy to (\ref{gen_sum}), using \(1 + \omega = \omega\) we note
\[
(\omega+1 ) \cdot \omega\ =\ \sup_{n \in \omega}\ (\omega+1) \cdot n\ =\ \sup_{n \in \omega}\ \omega \cdot n + 1\ =\ \omega^2.
\]
This property of the arithmetic on \(\Ssig\) will be a constant theme in the rest of
this section and will be exploited in many of the proofs.

\subsection{Properties of reduced block form signatures}
\label{Rcal:subsec}
Next we begin our analysis of \(\Rsig\).

\begin{lem} \label{R_ordering}
If \(\Bs\) is in \(\Rsig\) and \(\alpha < \rho(\Bs)\), then there is an
\(\As\) in 
\(\Rsig\) such that
\(\As \leq \Bs\) and \(\rho(\As) = \alpha\).
\end{lem}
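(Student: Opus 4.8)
The plan is to argue by transfinite induction on $\rho(\Bs) < \epsilon_0$. Since $\rho$ restricts to a bijection of $\Rsig$ onto $\epsilon_0$ (Lemma~\ref{RhoIsIso}), it suffices to \emph{exhibit}, for each $\alpha < \rho(\Bs)$, some $\As \in \Rsig$ with $\As \le \Bs$ and $\rho(\As) = \alpha$; this $\As$ is then the unique such element. The case $\alpha = 0$ is trivial: $\zero$ is the restriction of $\Bs$ to the empty base, and any restriction of $\Bs$ is contained in, say, $\infl{\Bs}{m}$, so $\zero \le \Bs$. So assume $\alpha > 0$, and write $\Bs$ in its canonical form $\Bs = \sum_{i<n}\exp(\Bs_i)$ with $n \ge 1$, each $\Bs_i$ a nonzero element of $\Rsig$ and $\rho(\Bs_{i+1}) \le \rho(\Bs_i)$; then $\rho(\Bs) = \sum_{i<n}\omega^{\beta_i}$ is the Cantor normal form, where $\beta_i := -1 + \rho(\Bs_i)$. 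As $0 < \alpha < \rho(\Bs)$, there is a largest $k < n$ with $\sigma := \sum_{i<k}\omega^{\beta_i} \le \alpha$, and then by left cancellativity $\alpha = \sigma + \gamma$ for a unique $\gamma < \omega^{\beta_k}$.

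The heart of the proof is the following, to be established under the inductive hypothesis that Lemma~\ref{R_ordering} holds for every element of $\Rsig$ of $\rho$-value strictly below $\rho(\Bs)$: \emph{for every indecomposable $\Es \in \Rsig$ with $\rho(\Es) \le \rho(\Bs)$ and every $\gamma < \rho(\Es)$, there is $\Cs \in \Rsig$ with $\Cs \le \Es$ and $\rho(\Cs) = \gamma$.} Granting this sub-claim, the proof concludes quickly. If $n \ge 2$ then $\rho(\exp(\Bs_k)) = \omega^{\beta_k} \le \omega^{\beta_0} < \rho(\Bs)$, so the induction hypothesis applied directly to $\exp(\Bs_k)$ supplies $\Cs \le \exp(\Bs_k)$ with $\rho(\Cs) = \gamma$; if $n = 1$ then $\Bs = \exp(\Bs_0)$ is indecomposable and the sub-claim gives the same $\Cs$ (with $\Es := \Bs$). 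In either case put $\As := \sum_{i<k}\exp(\Bs_i) + \Cs$. One checks $\As \in \Rsig$ (the $\rho$-values of the blocks are weakly decreasing, using $\gamma < \omega^{\beta_k} \le \omega^{\beta_{k-1}}$ at the junction) and $\rho(\As) = \sigma + \gamma = \alpha$. Monotonicity of $+$ and $\exp$ under $\le$ (Lemma~\ref{monotone}) gives $\As \le \sum_{i \le k}\exp(\Bs_i)$, and the latter is the restriction of $\Bs$ to its first $k+1$ blocks, hence is $\le \Bs$; transitivity finishes the main argument.

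It remains to prove the sub-claim. We may assume $\gamma > 0$ (otherwise $\Cs := \zero$ works), so $\Es = \exp(\Bs')$ for some nonzero $\Bs' \in \Rsig$, with $\beta := -1 + \rho(\Bs') \ge 1$ and $\rho(\Es) = \omega^\beta$. Let $\delta$ be the leading exponent of the Cantor normal form of $\gamma$; then $\delta < \beta$, so $\gamma < \omega^{\delta+1}$, and $(1+\delta)+1 \le 1+\beta = \rho(\Bs')$. I first produce $\Ys \in \Rsig$ with $\rho(\Ys) = 1 + \delta$ and $\Ys + \one \le \Bs'$: when $(1+\delta)+1 < \rho(\Bs')$, note $\rho(\Bs') = 1+\beta < \omega^\beta = \rho(\Es) \le \rho(\Bs)$ (as $\beta \ge 1$), so the induction hypothesis applied to $\Bs'$ yields an element of $\Rsig$ of $\rho$-value $(1+\delta)+1$ lying $\le \Bs'$; every element of $\Rsig$ whose $\rho$-value is the successor $(1+\delta)+1$ has canonical form ending in the block $\exp(\one) = \one$, hence is of the required shape $\Ys + \one$ with $\rho(\Ys) = 1+\delta$; when $(1+\delta)+1 = \rho(\Bs')$, the element $\Bs'$ itself has this shape. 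Now pick $k < \omega$ with $\gamma < \omega^\delta \cdot k$ (possible since $\gamma < \omega^{\delta+1} = \sup_{j<\omega}\omega^\delta\cdot j$) and set $\Ds := \exp(\Ys)\cdot k = \sum_{i<k}\exp(\Ys) \in \Rsig$, so that $\rho(\Ds) = \omega^{-1+(1+\delta)}\cdot k = \omega^\delta\cdot k$. By Lemma~\ref{duplication}\,(\ref{prod_case}) and monotonicity of $\exp$ (Lemma~\ref{monotone}),
$$\Ds \;\le\; \exp(\Ys + \one) \;\le\; \exp(\Bs') \;=\; \Es.$$
Finally $\rho(\Ds) = \omega^\delta \cdot k < \omega^{\delta+1} \le \omega^\beta = \rho(\Es) \le \rho(\Bs)$, so the induction hypothesis applies to $\Ds$ and provides $\Cs \in \Rsig$ with $\Cs \le \Ds$ and $\rho(\Cs) = \gamma$; then $\Cs \le \Ds \le \Es$, proving the sub-claim.

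The main obstacle is precisely the indecomposable case $n = 1$ of the sub-claim: a decomposable ordinal $\gamma < \omega^\beta$ cannot be realized merely by restricting $\Bs = \exp(\Bs')$, so the ordinal identity $\omega^\delta\cdot k < \omega^{\delta+1} \le \omega^\beta$ must be mirrored on the level of signatures by the ``duplication'' inequality $\exp(\Ys)\cdot k \le \exp(\Ys+\one)$ of Lemma~\ref{duplication}, after which one re-enters the induction on the strictly smaller object $\exp(\Ys)\cdot k$. Secondary care is needed for the $-1+(\cdot)$ bookkeeping, in particular in deciding whether $\Ys+\one$ can be taken strictly below $\Bs'$ or must equal $\Bs'$.
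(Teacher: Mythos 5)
Your proof is correct and follows essentially the same route as the paper's: induction on \(\rho(\Bs)\), reduction along the blocks \(\exp(\Bs_i)\) of the canonical form, and Lemma \ref{duplication}(\ref{prod_case}) combined with Lemma \ref{monotone} as the key step that mirrors \(\omega^{\delta}\cdot k < \omega^{\delta+1}\) at the level of signatures. The only difference is organizational: the paper case-splits on whether the exponent of the leading block is \(\one\), a successor \(\Ys+\one\), or of limit \(\rho\)-value (invoking duplication only in the successor case and recursing into the exponent itself in the limit case), whereas you treat these uniformly by extracting the leading exponent of the target ordinal \(\gamma\) and always routing through the duplication inequality; both versions work.
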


\begin{proof}
The proof  is by induction on \(\rho(\Bs)\).
If \(\rho(\Bs) = 0\), then the lemma is vacuously true.
Now suppose \(\rho(\Bs) \ge 1\) and let \(\Bs = \sum_{i<n} \exp(\Xs_i)\)
where \(\rho(\Xs_{i+1}) \leq \rho(\Xs_i)\) for each \(i < n-1\)
(note that 
possibly \(n=1\) in which case
\(\Bs = \exp(\Xs_0)\)).
Let \(k < n\) be minimal such that \(\alpha < \rho\big(\sum_{i \leq k}
\exp(\Xs_i)\big)\).
If \(k=0\) and \(\Xs_0 = \one\), then \(\alpha < \exp(\Xs_0) = 1\)
and thus 
\(\alpha = 0\).
In this case we take \(\As = \zero\).

If \(k=0\) and \(\Xs_0 = \Ys+\one\) for some \(\Ys \ne \zero\) in \(\Rsig\), then
\[
\alpha < \rho\big(\exp(\Xs_0)\big) = \rho\big(\exp(\Ys + \one)\big) =
\omega^{-1 + \rho(\Ys) + 1} = \rho\big(\exp(\Ys)\big) \cdot \omega
\]
and there is an \(m\) such that \(\alpha < \rho\big(\exp(\Ys)\big)
\cdot m\). 
From (\ref{prod_case}) of Lemma \ref{duplication},
\[
\Bs' := \exp(\Ys) \cdot m \leq \exp(\Ys+\one) = \exp(\Xs_0) \leq  \Bs.
\]
Since \(\Ys\) is in \(\Rsig\), \(\Bs'\) is in \(\Rsig\) and by our induction hypothesis
there is an \(\As\) in \(\Rsig\) such that \(\As \leq \exp(\Ys)
\cdot m = \Bs' 
\leq \Bs\)
and such that \(\rho(\As) = \alpha\).

If \(k=0\) but we are not in the previous cases, then \(\rho(\Xs_0)
= \delta\) 
is a limit ordinal.
Let \(0 < \gamma < \delta\) be such that \(\alpha < \omega^{-1+
\gamma}\). 
By our induction hypothesis, there is a \(\Cs \leq \Xs_0\) in
\(\Rsig\) such 
that
\(\rho(\Cs) = \gamma\) and hence \[
\rho\big(\exp(\Cs)\big) = \omega^{-1 + \gamma} < \omega^{-1 +
\delta} = \rho(\Bs).
\]
Applying our induction hypothesis again, there is an \(\As \leq
\exp(\Cs)\le \exp(\Xs_0)=B\) such that
\(\As\) is in \(\Rsig\) and \(\rho(\As) = \alpha\).

Finally, if \(0 < k < n\), then let \(0 < \alpha' <
\rho\big(\exp(\Xs_k)\big)\) be such  that
\[
\alpha = \rho(\sum_{j < k} \exp\big(\Xs_j)\big) + \alpha'.
\]
Now
\[
\alpha' < \rho\big(\exp(\Xs_k)\big) < 
\rho\big( \exp(\Xs_0) + \exp(\Xs_k)\big) \leq \rho(\Bs).
\]
By our induction hypothesis there is
an \(\As' \leq \exp(\Xs_k)\) such that \(\As' \in \Rsig\) and
\(\rho(\As') = \alpha'\).
Observe that \(\As' = \sum_{i < m} \exp(\Ys_i)\) for some \(m \geq 1\)
where the \(\Ys_i\)'s come from \(\Rsig\) and
\(\rho\big(\exp(\Ys_{i+1})\big) \leq 
\rho\big(\exp(\Ys_i)\big)\) for all
\(i < m\).
In particular, \(\rho\big(\exp(\Ys_0)\big) \leq \alpha' <
\rho\big(\exp(\Xs_k)\big)\) and therefore
\[
\As = \sum_{i < k} \exp(\Xs_i) + \sum_{j < m} \exp(\Ys_j)
\]
is in \(\Rsig\) and satisfies that \(\rho(\As) = \alpha\).
\end{proof}

While the notation hints that the converse of the next
lemma should be true, its proof is subtle and will be shown in Lemma \ref{R_order_char}
below.

\begin{lem} \label{rank_to_reduction}
If \(\As\) and \(\Bs\) are in \(\Rsig\), then \(\rho(\As) \leq
\rho(\Bs)\) implies \(\As \leq \Bs\).
\end{lem}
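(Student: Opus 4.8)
The plan is to deduce the lemma directly from Lemma \ref{R_ordering} together with the fact, recorded in Lemma \ref{RhoIsIso}, that the restriction of $\rho$ to $\Rsig$ is a bijection (in particular, injective). The argument naturally splits into two cases, according to whether $\rho(\As) = \rho(\Bs)$ or $\rho(\As) < \rho(\Bs)$.

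In the first case, $\rho(\As) = \rho(\Bs)$ with both $\As$ and $\Bs$ in $\Rsig$; the injectivity of $\rho$ on $\Rsig$ then forces $\As = \Bs$ as elements of $\Ssig$, and $\As \leq \Bs$ follows from reflexivity of $\leq$ (the $n = 0$ clause in the definition of $\leq$). In the second case, set $\alpha := \rho(\As)$, so that $\alpha < \rho(\Bs)$. Lemma \ref{R_ordering} then supplies an $\As' \in \Rsig$ with $\As' \leq \Bs$ and $\rho(\As') = \alpha = \rho(\As)$. Applying injectivity of $\rho$ on $\Rsig$ once more yields $\As' = \As$, whence $\As \leq \Bs$.

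There is essentially no obstacle beyond what has already been established: the entire combinatorial content has been packaged into Lemma \ref{R_ordering}, whose inductive proof treats the successor, limit, and "tail" cases arising from the Cantor normal form, and into the routine inductive verification behind Lemma \ref{RhoIsIso}. The one point that warrants a moment's care is that $\leq$ and $\rho$ are defined on the equivalence classes comprising $\Ssig$, so the identifications $\As = \Bs$ and $\As' = \As$ above are identifications of equivalence classes; this is exactly the form of equality needed to pass from $\As' \leq \Bs$ to $\As \leq \Bs$. Beyond that bookkeeping, the proof is a two-line appeal to the two preceding lemmas.

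Combined with the (harder) converse direction, which will be taken up separately, this establishes that $\rho$ is an order isomorphism from $(\Rsig,\leq)$ onto $(\epsilon_0,\in)$ and thereby furnishes the key ingredient for Theorem \ref{R_thm}.
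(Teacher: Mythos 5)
Your proof is correct and is essentially identical to the paper's: both cases ($\rho(\As)<\rho(\Bs)$ via Lemma \ref{R_ordering}, and $\rho(\As)=\rho(\Bs)$ directly) are handled by the same appeal to the injectivity of $\rho$ on $\Rsig$. The remark about working with equivalence-class representatives is a reasonable extra bit of bookkeeping but does not change the argument.
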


\begin{proof}
If \(\rho(\As) < \rho(\Bs)\), then by Lemma \ref{R_ordering} there is an
\(\As' \leq \Bs\) in \(\Rsig\) such that
\(\rho(\As') = \rho(\As)\).
Since \(\rho\) is one-to-one on \(\Rsig\), we have that \(\As'=\As\).
Similarly, if \(\rho(\As) = \rho(\Bs)\), then \(\As = \Bs\).
\end{proof}

For \(\Xs \in \Ssig\), recall that 
\(E(\Xs) = \exp\big(\exp(\Xs)\big)\). 

\begin{lem} \label{proper_successor}
If \(\As,\Bs \in \Rsig\) and \(\As+ \one \leq \Bs\), then
\(\gen{E(\Bs)}\) does 
not embed into \(\gen{E(\As)}\).
\end{lem}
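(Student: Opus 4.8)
The plan is to realize the wreath product $\gen{E(\As)}\wr\gen{E(\As)}$ inside $\gen{E(\As+\one)}$ and then use the EA-class estimate $\EA(G\wr G)>\EA(G)$ to block the putative embedding. First I would reduce to the statement that $\gen{E(\As+\one)}$ does not embed into $\gen{E(\As)}$: since $\As+\one\le\Bs$, two applications of Lemma \ref{monotone} give $E(\As+\one)=\exp(\exp(\As+\one))\le\exp(\exp(\Bs))=E(\Bs)$, so Proposition \ref{reduction_to_embedding} provides an embedding $\gen{E(\As+\one)}\hookrightarrow\gen{E(\Bs)}$; an embedding $\gen{E(\Bs)}\hookrightarrow\gen{E(\As)}$ would then compose to one of $\gen{E(\As+\one)}$ into $\gen{E(\As)}$. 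The case $\As=\zero$ is immediate, since then $E(\As)=\zero$ while $\one=E(\one)\le E(\Bs)$ forces $\gen{E(\Bs)}$ to be infinite, hence not embeddable into the trivial group $\gen{E(\zero)}$. So assume $\As\ne\zero$ and write $G:=\gen{E(\As)}$.

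The arithmetic core is the inequality $E(\As)*E(\As)\le E(\As+\one)$ in $\Ssig$. Using $\exp(\Xs)*\exp(\Ys)=\exp(\Xs+\Ys)$ one computes
\[
E(\As)*E(\As)=\exp(\exp(\As))*\exp(\exp(\As))=\exp\big(\exp(\As)\cdot 2\big),
\]
and part (\ref{prod_case}) of Lemma \ref{duplication} gives $\exp(\As)\cdot 2\le\exp(\As+\one)$, so applying $\exp$ (which is $\le$-monotone by Lemma \ref{monotone}) yields
\[
E(\As)*E(\As)=\exp\big(\exp(\As)\cdot 2\big)\le\exp\big(\exp(\As+\one)\big)=E(\As+\one).
\]
Since $\As\ne\zero$, the signature $E(\As)=\exp(\exp(\As))$ is nonempty and all of its values are $\ge 2$, so $E(\As)*E(\As)$ takes only positive values; Proposition \ref{wreath_prop} therefore identifies $\gen{E(\As)*E(\As)}$ with the permutation wreath product $\gen{E(\As)}\wr\gen{E(\As)}$ (its two halves have signature $E(\As)$, hence generate groups isomorphic to $\gen{E(\As)}$ by Theorem \ref{sig_thm}), and Proposition \ref{reduction_to_embedding} embeds it into $\gen{E(\As+\one)}$.

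To conclude, suppose toward a contradiction that $\gen{E(\Bs)}$ embeds into $G$. Stringing together the embeddings above produces $G\wr G\hookrightarrow G$. Now $G$ is finitely generated and infinite, is elementary amenable (by the structural description of $\Sgen$-generated groups), and by Proposition \ref{wreath_prop} acts faithfully on the infinite index set over which the wreath product is formed; hence Proposition \ref{EAGWrG} gives $\EA(G\wr G)\ge\EA(G)+1>\EA(G)$, whereas $G\wr G\hookrightarrow G$ with both groups in $EG$ forces $\EA(G\wr G)\le\EA(G)$ — a contradiction.

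The points I expect to need the most care are, first, checking that the wreath product delivered by the reduction relation is exactly the permutation wreath product to which Proposition \ref{EAGWrG} applies, i.e. that $G=\gen{E(\As)}$ acts faithfully on a genuinely infinite set (this is precisely what Lemma \ref{FaithfulLem} and Proposition \ref{wreath_prop} secure for the relevant orbit set $\Ycal$), and second, that $\gen{E(\As)}$ is elementary amenable so that its EA-class is even defined; both are available from results already in place but must be invoked deliberately. The arithmetic in the middle paragraph is the short computation displayed there, so the substance of the argument is really the organizational bookkeeping around it.
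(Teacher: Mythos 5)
Your proof is correct and follows essentially the same strategy as the paper: realize \(\gen{E(\As)}\wr\gen{E(\As)}\cong\gen{E(\As)*E(\As)}\) inside \(\gen{E(\Bs)}\) via \(E(\As)*E(\As)\le E(\As+\one)\le E(\Bs)\), then invoke Proposition \ref{EAGWrG} to get a strict jump in EA-class that blocks the reverse embedding. The only (harmless) difference is that you derive \(E(\As)*E(\As)\le E(\As+\one)\) from the identity \(\exp(\Xs)*\exp(\Ys)=\exp(\Xs+\Ys)\) together with Lemma \ref{duplication} and the monotonicity of \(\exp\), whereas the paper obtains it from a single explicit inflation of \(E(\As+\one)\) by its maximum element; your explicit treatment of the degenerate case \(\As=\zero\) is a small bonus.
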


\begin{proof}
Let \(n\) be the maximum element of \(E(\As+\one)\) and note
that \(E(\As) * E(\As)\) is obtained from 
\(\infl{E(\As + \one)}{n}\) by removing its maximum element.
Consequently 
\[
E(\As) * E (\As)  \leq E(\As+\one) \leq E(\Bs).
\]
By Propositions \ref{reduction_to_embedding} and \ref{wreath_prop}
\[
\gen{E(\As) * E(\As)} \cong \gen{E(\As)} \wr \gen{E(\As)}
\]
embeds into \(\gen{E(\Bs)}\).
By Proposition \ref{EAGWrG}, 
the EA-class of \(\gen{E(\As)}\) is less than that of
\(\gen{E(\As)} \wr \gen{E(\As)}\) which is at most
that of \(\gen{E (\Bs)}\).
Consequently \(\gen{E (\Bs)}\) is not embeddable into
\(\gen{E(\As)}\). 
\end{proof}

\begin{lem} \label{embedding_to_rank}
If \(\As\) and \(\Bs\) are in \(\Rsig\) and \(\rho(\As) < \rho(\Bs)\),
then \(\gen{E(\Bs)}\) does not embed into \(\gen{E(\As)}\).
\end{lem}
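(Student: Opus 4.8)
The plan is to reduce everything to Lemma~\ref{proper_successor}, which already handles the case in which $\As + \one \leq \Bs$. Since $\rho(\As)$ and $\rho(\Bs)$ are ordinals with $\rho(\As) < \rho(\Bs)$, we have $\rho(\As) + 1 \leq \rho(\Bs)$, so the natural candidate to interpose is $\As + \one$: if I can show $\As + \one \in \Rsig$ and $\As + \one \leq \Bs$, then Lemma~\ref{proper_successor} applied to the pair $\As,\Bs$ immediately gives that $\gen{E(\Bs)}$ does not embed into $\gen{E(\As)}$.

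First I would verify the membership $\As + \one \in \Rsig$. If $\As = \zero$ this is immediate since $\zero + \one = \one$. Otherwise, write $\As = \sum_{i<n} \exp(\Xs_i)$ in block form with each $\Xs_i$ a nonzero element of $\Rsig$ and $\rho(\Xs_{i+1}) \leq \rho(\Xs_i)$ for $i < n-1$. Recalling that $\one$ is a fixed point of $\exp$, so $\one = \exp(\one)$, and that $\rho(\exp(\Xs)) = \omega^{-1+\rho(\Xs)} \geq 1$ for every $\Xs$, while $\rho(\one) = 1$, appending $\one$ as a final block respects the defining decreasing condition of $\Rsig$; hence $\As + \one = \sum_{i<n} \exp(\Xs_i) + \exp(\one)$ lies in $\Rsig$. (This is the one spot that needs any care, and it is routine.)

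Next, since by Lemma~\ref{RhoIsIso} the map $\rho$ carries $+$ --- restricted to pairs whose sum remains in $\Rsig$ --- to ordinal addition, we get $\rho(\As + \one) = \rho(\As) + 1 \leq \rho(\Bs)$. Lemma~\ref{rank_to_reduction} then yields $\As + \one \leq \Bs$, and Lemma~\ref{proper_successor} finishes the argument. There is no genuine obstacle here; the real content of the statement is packed into Lemma~\ref{proper_successor}, which in turn rests on the wreath-product description of Proposition~\ref{wreath_prop} and the $\EA$-class estimate of Proposition~\ref{EAGWrG}, together with the fact that $\rho$ is an order isomorphism onto the ordinals below $\epsilon_0$.
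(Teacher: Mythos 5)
Your proposal is correct and follows essentially the same route as the paper: pass from \(\rho(\As) < \rho(\Bs)\) to \(\rho(\As+\one) \leq \rho(\Bs)\), apply Lemma~\ref{rank_to_reduction} to get \(\As + \one \leq \Bs\), and invoke Lemma~\ref{proper_successor}. The only difference is that you explicitly verify \(\As + \one \in \Rsig\) (which the paper leaves implicit, and which is indeed needed to apply Lemma~\ref{rank_to_reduction}); note only that the relevant comparison there is \(\rho(\one) = 1 \leq \rho(\Xs_{n-1})\) for the last block \(\Xs_{n-1}\) of \(\As\), rather than the inequality \(\rho(\exp(\Xs)) \geq 1\) you cite, though both are immediate.
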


\begin{proof}
If \(\rho(\As) < \rho(\Bs)\), then \(\rho(\As+\one) \leq \rho(\Bs)\).
By Lemma \ref{rank_to_reduction}, \(\As + \one \leq \Bs\) and
therefore we have the desired conclusion 
by Lemma \ref{proper_successor}.
\end{proof}

This now leads to the following characterization of the restriction
of \(\leq\) to \(\Rsig\).
Taken together with Lemma \ref{RhoIsIso}, this completes the proof of the second half
of Theorem \ref{R_thm}. 

\begin{lem} \label{R_order_char}
If \(\As\) and \(\Bs\) are elements of \(\Rsig\), then the following
are equivalent: 
\begin{enumerate}

\item \label{R_reduction}
\(\As \leq \Bs\).

\item \label{R_rank}
\(\rho(\As) \leq \rho(\Bs)\).

\end{enumerate}
\end{lem}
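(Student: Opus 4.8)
The plan is to derive the lemma almost immediately from the machinery already in place. The implication (\ref{R_rank}) $\Rightarrow$ (\ref{R_reduction}) is precisely Lemma \ref{rank_to_reduction}, so the only work is the converse, and for that I would argue the contrapositive: assuming $\rho(\Bs) < \rho(\As)$, I want to conclude $\As \not\leq \Bs$.

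First I would record that $\Rsig$ is closed under $\exp$ — taking the one-term sequence $(\As)$ in the defining clause for $\Rsig$ shows $\exp(\As) \in \Rsig$ whenever $\As \in \Rsig$ — and hence closed under $E = \exp\circ\exp$. Thus $E(\As)$ and $E(\Bs)$ both lie in $\Rsig$, and Lemma \ref{embedding_to_rank} applies to them: since $\rho(\Bs) < \rho(\As)$, that lemma (with the two signatures interchanged) gives that $\gen{E(\As)}$ does not embed into $\gen{E(\Bs)}$. On the other hand, were $\As \leq \Bs$ to hold, two applications of the monotonicity of $\exp$ (Lemma \ref{monotone}(2)) would give $E(\As) \leq E(\Bs)$, and then Proposition \ref{reduction_to_embedding} would produce an embedding of $\gen{E(\As)}$ into $\gen{E(\Bs)}$, contradicting the previous sentence. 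Hence $\As \not\leq \Bs$, as desired. Since $\rho$ is injective on $\Rsig$ and maps onto $\epsilon_0$ (Lemma \ref{RhoIsIso}), the trichotomy of ordinals lets one package this cleanly as the statement that $\As \leq \Bs$ forces $\rho(\As) \leq \rho(\Bs)$.

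I do not expect a genuine obstacle here: the substantive content — the EA-class separation behind Lemma \ref{embedding_to_rank}, which relies on the wreath-product identification of Proposition \ref{wreath_prop} together with the EA-class estimate of Proposition \ref{EAGWrG} — has already been extracted in the preceding lemmas, so this proof is essentially bookkeeping. The one point that warrants a moment's care is verifying that $E(\As)$ and $E(\Bs)$ belong to $\Rsig$ so that Lemma \ref{embedding_to_rank} is applicable; as noted above this is immediate from the recursive definition of $\Rsig$, applied twice.
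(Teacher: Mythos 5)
Your proposal is correct and follows essentially the same route as the paper: Lemma \ref{rank_to_reduction} for one direction, and for the other the combination of Lemma \ref{monotone}, Proposition \ref{reduction_to_embedding}, and Lemma \ref{embedding_to_rank} applied with the roles of \(\As\) and \(\Bs\) reversed (you phrase it as a contrapositive, the paper argues directly, but the content is identical). Your closing worry about checking \(E(\As), E(\Bs) \in \Rsig\) is superfluous, since the hypothesis of Lemma \ref{embedding_to_rank} concerns \(\As\) and \(\Bs\) themselves, which are in \(\Rsig\) by assumption.
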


\begin{proof}
Lemma \ref{rank_to_reduction} establishes that (\ref{R_rank})
implies (\ref{R_reduction}). 
If \(\As\le \Bs\), Lemma \ref{monotone} gives
\(E(\As)\le E(\Bs)\) which implies that
\(\gen{E(\As)}\) embeds in \(\gen{E(\Bs)}\) by Proposition
\ref{reduction_to_embedding}.  The contrapositive of Lemma
\ref{embedding_to_rank} with the roles of \(\As\) and \(\Bs\)
reversed gives \(\rho(\As)\le \rho(\Bs)\).
\end{proof}

\begin{remark}
While Lemma \ref{R_order_char} does not mention groups or embeddings
between them, it is convenient to use the group theory concept of
the EA-class to establish the implication. 
It is not obvious how to provide a
proof of Lemma \ref{R_order_char} which avoids group theory.
\end{remark}

\subsection{Properties of block form signatures} \label{block:subsec}

We now broaden our analysis to \(\Bsig\).

\begin{lem} \label{regroup}
If \(\As\), \(\Bs\), and \(\Cs\) are in \(\Ssig\) with
\(\Bs,\Cs \ne\zero\), then 
\[
\exp\big(\As + \Bs * \exp(\Cs)\big) \leq \exp \big((\As+\Bs) *
\exp(\Cs)\big). 
\]
\end{lem}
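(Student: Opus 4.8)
The plan is to prove the inequality directly at the level of signatures, by exhibiting the left-hand side as (equivalent to) a restriction of a single inflation of the right-hand side, so that one step of the definition of \(\le\) on \(\Ssig\) suffices. First I would record both signatures explicitly. Write \(C\) for the base of \(\exp(\Cs)\) and set \(\Ds := \exp\big((\As+\Bs)*\exp(\Cs)\big)\). Unwinding the definitions of \(+\), \(*\) and \(\exp\), the signature \(\Ds\) has base \(A \cup B \cup C\) ordered so that \(A < B < C\), with internal values \(\exp(\As)\) on \(A\), \(\exp(\Bs)\) on \(B\) and \(E(\Cs)\) on \(C\), and with cross values identically \(1\) between \(A\) and \(B\), identically \(2\) between \(A\) and \(C\), and identically \(2\) between \(B\) and \(C\). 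The same computation for \(\exp\big(\As+\Bs*\exp(\Cs)\big)\) gives precisely the same data \emph{except} that its \(A\)-to-\(C\) cross value is \(1\) rather than \(2\); so the whole content of the lemma is that this one family of oscillation values can be lowered from \(2\) to \(1\) by a single reduction step.

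Next I would let \(m\) be the least element of the block \(B\) inside \(\Ds\) (which exists since \(\Bs \ne \zero\)) and form \(\infl{\Ds}{m}\). Because \(\Ds(i,m) = 1 > 0\) for every \(i \in A\), the inflation creates an order-isomorphic copy \(A^m\) of \(A\), and the base of \(\infl{\Ds}{m}\) is ordered \(A < A^m < B < C\). Using the three defining formulas for an inflated signature, together with the convention \(\Ds(m,m) = \infty\), I would compute that \(A^m\) again carries the internal value \(\exp(\As)\); that the value between \(A^m\) and \(B\) is \(\min(1,\,\cdot\,) = 1\) throughout; that the value between \(A^m\) and \(C\) is \(\min(1,2) = 1\); and that the original blocks \(B\) and \(C\), together with the value \(2\) between them, are untouched. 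Restricting \(\infl{\Ds}{m}\) to \(A^m \cup B \cup C\) then produces a signature whose data agrees, under the order-preserving identification of \(A^m\) with \(A\), with that of \(\exp\big(\As+\Bs*\exp(\Cs)\big)\). Hence the latter is a restriction of an inflation of \(\Ds\), which is exactly the desired inequality.

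The only real work is bookkeeping: one must keep the induced orderings and the three inflation formulas straight, and in particular observe that the reduction is driven by inflating at the \emph{bottom of the \(B\)-block}; inflating instead at the bottom of the \(C\)-block (the first thing one might try) produces copies whose oscillation with \(C\) is still \(2\) and so accomplishes nothing. The hypothesis \(\Cs \ne \zero\) is not used but is harmless, and I do not anticipate any genuine obstacle beyond this.
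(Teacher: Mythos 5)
Your proposal is correct and follows essentially the same route as the paper's proof: the paper likewise inflates \(\exp\big((\As+\Bs)*\exp(\Cs)\big)\) at the minimal element \(m\) of the \(\Bs\)-block and checks that sending each \(i\in\As\) to \(i^m\) (fixing \(\Bs*\exp(\Cs)\)) realizes the left-hand side inside that inflation, with the key computation being \(\min\big(1,\,\cdot\,\big)=1\). Your block-by-block bookkeeping of the cross values matches the paper's verification exactly.
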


\begin{proof}
Let \(m\) be the minimal element of \(\Bs\) in \(\exp\big((\As+\Bs)
* \exp(\Cs)\big)\). 
It suffices to check that the map
\[
\pi: \exp\big(\As + \Bs * \exp(\Cs)\big) \to
\infl{\exp\big((\As+\Bs) * \exp(\Cs)\big)}{m} 
\]
defined by
\[
\pi (i) =
\begin{cases}
i^m & \textrm{ if } i \in \As \\
i & \textrm{ if } i \in \Bs* \exp(\Cs)
\end{cases}
\] 
is an embedding.

First observe that the restrictions of \(\pi\) to
\(\As\) and \(\Bs * \exp(\Cs)\) are embeddings.
Furthermore, if \(i \in \As\) and
\(j \in \Bs * \exp(\Cs)\), then \(i^m < m \leq j\) and hence \(\pi\)
is order preserving. 
Finally suppose that \(i \in \As\) and \(j \in \Bs * \exp(\Cs)\).
By definition, 
\[
\infl{\exp\big((\As+\Bs) * \exp(\Cs)\big)}{m} (\pi(i),\pi(j))  =
\]
\[
\min\big( \exp\big((\As+\Bs) * \exp(\Cs)\big)(i,m), 
\exp\big((\As+\Bs) * \exp(\Cs)\big)(m,j) \big)
\]
\[
=\min \big(1,\exp\big((\As+\Bs) * \exp(\Cs)\big)(m,j) \big)
\]
\[
 = 1 =  \exp\big(\As+\Bs*\exp(\Cs)\big)(i,j).
\]
\end{proof}

\begin{lem} \label{weak_init_seg}
If \(\As\) is in \(\Bsig\) and \(\beta < \rho(\As)\),
then there exist \(\Bs,\Cs \in \Bsig\) such that \(\exp(\As)\) is
equivalent to \(\exp(\Bs+\Cs)\) and 
\(\beta \leq \rho(\Bs) < \rho(\As)\).
Moreover, if \(\As\) is indecomposable, then we can take \(\Cs = \As\).
\end{lem}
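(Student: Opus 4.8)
The plan is to argue by induction on the complexity of $\As$, proving the main assertion and the ``moreover'' clause simultaneously; note that the ``moreover'' clause is exactly the case in which $\As$ is indecomposable, since the indecomposable elements of $\Bsig$ are precisely those of the form $\exp(\Xs)$. The base cases are immediate: if $\As=\zero$ the statement is vacuous, and if $\As=\one$ one takes $\Bs=\zero$ and $\Cs=\one$, which also settles the ``moreover'' clause since $\one$ is indecomposable.

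For the decomposable case I would write $\As=\As'+\As''$ where $\As''$ is the last block of $\As$ (hence indecomposable) and $\As'\neq\zero$, so that $\rho(\As)=\rho(\As')+\rho(\As'')$ and both summands have strictly smaller complexity than $\As$. The argument then splits according to how $\beta$ compares with $\rho(\As')$. If $\beta<\rho(\As')$, apply the inductive hypothesis to $\As'$ at the parameter $\beta$ to get $\Bs,\Cs_0$ with $\exp(\As')\equiv\exp(\Bs+\Cs_0)$ and $\beta\le\rho(\Bs)<\rho(\As')$, and set $\Cs:=\Cs_0+\As''$; then $\exp(\As)=\exp(\As')*\exp(\As'')\equiv\exp(\Bs+\Cs_0)*\exp(\As'')=\exp(\Bs+\Cs)$ by Lemma \ref{monotone}. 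If $\beta=\rho(\As')$, take $\Bs:=\As'$ and $\Cs:=\As''$ directly. If $\beta>\rho(\As')$, write $\beta=\rho(\As')+\gamma$ with $0<\gamma<\rho(\As'')$, apply the ``moreover'' clause of the inductive hypothesis to the indecomposable $\As''$ at the parameter $\gamma$ to obtain $\Bs''$ with $\exp(\As'')\equiv\exp(\Bs''+\As'')$ and $\gamma\le\rho(\Bs'')<\rho(\As'')$, and set $\Bs:=\As'+\Bs''$ and $\Cs:=\As''$; again $\exp(\As)\equiv\exp(\Bs+\Cs)$ follows from Lemma \ref{monotone} and the $\rho$-bounds are routine. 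Throughout this case the only nontrivial input is the monotonicity of $+$, $*$ and $\exp$ with respect to $\leq$ from Lemma \ref{monotone}.

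The indecomposable case ($\As=\exp(\Xs)$ with $\Xs\neq\one$, so $\rho(\Xs)\ge 2$ and $\rho(\As)=\omega^{-1+\rho(\Xs)}$ is a limit, and here one must achieve $\Cs=\As$) is where the real work lies. The inequality $\exp(\exp(\Xs))\le\exp(\Bs+\exp(\Xs))$ is immediate from Lemma \ref{monotone}, since $\exp(\Xs)$ is a restriction, hence $\le$, of $\Bs+\exp(\Xs)$. For the reverse inequality I would use the block decomposition $\Xs=\sum_{i<n}\exp(\Xs_i)$ and the identity $\exp(\Xs)=\exp(\Xs^{(k)})*E(\Xs_k)*\cdots*E(\Xs_{n-1})$, where $\Xs^{(k)}:=\sum_{i<k}\exp(\Xs_i)$. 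Choosing $\Bs\in\Rsig$ with $\rho(\Bs)=\beta$ (surjectivity of $\rho$ on $\Rsig$, Lemma \ref{RhoIsIso}) and taking $k$ least with $\beta<\rho(\exp(\Xs^{(k)}))$, one application of Lemma \ref{regroup} (with the middle factor $\exp(\Xs^{(k)})$ and right factor $\exp(\exp(\Xs_k)+\cdots+\exp(\Xs_{n-1}))$) gives $\exp(\Bs+\exp(\Xs))\le\exp\big((\Bs+\exp(\Xs^{(k)}))*E(\Xs_k)*\cdots*E(\Xs_{n-1})\big)$; since $\rho(\Bs)=\beta<\rho(\exp(\Xs^{(k)}))$ one shows $\Bs\le\exp(\Xs^{(k)})$ via the $\Rsig$-machinery (Lemmas \ref{R_ordering} and \ref{rank_to_reduction}), and then part (\ref{gen_sum}) of Lemma \ref{duplication} collapses $(\Bs+\exp(\Xs^{(k)}))*E(\Xs_k)*\cdots*E(\Xs_{n-1})\equiv\exp(\Xs^{(k)})*E(\Xs_k)*\cdots*E(\Xs_{n-1})=\exp(\Xs)$, so applying $\exp$ yields $\exp(\Bs+\exp(\Xs))\le\exp(\exp(\Xs))$. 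When no proper truncation has $\rho$-value exceeding $\beta$ (i.e.\ $k=n$), or when $\Xs$ is itself indecomposable, one instead descends into the last block of $\Xs$ (or peels off one layer of exponentiation), reducing to a $\Bsig$-element of strictly smaller complexity and re-running the same analysis there while tracking the $\rho$-bounds.

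The main obstacle I anticipate is precisely this indecomposable case: making the reduction work uniformly requires coordinating the choice of $\Bs$, the choice of how far into the $*$-decomposition of $\exp(\Xs)$ to push it, and the verification $\Bs\le\exp(\Xs^{(k)})$, so that $\beta\le\rho(\Bs)<\rho(\As)$ and the absorption hypothesis $\rho(\Bs)<\rho(\exp(\Xs^{(k)}))$ both hold at the stage where Lemma \ref{duplication}(\ref{gen_sum}) is applied. Since $\exp(\Xs^{(k)})$ need not lie in $\Rsig$, establishing $\Bs\le\exp(\Xs^{(k)})$ may also require a short auxiliary induction showing that an element of $\Rsig$ reduces into any element of $\Bsig$ of no smaller $\rho$-value. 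By contrast, the base cases and the decomposable case are routine once Lemma \ref{monotone} is in hand.
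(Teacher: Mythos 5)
Your base cases and your treatment of the decomposable case are correct and essentially match the paper's argument (the paper writes \(\As = \Bs_0 + \Cs_0\) and recurses on \(\Cs_0\); your three-way split on \(\beta\) versus \(\rho(\As')\) amounts to the same thing). The genuine gap is in the indecomposable case \(\As = \exp(\Xs)\). Your main sub-case requires a \(k\) with \(1 \le k \le n-1\) and \(\beta < \rho\big(\exp(\Xs^{(k)})\big)\); since \(\rho(\Xs^{(n-1)}) < \rho(\Xs)\), the values \(\beta\) with \(\omega^{-1+\rho(\Xs^{(n-1)})} \le \beta < \omega^{-1+\rho(\Xs)} = \rho(\As)\) are never covered, and this uncovered tail is always nonempty and is where the work lies. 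For example, if \(\As = \exp(\one+\one)\) (so \(\rho(\As)=\omega\)) your sub-case handles only \(\beta = 0\), and if \(\Xs\) is itself indecomposable (\(n=1\), e.g.\ \(\As = E(\one+\one)\)) it handles nothing at all. For these \(\beta\) the naive recursion fails outright: you cannot apply the inductive hypothesis to \((\Xs,\beta)\) because \(\beta\) may well exceed \(\rho(\Xs)\). The paper's substantive content is exactly here: when \(\Xs = \Ds + \one\) it takes \(\Bs = \exp(\Ds)\cdot n\) with \(\beta \le \omega^{-1+\rho(\Ds)}\cdot n\) and collapses \(\exp\big(\exp(\Ds)\cdot n + \exp(\Ds)*\exp(\one)\big)\) via Lemmas \ref{regroup} and \ref{duplication}; when \(\rho(\Xs)\) is a limit it first replaces \(\beta\) by a \(\gamma < \rho(\Xs)\) with \(\beta \le \omega^{-1+\gamma}\), recurses on \(\Xs\) at \(\gamma\), and sets \(\Bs = \exp(\Es)\). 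Neither construction, nor the crucial ``take a logarithm before recursing'' step, appears in your sketch; ``descend into the last block and re-run the analysis'' does not produce them.

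There is a second problem even in the sub-case you do treat: the step \(\Bs \le \exp(\Xs^{(k)})\) compares an arbitrary element of \(\Rsig\) with an element of \(\Bsig\) by \(\rho\)-value. Lemmas \ref{R_ordering} and \ref{rank_to_reduction} give this only for pairs from \(\Rsig\); the extension to \(\Bsig\) is Lemma \ref{block_to_R}, whose proof relies on Lemma \ref{exact_init_seg} and hence on the present lemma. So the ``short auxiliary induction'' you invoke is essentially the statement being proved, and assuming it is circular. The paper avoids the comparison entirely by choosing \(\Bs\) to be a literal copy (or finite multiple) of a prefix of the decomposition, so that Lemma \ref{duplication}(\ref{gen_sum}) applies with identical summands. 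Finally, your induction on complexity would not support the successor-case recursion in any event, since \(\exp(\Ds)\cdot n\) has larger cardinality than \(\exp(\Ds+\one)\) though smaller \(\rho\); this is why the paper inducts on \(\rho(\As)\) first and on cardinality only second.
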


\begin{proof}
The proof is by induction on \(\rho(\As)\) and then on the
cardinality of \(\As\). 
Toward this end, let \(\As\) be given;
there are now several cases to consider.
If \(\As = \zero\), then the lemma is vacuously true and if
\(\As = \one\) then \(\beta = 0\) and we can take \(\Bs =\zero\) and \(\Cs = \As\).
 
Next suppose that \(\As = \Bs_0 + \Cs_0\) with both \(\Bs_0\) and
\(\Cs_0\) not \(\zero\).
If \(\beta \leq \rho(\Bs_0)\), then we are done.
Otherwise, let \(\gamma\) be such that
\(\beta = \rho(\Bs_0) + \gamma\), noting that \(\gamma < \rho(\Cs_0)\).
Since the cardinality of \(\Cs_0\) is smaller than that of \(\As\),
we can apply our induction hypothesis to find 
\(\Bs_1\) and \(\Cs\) such that \(\exp(\Bs_1+\Cs)\) is equivalent to
\(\exp(\Cs_0)\) and 
\(\gamma \leq \rho(\Bs_1) < \rho(\Cs_0)\).
Observe that \(\Bs = \Bs_0 + \Bs_1\) and \(\Cs\) satisfy the
conclusion of the lemma: 
\[
\beta = \rho( \Bs_0 ) + \gamma \le \rho( \Bs_0 ) + \rho( \Bs_1 )
= \rho( \Bs_0 + \Bs_1 ) = \rho(\Bs)
\]
while basic manipulations with the arithmetic in \(\Bsig\) gives
\[
\exp(\Bs + \Cs) =  \exp( \Bs_0 ) * \exp( \Bs_1 + \Cs) \equiv
\exp(\Bs_0) * \exp(\Cs_0) = \exp(\As). 
\]

Next suppose that \(\As = \exp(\Ds + \one)\) for \(\Ds \ne \zero\).
Let \(n\) be such that \(\beta \leq \omega^{-1 + \rho(\Ds)} \cdot n\).
Set \(\Bs = \exp(\Ds) \cdot n\).
By our choice of \(n\) we have that \(\beta \leq \rho(\Bs) < \rho(\As)\).
Also, it is clear that \(\exp(\As) \leq \exp(\Bs+\As)\) and hence it
is sufficient in this case to show 
that \(\exp(\Bs+\As) \leq \exp(\As)\).
This follows from Lemmas \ref{regroup} and \ref{duplication}:
\[
\begin{split}
\exp\big(\exp(\Ds) \cdot n + &\exp(\Ds+\one)\big) =
\exp\big(\exp(\Ds) \cdot n + \exp(\Ds)* \exp(\one)\big) \\
\leq &\exp\big(\big(\exp(\Ds) \cdot (n+1)\big) * \exp(\one)\big) \\
\leq &\exp\big(\exp(\Ds) * \exp(\one)\big) =
\exp\big(\exp(\Ds+\one)\big).
\end{split}
\]

Finally, suppose that \(\As = \exp(\Ds)\) and \(\rho(\Ds)\) is a
limit ordinal \(\delta\). 
Let \(\gamma < \delta\) be such that \(\beta \leq \omega^{-1 + \gamma}\).
By our induction hypothesis, there exist \(\Es,\Fs \in \Bsig\) such that
\(\exp(\Ds) \equiv \exp(\Es + \Fs)\) and \(\gamma \leq \rho(\Es) <
\delta\).
We set \(\Bs = \exp(\Es)\).
As in the previous case, it suffices to show that
\(\exp(\Bs+\As) \leq \exp(\As)\). 
This again follows  from Lemmas \ref{regroup} and \ref{duplication}: 
\[
\begin{split}
\exp\big(\Bs + \As\big) \equiv &\exp\big(\exp(\Es) +
\exp(\Es+\Fs)\big) \\
 = &\exp \big(\exp(\Es) + \exp(\Es)*\exp(\Fs)\big) \\
\leq &\exp\big(\big(\exp(\Es) + \exp(\Es)\big) * \exp(\Fs)\big) \\
 \leq &\exp\big(\exp(\Es)* \exp(\Fs)\big) \equiv
 \exp\big(\exp(\Ds)\big) = \exp(\As).
\end{split}
\]
\end{proof}

\begin{lem} \label{exact_init_seg}
If \(\As\) is in \(\Bsig\) and \(\beta < \rho(\As)\),
then there exist \(\Bs,\Cs \in \Bsig\) such that \(\exp(\As)\) is
equivalent to \(\exp(\Bs+\Cs)\) and 
\(\beta = \rho(\Bs)\).
Moreover, if \(\As\) is indecomposable, then we can take \(\Cs = \As\).
\end{lem}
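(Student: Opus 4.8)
The lemma only sharpens Lemma~\ref{weak_init_seg}, strengthening ``\(\beta \le \rho(\Bs)\)'' to ``\(\beta = \rho(\Bs)\)''. The plan is to invoke Lemma~\ref{weak_init_seg} and then repeatedly ``trim'' the first summand until its \(\rho\)-value drops to exactly \(\beta\). First apply Lemma~\ref{weak_init_seg} to obtain \(\Bs_0,\Cs_0\in\Bsig\) with \(\exp(\As)\equiv\exp(\Bs_0+\Cs_0)\) and \(\beta\le\rho(\Bs_0)<\rho(\As)\), with moreover \(\Cs_0=\As\) when \(\As\) is indecomposable. If \(\rho(\Bs_0)=\beta\) we are done. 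Otherwise \(\beta<\rho(\Bs_0)\), so (as \(\Bs_0\in\Bsig\)) a further application of Lemma~\ref{weak_init_seg} yields \(\Bs_1,\Ds_0\in\Bsig\) with \(\exp(\Bs_0)\equiv\exp(\Bs_1+\Ds_0)\) and \(\beta\le\rho(\Bs_1)<\rho(\Bs_0)\). Iterating produces sequences \((\Bs_i)\) and \((\Ds_i)\) in \(\Bsig\) with \(\exp(\Bs_i)\equiv\exp(\Bs_{i+1}+\Ds_i)\) and \(\beta\le\rho(\Bs_{i+1})<\rho(\Bs_i)\); since a strictly decreasing sequence of ordinals is finite, the process halts at some stage \(k\), and it can halt only when \(\rho(\Bs_k)=\beta\).

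To reassemble, use the identity \(\exp(\Xs+\Ys)=\exp(\Xs)*\exp(\Ys)\) and the associativity of \(*\), together with the monotonicity of \(\Xs\mapsto\Xs*\exp(\Ys)\) in its first argument (the case of Lemma~\ref{monotone}(4) with equal second arguments), which shows that \(\equiv\) is preserved under multiplying a fixed \(\exp(\cdot)\) on the right. Pushing the equivalence \(\exp(\Bs_i)\equiv\exp(\Bs_{i+1}+\Ds_i)\) through the growing tail at each step gives
\[
\exp(\As)\ \equiv\ \exp(\Bs_0+\Cs_0)\ \equiv\ \cdots\ \equiv\ \exp\big(\Bs_k+\Ds_{k-1}+\cdots+\Ds_0+\Cs_0\big).
\]
Taking \(\Bs:=\Bs_k\) and \(\Cs:=\Ds_{k-1}+\cdots+\Ds_0+\Cs_0\in\Bsig\) proves the first assertion, because \(\rho(\Bs_k)=\beta\).

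For the ``moreover'', run the argument with \(\As\) indecomposable, so that \(\Cs_0=\As\). Each \(\exp(\Bs_{i+1})\) is a restriction of \(\exp(\Bs_{i+1}+\Ds_i)\equiv\exp(\Bs_i)\), hence \(\exp(\Bs_k)\le\exp(\Bs_0)\). Now \(\exp(\As)\le\exp(\Bs_k+\As)\) is the restriction map, while Lemma~\ref{monotone}(4) gives
\[
\exp(\Bs_k+\As)=\exp(\Bs_k)*\exp(\As)\ \le\ \exp(\Bs_0)*\exp(\As)=\exp(\Bs_0+\As)\ \equiv\ \exp(\As).
\]
Hence \(\exp(\Bs_k+\As)\equiv\exp(\As)\), and since \(\rho(\Bs_k)=\beta\) we may take \(\Bs:=\Bs_k\), \(\Cs:=\As\).

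The genuinely new content has already been spent in Lemma~\ref{weak_init_seg}; what remains is bookkeeping, and the step where an error is most likely to creep in is the reassembly: one must check that every \(\Ds_i\) really lies in \(\Bsig\) (so the iterated sums stay there), that each equivalence is transported using only monotonicity of \(*\) in its \emph{first} argument (so that no ``\(\exp\) reflects \(\le\)'' principle is invoked), and that termination of the iteration rests solely on the well-foundedness of the ordinals.
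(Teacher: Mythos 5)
Your proof is correct and follows essentially the same route as the paper's: the paper performs an induction on \(\rho(\As)\), applying Lemma~\ref{weak_init_seg} once and then the inductive hypothesis to the first summand \(\Xs\), which is exactly the iteration you have unrolled (with termination guaranteed by well-foundedness of the ordinals in place of the induction), and it reassembles via the same identity \(\exp(\Xs+\Ys)=\exp(\Xs)*\exp(\Ys)\) together with monotonicity of \(*\). Your treatment of the indecomposable case likewise mirrors the paper's chain \(\exp(\As)\le\exp(\Bs+\As)\le\exp(\Bs+\Rs+\As)\equiv\exp(\As)\).
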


\begin{proof}
The proof is by induction on \(\rho(\As)\).
As before, the case \(\rho(\As) \leq 1\) is trivial.
If \(\beta < \rho(\As)\), then by Lemma \ref{weak_init_seg},
there exist \(\Xs,\Ys \in \Bsig\) such that 
\(\exp(\Xs+\Ys) \equiv \exp (\As)\), \(\beta \leq \rho(\Xs) <
\rho(\As)\), and \(\Ys = \As\) if 
\(\As\) is indecomposable.
If \(\rho(\Xs) = \beta\), then we are done.
Otherwise,
by our inductive assumption, there exist \(\Bs,\Rs \in \Bsig\) such
that 
\(\rho(\Bs) = \beta\) and \(\exp (\Bs+\Rs) \equiv \exp(\Xs)\).
If we set \(\Cs := \Rs + \Ys\), we have that
\[
\begin{split}
\exp(\Bs + \Cs) = &\exp\big(\Bs + (\Rs+\Ys)\big) = \exp(\Bs+\Rs)
* \exp(\Ys) \\
\equiv  &\exp(\Xs) * \exp(\Ys) = \exp(\Xs+\Ys)  \equiv \exp(\As).
\end{split}
\]
If \(\As\) is indecomposable, then we get \(\exp(\Bs+\As)\equiv
\exp(\As)\) from
\[
\begin{split}
\exp(\As) \le &\exp(\Bs+\As) \le \exp(\Bs + \Rs + \As)
=  \exp( \Bs + \Rs) * \exp(\As) \\
\equiv & \exp(\Xs)*\exp(\As) = \exp(\Xs+\As) \equiv \exp(\As).
\end{split}
\]
So \(\Cs=\As\) fits the conclusion of the lemma.
\end{proof}

\begin{lem} \label{block_to_R}
If \(\As\) is an indecomposable element of \(\Bsig\), then there is
a unique indecomposable \(\Bs \in \Rsig\) such that 
\(\As \equiv \Bs\).
\end{lem}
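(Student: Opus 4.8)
The plan is to establish uniqueness and existence separately; uniqueness is the easy half and uses nothing from the existence argument.

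\textbf{Uniqueness.} Suppose $\Bs$ and $\Bs'$ both lie in $\Rsig$ with $\As \equiv \Bs$ and $\As \equiv \Bs'$. Transitivity of $\le$ gives $\Bs \le \As \le \Bs'$ and $\Bs' \le \As \le \Bs$, so $\Bs \equiv \Bs'$; Lemma \ref{R_order_char} then yields $\rho(\Bs) \le \rho(\Bs')$ and $\rho(\Bs') \le \rho(\Bs)$, hence $\rho(\Bs) = \rho(\Bs')$, and so $\Bs = \Bs'$ because $\rho$ is injective on $\Rsig$ by Lemma \ref{RhoIsIso}. Note this argument needs no hypothesis on $\As$ beyond $\As \in \Bsig$.

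\textbf{Existence.} For $\alpha < \epsilon_0$ let $\Rs_\alpha$ be the unique element of $\Rsig$ with $\rho(\Rs_\alpha) = \alpha$ (it exists by Lemma \ref{RhoIsIso}, since $\alpha$ has a Cantor normal form). I would in fact prove, by induction on the complexity of $\As$ (the lexicographically ordered pair of its cardinality and the sum of its values), the stronger statement that \emph{every} $\As \in \Bsig$ satisfies $\As \equiv \Rs_{\rho(\As)}$; the lemma is the case of indecomposable $\As$. The cases $\As \in \{\zero,\one\}$ are immediate, as these already lie in $\Rsig$ with $\rho(\zero)=0$, $\rho(\one)=1$. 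If $\As \notin \{\zero,\one\}$ is indecomposable, then $\As = \exp(\Ds)$ for a unique $\Ds \in \Bsig$; here $\Ds \notin \{\zero,\one\}$ and $\Ds$ has strictly smaller complexity (applying $\exp$ leaves the cardinality $|\Ds|\ge 2$ fixed but adds $1$ to each of the positively many values), so the induction hypothesis gives $\Ds \equiv \Rs_{\rho(\Ds)}$. By Lemma \ref{monotone} then $\As = \exp(\Ds) \equiv \exp(\Rs_{\rho(\Ds)})$, and $\exp(\Rs_{\rho(\Ds)})$ lies in $\Rsig$ (which is closed under $\exp$) with $\rho$-value $\omega^{-1+\rho(\Ds)} = \rho(\As)$, hence equals $\Rs_{\rho(\As)}$.

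\textbf{The decomposable case and the main obstacle.} If $\As = \exp(\Xs_0) + \cdots + \exp(\Xs_{n-1})$ with $n \ge 2$ and each $\Xs_i \ne \zero$, then every summand $\exp(\Xs_i)$ is indecomposable of strictly smaller complexity, so by induction $\exp(\Xs_i) \equiv \Rs_{\gamma_i}$ with $\gamma_i := \rho(\exp(\Xs_i)) = \omega^{-1+\rho(\Xs_i)}$ a power of $\omega$; hence $\As \equiv \sum_{i<n} \Rs_{\gamma_i}$ by Lemma \ref{monotone}. Since the Cantor normal form of $\gamma_0 + \cdots + \gamma_{n-1}$ is obtained from the word $\gamma_0,\ldots,\gamma_{n-1}$ by repeatedly deleting an entry that is strictly smaller than its right neighbour, and since each such deletion preserves the ordinal sum, Lemma \ref{monotone} (to keep the unchanged prefix and suffix fixed) reduces the identity $\sum_{i<n}\Rs_{\gamma_i} \equiv \Rs_{\gamma_0 + \cdots + \gamma_{n-1}}$ to the following \emph{absorption fact}: for powers of $\omega$ $\gamma < \delta$ below $\epsilon_0$, $\Rs_\gamma + \Rs_\delta \equiv \Rs_\delta$. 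The inequality $\Rs_\delta \le \Rs_\gamma + \Rs_\delta$ is immediate, $\Rs_\delta$ being a restriction of the left side. The reverse inequality $\Rs_\gamma + \Rs_\delta \le \Rs_\delta$ is the crux of the entire proof: here I would write $\Rs_\delta = \exp(\Es)$ with $\Es \in \Rsig$, use $\gamma = \rho(\Rs_\gamma) < \delta = \rho(\Rs_\delta)$ together with Lemma \ref{exact_init_seg} to obtain $\Bs,\Cs$ with $\exp(\Es) \equiv \exp(\Bs+\Cs)$ and $\rho(\Bs)=\gamma$, and then absorb the leading summand $\Rs_\gamma$ into $\exp(\Bs)$ by the same manipulations with Lemmas \ref{regroup} and \ref{duplication} used in the proofs of Lemmas \ref{weak_init_seg} and \ref{exact_init_seg}. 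I expect this absorption step --- reconciling the additive collapsing built into Cantor normal forms with the rigid relation $\le$ on $\Ssig$ --- to be the one genuinely delicate point; all the remaining work is bookkeeping with $\rho$ and with the closure of $\Rsig$ under $+$, $*$, and $\exp$.
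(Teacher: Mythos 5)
Your uniqueness argument is exactly the paper's. The existence argument, however, contains a genuine gap: the strengthened induction hypothesis --- that \emph{every} \(\As \in \Bsig\) satisfies \(\As \equiv \Rs_{\rho(\As)}\) --- is false for decomposable \(\As\), and the ``absorption fact'' to which you reduce the decomposable case is precisely the false instance. For powers of \(\omega\) \(\gamma < \delta\) one has \(\gen{\Rs_\gamma + \Rs_\delta} = \gen{\Rs_\gamma} + \gen{\Rs_\delta} \cong \gen{\Rs_\delta} + \gen{\Rs_\gamma} = \gen{\Rs_{\delta+\gamma}}\), and since ordinal addition does not absorb on the right (\(\delta + \gamma > \delta\)), this group does not embed into \(\gen{\Rs_\delta}\) (Lemma \ref{RsigEmbedOrder}); by Proposition \ref{reduction_to_embedding}, \(\Rs_\gamma + \Rs_\delta \not\leq \Rs_\delta\). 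Concretely, with \(\gamma = 1\), \(\delta = \omega\): \(\one + (\one * \one) \not\leq \one * \one\), because \(\Z + (\Z \wr \Z)\) does not embed into \(\Z \wr \Z\) (the centralizer of every nontrivial element of \(\Z \wr \Z\) is abelian, so no nontrivial element centralizes a copy of \(\Z \wr \Z\)). This non-absorption is exactly why the lemma is restricted to indecomposable elements of \(\Bsig\), and why the paper must later pass to \(\Ssig'\) and to biembeddability of groups, rather than \(\equiv\), to handle reordering of summands. The failure also propagates into your indecomposable case, since there you apply the induction hypothesis to the possibly decomposable argument \(\Ds\) of \(\As = \exp(\Ds)\).

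The statement you actually need in that case --- \(\exp(\Ds) \equiv \exp(\Es)\) for some \(\Es \in \Rsig\) --- is true, but it cannot be obtained by applying \(\exp\) to a (nonexistent) equivalence \(\Ds \equiv \Es\). The correct mechanism, and the one the paper uses, is that absorption of a small summand by a larger one \emph{does} hold underneath an enclosing \(\exp\): \(\exp(\Rs_\gamma + \Rs_\delta) \equiv \exp(\Rs_\delta)\). Accordingly, the paper inducts on \(\rho(\As)\) over indecomposable \(\As = \exp\big(\sum_{i<n}\exp(\Xs_i)\big)\) only, replaces each \(\Xs_i\) by an equivalent \(\Ys_i \in \Rsig\), and then merges adjacent out-of-order summands \emph{inside} the outer \(\exp\): using \(\exp(\Bs+\Cs) = \exp(\Bs)*\exp(\Cs)\) to localize to the adjacent pair and Lemma \ref{exact_init_seg} together with the induction hypothesis at smaller rank to establish \(\exp\big(\exp(\Ys_k)+\exp(\Ys_{k+1})\big) \equiv \exp\big(\exp(\Ys_{k+1})\big)\) when \(\rho(\Ys_k) < \rho(\Ys_{k+1})\). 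Your instinct that the absorption step is the delicate point is right; the repair is to never detach it from the enclosing \(\exp\) (equivalently, from a right factor of the form \(*\exp(\cdot)\), as in Lemma \ref{duplication}).
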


\begin{proof}
First observe that if \(\As \in \Bsig\) is indecomposable, then
\(\As = \exp(\Xs)\) for some \(\Xs\). 
Uniqueness of \(\Bs\) follows immediately from Lemmas \ref{RhoIsIso} and
\ref{R_order_char}. 

The proof of existence of \(\Bs\) is by induction on \(\rho(\As)\).
If \(\rho(\As) = 1\), then \(\As = \one\) and \(\As \in \Rsig\).
If \(\As = \exp(\Xs)\) and \(\Xs\) is indecomposable, then by our
induction hypothesis, 
there is a \(\Ys \in \Rsig\) such that \(\Ys \equiv \Xs\).
Since \(\Bs = \exp(\Ys)\) is also in \(\Rsig\),
it follows from Lemma \ref{monotone} that \(\As = \exp(\Xs) \equiv
\exp(\Ys) = \Bs\). 

If \(\As = \exp\big( \sum_{i<n} \exp(\Xs_i)\big)\), then by our
induction hypothesis there are \((\Ys_i \mid i < n)\) in \(\Rsig\)
such that 
\(\exp(\Ys_i) \equiv \exp(\Xs_i)\) for each \(i < n\).
If there is no \(k < n-1\) such that \(\rho(\Ys_k) <
\rho(\Ys_{k+1})\), then 
\(\exp\big( \sum_{i<n} \exp(\Ys_i)\big)\) is in \(\Rsig\), is indecomposable, and is
equivalent to \(\As\). 

Suppose now that there is a \(k < n-1\) such that \(\rho(\Ys_k) <
\rho(\Ys_{k+1})\) and note that in this case \(n > 1\). 
We first claim that \(\exp\big(\exp(\Ys_k) + \exp(\Ys_{k+1})\big)
\equiv \exp\big(\exp(\Ys_{k+1})\big)\). 
To see this, observe that by Lemma \ref{exact_init_seg} with
\(\As=\exp(\Ys_{k+1})\) and \(\beta=\rho(\exp(\Ys_k))\)
there exists
a \(\Rs \in \Bsig\) such that 
\(\rho(\Rs) = \rho\big(\exp(\Ys_k)\big)\) and \(\exp\big(\Rs +
\exp(\Ys_{k+1})\big) \equiv \exp\big(\exp(\Ys_{k+1})\big)\). 
Since
\[
\rho\big(\exp(\Ys_k)\big) < \rho\big(\exp(\Ys_{k+1})\big) \leq \rho(\As),
\] we can apply
our induction hypothesis to conclude that \(\Rs \equiv \exp(\Ys_k)\)
and thus 
that \(\exp\big( \exp(\Ys_k) + \exp(\Ys_{k+1})\big) \equiv
\exp\big(\exp(\Ys_{k+1})\big)\). 
Removing \(\Ys_k\) from the sum, reindexing the remaining summands, and repeating the process if
necessary, we eventually 
arrive at an indecomposable element of \(\Rsig\) which is equivalent to \(\As\).
\end{proof}

\subsection{Representing elements of \(\Ssig\) in \(\Rsig\)}

\label{Scal_reduction:subsec}

Next we turn to the general analysis of elements of \(\Ssig\) in order to complete the proof of Theorem \ref{R_thm}.
We need the following characterization of nontrivial products,
analogous to Lemma \ref{dir_sum_char}.

\begin{lem} \label{factor_lem}
Suppose \(\As\) is in \(\Ssig\) and has cardinality at least \(2\) and
maximum element \(n\).
If \(\As\) satisfies the following conditions: 
\begin{itemize}

\item for all \(i < n\), \(\As(i,n) \geq 1\) and

\item
there exist \(m < n\) such that \(\As(i,n) = 1\) if and only if \(i \leq m\),
\end{itemize} 
then \(\As = \Bs*\exp(\Cs)\) for some nonzero
\(\Bs,\Cs \in \Ssig\).
\end{lem}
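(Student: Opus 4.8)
The plan is to produce the witnessing decomposition explicitly from the hypothesised $m$. Put $B := \{i : i \le m\}$ and $C := \{i : m < i \le n\}$, so that the base of $\As$ is partitioned into $B$ followed by $C$; both are nonempty, since $0 \in B$ (as $m \ge 0$ forces $\As(0,n) = 1$) and $n \in C$. Let $\Bs$ be the restriction of $\As$ to pairs from $B$: because the relation (!) is a condition on triples, it is inherited by suborders, so $\Bs \in \Ssig$, and likewise the restriction of $\As$ to pairs from $C$ lies in $\Ssig$. I would then take $\Cs$ to be the pointwise predecessor of that restriction; the goal is to show that the restriction of $\As$ to $C$ equals $\exp(\Cs)$ for some (necessarily unique, nonzero) $\Cs \in \Ssig$, which, by the already-noted fact that the range of $\exp$ is exactly the positive-valued signatures, reduces to showing this restriction takes only positive values. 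Once these pieces are in hand, $\As = \Bs * \exp(\Cs)$ follows by unwinding the definition of $*$ at the level of signatures, provided one also checks that $\As(i,j) = 1$ for every $i \in B$ and $j \in C$.

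So two verifications remain, and both are immediate applications of Proposition \ref{!_equiv}. First record that $\As(i,n) = 1$ for $i \in B$ (hypothesis), while $\As(i,n) \ge 2$ for $i \in C$ with $i \ne n$ (such an $i$ satisfies $i > m$, hence $\As(i,n) \ne 1$, while $\As(i,n) \ge 1$). For the cross-values, fix $i \in B$ and $j \in C$; if $j = n$ the value is $1$ by hypothesis, and otherwise $i < j < n$, so (!) gives $\As(i,j) = \As(j,n) \mathbin{!} \As(i,n)$ with $p := \As(j,n) \ge 2$ and $q := \As(i,n) = 1$; since $p \ne q$, clause (\ref{rFocPoint}) of Proposition \ref{!_equiv} forces $\As(i,j) = \min(p-1,q) = 1$. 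For positivity inside $C$, fix $i < j$ in $C$; if $j = n$ then $\As(i,n) \ge 2$, and otherwise $i < j < n$ gives $\As(i,j) = \As(j,n) \mathbin{!} \As(i,n)$ with both $\As(j,n), \As(i,n) \ge 2$, whence clause (\ref{rFocPoint}) yields $\As(i,j) \ge \min(\As(j,n)-1, \As(i,n)) \ge 1$. This is all that is needed, since $\Cs \ne \zero$ as $C \ne \emptyset$ and $\Bs \ne \zero$ as $B \ne \emptyset$.

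There is no real obstacle here; the argument is essentially bookkeeping around the definitions. The one point that genuinely uses the full strength of Proposition \ref{!_equiv}, rather than just its inequality half, is pinning down the cross-values to be exactly $1$: this is where the ``equality holds unless $p = q$'' clause is essential, and the hypothesis that the indices $i$ with $\As(i,n) = 1$ are exactly those with $i \le m$ is precisely what guarantees $p \ne q$ in that step. Everything else --- nonemptiness of the two factors, that restrictions of a signature remain in $\Ssig$, and that a positive-valued signature is $\exp$ of a signature --- is routine.
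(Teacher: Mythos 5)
Your proof is correct and follows essentially the same route as the paper's: both use the hypothesis together with clause (\ref{rFocPoint}) of Proposition \ref{!_equiv} (via the (!) relation applied to triples \(i<j<n\)) to show the cross-values are exactly \(1\) and the values within the top block are positive, then read off the decomposition \(\As = \Bs * \exp(\Cs)\). The extra bookkeeping you supply (restrictions stay in \(\Ssig\), positive-valued signatures lie in the range of \(\exp\), the singleton case \(m=n-1\)) matches what the paper leaves implicit or notes in passing.
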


\begin{proof}
If \(i \leq m < j < n\), then \(\As(i,n) = 1 < \As(j,n)\) and hence
\(\As(i,j) = \min \big(\As(j,n)-1,1\big) = 1\).
Furthermore, if \(m < i < j < n\) then \(\As(i,j) \geq  \min (\As(j,n)-1, \As(i,n)) > 0\).
Thus \(\As = \Bs * \exp(\Cs)\), where \(\exp(\Cs)\) consists of the
elements of \(\As\) above 
\(m\) and \(\Bs\) consists of the remaining elements.  This holds
even if \(m=n-1\).  In this case \(\As = \Bs*\one = \Bs *
\exp(\one)\). 
\end{proof}

The next lemma immediately yields Proposition \ref{+*_form}.

\begin{lem} \label{decomp_lem}
If \(\As \ne \zero,\one\) is in \(\Ssig\),
then one of the following is true for some nonzero \(\Bs,\Cs \in \Ssig\):
\begin{enumerate}

\item \(\As = \exp(\Bs)\),

\item \(\As = \Bs + \Cs\), or

\item \(\As\) is equivalent to \(\Bs * \exp(\Cs)\) and \(\Bs * \exp(\Cs)\) has the same 
cardinality as \(\As\).

\end{enumerate}
\end{lem}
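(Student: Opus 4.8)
The plan is to organize the argument around the values $\As(i,n)$, where $n$ denotes the maximum element of the (canonical) base of $\As$; note $|\As|\ge 2$. First I would dispose of the two easy cases. If $\As(i,n)=0$ for some $i<n$, then the contrapositive of Lemma~\ref{top_positive} says $\As$ is decomposable, so $\As=\Bs+\Cs$ for nonzero $\Bs,\Cs\in\Ssig$; this is case~(2). If instead $\As$ takes only positive values, then $\As$ lies in the range of $\exp$ (as recorded just before the statement), so $\As=\exp(\Bs)$ for some $\Bs\in\Ssig$ with $|\Bs|=|\As|\ge 2$, hence $\Bs\ne\zero$; this is case~(1). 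It remains to treat the case in which $\As(i,n)\ge 1$ for all $i<n$ but $\As(i,j)=0$ for some pair $i<j$ (necessarily with $j<n$), and here I would aim for case~(3).

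In this remaining case, a short application of (!) --- using Proposition~\ref{!_equiv} together with $\As(i,n),\As(j,n)\ge 1$ --- forces $\As(j,n)=1$, so the set $M:=\{i<n:\As(i,n)=1\}$ is nonempty. If $M$ is an initial segment $\{0,\dots,m\}$ of the base (necessarily $m<n$ since $m\in M$), then $\As(i,n)=1$ if and only if $i\le m$, and Lemma~\ref{factor_lem} applies directly to give $\As=\Bs*\exp(\Cs)$ for nonzero $\Bs,\Cs\in\Ssig$; being a literal equality, this gives case~(3) with the cardinality condition automatic.

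The crux is the case in which $M$ is not an initial segment. Put $m:=\max M$. Since every element below $n$ has positive oscillation with $n$, the inflation $\infl{\As}{n}$ adjoins a copy $i^n$ of each $i<n$, with all copies placed above the original elements below $n$ and below $n$. I would restrict $\infl{\As}{n}$ to the subset
\[
M\ \cup\ \{\,i^n : i\le m \text{ and } \As(i,n)\ge 2\,\}\ \cup\ \{\,j : m<j<n\,\}\ \cup\ \{n\},
\]
which one checks has exactly $|\As|$ elements and, in the induced order, splits as $M$ below the remaining ``top'' block. Using the defining formulas for $\infl{\As}{n}$ and (!), one verifies that every element of $M$ has oscillation exactly $1$ with every element of the top block and that the top block has only positive oscillations; hence the restriction equals $\Bs*\exp(\Cs)$, where $\Bs$ is carried by $M$ and $\exp(\Cs)$ by the top block. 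Both are nonzero --- the failure of $M$ to be initial forces the block of copies to be nonempty, so $|\Cs|\ge 2$. Since $\Bs*\exp(\Cs)$ is a restriction of an inflation of $\As$, we get $\Bs*\exp(\Cs)\le\As$; conversely $\As$ is itself a restriction of a single inflation of $\Bs*\exp(\Cs)$ (by its top element), giving $\As\le\Bs*\exp(\Cs)$. Thus $\As\equiv\Bs*\exp(\Cs)$ with matching cardinality, which is case~(3).

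The main obstacle is this last case: tracking the linear order produced by the inflation and verifying both directions of the biembeddability $\As\equiv\Bs*\exp(\Cs)$ from explicit restrictions of single inflations. The other cases follow quickly from Lemmas~\ref{top_positive} and~\ref{factor_lem} and the description of the range of $\exp$.
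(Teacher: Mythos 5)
Your handling of the first two cases, the use of (!) to see that the witnessing $j$ has $\As(j,n)=1$, and the construction of the restriction $\Ds$ of $\infl{\As}{n}$ are all sound and essentially match the paper: your $\Ds$ differs from the paper's only in keeping the originals $j$ with $m<j<n$ rather than their copies $j^n$, which is immaterial, and your verification that it has the form $\Bs*\exp(\Cs)$ with $|\Bs*\exp(\Cs)|=|\As|$, hence $\Bs*\exp(\Cs)\le\As$, is correct. The gap is in the reverse inequality $\As\le\Bs*\exp(\Cs)$, which is where the paper spends most of its effort (Claims \ref{PiOrder}--\ref{PiIso}). Your claim that $\As$ is a restriction of a \emph{single} inflation of $\Bs*\exp(\Cs)$ by its top element is false in general. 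A single inflation by the top element places all new copies in one block above all originals, so in $\infl{\Ds}{n}$ the elements with oscillation $1$ with $n$ occupy at most two blocks of the linear order, and likewise for those with oscillation $\ge 2$; but $\As$ may alternate between these two kinds of elements more often than that pattern permits.

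Concretely, take $\As$ on $\{0,1,2,3,4\}$ with $\As(0,4)=\As(2,4)=2$, $\As(1,4)=\As(3,4)=1$, $\As(0,2)=\As(1,2)=1$, and $\As(0,1)=\As(0,3)=\As(1,3)=\As(2,3)=0$. One checks (!) holds for every triple, $M=\{1,3\}$ is not an initial segment, and $\Ds$ has base $1<3<0^4<2^4<4$ with oscillations $1,1,2,2$ against $4$. In $\infl{\Ds}{4}$ the elements below $4$ occur in the order: the two originals with oscillation $1$, the two with oscillation $2$, then the four copies repeating that pattern; so the sequence of oscillations with $4$ read along the order is $1,1,2,2,1,1,2,2$. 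An embedded copy of $\As$ would require an increasing sequence $x_0<x_1<x_2<x_3<4$ realizing the pattern $2,1,2,1$, and no such subsequence exists. (The relation $\As\le\Ds$ is nonetheless true; it just cannot be witnessed in one step.) The paper's remedy is to perform a \emph{sequence} of inflations of $\Ds$, one for each $u_i^n$ where $u_i$ is the least element above $i$ with $\As(u_i,n)>1$; this interleaves copies of the oscillation-$1$ elements at the correct positions among the $i^n$, and the map $i\mapsto i^n$ (for $\As(i,n)>1$), $i\mapsto i^{(u_i^n)}$ (for $\As(i,n)=1$) is then checked to be an isomorphism onto a restriction. This interleaving device, together with the bookkeeping of Claims \ref{PiOrder}, \ref{LowOscSpefics}, and \ref{PiIso}, is the idea missing from your proposal.
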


\begin{proof}
Let \(\As\) be given with maximum element \(n\). 
We are done if there are no \(i < j \leq n\) such
that \(\As(i,j) = 0\) since then \(\As = \exp(\Bs)\) for some \(\Bs\).
If \(\As(i,n) = 0\) for some \(i < n\), then \(\As\) is decomposable
by Lemma \ref{top_positive} 
and we are finished.

Now suppose the first two conclusions of the lemma do not occur.
In this case, there is an \(i < j < n\) such that \(\As(i,j) = 0\)
but there is no \(i' < n\) such 
that \(\As(i',n) = 0\).
Since \(\As\) is in \(\Ssig\), it must be that \(\As(j,n) = 1\) and
in particular there is a 
\(j < n\) such that \(\As(j,n) = 1\).
Define \(\Ds\) to be the restriction of \(\infl{\As}{n}\) such that
\[
\Ds = \{i \mid \As(i,n) =1\}  \cup \{i^n \mid i<n \textrm{ and }\As(i,n) > 1\} \cup \{n\}.
\]
Clearly \(\Ds \leq \As\).
Observe that \(\Ds\) satisfies the hypothesis of Lemma
\ref{factor_lem} and thus has the desired form 
\(\Bs * \exp(\Cs)\) for some nontrivial \(\Bs\) and \(\Cs\).

Thus it is sufficient to show that \(\As \leq \Ds\).  For each
\(i<n\) with \(\As(i,n)=1\), let \(u_i\) be the unique least element
of \(\As\) with \(i<u_i\le n\) and \(\As(u_i,n)>1\).  This always
exists since \(\As(n,n)=\infty\).
We now perform a sequence of inflations of \(D\) by the members of
\[
J := \{u_i^n \mid i<n \textrm{ and } \As(i,n)=1\}
\]
in increasing order and then remove some of the resulting conjugates
so that the  
base of the resulting \(\Xs \in \Ssig\) consists of:
\begin{itemize}

\item all \(i^n\) such that \(i \le n\) and \(\As(i,n) > 1\) (where
\(n^n:=n\));

\item all \(i^{(u_i^n)}\) such that \(\As(i,n)=1\).

\end{itemize}
We claim that \(\As\) is equivalent to \(\Xs\).
Define \(\pi: A \to X\) by 
\[
\pi(i) =
\begin{cases}
i^n, &i\le n,\ \As(i,n) > 1, \\
i^{(u_i^n)},
&i<n,\ \As(i,n) = 1.
\end{cases}
\]
The proof is completed by Claims \ref{PiOrder} and \ref{PiIso} below.
\end{proof}

\begin{claim} \label{PiOrder}
\(\pi\) is order preserving.
\end{claim}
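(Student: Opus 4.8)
The plan is to read the order on $X$ directly off the definition of inflation and then verify $\pi(i)<\pi(j)$ for $i<j$ by a short case split according to whether $\As(i,n)$ and $\As(j,n)$ equal $1$ or exceed $1$.

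The first step is to isolate two structural facts. (i) In $\infl{\Ys}{m}$ the conjugates created form a block lying immediately below $m$ and above every element of the base that was below $m$, and $x^m<y^m$ exactly when $x<y$. (ii) A later inflation, and the final deletion of redundant conjugates, each preserve the order on the elements they retain, so relations of type (i) survive into $X$. Apply this to $\Ds$, whose order is: first the originals $i$ with $\As(i,n)=1$, then the conjugates $i^n$ with $\As(i,n)>1$, then $n$ (recall $n^n:=n$). Fact (i) then gives $i^n<j^n$ in $X$ whenever $i<j$ are osc-$>1$ indices, and, since the inflations by the elements of $J=\{u_i^n\}$ are performed in increasing order, $i^{(u^n)}<u^n<j^{(v^n)}$ in $X$ whenever $u^n<v^n$ lie in $J$ and the displayed conjugates occur in $X$. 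Recall that $i<u_i$ by definition of $u_i$, and that the hypothesis ``no $i'<n$ has $\As(i',n)=0$'' is precisely what makes each $u_i$ well defined.

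Now the case check for $i<j$ in $A$. If $\As(i,n),\As(j,n)>1$, then $\pi(i)=i^n<j^n=\pi(j)$ by fact (i). If $\As(i,n)=\As(j,n)=1$: when $u_i=u_j$, the conjugates $i^{(u_i^n)}$ and $j^{(u_j^n)}$ are born in the same inflation and inherit the order $i<j$; when $u_i\ne u_j$, the inequality $i<j$ forces $u_i\le u_j$, hence $u_i<u_j$, and the combined fact above gives $\pi(i)=i^{(u_i^n)}<u_i^n<j^{(u_j^n)}=\pi(j)$. If $\As(i,n)>1$ and $\As(j,n)=1$, then $i<j<u_j$, so $i^n<u_j^n$ already in $\Ds$, while $j^{(u_j^n)}$ sits in the block just below $u_j^n$ and hence above $i^n$; thus $\pi(i)<\pi(j)$. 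Symmetrically, if $\As(i,n)=1$ and $\As(j,n)>1$, then $i<u_i\le j$: if $u_i=j$ then $\pi(i)=i^{(j^n)}<j^n=\pi(j)$, and if $u_i<j$ then $\pi(i)=i^{(u_i^n)}<u_i^n<j^n=\pi(j)$.

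The step I expect to demand the most care is the bookkeeping behind the structural facts: one must confirm that each comparison invoked ($i^n<j^n$, $i^n<u_j^n$, $u_i^n<j^n$) is already present in $\Ds$ — which it is, because the osc-$=1$ originals precede the osc-$>1$ conjugates there, together with $i<u_i$ and $j<u_j$ — and that performing the inflations in increasing $J$-order keeps every freshly created block $\{x^{(v^n)}\}$ above every element that was below $v^n$ at that stage, in particular above the earlier block $\{x^{(u^n)}\}$ when $u^n<v^n$. The boundary case $u_i=n$ (so $u_i^n=n^n=n$ and the inflation is by the top element of $\Ds$) only needs to be flagged: it is handled by the same argument, giving $\pi(i)=i^{(n)}<n$.
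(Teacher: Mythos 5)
Your proof is correct and follows essentially the same route as the paper's: the same four-way case split on whether \(\As(i,n)\) and \(\As(j,n)\) equal \(1\) or exceed \(1\), resolved by the same positional facts about where conjugate blocks land under inflation (and you rightly flag the boundary case \(u_i=n\), which the convention \(n^n:=n\) absorbs). One parenthetical aside is off: \(u_i\) is well defined because \(\As(n,n)=\infty\) makes \(n\) itself a candidate, not because of the hypothesis that no \(\As(i',n)\) vanishes --- that hypothesis is instead what guarantees every conjugate \(i^n\) exists in \(\infl{\As}{n}\) --- but this does not affect the argument.
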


\begin{proof}
Let \(i < j < n\).
If \(\As(i,n)\) and \(\As(j,n)\) are both greater than \(1\),
then \(\pi(i) = i^n < j^n = \pi(j)\).
If \(\As(i,n) = 1\) and \(\As(j,n) > 1\), then
\(i<u_i\le j\) and \(\pi(i) = i^{(u_i^n)} < u_i^n\le j^n = \pi(j)\).
If \(\As(i,n) > 1\) and \(\As(j,n) = 1\), then \(i<j<u_j\),
\(i^n<u_j^n\), and in the inflation by \(u_j^n\), we have
\(\pi (i) = i^n<j^{(u_j^n)} = \pi(j)\). 

Finally if \(\As(i,n) = \As(j,n) = 1\), then either \(u_i=u_j\) or
\(u_i<u_j\).  In the first case, \(\pi(i) =
i^{(u_j^n)}<j^{(u_j^n)} = \pi(j)\).  In the second case,
\(i^{(u_i^n)} < u_i^n < u_j^n\), so \(\pi(i) = i^{(u_i^n)} < u_i^n <
j^{(u_j^n)} = \pi(j)\).
\end{proof}

\begin{claim} \label{LowOscSpefics}
If \(i<u_i\) and \(\As(i,n) = 1\), then
\(\Ds(i,u_i^n) = 1\).
If additionally we have \(u_i<j<u_j\) and \(\As(j,n) = 1\), then \(\As(i,j) = 0\).
\end{claim}

\begin{proof}
The definition of inflation gives
\[
\Ds(i,u_i^n) = \min\big(\As(u_i,n)-1, \As(i,n)\big) = 1.
\]
For the second conclusion, we have \(\As(j,n) = 1< \As(u_i,n)\), so 
\[
\As(u_i,j) = \min\big(\As(j,n)-1, \As(u_i,n)\big) = 0.
\]
Now \(i<u_i\) and Lemma \ref{ZeroToEnd} gives \(\As(i,j)=0\).
\end{proof}

\begin{claim}\label{PiIso}
If \(i < j \le n\), then \(\Xs\big(\pi(i),\pi(j)\big) = \As(i,j)\).
\end{claim}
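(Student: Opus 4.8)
The plan is to prove Claim \ref{PiIso} by a four‑way case analysis according to whether \(\As(i,n)\) equals \(1\) or is strictly bigger than \(1\), and likewise for \(\As(j,n)\); the possibility \(j=n\) is folded into the second alternative via the convention \(\As(n,n)=\infty\). The organizing principle, used throughout, is that every inflation step keeps every element of its input signature and merely extends the signature function, so the oscillation value between any two elements is \emph{frozen} as soon as both have been created. Hence \(\Xs(\pi(i),\pi(j))\) equals the value produced at whichever of the two inflation steps creating \(\pi(i)\) and \(\pi(j)\) is performed \emph{later}, and the data fed into that one computation are values of \(\Ds=\infl{\As}{n}\) (restricted), which are themselves obtained from values of \(\As\) through the three inflation formulas recalled in Section \ref{inflation:sec}.

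First I would dispose of the ``pure'' cases. If \(\As(i,n),\As(j,n)>1\) then \(\pi(i)=i^n\) and \(\pi(j)=j^n\) (with \(n^n:=n\)) are already present in \(\Ds\), so \(\Xs(\pi(i),\pi(j)) = \infl{\As}{n}(i^n,j^n) = \As(i,j)\). If \(\As(i,n)=\As(j,n)=1\) and \(u_i=u_j\), then \(\pi(i)=i^{(u_i^n)}\) and \(\pi(j)=j^{(u_j^n)}\) are conjugates of the \emph{same} inflation, whence \(\Xs(\pi(i),\pi(j)) = \Ds(i,j) = \infl{\As}{n}(i,j) = \As(i,j)\).

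Next come the two ``mixed'' configurations, which carry the real content. Suppose \(\As(i,n)>1\) and \(\As(j,n)=1\); then \(i<j<u_j\), so when \(\pi(j)=j^{(u_j^n)}\) is created (by inflating at \(u_j^n\)) the element \(\pi(i)=i^n\) lies below the inflation point, and the original–conjugate formula gives \(\Xs(\pi(i),\pi(j)) = \min\bigl(\Ds(j,u_j^n)-1,\ \Ds(i^n,u_j^n)\bigr)\); since \(\Ds(j,u_j^n) = \infl{\As}{n}(j,u_j^n) = \min\bigl(\As(u_j,n)-1,\As(j,n)\bigr) = 1\) the value is \(0\), and (!) applied to the triple \(\bigl(\As(j,n),\As(i,n),\As(i,j)\bigr)=(1,>1,\cdot)\) forces \(\As(i,j)=0\) by the equality clause of Proposition \ref{!_equiv}. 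The case \(\As(i,n)=1<\As(j,n)\) is dual and uses the conjugate–original formula: with \(u_i\le j\) one gets \(\Xs(\pi(i),\pi(j)) = \min\bigl(\Ds(i,u_i^n),\ \Ds(u_i^n,j^n)\bigr)\), interpreting \(\Ds(u_i^n,u_i^n)=\infty\) when \(u_i=j\); here \(\Ds(i,u_i^n)=1\) (as in Claim \ref{LowOscSpefics}), while (!) gives \(\As(u_i,j)\ge1\) and \(\As(i,j)=1\), so both sides equal \(1\). Finally, for \(\As(i,n)=\As(j,n)=1\) with \(u_i<u_j\), one checks from the definition of \(u_i\) that \(i<u_i<j<u_j\); the later‑created conjugate is again \(j^{(u_j^n)}\), so the original–conjugate formula gives \(\Xs(\pi(i),\pi(j)) = \min\bigl(\Ds(j,u_j^n)-1,\cdot\bigr)=0\), matching \(\As(i,j)=0\) from Claim \ref{LowOscSpefics}.

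I expect the main obstacle to be not the arithmetic but the order bookkeeping in the mixed cases: to select the correct one of the three inflation formulas one must know whether the conjugate attached to one generator lies below, equal to, or above the inflation point attached to the other. That information is precisely what Claim \ref{PiOrder} together with the ordering conventions for \(\infl{\cdot}{m}\) supplies, so in the write‑up I would invoke Claim \ref{PiOrder} freely and verify the few position facts that enter (\(i<u_j\); \(u_i\le j\); and \(u_i<u_j\Rightarrow u_i<j\)) directly from the definition of \(u_i\). The one remaining point needing a word is that intermediate values such as \(\Ds(j,u_j^n)=1\) are unaffected by inflation steps carried out at earlier stages; this is immediate from the freezing principle, since no element of the form \(j\) or \(u_j^n\) is ever the inflation point of a step in \(J\).
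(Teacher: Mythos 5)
Your argument is correct and follows essentially the same route as the paper's: a case split on whether \(\As(i,n)\) and \(\As(j,n)\) equal \(1\) (with \(j=n\) absorbed via the \(\As(n,n)=\infty\) convention), direct evaluation of the inflation formulas using the fact that inflation only extends the signature, and Claim \ref{LowOscSpefics} together with (!) for the zero and mixed cases. The only cosmetic difference is that in the unequal-value cases the paper first establishes \(\Xs(\pi(i),n)=\As(i,n)\) and then invokes (!) inside \(\Xs\) to pin down \(\Xs(\pi(i),\pi(j))\) without position bookkeeping, whereas you evaluate the relevant inflation formula at the later-created conjugate directly; both yield the same minimum.
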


\begin{proof}
We first consider \(j=n\).  
If either \(\As(i,n)>1\) or both \(\As(i,n)=1\) and
\(u_i=n\),
then \(\Xs(\pi(i), \pi(n)) = \Ds(i^n,n)
= \As(i,n)\).
If \(\As(i,n)=1\) and \(u_i<n\), then by Claim
\ref{LowOscSpefics} we have
\[
\Xs(i^{(u_i^n)}, n) = \min(\Ds(i,u_i^n), \Ds(u_i^n,n)) =
\min(1, \Ds(u_i^n,n)) =
1.\]

For \(i<j<n\), we first assume \(\As(i,n)\ne \As(j,n)\).
Since \(\Xs\) is in \(\Ssig\), the previous case implies
\begin{align*}
\Xs(\pi(i),\pi(j)) & = \min\big( \Xs(\pi(j),n)-1 ,
\Xs(\pi(i),n) \big) \\ 
& = \min\big( \As(j,n)-1 , \As(i,n) \big) \\
& = \As(i,j).
\end{align*}
If \(\As(i,n) = \As(j,n) > 1\),
then \(\pi(i) = i^n\) and \(\pi(j) = j^n\).
In this case we have that
\(\Xs(i^n,j^n) = \Ds(i^n,j^n) = \As(i,j)\).

Finally we have the case \(\As(i,n) = 1 = \As(j,n)\).  If
\(u_i = u_j\), then
\(\Xs(i^{(u_j^n)}, j^{(u_j^n)}) = \Ds(i,j) = \As(i,j)\).  So we
assume 
\(i<u_i<j<u_j\).  From the beginning of the proof \(\Xs(j^{(u_j^n)},
n) = \As(j,n)=1\).  Since \(\Xs(u_i^n,n) = \Ds(u_i^n,n) =
\As(u_i,n)>1\), we have 
\[
\Xs(u_i^n, j^{(u_j^n)}) = \min(\Xs(j^{(u_j^n)}, n)-1, \Xs(u_i^n, n))
=0.
\]
Now \(i^{(u_i^n)}< u_i^n\), so by Lemma \ref{ZeroToEnd}, we have
\(\Xs(i^{(u_i^n)}, j^{(u_j^n)})=0\).  By Claim \ref{LowOscSpefics},
\(\As(i,j)=0 = \Xs (\pi(i),\pi(j))\).  
\end{proof}

\begin{lem}  \label{standard_to_block}
If \(\As\) is in \(\Ssig\), then there is a \(\Bs \in \Bsig\) such
that \(\As \equiv \Bs\). 
Moreover, if \(\As\) is indecomposable, then \(\Bs\) can be taken to
be indecomposable. 
\end{lem}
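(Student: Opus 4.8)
The plan is induction on the complexity of $\As$, i.e.\ on the pair $(|\As|,\sum_{i<j}\As(i,j))$ ordered lexicographically, peeling off a structurally smaller piece at each step via Lemma \ref{decomp_lem}. If $|\As|\le 1$ then $\As\in\{\zero,\one\}\subseteq\Bsig$ and $\As$ is (vacuously) indecomposable, so we take $\Bs=\As$. If $|\As|\ge 2$ and $\As$ is decomposable, write $\As=\As_0+\As_1$ with $\As_0,\As_1\ne\zero$; both have strictly smaller cardinality, hence strictly smaller complexity, so by the inductive hypothesis $\As_i\equiv\Bs_i$ for some $\Bs_i\in\Bsig$, and then $\As\equiv\Bs_0+\Bs_1\in\Bsig$ by Lemma \ref{monotone}. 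No indecomposability claim is needed in this case.

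So assume $|\As|\ge 2$ and $\As$ is indecomposable. Then Lemma \ref{decomp_lem} applies, and its purely additive alternative is ruled out, so either $\As=\exp(\Cs)$ for some $\Cs\ne\zero$, or $\As$ is equivalent to $\Bs*\exp(\Cs)$ for nonzero $\Bs,\Cs\in\Ssig$ with $|\Bs*\exp(\Cs)|=|\As|$. In the first case $\Cs$ has the same base as $\As$ but strictly smaller value-sum, hence strictly smaller complexity, so by induction $\Cs\equiv\Ds\in\Bsig$; by Lemma \ref{monotone}, $\As=\exp(\Cs)\equiv\exp(\Ds)$, which lies in $\Bsig$ and is indecomposable since it takes only positive values. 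In the second case $|\Bs|,|\Cs|<|\As|$ because both parts of the product are nonempty, so $\Bs$ and $\Cs$ each have strictly smaller complexity; the inductive hypothesis yields $\Bs',\Cs'\in\Bsig$ with $\Bs\equiv\Bs'$ and $\Cs\equiv\Cs'\ne\zero$, and Lemma \ref{monotone} gives $\As\equiv\Bs'*\exp(\Cs')$.

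The crux is then to show that $\Bs'*\exp(\Cs')$ is equivalent to an indecomposable element of $\Bsig$. Write the canonical block decomposition $\Bs'=\sum_{i<p}\exp(\Zs_i)$ with each $\Zs_i\in\Bsig$ nonzero and $p\ge 1$. Each $\exp(\Zs_i)$ is an indecomposable element of $\Bsig$, hence by Lemma \ref{block_to_R} it is equivalent to an element of $\Rsig$; since $\rho$ identifies $(\Rsig,\le)$ with an initial segment of the ordinals (Lemmas \ref{R_order_char} and \ref{RhoIsIso}), the signatures $\exp(\Zs_0),\dots,\exp(\Zs_{p-1})$ are pairwise comparable under $\le$, so there is a $j<p$ with $\exp(\Zs_i)\le\exp(\Zs_j)$ for all $i<p$. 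Now Lemma \ref{duplication}\,(\ref{gen_sum}) collapses the sum, and the identity $\exp(\Xs)*\exp(\Ys)=\exp(\Xs+\Ys)$ finishes the computation:
\[
\As\equiv\Bs'*\exp(\Cs')=\Big(\sum_{i<p}\exp(\Zs_i)\Big)*\exp(\Cs')\equiv\exp(\Zs_j)*\exp(\Cs')=\exp(\Zs_j+\Cs').
\]
Since $\Zs_j+\Cs'\in\Bsig$, this last signature lies in $\Bsig$ and is indecomposable, which completes the induction.

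I expect this last paragraph to be the main obstacle. The tempting guess that $\big(\sum_i\exp(\Zs_i)\big)*\exp(\Cs')$ is itself a block is false (already $(\one+\one)*\one$ is not in $\Bsig$), so one is genuinely forced to pass through $\Rsig$ in order to know that the summands $\exp(\Zs_i)$ form a $\le$-chain before Lemma \ref{duplication} can be used to amalgamate them into a single block; lining up the bookkeeping --- ``indecomposable block $\Leftrightarrow$ block of the form $\exp(\cdot)$'', and $\le$-comparability of indecomposable blocks via Lemma \ref{block_to_R} --- is where the care is needed. Everything else reduces to the complexity bookkeeping and to routine invocations of Lemma \ref{monotone} and the algebraic identities for $+$, $*$, and $\exp$ on $\Ssig$.
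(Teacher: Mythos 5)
Your proposal is correct and follows essentially the same route as the paper's proof: induction on complexity, reduction via Lemma \ref{decomp_lem} to the cases $\exp(\Cs)$, $\Bs+\Cs$, and $\Bs*\exp(\Cs)$, and in the last case passing through Lemma \ref{block_to_R} and Lemma \ref{R_order_char} to find a $\le$-maximal summand of the block form of $\Bs'$ so that Lemma \ref{duplication} collapses the product to $\exp(\Zs_j+\Cs')$. The only (welcome) addition is that you explicitly track the ``moreover'' clause, which the paper leaves implicit.
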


\begin{proof}
The proof is by induction on the complexity of \(\As\).
If \(\As = \one\) or \(\As=\zero\), then \(\As\) is already in \(\Bsig\).
If \(\As = \exp(\Bs)\) for some \(\Bs\), then
by our induction hypothesis, there is a \(\Bs' \in \Bsig\) such that
\(\Bs' \equiv \Bs\).
Since \(\exp(\Bs') \in \Bsig\) and by Lemma \ref{monotone} \(\As
\equiv \exp(\Bs')\), we are done. 
If \(\As = \Bs+\Cs\), then by our induction hypothesis, there are
\(\Bs',\Cs' \in \Bsig\) such 
that \(\Bs' \equiv \Bs\) and \(\Cs' \equiv \Cs\).
Again, \(\Bs'+\Cs' \in \Bsig\) and \(\As = \Bs+\Cs \equiv \Bs' + \Cs'\).

If \(\As\) is not of these three forms, then Lemma \ref{decomp_lem}
implies 
that there exist \(\Bs,\Cs \in \Ssig\) such that
\(\As \equiv \Bs*\exp(\Cs)\) and the cardinality of
\(\Bs*\exp(\Cs)\) is the same as that of \(\As\). 
As above, by induction hypothesis, we may assume that \(\Bs\) and
\(\Cs\) are both in \(\Bsig\). 
If \(\Bs = \exp(\Xs)\) for some \(\Xs\) (which would necessarily be
in \(\Bsig\)), 
then \(\Bs*\exp(\Cs) = \exp(\Xs) * \exp(\Cs) = \exp(\Xs+\Cs)\),
which is in \(\Bsig\). 
Thus we may assume that \(\Bs = \sum_{i=1}^n \exp(\Xs_i)\) where each
\(\Xs_i\) is in \(\Bsig\).
Let \(i \leq n\) be such that \(\rho(\exp(\Xs_i))\) is maximized.
By Lemmas \ref{R_order_char} and \ref{block_to_R},
we have that \(\exp(\Xs_j) \leq \exp(\Xs_i)\) for all \(j \neq n\).
By Lemma \ref{duplication},
\[
\big(\sum_{j=1}^n \exp(\Xs_j)\big) * \exp(\Cs) \equiv \exp(\Xs_i) *
\exp(\Cs) = \exp(\Xs_i + \Cs)\in \Bsig.
\]
\end{proof}

We are now in a position to complete the proof of Theorem \ref{R_thm}.
Let \(\Ssigp \subseteq \Ssig\) consist of all \(\As \in \Ssig\) for which there is a 
\(\Bs \in \Rsig\) such that \(\As \equiv \Bs\) and define \(\rho(\As):=\rho(\Bs)\).
Since the restriction of \(\equiv\) to \(\Rsig\) is just the equality relation by Lemmas \ref{RhoIsIso} and
\ref{R_order_char}, this is well defined.
Clearly \(\Rsig \subseteq \Ssigp\).
By Lemmas \ref{block_to_R} and \ref{standard_to_block},
\(\Ssigp\) includes all of the indecomposable elements of \(\Ssig\).
Furthermore, if \(( \As_i \mid i \leq n )\) is a sequence of indecomposable elements of \(\Ssig\) and
\(\rho(\As_i) \geq \rho(\As_{i+1})\) for all \(i < n\), then 
\(\sum_{i \leq n} \As_i\) is in \(\Ssigp\).
In particular, 
Lemma \ref{R_order_char} immediately extends to all elements of \(\Ssigp\).

Now observe that if \(\As \in \Ssig\), then \(\As = \sum_{i < n} \As_i\) for some sequence of indecomposable \(\As_i\)'s.
Hence there is a permutation \(\sigma\) of \(\{0,\ldots,n-1\}\) such that
\(\sum_{i < n} \As_{\sigma(i)}\) is in \(\Ssigp\) --- i.e. some \emph{reordering} of \(\As\) is in \(\Ssigp\).
In particular for every \(\As \in \Ssig\) there is an \(\As' \in \Ssigp\) such that
\(\gen{\As} \cong \gen{\As'}\).
This completes the proof of Theorem \ref{R_thm}.

\subsection{\((\Ssig,\leq)\) is well-founded} \label{S_WF:subsec}
We now extend the definition of \(\rho\) to \(\Ssig\) and prove that \((\Ssig,\leq)\) is well-founded.
Observe that if \(\As'\) is a reordering of \(\As\) and both are in \(\Ssigp\), then \(\rho(\As) = \rho(\As')\).
If \(\As \in \Ssig\) define
\(\rho(\As) := \rho(\As')\) where \(\As'\) is in \(\Ssigp\) and \(\As'\) is a reordering of \(\As\);
by our observation this does not depend on our choice of \(\As'\).
Notice also that if \(\As \in \Ssig\), then \(\rho(\As+\one) = \rho(\As) + 1\).
Unlike in the case of \(\Ssigp\), however, it is not immediately clear that \(\As \equiv \Bs\) implies
\(\rho(\As) = \rho(\Bs)\).
This will be a consequence of Proposition \ref{S_order_char} below.

In what follows, we will write the \emph{sequence of ranks of \(\As\)} to refer to the sequence of \(\rho\)-values of
the indecomposable summands of \(\As\).
The \emph{multiset of ranks of \(\As\)} is the set with repetitions consisting of the range of the sequence of
ranks of \(\As\).
Thus \(\rho(\As)\) is the sum of the multiset of ranks of \(\As\) ordered largest to smallest.
Notice that for all \(\As, \Bs \in \Ssig\):
\begin{itemize}

\item if \(\As\) and \(\Bs\) have the same multiset of ranks, then \(\rho(\As) = \rho(\Bs)\);

\item if the multiset of ranks of \(\As\) is obtained from the multiset of ranks of \(\Bs\) by replacing one of the ordinals by one or more smaller ordinals, then \(\rho(\As) < \rho(\Bs)\).

\end{itemize}
This is a consequence of Theorem \ref{CNF} and the fact that any multiset of ranks consists of indecomposable ordinals
--- those of the form
\(\omega^\xi\).

If \(\As \ne \zero\) is in \(\Ssig\), let \(\As^-\) be the result of removing the maximum element of \(\As\).

\begin{lem} \label{A-_succ}
For any nonzero \(\As \in \Ssig\), \(\As^- + \one \leq \As\).
\end{lem}

\begin{proof}
This is proved by induction on the complexity of \(\As\).
If \(\As = \As^- + \one\), there is nothing to show (this includes the case when \(\As = \one\)).
Let \(n = |\As| -1 > 0\) and fix an \(i < n\) such that \(\As(i,n) > 0\).
Define \(\Bs\) to be the restriction of \(\infl{\As}{n}\) to 
\(\{0,\ldots,n-1,i^n\}\), noting that \(\Bs^- = \As^-\).
We now have that \(\Bs(i,i^n) = \As(i,n)-1\) and for
all \(j < n\), 
\[
\Bs(j,i^n) = \min \big(\As(i,n)-1,\As(j,n)\big) \leq \As(j,n).
\]
Thus \(\Bs \leq \As\) and \(\Bs\) has smaller complexity than \(\As\).
By our induction hypothesis \(
\As^- + \one =
\Bs^- + \one \leq \Bs \leq \As\). 
\end{proof}

\begin{prop} \label{S_order_char}
If \(\As\) and \(\Bs\) are elements of \(\Ssig\), then \(\As \leq \Bs\) implies \(\rho(\As) \leq \rho(\Bs)\).
If moreover \(\As\) and \(\Bs\) are in \(\Ssigp\), then \(\rho(\As) \leq \rho(\Bs)\) implies \(\As \leq \Bs\).
\end{prop}

\begin{proof}
As noted in the lead-up to Lemma \ref{A-_succ}, when \(\As,\Bs \in \Ssigp\) both implications in the lemma follow from Lemma \ref{R_order_char} and the transitivity of \(\leq\).
Suppose now that \(\As \leq \Bs\) is in \(\Ssig\).
Fix indecomposable \(\Bs_0,\ldots,\Bs_{n-1}\) in \(\Ssig\) such that \(\Bs = \sum_{i < n} \Bs_i\).
It suffices to show that \(\rho(\As) \leq \rho(\Bs)\) if either \(\As\) is an inflation of \(\Bs\) or if \(\As\) is obtained from
\(\Bs\) by deleting an element.

First suppose that \(\As = \infl{\Bs}{m}\) for some \(m < |\Bs|\).
Let \(j < n\) be such that \(m\) comes from the summand \(\Bs_j\) and observe that 
\[
\As = \sum_{i < j} \Bs_i + \infl{\Bs_j}{m} + \sum_{j < i < n} \Bs_i.
\]
Since \(\Bs_j \equiv \infl{\Bs_j}{m}\) are indecomposable, \(\rho(\Bs_j) = \rho(\infl{\Bs_j}{m})\).
It follows that \(\As\) and \(\Bs\) have the same multiset of ranks and hence that \(\rho(\As) = \rho(\Bs)\). 

Now suppose that \(m < |\Bs|\) and \(\As\) is obtained from \(\Bs\) by removing \(m\).
Let \(j < n\) be such that \(m\) comes from the summand \(\Bs_j\) and let \(\Bs_j'\) denote
the result of removing the corresponding element.
If \(\Bs_j'\) is indecomposable (including the possibility \(\Bs_j' = \zero\)),
then \(\rho(\Bs_j') \leq \rho(\Bs_j)\).
Since the multiset of ranks of \(\As\) is either the same as that of \(\Bs\) or the result of decreasing one of its entries,
\(\rho(\As) \leq \rho(\Bs)\).

Suppose now that \(\Bs_j' = \sum_{k \leq l} \Cs_k\) for some indecomposable \(\Cs_0,\ldots,\Cs_l\)
in \(\Ssig\) with \(l \geq 1\).
First note that if the maximum elements of \(\Bs_j\) and \(\Bs_j'\) coincide --- i.e. if \(m\) does does
not correspond to the maximum element of \(\Bs_j\) --- then Lemma \ref{dir_sum_char} implies
that \(\Bs_j\) is decomposable, contrary to our assumption.
Since the multiset of ranks of \(\As\) is obtained by replacing \(\rho(\Bs_j)\) with
\(\rho(\Cs_0), \ldots,\rho(\Cs_l)\), it suffices to show that \(\rho(\Cs_k) < \rho(\Bs_j)\) for all \(k \leq l\).
By applying Lemma \ref{A-_succ} to the subset of \(\Bs_j\) consisting of the elements of \(\Cs_k\) together
with \(m\), we obtain that \(\Cs_k + \one \leq \Bs_j\).
Since \(\Cs_k+\one\) and \(\Bs_j\) are all in \(\Ssigp\), it follows that
\(\rho(\Cs_k) < \rho(\Cs_k) + 1 \leq \rho(\Bs_j)\) as desired.
\end{proof}

\begin{prop} \label{decrease_rank}
If \(\As \ne \zero\) is in \(\Ssig\),
then \(\rho(\As^-) < \rho(\As)\) and \(\As^- < \As\). 
\end{prop}

\begin{proof}
By Lemma \ref{A-_succ}, \(\As^- + \one \leq \As\).
By Proposition \ref{S_order_char}, \(\rho(\As^- + \one) \leq \rho(\As)\).
Hence \[
\rho(\As^-) < \rho(\As^{-}) + 1 = \rho(\As^{-} + \one) \leq \rho(\As).
\]
Combining this with \(\As^- \leq \As\) and Proposition \ref{S_order_char}, we obtain \(\As^{-} < \As\).
\end{proof}

\begin{prop} \label{S_WF}
If \(\As,\Bs \in \Ssig\) and \(\As < \Bs\), then \(\rho(\As) < \rho(\Bs)\).
In particular every nonempty subset of \(\Ssig\) has a \(\leq\)-minimal element.
\end{prop}

\begin{proof}
By Proposition \ref{S_order_char}, \(\As  < \Bs\) implies \(\rho(\As) \leq \rho(\Bs)\).
Suppose for contradiction that there are \(\As < \Bs\) in \(\Ssig\) such that \(\rho(\As) = \rho(\Bs)\).
By replacing \(\As\) and \(\Bs\) if necessary, we may assume that \(\As\) is obtained by removing
a member of \(\Bs\) or by inflating \(\Bs\).
Moreover, since \(\As < \Bs\), \(\As\) is not an inflation of \(\Bs\).
Observe that the sets of ranks of \(\As\) and \(\Bs\) must be the same since \(\rho(\As) = \rho(\Bs)\).
On the other hand the sequence of ranks must be different since otherwise the corresponding summands
would be equivalent by Lemma \ref{R_order_char}, which would imply
\(\As \equiv \Bs\) by Lemma \ref{monotone}. 
Note however, that the manipulation which produced \(\As\) from \(\Bs\)
can change at most one entry in the sequence
of ranks and in particular cannot properly reorder them.
\end{proof}

\section{The embeddability relation on \(\Sgen\)-generated groups}
\label{main:sec}

In this section we give a proof of Theorem \ref{main_thm}.
If \(\xi < \epsilon_0\), then \(\Rs_\xi \in \Rsig\) is the
unique element with \(\rho(\Rs_\xi) = \xi\) and 
\(G_\xi :=\gen{\Rs_\xi}\);
these are just restatements of the definitions of \(\Rs_\xi\) and \(G_\xi\) given
in Section \ref{BasicOp:sec} which made only implicit reference to \(\rho\).
With these definitions, it is immediate that \(G_{\xi+1} \cong G_\xi + \Z\) and by
Proposition \ref{reduction_to_embedding} and Lemma \ref{rank_to_reduction}, \(G_\xi\) embeds into \(G_\eta\)
if \(\xi \leq \eta\).
We will see in Section \ref{R_order_char:subsec} that \(G_\xi\) embeds into \(G_\eta\)
only if \(\xi \leq \eta\). 

\subsection{EA-class calculations}
We now verify the EA-class calculations asserted in Theorem \ref{main_thm}. 
Observe that if \(\xi < \eta < \epsilon_0\), then \(\EA(G_\xi) \leq
\EA(G_\eta)\).
We first note the following lemma.
 
\begin{lem} \label{EA_calc_lem}
If \(A \in \Sgen\) and \(|A| > 1\), then
\[
\sup_{\Bs < \As} \EA(\gen{B}) \leq \EA(\gen{A}) \leq \big( \sup_{\Bs
< \As} \EA(\gen{B}) \big) + 2
\]
and \(\gen{A}\) has property \(\Sigma\).
In particular, if \(\sup_{\Bs < \As} \EA(\gen{B})\) is a limit ordinal, 
then \(\EA(\gen{A}) = \big( \sup_{\Bs < \As} \EA(\gen{A}) \big) +2\).
\end{lem}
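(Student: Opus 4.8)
The statement to prove is Lemma \ref{EA_calc_lem}: for $A \in \Sgen$ with $|A| > 1$, we have the sandwich
\[
\sup_{\Bs < \As} \EA(\gen{B}) \leq \EA(\gen{A}) \leq \big( \sup_{\Bs < \As} \EA(\gen{B}) \big) + 2,
\]
$\gen{A}$ has property $\Sigma$, and the ``limit'' consequence follows. The natural route is to recall from Section \ref{inflation:sec} that $\gen{A}$ is an extension of the normal subgroup $N = \gen{(a_i)^{a_{\max}^p} \mid i < |A|-1, p \in \Z}$ by $\Z$, and that $N = \bigcup_k \gen{A_k}$ is an increasing union, where each $A_k$ has the same dynamical diagram as the set $B_k$ obtained by conjugating the lower generators by nonnegative powers of $a_{\max}$. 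The key combinatorial observation --- which needs to be extracted from the inflation machinery --- is that each $\gen{A_k} \cong \gen{B_k}$ is marked-isomorphic to (a group generated by a signature which is) a signature $\Cs$ with $\Cs < \As$ strictly: indeed $A_k$ arises by iterated inflation of $A$ by $a_{\max}$ followed by a restriction, so $\As_k < \As$ in the sense of the $\le$ relation on $\Ssig$, and since $|A_k| > |A| \geq 2$ when $A$ is indecomposable and $|A|>1$, the inequality is strict; when $A$ decomposes as $B+C$ one treats the summands separately. Hence $\EA(\gen{A_k}) \leq \sup_{\Bs < \As} \EA(\gen{B})$ for every $k$.

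\textbf{Carrying it out.} First I would dispose of the decomposable case: if $A = B+C$ with $B,C \neq \emptyset$, then $\gen{A} = \gen{B} + \gen{C}$, $\EA$ of a direct sum is the max of the two $\EA$-classes, property $\Sigma$ is preserved under direct sums, and $\Bs < \As$, $\Cs \le \As$ (with one of them forced to be $<\As$ when $|A|>1$), so the bounds follow directly from the induction hypothesis applied to the smaller-complexity pieces --- actually even more simply, since $\EA(\gen{A}) = \max(\EA(\gen B),\EA(\gen C))$ and both $\Bs,\Cs$ are $\le \As$. So assume $A$ is indecomposable. Then set $\lambda := \sup_{\Bs < \As}\EA(\gen B)$. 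For the \emph{lower} bound $\lambda \le \EA(\gen A)$: any $\Bs < \As$ has $\gen{B}$ embedding into $\gen{A}$ by Proposition \ref{reduction_to_embedding}, and $\EA$ is monotone under subgroups (Section \ref{EAClass:sec}), so $\EA(\gen B) \le \EA(\gen A)$; take the sup. For the \emph{upper} bound: $\gen{A}$ is an extension $1 \to N \to \gen{A} \to \Z \to 1$. The group $N$ is the directed union of the $\gen{A_k}$, so $\EA(N) \le \sup_k \EA(\gen{A_k}) + 1 \le \lambda + 1$ (the $+1$ from operation (\ref{directed_union}); if the sup is attained, even $\lambda$). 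Then $\EA(\gen A) \le \EA(N) + 1 \le \lambda + 2$ using that extension by $\Z$ (an abelian group, which is in $EG_0$) raises $\EA$ by at most $1$. For property $\Sigma$: $\Z$ has $\Sigma$, each $\gen{A_k}$ has $\Sigma$ by induction (its signature has complexity below that of $A$ --- here is where one must check $A_k$ genuinely has smaller complexity, which holds because conjugates strictly reduce to a signature equivalent to a proper restriction-of-inflation, and one can always choose the witnessing $\Bs < \As$ to have complexity $< $ complexity of $\As$ after excising extraneous bumps), hence $N$ (a directed union of $\Sigma$-groups) has $\Sigma$, and $\gen{A}$ (an extension of a $\Sigma$-group by a $\Sigma$-group) has $\Sigma$, since the class of $\Sigma$-groups is closed under (\ref{extension})--(\ref{directed_union}). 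Finally, when $\lambda$ is a limit ordinal: by property $\Sigma$ of the pieces and the refined Proposition \ref{EAGWrG}-style estimate, $\EA(N) \le \lambda$ (a directed union of groups of $\EA$-class $<\lambda$ with $\Sigma$ lands in $EG_\lambda = \bigcup_{\beta<\lambda}EG_\beta$ at class exactly $\lambda$ only via the union step; more carefully, since each $\EA(\gen{A_k}) < \lambda$ and $\lambda$ is limit, $N \in EG_\lambda$, so $\EA(N) \le \lambda$, and it is $\geq \lambda$ by the lower bound argument transported through $N$); then $\EA(\gen A) = \EA(N) + 1$? No --- one needs $\EA(\gen A) = \lambda + 2$. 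Here one argues $\EA(\gen A) \ge \lambda + 2$: if it were $\le \lambda + 1$, then since $\gen A$ is finitely generated and $\lambda+1$ is of successor-of-limit type, finite generation forbids $\EA$-class $\lambda + 1$ exactly (Section \ref{EAClass:sec}), forcing $\EA(\gen A) \le \lambda$, contradicting that $\gen A$ contains, via inflation and the wreath product Proposition \ref{wreath_prop}, subgroups realizing $\EA$-classes cofinal in $\lambda$ together with a further wreath step --- more cleanly, one invokes that $\gen{A}$ contains a copy of $\gen{B}\wr\gen{B}$ for suitable $\Bs$, pushing the class strictly above every $\beta < \lambda$ by at least the two units the wreath product guarantees.

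\textbf{The main obstacle.} The delicate point is the sharp limit-case claim $\EA(\gen A) = \lambda + 2$ rather than merely $\lambda+1$ or lying in $[\lambda, \lambda+2]$. The lower bound $\EA(\gen A) \ge \lambda+2$ is where the work concentrates: it requires showing $\gen A$ cannot have $\EA$-class $\le \lambda+1$. The cleanest argument uses that $\gen A$ is finitely generated (so its $\EA$-class is never of the form $\mu+1$ for $\mu$ limit, hence never $\lambda+1$), combined with exhibiting, inside $\gen{A}$, a subgroup of $\EA$-class $\ge \lambda+1$ --- for which one takes a $\Bs<\As$ with $\rho(\Bs)$ close to $\rho(\As)$ of the right form and applies Proposition \ref{EAGWrG} to a wreath subgroup $\gen{E(\Cs)}\wr\gen{E(\Cs)} \hookrightarrow \gen A$, whose $\EA$-class strictly exceeds every $\EA(\gen B)$, $\Bs<\As$. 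Making this embedding and the resulting strict inequality precise, and matching it to the exact value $\lambda+2$, is the part that requires care; everything else is assembling monotonicity of $\EA$, the extension/union estimates, and closure of the $\Sigma$-class, all of which are already available in Sections \ref{EAClass:sec}, \ref{wreath:subsec}, \ref{inflation:sec}, and \ref{wreath:sec}.
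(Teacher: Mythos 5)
Your overall architecture is the paper's: the lower bound from Proposition \ref{reduction_to_embedding} and monotonicity of \(\EA\); the upper bound from writing \(\gen{A}\) as an extension by \(\Z\) of the directed union \(N=\bigcup_k \gen{A_k}\), with each \(\gen{A_k}\) marked isomorphic to a \(\gen{B_k}\) whose signature lies strictly below \(\As\); property \(\Sigma\) from closure of the \(\Sigma\)-class under extensions and directed unions; and the limit case from the ``forbidden value'' facts of Section \ref{EAClass:sec}.

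There is, however, one step whose stated justification fails, and it is the only genuinely delicate point of the lemma: the strictness of \(\Bs_k < \As\). You assert that the restriction-of-an-inflation relation gives \(\le\), and that the inequality is strict ``since \(|A_k| > |A| \ge 2\).'' Cardinality cannot witness strictness of the preorder \(\le\) on \(\Ssig\): the inflation \(\infl{A}{a}\) has strictly larger base than \(A\) yet is equivalent to \(A\), since it contains \(A \setminus \{a\} \cup \{a^2\}\), which has the same dynamical diagram as \(A\), so \(A \le \infl{A}{a} \le A\). Without strictness you only obtain \(\EA(\gen{B_k}) \le \sup_{\Bs \le \As} \EA(\gen{B})\), a supremum that includes \(\EA(\gen{A})\) itself and hence gives no upper bound. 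The correct argument, which is the paper's, is that \(B_k\) contains no power of \(a_{\max}\) and is therefore contained in \(A_k^-\), the iterated inflation with its maximum element deleted; Lemma \ref{decrease_rank} then yields \(\rho(\Bs_k) \le \rho(\As_k^-) < \rho(\As_k) = \rho(\As)\), and Lemma \ref{S_order_char} converts this into \(\Bs_k < \As\). Separately, your limit-case discussion is more elaborate than needed: once \(\lambda := \sup_{\Bs<\As}\EA(\gen{B})\) is a limit ordinal and \(\lambda \le \EA(\gen{A}) \le \lambda+2\), the facts that no group has \(\EA\)-class equal to a limit ordinal and that no finitely generated group has \(\EA\)-class the successor of a limit ordinal already force \(\EA(\gen{A}) = \lambda+2\); no wreath-product subgroup of class \(\ge \lambda+1\) needs to be exhibited.
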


\begin{proof}
Since \(\Bs < \As\) implies \(\gen{B}\) embeds into \(\gen{A}\),
the first inequality follows from the monotonicity of EA-class.
To see the second inequality, define for each
\(k \in \omega\) 
\[
B_k := \{a_i^{{a_{\max}}^p} \mid i < |A|-1 \textrm{ and } 0 \leq p
\leq 2k\}. 
\]
As noted in the beginning of Section \ref{inflation:sec}, if \(A_k\) is obtained from
\(A\) by iteratively inflating by \(a_{\max}\), then \(B_k \leq A_k^-\).  
In particular by Proposition \ref{decrease_rank},
\(\rho(\Bs_k) < \rho(\As)\).
By Proposition \ref{S_order_char}, \(\Bs_k < \As\).
Setting
\[
N:= \bigcup_{k=0}^\infty \gen{a_i^{a_{\max}^p} \mid i < |A|-1
\textrm{ and } -k \leq p \leq k } 
\]
we have that \(\gen{A}\) is an extension of \(N\) by \(\Z\) and
\(N\) is an increasing union of groups isomorphic to ones of the
form \(\gen{B_k}\). 
Thus 
\[
\EA(\gen{A}) \leq \EA(N) +1 \leq \big( \sup_k \EA(\gen{B_k}) \big) +
2 \leq \big( \sup_{\Bs < \As} \EA(\gen{B}) \big ) + 2 
\]
as desired.

The above argument in particular shows that every
\(\Sgen\)-generated group is in the smallest class 
that contains the abelian groups and is closed under the elementary
operations of 
\emph{extensions} and \emph{directed unions}. 
Since the class of groups which has property \(\Sigma\) includes
this class, it follows that every \(\Sgen\)-generated 
group has \(\Sigma\).
\end{proof}

\begin{lem}
If \(\xi = \omega^{(\omega^\alpha) \cdot ({2^n})}\) for
\(0 \le \alpha < \epsilon_0\) and \(n < \omega\),
then \(\EA(G_\xi) = \omega \cdot \alpha + n + 2\) if \(\alpha>0\)
and \(\EA(G_\xi) = n+1\) if \(\alpha=0\).
\end{lem}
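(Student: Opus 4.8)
The plan is to recognise $G_\xi$ as an iterated permutation wreath product and then compute $\EA(G_\xi)$ by transfinite induction on $\alpha$ with an inner induction on $n$, using Propositions \ref{wreath_prop} and \ref{EAGWrG} at the successor-type steps and Lemma \ref{EA_calc_lem} at the limit step. First I would unwind the definition of $\Rs_\xi$. Write $\mu=\omega^\alpha\cdot 2^n$. For $\alpha\ge 1$ the ordinal $\mu$ is infinite, so $-1+\mu=\mu$ and $\Rs_{\omega^\mu}=\exp(\Rs_\mu)$; and since the Cantor normal form of $\omega^\alpha\cdot 2^n$ consists of $2^n$ equal terms $\omega^\alpha$, one has at the level of $\Rsig$ that $\Rs_{\omega^\alpha\cdot 2^n}=\Rs_{\omega^\alpha\cdot 2^{n-1}}+\Rs_{\omega^\alpha\cdot 2^{n-1}}$ for $n\ge 1$. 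Combining these with $\exp(\As+\Bs)=\exp(\As)*\exp(\Bs)$ gives
\[
\Rs_{\omega^{\omega^\alpha 2^n}}=\Rs_{\omega^{\omega^\alpha 2^{n-1}}}*\Rs_{\omega^{\omega^\alpha 2^{n-1}}}\qquad(\alpha\ge 1,\ n\ge 1).
\]
When $\alpha=0$ the identity $\exp(\one)=\one$ absorbs one exponentiation and $\Rs_{\omega^{2^n}}$ comes out as the $*$-product of $2^n+1$ copies of $\one$, so $G_{\omega^{2^n}}\cong\Z^{\wr(2^n+1)}\cong\Z^{\wr 2^n}\wr\Z$. In each of these expressions the signature being decomposed is $\exp$ of another signature and so takes only positive values, whence Proposition \ref{wreath_prop} applies to every occurrence of $*$ and gives, for $\alpha\ge 1$ and $n\ge 1$, the isomorphism $G_{\omega^{\omega^\alpha 2^n}}\cong G_{\omega^{\omega^\alpha 2^{n-1}}}\wr G_{\omega^{\omega^\alpha 2^{n-1}}}$.

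\emph{Running the induction.} Every $\Sgen$-generated group has property $\Sigma$ (Lemma \ref{EA_calc_lem}) and acts faithfully on the infinite set $I$, so Proposition \ref{EAGWrG} gives $\EA(K\wr K)=\EA(K)+1$ for each such $K$. Hence $\EA(\Z^{\wr 2^k})=k$ by induction on $k$ (via $\Z^{\wr 2^k}\cong\Z^{\wr 2^{k-1}}\wr\Z^{\wr 2^{k-1}}$), and for $\alpha\ge 1$, $n\ge 1$ the wreath decomposition above yields $\EA(G_{\omega^{\omega^\alpha 2^n}})=\EA(G_{\omega^{\omega^\alpha 2^{n-1}}})+1$, which by the inner hypothesis is $(\omega\alpha+(n-1)+2)+1=\omega\alpha+n+2$. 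The inner base case $n=0$, $\alpha\ge 1$ is where Lemma \ref{EA_calc_lem} enters: since $\Rs_{\omega^{\omega^\alpha}}$ has more than one element, $\EA(G_{\omega^{\omega^\alpha}})$ lies within $2$ of $S:=\sup_{\eta<\omega^{\omega^\alpha}}\EA(G_\eta)$ and equals $S+2$ once $S$ is shown to be a limit. To identify $S$, I would first note that $\EA(G_\eta)=\EA(G_{\omega^{\beta}})$, where $\omega^{\beta}$ is the leading term of the Cantor normal form of $\eta$: indeed $\Rs_\eta$ is a finite direct sum of copies of $\Rs_{\omega^{\beta}}$ together with strictly smaller pieces, so this follows from $\EA(G+G)=\EA(G)$ and monotonicity of $\EA$ along $\emb$ (Proposition \ref{order_char_prop}). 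A short Cantor-normal-form calculation then shows that every $\eta<\omega^{\omega^\alpha}$ satisfies $\omega^{\beta}\le\omega^{\omega^\gamma 2^j}$ for some $\gamma<\alpha$ and $j<\omega$, so by the outer hypothesis $\EA(G_\eta)\le\EA(G_{\omega^{\omega^\gamma 2^j}})<\omega(\gamma+1)\le\omega\alpha$, while these values are cofinal in $\omega\alpha$. Hence $S=\omega\alpha$, a limit, and $\EA(G_{\omega^{\omega^\alpha}})=\omega\alpha+2$. Finally, for $\alpha=0$ we have $\EA(G_{\omega^{2^n}})=\EA(\Z^{\wr 2^n}\wr\Z)$, and the upper bound $\le n+1$ follows from $\Z^{\wr(2^n+1)}\le\Z^{\wr 2^{n+1}}$ (a wreath product contains its top factor) together with $\EA(\Z^{\wr 2^{n+1}})=n+1$.

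\emph{The main obstacle.} The delicate point will be the matching lower bound $\EA(\Z^{\wr(2^n+1)})\ge n+1$, i.e.\ that $\Z^{\wr(2^n+1)}\notin EG_n$. Proposition \ref{EAGWrG} with monotonicity and property $\Sigma$ do not settle this: they pin down $\EA$ only at the dyadic iterates $\Z^{\wr 2^k}$, leaving a gap of size one between consecutive powers of two. I would close it by invoking the refinement $\EA(K\wr\Z)=\EA(K)+1$, valid for $K$ an infinite finitely generated group in $EG$ with property $\Sigma$, applied with $K=\Z^{\wr 2^n}$; the only nontrivial inequality here is the lower bound, and it is essentially the content of the proof of Proposition \ref{EAGWrG} in \cite{BrinEG}, whose argument that $G\wr G\notin EG_{\EA(G)}$ uses only that the acting factor is infinite. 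Absent a direct citation, the same lower bound can be extracted by an induction on $n$ over the stratification of $EG$, using torsion-freeness of $\Z^{\wr(2^n+1)}$ and its derived series to rule out the extension structure that membership in $EG_n$ would force.
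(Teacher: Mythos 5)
Your treatment of the case \(\alpha\ge 1\) is essentially the paper's argument: the same doubling \(\Rs_{\omega^{\omega^\alpha\cdot 2^{n}}}=\Rs_{\omega^{\omega^\alpha\cdot 2^{n-1}}}*\Rs_{\omega^{\omega^\alpha\cdot 2^{n-1}}}\) combined with Propositions \ref{wreath_prop} and \ref{EAGWrG} at successor steps, and Lemma \ref{EA_calc_lem} at \(n=0\) (your identification of the supremum via leading Cantor normal form terms is, if anything, more careful than the paper's, which takes the supremum only over \(\Xi\)). You have also correctly diagnosed why \(\alpha=0\) is genuinely different: since \(\exp(\one)=\one\), the group \(G_{\omega^{2^n}}\) is the iterated wreath product of \(2^n+1\) copies of \(\Z\), the doubling relation fails, and Proposition \ref{EAGWrG} pins down only the dyadic iterates. (The paper itself is laconic at exactly this point.)

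The gap is that your proposed repair rests on a false statement. The ``refinement'' \(\EA(K\wr\Z)=\EA(K)+1\), for \(K\) infinite, finitely generated, elementary amenable and with property \(\Sigma\), fails for \(K=\Z\wr\Z\wr\Z\): this \(K\) is \(\Sgen\)-generated (it is \(G_{\omega^2}\)), hence has property \(\Sigma\) by Lemma \ref{EA_calc_lem}, and \(\EA(K)=2\) (it is torsion-free, not virtually abelian and not metabelian, so not in \(EG_1\), while it is metabelian-by-abelian). Yet \(K\wr\Z=\Z\wr\Z\wr\Z\wr\Z\cong(\Z\wr\Z)\wr(\Z\wr\Z)\) has EA-class \(\EA(\Z\wr\Z)+1=2=\EA(K)\) by Proposition \ref{EAGWrG}. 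So the lower-bound argument of \cite{BrinEG} cannot depend only on the acting factor being infinite, and the one instance you need, \(K=\Z^{\wr 2^n}\), is precisely the assertion to be proved rather than a special case of a general principle. The derived-series fallback is equally doubtful, since \(\Z\wr\Z\wr\Z\wr\Z\) has derived length \(4\) but EA-class \(2\), so derived length does not control membership in \(EG_n\). Nor can Proposition \ref{EAGWrG} be applied to a subgroup of the form \(H\wr H\) inside \(G_{\omega^{2^n}}\): already \(\rho\bigl(\Rs_{\omega^{2^{n-1}}}*\Rs_{\omega^{2^{n-1}}}\bigr)=\omega^{2^n+1}>\omega^{2^n}\), so \(G_{\omega^{2^{n-1}}}\wr G_{\omega^{2^{n-1}}}\) does not embed into \(G_{\omega^{2^n}}\) by Theorem \ref{main_thm}. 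The lower bound \(\EA(\Z^{\wr(2^n+1)})\ge n+1\) therefore requires a separate argument (for instance an induction over the stratification of \(EG\) showing that a finitely generated torsion-free group in \(EG_n\) cannot contain the \((2^n+1)\)-fold iterated wreath product of \(\Z\)), and your proposal does not supply one.
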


\begin{proof}
Define 
\[
\Xi:= \{ \omega^{(\omega^\alpha) \cdot ({2^n})} \mid 0 < \alpha <
\epsilon_0 \textrm{ and }  n < \omega\} 
\]
and set \(\theta (\omega^{(\omega^\alpha) \cdot (2^n)} ) = \omega
\cdot \alpha + n + 2\). 
We verify by induction on \(\xi\)
that \(\EA(G_\xi) = \theta(\xi)\), which is what is asserted
in the first conclusion. 

The least element of \(\Xi\) is \(\omega^\omega\).
In this case, \(\Rs_{\omega^\omega} = E (\one + \one)\)
is the signature of a standard pair with oscillation 2 and
\(G_{\omega^\omega}\) is the Brin-Navas group, which has EA-class
\(\omega+2 = \theta(\omega^\omega)\). 
Next observe that if \(\omega^{(\omega^\alpha) \cdot (2^n)}\) is in
\(\Xi\), then the next element of \(\Xi\) 
is \(\omega^{(\omega^\alpha) \cdot (2^{n+1})}\).
In particular, \(\xi \in \Xi\) is a limit point of
\(\Xi\) precisely when \(\xi = \omega^{\omega^\alpha}\) 
for some \(\alpha > 1\).
If \(\xi \in \Xi\) is of the form \(\omega^{(\omega^\alpha)
\cdot (2^{n+1})}\), 
then setting \(\xi' = \omega^{(\omega^\alpha) \cdot (2^n)}\) and \(\beta = \omega^\alpha \cdot 2^n\)
we have
\[
\Rs_\xi  = \exp(\Rs_\beta + \Rs_\beta) = \exp(\Rs_\beta) * \exp(\Rs_\beta) =  \Rs_{\xi'} * \Rs_{\xi'}.
\]
By Proposition \ref{wreath_prop}, \(G_\xi = G_{\xi'}
\ \wr\  G_{\xi'}\). 
By Proposition \ref{EAGWrG} and our induction hypothesis, 
\[
\EA(G_\xi) = \EA(G_{\xi'}) + 1 = \theta(\xi') +
1 = \theta (\xi). 
\]

Now assume that \(\xi \in \Xi\) is a limit point
of \(\Xi\). 
In what follows \(\xi'\) always represents an element of
\(\Xi\). 
We first claim that
\begin{equation}\label{ThetaContinuous}
\theta (\xi) = \big (\sup_{\xi' < \xi} \theta (\xi') \big )+ 2. 
\end{equation}
If \(\xi = \omega^{\omega^{\alpha + 1}}\), then
(\ref{ThetaContinuous}) follows from 
the fact that 
\(\xi = \sup_n \omega^{(\omega^\alpha) \cdot (2^n)}\) and
consequently that
\[
\begin{split}
\theta(\xi) = &\omega \cdot (\alpha + 1) + 2 = \omega \cdot \alpha +
\omega + 2 \\
= &\big ( \sup_n \omega \cdot \alpha + n \big) + 2 =  
 \big ( \sup_n \theta (\omega^{(\omega^\alpha) \cdot (2^n)}) \big ) + 2.
\end{split}
\]
If \(\xi = \omega^{\omega^\alpha}\) for a limit ordinal \(\alpha\),
then (\ref{ThetaContinuous}) follows 
from the continuity of the maps \(\alpha \mapsto \omega^{\omega^\alpha}\)
and \(\alpha \mapsto \omega \cdot \alpha\). 

Observe that in both cases \(\sup_{\xi' < \xi}
\theta(\xi') = \omega \cdot \alpha + \omega\) is a limit ordinal. 
Now observe that by Lemma \ref{EA_calc_lem} and our induction hypothesis
\[
\EA(G_\xi) = \big(\sup_{\xi' < \xi} \EA(G_{\xi'}) \big) + 2 =
\big (\sup_{\xi' < \xi} \theta 
(\xi') \big )+ 2 
= \theta (\xi).
\]
The first equality holds since \(\sup_{\xi' < \xi}
\EA(G_{\xi'}) = \sup_{\xi' < \xi} \theta (\xi')\) 
is a limit ordinal.

If \(\alpha=n=0\), then \(\xi=\omega\), \(\Rs_\omega=\exp(\one+\one)
= \one * \one\), \(G_\omega = \Z\wr\Z\), and \(EA(G_\omega)=1\).
The last conclusion follows from Lemma \ref{EA_calc_lem} and
arguments similar to those above.
\end{proof}

We have established the following proposition.

\begin{prop} \label{F-less}
For each \(\xi < \epsilon_0\),
\(G_\xi\) is elementary amenable and hence does not
contain a copy of Thompson's group \(F\).
\end{prop}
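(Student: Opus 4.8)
The plan is to read this off as an essentially immediate consequence of the work already done, so that no new argument about the internal structure of $G_\xi$ is needed. First I would recall that $G_\xi := \gen{\Rs_\xi}$ with $\Rs_\xi \in \Rsig \subseteq \Ssig$, so by Theorem \ref{sig_exclam} there is some $A \in \Sgen$ whose signature is (equivalent to) $\Rs_\xi$ and $G_\xi = \gen{A}$; in other words each $G_\xi$ is an $\Sgen$-generated group.

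Next I would invoke the structural fact recorded inside the proof of Lemma \ref{EA_calc_lem}: every $\Sgen$-generated group belongs to the smallest class of groups that contains the abelian groups and is closed under extensions and directed unions --- that is, it lies in $EG$. (The induction is on complexity: for $|A| \le 1$ the group is $\Z$ or trivial; for $A = B + C$ one gets a direct sum of groups already known to be in $EG$; and for indecomposable $A$ with $|A| > 1$, Lemma \ref{EA_calc_lem} presents $\gen{A}$ as an extension by $\Z$ of a directed union of copies of groups $\gen{B}$ with $\Bs < \As$, which lie in $EG$ by the inductive hypothesis since $\rho(\Bs) < \rho(\As)$ by Lemmas \ref{decrease_rank} and \ref{S_order_char}.) In particular $G_\xi \in EG$, which is the assertion that $G_\xi$ is elementary amenable. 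It is worth emphasizing that this route, rather than the explicit EA-class computation, is the right one here, since Theorem \ref{main_thm} only names the EA-class for the distinguished exponents $\omega^{(\omega^\alpha)\cdot 2^n}$, whereas we want the conclusion for every $\xi < \epsilon_0$.

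For the ``in particular'' clause I would argue by contradiction. Recall from the introduction (see \cite{CFP}) that Thompson's group $F$ is not elementary amenable, and from Chou's theorem \cite{Chou} that each $EG_\alpha$, and hence $EG$ itself, is closed under passage to subgroups. Thus a hypothetical embedding of $F$ into $G_\xi$ would place $F$ inside a member of $EG$ and therefore force $F \in EG$, contradicting $F \notin EG$; hence $F$ does not embed into $G_\xi$. The main obstacle here is really only one of presentation: there is no hard mathematics left, but one must be careful to cite the ``every $\Sgen$-generated group is in $EG$'' statement from Lemma \ref{EA_calc_lem} rather than the EA-class formula, and to recall that $EG$ is subgroup-closed so that non-membership of $F$ passes to non-embeddability into $G_\xi$.
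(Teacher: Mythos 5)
Your proposal is correct and matches the paper's own route: the paper states this proposition with ``We have established the following proposition,'' relying precisely on the observation in the proof of Lemma \ref{EA_calc_lem} that every \(\Sgen\)-generated group lies in the class generated from abelian groups by extensions and directed unions, together with the subgroup-closure of \(EG\) and the fact that \(F \notin EG\). Your remark that the general membership in \(EG\), rather than the EA-class formula for the special exponents, is the needed ingredient is exactly right.
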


\begin{remark}
This shows in particular that the Brin-Sapir Conjecture holds within the class \(\Sfrak\).
Define a binary relation \(\strip\) on \(\Sgen\) by \(A \strip B\) if 
\(A\) is obtained from \(B\) by inflating it an arbitrary number of times with \(b_{\max}\)
and then removing \(b_{\max}\).
By Propositions \ref{S_order_char} and \ref{decrease_rank}, \(A \strip B\) implies \(\rho(A) < \rho(B)\).
Hence \((\Sgen,\strip)\) is well-founded.
Just this fact readily implies that the Brin-Sapir Conjecture holds for the class of
\(\Sgen\)-generated groups: by the arguments in the proof of Lemma \ref{EA_calc_lem}, there cannot be a
\(\strip\)-minimal \(A \in \Sgen\) which generates a counterexample to the Brin-Sapir Conjecture.
In fact this inductive approach to proving this conjecture is what led to the discovery of the
results in the present paper.
We believe that the correct approach to proving the Brin-Sapir Conjecture
is to prove a well-foundedness result for a suitable extension of this relation to a
broader class of generating sets. 
\end{remark}

\subsection{Reduction and embeddablity agree on \(\Rsig\)}\label{R_order_char:subsec}
We will now show that if \(\As\) and \(\Bs\) are in \(\Rsig\), then
\(\As \leq \Bs\) is equivalent to \(\gen{A} \emb \gen{B}\).
The forward implication has already been established in Proposition \ref{reduction_to_embedding}.
By Lemma \ref{R_order_char},
it suffices to show that if \(\As\) and \(\Bs\) are in \(\Rsig\)
and \(\rho(\Bs) < \rho(\As)\), then \(\gen{\As}\) does not embed into \(\gen{\Bs}\).
In order to do this, we will extend our analysis to
groups generated by decomposable elements of  \(\Rsig\) and \(\Ssig\).
The following lemma is used in this section and the next.

\begin{lem}\label{FullWidth}
Let \(H\) be generated by a finite geometrically fast 
system \(S\) of functions  with \(|S|>1\) and a \(\sqsubset\)-maximum element \(h\) such that
\(g \sqsubset h\) for every \(g \in S \setminus \{h\}\).
Let \(N{}\) be the normal closure of \(S\setminus\{h\}\) taken in \(H\).
Then the following hold:
\begin{enumerate}

\item \label{H_emb}
There is an embedding of \(H\) into \(\PLoI\) so that
the image of \(S\) has a \(\sqsubset\)-maximum element and so that every
element of \(H\setminus N{}\) has e-support identical to that of the image of \(h\).

\item \label{dont_commute}
For each \(f\in N{}\setminus\{1\}\) and each \(g\notin N{}\), we
have \([f,g]\ne1\).
Thus the center of \(H\) is trivial.

\item \label{central_cyclic}
The centralizer of every element of \(H\setminus N{}\) is
cyclic.
\end{enumerate}
\end{lem}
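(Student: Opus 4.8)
The plan is to establish the three parts in order, with part (\ref{H_emb}) providing the geometric setup that the other two parts exploit. For part (\ref{H_emb}), I would start from the fact in Section \ref{FastGenSec} that the marked isomorphism type of $\gen{S}$ depends only on the dynamical diagram of $S$; since $S$ is fast with $\sqsubset$-maximum $h$, I can conjugate by an element of $\Homeo_+(I)$ — or simply realize an isomorphic fast system inside $\PLoI$ with the same dynamical diagram, using \cite{fast_gen} — so that the image of $h$ has extended support all of $(0,1)$ and every other generator is supported deep inside. The content is then to argue that any $g \in H \setminus N$ has extended support equal to that of $h$: writing $g$ as a word in $S^{\pm 1}$, the exponent sum on $h$ (mod the relations, i.e.\ the image of $g$ under $H \to H/N \cong \Z$) is nonzero, and since every orbital of the other generators is swallowed by the unique orbital of $h$, the support of $g$ near the endpoints $0$ and $1$ is governed entirely by that nonzero power of $h$. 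I would make this precise by noting that in a small neighborhood of $0$ (and of $1$), every generator except $h$ is the identity, so $g$ agrees with a nontrivial power of $h$ there, forcing $\inf \supt(g) = \inf \supt(h)$ and $\sup \supt(g) = \sup \supt(h)$; combined with $g \sqsubset h$ being false (it cannot be, as $g$ reaches the endpoints), this gives that the extended support of $g$ is exactly that of $h$.

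For part (\ref{dont_commute}), suppose $f \in N \setminus \{1\}$ and $g \notin N$ with $[f,g] = 1$. By part (\ref{H_emb}) I may assume $g$ has extended support $(0,1)$, and $f$ has extended support contained in a compact subinterval $K \subset (0,1)$ bounded away from the endpoints (since $N$ is generated by the $\gen{S \setminus \{h\}}$-conjugates of $S \setminus \{h\}$, every element of $N$ has support inside $\bigcup_p \supt(h)^{h^p}$ — no, more carefully: inside the union of $G$-translates of the inner region, which stays away from $0$ and $1$ because $h$ pushes points monotonically toward an endpoint and the other generators live strictly inside $h$'s orbital). Then $g$ commuting with $f$ forces $g$ to preserve $\supt(f)$, hence to fix the endpoints of the support interval of $f$; but I want a contradiction from the dynamics of $g$ near $0$ or $1$. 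The cleaner route: since $g \notin N$, $g$ acts on the set of translates of $K$ by a nontrivial power of $h$, moving $K$ off itself, so $f$ and $f^g = f$ would have disjoint supports yet be equal — contradiction unless $f = 1$. I expect this "$g$ moves $K$ off itself" step to require care: it uses that the $h$-power in $g$ is nonzero and that $h$ has no fixed points in its orbital, so $K h^n \cap K = \emptyset$ for the relevant $n$; the other generators in $g$ only permute translates of $K$ among themselves, so they cannot undo this. Triviality of the center is then immediate since $H \ne N$ (as $|S| > 1$ and $h \notin N$, because $H/N \cong \Z$).

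For part (\ref{central_cyclic}), let $g \in H \setminus N$ and let $C$ be its centralizer. Here I would invoke the representation of $H$ inside $\PLoI$ from part (\ref{H_emb}): $g$ is a PL homeomorphism of $(0,1)$ with no fixed points in a neighborhood of $0$ and of $1$, so near each endpoint $g$ has a well-defined germ, and the germ at $0$ is a nontrivial PL germ (it is the germ of a nonzero power of $h$). The centralizer of $g$ in $\PLoI$, restricted to its action on germs at $0$, embeds into the centralizer of a nontrivial PL germ, which — by the standard structure theory for centralizers in $\PLoI$, the reason the paper insists on representing $F$ inside $\PLoI$ (see the remark in Section 3.1) — is abelian, in fact infinite cyclic or torsion-free abelian of rank controlled by the slope data. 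More efficiently: any $c \in C$ commutes with $g$, so $c$ preserves $\supt(g)$ and hence its extended support $(0,1)$; by part (\ref{dont_commute}) applied with a bit of bookkeeping, $c \notin N$ unless $c = 1$ commutes with everything — wait, $c$ could be in $N$. So instead: $C \cap N$ is trivial by part (\ref{dont_commute}) (any nontrivial element of $N$ commuting with $g \notin N$ is forbidden), hence $C$ injects into $H/N \cong \Z$, so $C$ is cyclic. This last observation — that $C \cap N = 1$ forces $C$ cyclic via $C \hookrightarrow H/N$ — is the crisp argument and I would use it; it makes part (\ref{central_cyclic}) an immediate corollary of part (\ref{dont_commute}).

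The main obstacle I anticipate is the geometric bookkeeping in part (\ref{dont_commute}): precisely controlling the support of a general element of $N$ (showing it lies in a compact subset of $(0,1)$ stable under the action relevant to $g$) and showing that the nonzero $h$-power in $g$ genuinely displaces that support. Everything else — part (\ref{H_emb}) being a conjugation normalization plus endpoint analysis, part (\ref{central_cyclic}) being a one-line consequence of (\ref{dont_commute}) — is routine once (\ref{dont_commute}) is in hand.
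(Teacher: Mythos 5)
There is a genuine gap, and it occurs twice --- in parts (\ref{H_emb}) and (\ref{dont_commute}) --- for the same underlying reason: you have no mechanism for controlling what an element of \(H\setminus N\) does in the \emph{interior} of the extended support of \(h\). In part (\ref{H_emb}), pinning down \(\inf\supt(g)\) and \(\sup\supt(g)\) by a germ analysis at the two endpoints does not show that the extended support of \(g\in H\setminus N\) equals that of \(h\): the extended support is the interior of the closure of the support, so you must also rule out \(g\) being the identity on an interior subinterval, and the endpoint analysis says nothing about that. In part (\ref{dont_commute}), the claim that ``\(g\) moves \(K\) off itself'' is simply false: \(h\), and hence \(g\), may have interior transition points (so \(g\) can have fixed points inside its extended support), and even when \(g=h\) is a single positive bump a compact \(K\supseteq\supt(f)\) typically meets \(Kh\) (e.g.\ \(f=b\cdot b^{a}\) in the Brin--Navas group has \(\supt(f)a\cap\supt(f)\ne\emptyset\)). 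Worse, the genuinely hard case is exactly the one with no displacement at all: if \([f,g]=1\) then \(g\) permutes the finitely many orbitals of \(f\), and an orientation-preserving homeomorphism of an interval cannot have a finite point-orbit of size greater than one, so \(g\) fixes each orbital of \(f\) setwise; the orbitals of \(f\) are then unions of orbitals of \(g\), and one must rule out \(f|_J\) commuting with \(g|_J\) on a common orbital \(J\). Your proposal contains no tool for this.

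The paper supplies the missing tool with an arithmetic normalization absent from your write-up: it re-realizes \(S\) in \(\PLoI\) (same dynamical diagram, so the same marked group) with all slopes of \(h\) in the coset \(3\gen{2}\) of the multiplicative group \(\gen{2}\) and all slopes of the remaining generators in \(\gen{2}\). Then every \(f=uh^i\in H\setminus N\) has slope in \(3^i\gen{2}\not\ni 1\) throughout the extended support of \(h\), which yields part (\ref{H_emb}); and on a common orbital, commuting of \(f|_J\) and \(g|_J\) would force both to be powers of a common root by Theorem 4.18 of \cite{picric}, contradicting the incommensurability of powers of \(2\) and \(3\) --- which yields part (\ref{dont_commute}). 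I will add that your part (\ref{central_cyclic}) --- \(C\cap N=1\) by part (\ref{dont_commute}), hence \(C\) embeds in the cyclic group \(H/N\) and is cyclic --- is correct and genuinely simpler than the paper's argument, which instead compares the root-exponent ratios \(m_J/n_J\) across the orbitals of \(g\) using Theorems 4.15, 4.18 and 5.5 of \cite{picric}; but it sits downstream of part (\ref{dont_commute}), so the proposal as a whole does not stand.
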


\begin{proof} 
In this proof, \(\gen2\) is the multiplicative
subgroup of \(\R\) generated by \(2\), and \(3\gen2\) is the coset
containing \(3\).  
Observe that by replacing \(S\) by a generating set with the same dynamical diagram,
we may assume that the support of \(h\) is dense.
Next we adjust the elements of \(S\)
slightly so that after the adjustment it still satisfies the hypotheses,
the isomorphism class of \(\gen{S}\) has not changed,
and each element of \(S\) is piecewise linear
with all slopes used by \(h\) coming from \(3\gen 2\) and all
slopes used by elements of \(S\setminus\{h\}\)
coming from \(\gen 2\).
We can do this by keeping all transition points the same
and changing each bump so that its graph is two affine pieces.  If
the pieces have slopes sufficiently close to 0 or \(+\infty\), as
appropriate, then the feet of each new bump are small enough to
be contained in the feet of the bump it replaces.   The
dynamical diagram is unchanged and the group generated
isomorphic to the original.

All elements of \(H\) are of the form \(f=uh^i\) with \(u\in N{}\) a
product of elements of \(C=\{g^{h^j} \mid g\in S\setminus\{h\}, j\in
\Z\}\).  It follows from the chain rule and the fact that \(h\) is
the identity on no open interval that every element of \(C\) has
slopes restricted to \(\langle 2\rangle\).  The element \(f=uh^i\)
is outside \(N{}\) if and only if \(i\ne0\).  Thus every element
\(f\) of \(H\setminus N{}\) has slope in \(3^i\langle 2\rangle\)
everywhere the slope of \(f\) is defined.  
Hence every \(f\in H\setminus N{}\) has e-support equal to
that of \(h\), proving (\ref{H_emb}).

With \(f\in N{}\setminus \{1\}\) and \(g\in H\setminus N{}\), the
only way to have \([f,g]=1\) is for the orbitals of \(f\) to be
among the orbitals of \(g\).  It then follows from Theorem 4.18 of
\cite{picric} that on each orbital \(J\) of \(g\), there would be a
piecewise linear bump \(b\) with support \(J\) with \(f|_J\) and \(g|_J\) each a
power of \(b|_J\).  This is not possible because 3 to a nonzero,
rational power is never equal to such a power of 2, proving the
first part of (\ref{dont_commute}).  The center of \(H\) is trivial because there is
no room for a nontrivial central element.

To prove (\ref{central_cyclic}), we add to the information in the
paragraph above.  For \(g\in H\setminus N{}\) and \([f,g]=1\), we
now know \(f\notin N{}\), and the e-supports of \(f\) and
\(g\) equal that of \(h\).  Thus the orbitals of \(f\) and \(g\) are
identical.  Again, Theorem 4.18 of \cite{picric} makes each of
\(f\) and \(g\) powers of a common root on each orbital.  By Theorem
4.15 of \cite{picric}, for each orbital \(J\) of \(g\) there is
a unique minimum root \(r_J\) so that all roots of \(f\) and \(g\)
on \(J\) are integral powers of \(r_J\).  For each orbital \(J\) of
\(g\), let \(m_J\) and \(n_J\) be the integers so that
\(f|_J=r_J^{m_J}\) and \(g|_J=r_J^{n_J}\).

Assume by way of contradiction that there are orbitals \(J\ne K\) of
\(f\) for which \(m_J/n_J \ne m_K/n_K\).  Now
\[
\begin{split}
\left(f|_J\right)^{n_J} &= r_J^{(m_J n_J)} = \left(g|_J\right)^{m_J},
\quad\mathrm{while}  \\
\left(f|_K\right)^{n_J} &= r_K^{(m_K n_J)} \ne r_K^{(m_J n_K)} =
\left(g|_K\right)^{m_J}.
\end{split}
\]

Thus \(a=f^{n_J}g^{-m_J}\) is the identity on \(J\) and not on
\(K\).  From (\ref{H_emb}) we have \(a\in N{}\).  But this
contradicts the fact that \(a\) commutes with \(g\notin N{}\) which
by (\ref{dont_commute}) means that \(a\notin N{}\).  Thus \(m_J/n_J
= m_K/n_K\) for all orbitals \(J\) and \(K\) of \(g\).  Thus \(f\)
is determined by \(g\) and by the restriction of \(f\) to one
particular orbital of \(g\).  By Theorem 5.5 of \cite{picric}, if
\(J\) is an orbital of \(g\), then the restriction of the
centralizer of \(g\) to \(J\) is cyclic.  Thus the centralizer of
\(g\) in \(H\) is cyclic.  \end{proof}

We've already noted that Proposition \ref{reduction_to_embedding} and Lemma \ref{rank_to_reduction} imply
that if \(\As \leq \Bs\) are in \(\Rsig\), then \(\gen{\As}\) embeds into 
\(\gen{\Bs}\).
The next lemma provides the converse alluded to earlier.

\begin{lem}\label{RsigEmbedOrder}
If \(\As,\Bs \in \Rsig\) and \(\rho(\As) < \rho(\Bs)\), then
\(\gen{\Bs}\) does not embed into \(\gen{\As}\). 
\end{lem}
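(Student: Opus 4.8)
The plan is to prove the lemma by induction on $\rho(\As)$, so the inductive hypothesis is that $\gen{\Bs'}$ does not embed into $\gen{\As'}$ whenever $\As',\Bs' \in \Rsig$, $\rho(\As') < \rho(\As)$, and $\rho(\As') < \rho(\Bs')$. First I would reduce to a minimal case. By Lemma~\ref{R_order_char} and the bijectivity of $\rho$ on $\Rsig$ (Lemma~\ref{RhoIsIso}), if $\gen{\Bs}$ embeds into $\gen{\As}$ for some $\Bs$ with $\rho(\Bs) > \rho(\As)$, then, since $\rho(\Rs_{\rho(\As)+1}) = \rho(\As)+1 \le \rho(\Bs)$, Proposition~\ref{reduction_to_embedding} gives $\gen{\Rs_{\rho(\As)+1}} \emb \gen{\Bs} \emb \gen{\As}$. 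Since $\As$ is the unique element of $\Rsig$ of rank $\rho(\As)$ it is $\Rs_{\rho(\As)}$, and appending a trailing $\one$ summand shows $\Rs_{\rho(\As)+1} = \As + \one$, whence $\gen{\Rs_{\rho(\As)+1}} = \gen{\As} + \Z$. So it is enough to show that $\gen{\As} + \Z$ does not embed into $\gen{\As}$.

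If $\rho(\As)$ is finite then $\gen{\As}$ is free abelian of rank $\rho(\As)$ and the claim reduces to $\Z^{\rho(\As)+1} \not\emb \Z^{\rho(\As)}$. Suppose $\rho(\As) \ge \omega$ and $\As = \exp(\Xs)$ is indecomposable. By the discussion opening Section~\ref{inflation:sec}, $\gen{\As}$ has a normal subgroup $N$ with $\gen{\As}/N \cong \Z$, and $N = \bigcup_k \gen{A_k}$ is an increasing union in which $\gen{A_k}$ embeds into $\gen{\Rs_{\rho(\Bs_k)}}$ with $\rho(\Bs_k) < \rho(\As)$ (Lemma~\ref{decrease_rank}, as in the proof of Lemma~\ref{EA_calc_lem}). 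Assume $\phi \colon \gen{\As} + \Z \emb \gen{\As}$ and write elements of the domain as pairs $(g,m)$. If $\phi(\gen{\As} \times \{0\}) \subseteq N$, then $\gen{\As}$, being finitely generated, embeds into some $\gen{A_k}$ and hence into $\gen{\Rs_{\rho(\Bs_k)}}$ with $\rho(\Bs_k) < \rho(\As)$, contradicting the inductive hypothesis. Otherwise there is $g \ne 1$ with $\phi(g,0) \notin N$; then $(g,0)$ has infinite order (as $\gen{\As}$ is torsion free), commutes with $(0,1)$, and together they generate a subgroup isomorphic to $\Z \oplus \Z$, so $\gen{\phi(g,0),\phi(0,1)} \cong \Z^2$ lies in the centralizer of $\phi(g,0)$ in $\gen{\As}$; but that centralizer is cyclic by Lemma~\ref{FullWidth}(\ref{central_cyclic}) since $\phi(g,0) \notin N$, a contradiction.

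It remains to treat a decomposable $\As = \sum_{i<n} \exp(\Xs_i)$ in reduced form with $n \ge 2$, so $\gen{\As}$ is a finite direct product of the centerless ``big'' groups $\gen{\exp(\Xs_i)}$ (those with $|\Xs_i| > 1$) and a finite rank free abelian factor arising from the trailing $\one$-blocks. Here I would argue by a secondary induction on $n$, peeling off the smallest block, and study how an embedded copy of $\gen{\As} + \Z$ meets this product decomposition. Since each big factor is centerless with cyclic centralizers of its elements (Lemma~\ref{FullWidth}(\ref{dont_commute}),(\ref{central_cyclic})) and admits the full-width realization of Lemma~\ref{FullWidth}(\ref{H_emb}), the embedding should be forced to respect the factorization tightly enough --- its projections onto the factors and the way these are amalgamated being rigid in the manner of Goursat's lemma --- that one obtains either an embedding of $\gen{\exp(\Xs_0)} + \Z$ into $\gen{\exp(\Xs_0)}$ (for the top block) or a rank violation in the lower blocks, contradicting the indecomposable case or the inductive hypothesis.

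I expect this last direct-product analysis to be the main obstacle: a subgroup of a finite direct product need not respect the factors, and the kernel of the map from the image onto $\Z$ need not be finitely generated, so the induction cannot simply be pushed through one factor at a time. The purpose of Lemma~\ref{FullWidth} --- triviality of centers, cyclic centralizers of elements outside the distinguished normal subgroups, and full-width extended supports --- is exactly to supply the rigidity needed, and the genuine labor lies in tracking the ranks $\rho(\Bs_k)$ of the finitely generated pieces of the relevant normal subgroups together with the combinatorics of the Goursat-type diagonal subgroups.
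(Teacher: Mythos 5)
Your reduction to the successor case \(\Bs = \As + \one\) is correct, and your treatment of the indecomposable case is essentially the paper's argument (the paper organizes the whole proof around a lexicographically minimal counterexample rather than an explicit induction, but the content agrees: the extra commuting generator either lands in the distinguished normal subgroup \(N\), confining the image to a finitely generated piece of strictly smaller \(\rho\)-rank, or lands outside \(N\), where Lemma \ref{FullWidth}(\ref{central_cyclic}) puts a copy of \(\Z^2\) inside a cyclic centralizer). The genuine gap is the decomposable case, which you leave as a sketch and yourself flag as the main obstacle. The Goursat-style analysis of how the embedded subgroup meets the product decomposition is not what is needed, and --- as you correctly worry --- it is not clear it could be pushed through, since the image of the embedding need not respect the factors in any usable way.

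The missing idea is that one never needs to control how \(\phi(\gen{\Bs})\) sits inside \(\gen{\As} = G_0 + \cdots + G_k + \Z^n\); it suffices to track where the \emph{central} summand \(\Z^{n+1}\) of \(\gen{\Bs} = G_0 + \cdots + G_k + \Z^{n+1}\) goes under the projections \(\phi_i = \pi_i \circ \phi\) onto the indecomposable factors \(G_i = \gen{\As_i}\). If every \(\phi_i\) annihilates \(\Z^{n+1}\), then \(\phi\) embeds \(\Z^{n+1}\) into \(\Z^n\), which is impossible. Otherwise fix \(i\) and a central \(x\) with \(y = \phi_i(x) \neq 1\); since \(x\) commutes with everything in \(\gen{\Bs}\), the \emph{entire} image \(\phi_i(\gen{\Bs})\) lies in the centralizer of \(y\) in \(G_i\). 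Lemma \ref{FullWidth} now finishes: if \(y \notin N_i\) that centralizer is cyclic, and if \(y \in N_i\) it is contained in \(N_i\) by part (\ref{dont_commute}), whence finite generation of \(\gen{\Bs}\) confines \(\phi_i(\gen{\Bs})\) to a finitely generated piece of \(N_i\) whose \(\rho\)-rank is strictly below \(\rho(\As_i)\) (Lemma \ref{decrease_rank}). In either case the full image of \(\phi\) lies in a group obtained from \(\gen{\As}\) by replacing the factor \(G_i\) with one of rank \(< \omega^{\alpha_i}\), and a Cantor normal form computation shows this group has \(\rho\)-rank strictly below \(\rho(\As)\); since \(\gen{\As} \emb \gen{\Bs}\), this contradicts your inductive hypothesis applied to \(\As\) itself. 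No secondary induction on the number of blocks and no Goursat bookkeeping is required.
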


\begin{proof} Assume the lemma is false, and 
let \((\beta, \alpha)\) be the lexicographically minimal pair
for which there are \(\As\) and \(\Bs\) in \(\Rsig\) so that
\(\rho(\As)=\alpha<\beta=\rho(\Bs)\) and \(\gen\Bs\) embeds in
\(\gen\As\).  Since we know that \(\gen\Cs\) embeds in \(\gen\Bs\)
whenever \(\Cs\in \Rsig\) has \(\rho(\Cs)<\rho(\Bs)\), our choice of
\(\alpha\) and \(\beta\) must have \(\beta=\alpha+1\) and
\(\gen\Bs\) embeds in no \(\gen\Ds\) with \(\Ds\in \Rsig\) and
\(\rho(\Ds)<\rho(\As)\).

The result holds if \(\alpha\) is finite since then
\(\gen\As=\Z^\alpha\) and \(\gen\Bs=\Z^{\alpha+1}\) and an embedding
of \(\gen\Bs\) in \(\gen\As\) is not possible.  We thus assume
that \(\alpha\) is infinite with Cantor normal form 
\begin{equation}\label{AlphaForm}
\alpha=\omega^{\alpha_0}+\omega^{\alpha_1}+\cdots +
\omega^{\alpha_k} + n
\end{equation}
with \(\alpha_0\ge \alpha_1 \ge \cdots \ge \alpha_k>0\) and
\(n\ge0\).

We thus have
\[
\begin{split}
\As &= \As_0+\As_1+\cdots +\As_k+\one\cdot n,
\qquad\mathrm{and} \\
\Bs &= \As_0+\As_1+\cdots +\As_k+\one\cdot(n+1),
\end{split}
\]
where, for \(0\le i\le k\), each \(\As_i\) is in \(\Rsig\) and
indecomposable with \(|\As_i|>1\) and \(\rho(\As_i) =
\omega^{\alpha_i}\).

With \(G_i=\gen{\As_i}\), we have \(\gen\As\) representable as \(G_0
+ G_1 + \cdots + G_k + \Z^n\) and \(\gen\Bs\) as \(G_0+ G_1 + \cdots
+ G_k + \Z^{n+1}\).  We assume a homomorphic embedding
\(\phi:\gen\Bs \rightarrow \gen\As\).  For \(0\le i\le k\), we let
\(\pi_i:\gen\As \rightarrow G_i\) be the projection homomorphism,
and let \(\phi_i=\pi_i\circ \phi:\gen\Bs\rightarrow G_i\).

If for all \(i\) with \(0\le i\le k\), we have \(\phi_i(\Z^{n+1})\)
trivial, then \(\phi\) embeds \(\Z^{n+1}\) into \(\Z^n\) which is
not possible.  Thus for some \(i\) with \(0\le i\le k\) and some
element \(x\in \Z^{n+1}\), we have that \(y=\phi_i(x)\)
is a nonidentity element of \(G_i\).  Let \(A_i\in \Sgen\) have
signature \(\As_i\), let  \(h_i\) be the maximum
element of \(A_i\), and let \(N_i\) be the normal closure in
\(G_i=\gen{A_i}\) of \(A_i\setminus\{h_i\}\).

We note that for all \(g\in \Bs\), we have \([g,x]=1\).  Thus
\([\phi_i(g), y]=1\) in \(G_i\).
If \(y\notin N_i\), then by Lemma
\ref{FullWidth}(\ref{central_cyclic}) the centralizer \(C_i\) of
\(y\) in \(G_i\) is cyclic.  From this, we have \(\phi_i(g)\in
C_i\).  This puts the image of \(\phi\) in
\[
\gen{\As_0} + \cdots \gen{\As_{i-1}} + C_i + \gen{\As_{i+1}} +
\cdots \gen{\As_k} + \Z^n
\]
which is isomorphic to the group generated by
\[
\Es = \As_0 + \cdots + \As_{i-1} + \As_{i+1} + \cdots + \As_k +
\one\cdot(n+1)
\]
which is in \(\Rsig\).  Since (\ref{AlphaForm}) is in normal form
and since \(\rho(\As_i)\geq \omega\), we have \(\rho(\Es)<\rho(\As) =
\alpha\).  This contradicts our initial choice of \(\alpha\).

If \(y\in N_i\), then by Lemma \ref{FullWidth}(\ref{dont_commute})
the centralizer \(C_i\) of \(y\) in \(G_i\) is contained in
\(N_i\), and \(\phi_i(g)\) is in \(N_i\).  This puts the image of
\(\phi\) in 
\[
\gen{\As_0} + \cdots \gen{\As_{i-1}} + N_i + \gen{\As_{i+1}} +
\cdots \gen{\As_k} + \Z^n.
\]

Since \(\gen\Bs\) is finitely generated, we can replace \(N_i\) in
the above by a suitable finitely generated subgroup.  There is an
\(l\) so that \(\phi_i(\gen\Bs)\) is contained in \(\gen{K_l}\)
where
\[
K_l := \{h^{{h_i}^j} \mid h\in \As_i,\, h<h_i,\, \mathrm{and}\,
-l\le j\le l\}.
\]
If \(K'_l\) is similarly defined with \(-l\le j\le l\) replaced
by \(0\le j\le 2l\), then \(\gen{K_l}\) is isomorphic to
\(\gen{K'_l}\).
But \(K'_l\le K''_l\) where \(K''_l\) is an
appropriately chosen iterated inflation of \(A_i\).
By Lemma
\ref{inflate_biembedd}, \(K''_l\) and thus also \(K'_l\) are in
\(\Sgen\).       
Since \(\As_i\) and \(\Ks_i''\) are both indecomposable and \(\As_i \equiv \Ks_i''\),
by Propositions \ref{S_order_char} and 
\ref{decrease_rank} we have that \(\rho(\Ks'_l)<\rho(\Ks''_l) = \rho(\As_i)\).

We now have that \(\gen{\Bs}\) embeds in 
\[
\gen{\As_0} + \cdots \gen{\As_{i-1}} + \gen{K'_l} + \gen{\As_{i+1}} +
\cdots \gen{\As_k} + \Z^n.
\]
Once again, since (\ref{AlphaForm}) is in normal form, we get that
\[
\rho(\As_0 + \cdots \As_{i-1} + \Ks'_l + \As_{i+1} +
\cdots \As_k + \one \cdot n)< \rho(\As)=\alpha
.
\]  Again, this contradicts our choice
of \(\alpha\).  This completes the proof.
\end{proof}

We are finally in a position to observe that the proof of Theorem
\ref{main_thm} has been completed. 
Recall that \(\Ssigp\) consists of all \(\As\) in \(\Ssig\) such that for some \(\Bs \in \Rsig\),
\(\As \equiv \Bs\).

\begin{prop}  \label{order_char_prop}
If \(\As\) and \(\Bs\) are in \(\Ssigp\), then the following are
equivalent: 
\begin{enumerate}

\item \label{reduction} \(\As \leq \Bs\)

\item \label{embedding} \(\gen{\As}\) embeds into \(\gen{\Bs}\)

\item \label{lower_rank} \(\rho(\As) \leq \rho(\Bs)\).

\end{enumerate}
Moreover, for each \(\As\) in \(\Ssig\), there is a unique element
\(\Bs\) of \(\Rsig\) such that 
\(\gen{\As}\) is biembeddable with \(\gen{\Bs}\).
\end{prop}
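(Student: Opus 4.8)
The plan is to derive every clause of the proposition from the analysis of reduced signatures already in hand, so that the only genuinely nontrivial input is Lemma~\ref{RsigEmbedOrder}. Throughout I would use three facts. First, by the extension of $\rho$ to $\Ssig$, every $\As\in\Ssig$ is equivalent to a unique $\As^R\in\Rsig$ with $\rho(\As)=\rho(\As^R)$, and since $\As\equiv\As^R$ means $\As\le\As^R\le\As$, Proposition~\ref{reduction_to_embedding} shows that $\gen{\As}$ and $\gen{\As^R}$ are biembeddable. Second, $\Rsig\subseteq\Ssig'$: each summand $\exp(\Xs_i)$ of an element of $\Rsig$ is indecomposable by Lemma~\ref{top_positive}, and the values $\rho(\exp(\Xs_i))=\omega^{-1+\rho(\Xs_i)}$ are non-increasing in $i$. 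Third, $\rho$ is injective on $\Rsig$ (Lemma~\ref{RhoIsIso}). The restriction to $\Ssig'$ is what keeps this passage to $\As^R$ compatible with the group structure: in general $\Ssig$ the commutativity of the direct sum produces group isomorphisms that are invisible to $\le$, but the normal-form ordering of the indecomposable summands in $\Ssig'$ removes that slack.

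First I would prove (\ref{reduction})$\Leftrightarrow$(\ref{lower_rank}). If $\As\le\Bs$, then $\As^R\le\As\le\Bs\le\Bs^R$, so $\As^R\le\Bs^R$ and Lemma~\ref{R_order_char} yields $\rho(\As)=\rho(\As^R)\le\rho(\Bs^R)=\rho(\Bs)$. Conversely, if $\rho(\As)\le\rho(\Bs)$ then $\rho(\As^R)\le\rho(\Bs^R)$, so $\As^R\le\Bs^R$ by Lemma~\ref{rank_to_reduction}, whence $\As\le\As^R\le\Bs^R\le\Bs$. Next, (\ref{reduction})$\Rightarrow$(\ref{embedding}) is precisely Proposition~\ref{reduction_to_embedding}.

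The remaining implication (\ref{embedding})$\Rightarrow$(\ref{lower_rank}) is where Lemma~\ref{RsigEmbedOrder} is used and is the crux. If $\gen{\As}$ embeds into $\gen{\Bs}$, then since $\gen{\As}$ is biembeddable with $\gen{\As^R}$ and $\gen{\Bs}$ is biembeddable with $\gen{\Bs^R}$, the group $\gen{\As^R}$ embeds into $\gen{\Bs^R}$; applying Lemma~\ref{RsigEmbedOrder} in contrapositive form (with its hypothesis instantiated as $\rho(\Bs^R)<\rho(\As^R)$) we conclude $\rho(\As^R)\le\rho(\Bs^R)$, i.e. $\rho(\As)\le\rho(\Bs)$. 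This closes the cycle (\ref{reduction})$\Rightarrow$(\ref{embedding})$\Rightarrow$(\ref{lower_rank})$\Rightarrow$(\ref{reduction}). For the final assertion, existence of the reduced representative is the biembeddability of $\gen{\As}$ with $\gen{\As^R}$ noted above; for uniqueness, if $\gen{\As}$ is biembeddable with $\gen{\Bs}$ and with $\gen{\Bs'}$ for $\Bs,\Bs'\in\Rsig\subseteq\Ssig'$, then $\gen{\Bs}$ is biembeddable with $\gen{\Bs'}$, so two applications of (\ref{embedding})$\Rightarrow$(\ref{lower_rank}) give $\rho(\Bs)=\rho(\Bs')$ and hence $\Bs=\Bs'$.

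The main obstacle has in fact already been surmounted: it is Lemma~\ref{RsigEmbedOrder}, together with the centralizer computations of Lemma~\ref{FullWidth} and the EA-class estimates those rely on. What remains here is bookkeeping --- most importantly, never conflating the syntactic relation $\equiv$ on $\Ssig$ with biembeddability of the associated groups (the two are linked only via Proposition~\ref{reduction_to_embedding}), and checking that the passage $\As\mapsto\As^R$ is legitimate for elements of $\Ssig'$, which is exactly why the proposition is stated for $\Ssig'$ rather than all of $\Ssig$.
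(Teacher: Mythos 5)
Your argument is correct and follows the route the paper intends: the paper records this proposition with only the remark that it follows from Lemma \ref{RsigEmbedOrder} together with the passage to representatives in \(\Ssig'\) and \(\Rsig\), and your write-up supplies exactly those details (Lemmas \ref{R_order_char} and \ref{rank_to_reduction} for the equivalence of (\ref{reduction}) and (\ref{lower_rank}), Proposition \ref{reduction_to_embedding} for (\ref{reduction})\(\Rightarrow\)(\ref{embedding}), and the contrapositive of Lemma \ref{RsigEmbedOrder} for (\ref{embedding})\(\Rightarrow\)(\ref{lower_rank})). Your closing remark on why the statement is confined to \(\Ssig'\) rather than all of \(\Ssig\) matches the paper's own stated motivation for introducing that class.
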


\begin{proof}
Let \(\As\) and \(\Bs\) be given elements of \(\Ssigp\) and set \(\alpha:= \rho(\As)\) and \(\beta: = \rho(\Bs)\).
By the definition of \(\rho\) on \(\Ssigp\) and the choice of the signatures \(\Rs_\xi\) \((\xi < \epsilon_0)\), we
have that \(\As \equiv \Rs_{\alpha}\) and \(\Bs \equiv \Rs_{\beta}\).
By Proposition \ref{reduction_to_embedding}, (\ref{reduction}) implies (\ref{embedding});
by the contrapositive of Proposition \ref{RsigEmbedOrder}, (\ref{embedding}) implies (\ref{lower_rank});
by Proposition \ref{R_order_char}, (\ref{lower_rank}) implies (\ref{reduction}).

Finally, if \(\As\) is any element of \(\Ssig\), then there is a reordering \(\widetilde{\As}\) of \(\As\) which is
in \(\Ssigp\).
We then have \(\gen{\As} \cong \gen{\widetilde{\As}}\) is biembeddable with
\(\Rs_{\alpha}\) where \(\alpha:= \rho(\widetilde{\As}) = \rho(\As)\).
Uniquess follows from Lemmas \ref{RhoIsIso} and \ref{RsigEmbedOrder}.
\end{proof}

\section{\((\Ffrak,\emb)\) is not linear}

\label{nonlinear:sec}

We conclude this paper by showing that the class of those subgroups of \(F\) 
admitting a finite geometrically fast generating set is not linearly
ordered by the embeddability relation.
Consider the groups \(B + \Z\) and \(G\) with
geometrically fast generating sets specified by
the dynamical diagrams in Figures \ref{BZ_fig} and \ref{G_fig} below.
We apply Lemma \ref{FullWidth} 
embedding \(B+\Z\) and \(G\) in \(\PLoI\)
so that \(\{a,b\}\) and \(\{f,g,h\}\)
satisfy (\ref{H_emb}) of the Lemma.
Notice that \(\gen{a,b}\) is 
the Brin-Navas group \(B\); 
\(c\) generates a copy of \(\Z\) which commutes with the elements of
\(B\). 
\begin{figure}[b]
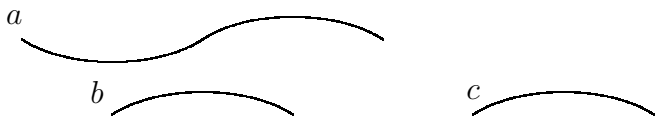

\[
\xy
(24,10); (48,10)**\crv{(30,6)&(42,6)}; (23,13)*{a};
(48,10); (72,10)**\crv{(54,14)&(66,14)};
(36,0); (60,0)**\crv{(42,4)&(54,4)}; (34,3)*{b};
(84,0); (108,0)**\crv{(90,4)&(102,4)}; (84,3)*{c};
\endxy
\]
\caption{A diagram for \(B + \Z\)} \label{BZ_fig}
\end{figure}
\begin{figure}[b] 
\[
\xy
(0,15); (24,15)**\crv{(6,11)&(18,11)}; (-1,18)*{f};
(24,15); (48,15)**\crv{(30,11)&(42,11)};
(48,15); (72,15)**\crv{(54,19)&(66,19)};
(72,15); (96,15)**\crv{(78,19)&(90,19)};
(36,0); (60,0)**\crv{(42,4)&(54,4)}; (34,2)*{h};
(12,0); (84,0)**\crv{(30,10)&(66,10)}; (10,2)*{g};
\endxy
\]
\caption{The group \(G = \gen{f,g,h}\)}\label{G_fig}
\end{figure}

\begin{thm}\label{NonEmbed} With \(G\) and \(B + \Z\) as above,
the following are true:
\begin{enumerate}

\item \label{EA-calc}
\(\EA(G)=\EA(B + \Z) = \omega+2\).

\item \label{BZ_nonemb}
There is no embedding of \(B + \Z\) into \(G\).

\item \label{nonemb_in_BZ}
There is no embedding of \(G\) into \(B + \Z\).

\end{enumerate}
\end{thm}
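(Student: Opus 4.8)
All three parts will be read off from Lemma~\ref{FullWidth}, applied to $G$ with $S=\{f,g,h\}$ (so $f$ is the $\sqsubset$-maximum and $N:=\langle\langle g,h\rangle\rangle$) and, for part~(\ref{nonemb_in_BZ}), to $B$ as well; the key structural fact is that $N$ is a directed union of finitely generated subgroups of finite EA-class.

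To prove~(\ref{EA-calc}), I would first observe that $(g,f)$ is a standard pair with $o(g,f)=2$: the two middle orbitals of $f$ constitute $f\cut$, which has an expansion point, and $\supt(g)$ straddles it while the endpoints of $\supt(g)$ lie in the two extreme orbitals of $f$. Hence $\gen{g,f}\cong G_{\omega^\omega}=B$ by Theorems~\ref{sig_thm} and~\ref{comb_to_iso}, so $\EA(G)\ge\EA(B)=\omega+2$. For the reverse inequality I would use the slope argument from the proof of Lemma~\ref{FullWidth} to see that $f^k\notin N$ for $k\ne0$, whence $G/N\cong\Z$ and $N=\bigcup_k N_k$ with $N_k:=\gen{g^{f^j},h^{f^j}\mid|j|\le k}$. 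Each $N_k$ is generated by a finite geometrically fast set of positive bumps; tracking supports one checks that all the $h^{f^j}$ lie below all the $g^{f^j}$ in the $\sqsubset$-order and that every pairwise oscillation equals $1$, so that the dynamical diagram of $N_k$ is that of an element of $\Sgen$ with signature $\one*\cdots*\one$ ($2(2k+1)$ factors); thus $N_k\cong G_{\omega^{4k+1}}$ has finite EA-class. Therefore $\EA(N)\le(\sup_k\EA(N_k))+1=\omega+1$ and $\EA(G)\le\EA(N)+1=\omega+2$. Finally $\EA(B+\Z)=\omega+2$, because $B\le B+\Z\le B+B$ and $\EA(B+B)=\EA(B)$.

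For~(\ref{BZ_nonemb}), suppose $\psi\colon B+\Z\hookrightarrow G$ and set $\bar B:=\psi(B\oplus 0)$ and $\bar c:=\psi(0,1)$. Then $\bar c$ has infinite order and commutes with the infinite nonabelian group $\bar B$, so its centralizer in $G$ is not cyclic; by Lemma~\ref{FullWidth}(\ref{central_cyclic}), $\bar c\in N$. Since $\bar c\in N\setminus\{1\}$ commutes with every element of $\bar B$, Lemma~\ref{FullWidth}(\ref{dont_commute}) forces $\bar B\subseteq N$, so $B+\Z$ embeds into $N=\bigcup_k N_k$; being finitely generated it embeds into some $N_k$, contradicting $\EA(N_k)<\omega+2=\EA(B+\Z)$ and the monotonicity of EA-class.

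For~(\ref{nonemb_in_BZ}), I would first note that $Z(G)=1$ by Lemma~\ref{FullWidth}(\ref{dont_commute}); thus if $\psi\colon G\hookrightarrow B+\Z=B\times\Z$, then $\psi(G)\cap(\{1\}\times\Z)$ is central in $\psi(G)$ and hence trivial, which means $\pi_B\circ\psi$ already embeds $G$ into $B$. It therefore suffices to prove $G\not\hookrightarrow B$, and this is the heart of the matter. Assuming an embedding $\psi\colon G\hookrightarrow B$, I would apply Lemma~\ref{FullWidth} to $B$ (with $N_B:=\langle\langle b\rangle\rangle$), use Lemma~\ref{FullWidth}(\ref{H_emb}) to realize the situation geometrically so that the image of $f$ is full-width while the images of $g$ and $h$ are supported inside it, and then derive a contradiction from the interaction of the two transversal nested families $\{g^{f^j}\}_j$ and $\{h^{f^j}\}_j$. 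The crucial point --- and the only place one really uses that $\{f,g,h\}\notin\Sgen$, i.e.\ that $f$ has four orbitals rather than two --- is that in $G$ the conjugates $g^{f^j}$ cluster (as $j\to-\infty$) onto a nondegenerate open interval which properly contains the point onto which the $h^{f^j}$ cluster, with a copy of $\gen{g,h}\cong\Z\wr\Z$ sitting inside; inside $B$ an infinite strictly $\sqsubset$-descending chain of single bumps can only contract onto the single expansion point of $a$, leaving no room for an element (the image of $h$) lying $\sqsubset$-below all of them. Making this incompatibility rigorous --- showing that this nested-within-nested limit configuration of $G$ cannot be realized inside $B$ --- is the main obstacle I expect to face.
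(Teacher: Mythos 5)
Your parts (\ref{EA-calc}) and (\ref{BZ_nonemb}) are correct and essentially the paper's own argument. The paper likewise writes \(N_G\) as the increasing union of the finite fast sets \(C_n=\{g^{f^i},h^{f^i}\mid |i|\le n\}\), gets \(\EA(N_G)=\omega+1\) from \(\sup_n\EA(\gen{C_n})=\omega\), and concludes \(\EA(G)=\omega+2\) using \(G/N_G\cong\Z\); your observation that the middle and top generators form an oscillation-\(2\) standard pair, hence generate a copy of \(B\), is a harmless alternative route to the lower bound. Part (\ref{BZ_nonemb}) is exactly the paper's argument via Lemma \ref{FullWidth}(\ref{dont_commute}) and (\ref{central_cyclic}).

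Part (\ref{nonemb_in_BZ}) contains a genuine gap, which you flag yourself. After the (correct) reduction to \(G\not\emb B\) via triviality of \(Z(G)\), you describe the right geometric obstruction --- infinitely many strictly nested supports squeezed between the conjugates of the innermost bump and the middle generator \(g\) inside \(G\), versus only finitely many nesting levels between any two comparable orbitals of elements of \(N_B\) --- but you supply no mechanism for transporting this geometry through an \emph{abstract} embedding \(\psi:G\to B\), and that transport is the entire content of the paper's proof. The paper accomplishes it with the first-order predicates \(D(x,y)\) (meaning \([x,y]\ne 1\) and \([x,x^y]=1\)) and \(T(x,y,z)\), which are preserved by any injective homomorphism. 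Claim \ref{DomClaim}, resting on the centralizer and roots results of \cite{picric}, shows that \(D(x,y)\) forces some orbital \(J\) of \(x\) with \(Jy\cap J=\emptyset\); Claim \ref{TowerClaim} shows that \(T(x,y,z)\) propagates proper containment of orbitals one level up the tower; and Claim \ref{NoDomAtTop} shows \(D\) cannot hold between two elements of \(B\setminus N_B\), which (via Claim \ref{GInBZLoc}) pins the image of \(N_G\) inside \(N_B\). Applying all of this to the infinite chain of conjugates of the innermost bump by powers of the \(\sqsubset\)-maximum generator, together with \(g\), produces an infinite strictly increasing chain of orbitals \(I_0\subsetneq I_1\subsetneq\cdots\subsetneq J\) of elements of \(N_B\); this is impossible because the orbitals occurring in \(N_B\) are stratified into \(\Z\)-indexed families with each level nested in the next, so only finitely many levels separate \(I_0\) from \(J\). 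Without a substitute for these algebraically detectable ``domination'' and ``tower'' predicates (or some other embedding-invariant encoding of the nesting), your sketch of (\ref{nonemb_in_BZ}) cannot be completed as written.
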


\begin{proof}
We begin by verifying (\ref{EA-calc}).
Define \(N_G\) to be the normal closure of \(\{g,h\}\) in \(G\) and
\(N_B\) to be the normal closure of \(b\) in \(B\).
The group \(N_G\) is generated by \(C=\{h^{f^i}, g^{f^i}\mid i\in \Z\}\).
If \(C_n = \{g^{f^i}, h^{f^i} \mid -n \leq i \leq n\}\),
then Proposition \ref{EAGWrG} implies \(\sup_n \EA(\gen{C_n}) = \omega\) and 
hence that \(\EA (N_G) = \omega+1\).
Since \(G\) is finitely generated and \(G/N_G\cong \Z\), we have
\(\EA(G)=\omega+2\).
Similarly, the group \(N_B + \Z\) is generated by \(D=\{b^{a^i}\mid
i\in \Z\} \cup \{c\}\). 
An analogous computation shows that \(\EA(N_B + \Z) = \omega+1\) and that
\(\EA(B + \Z) = \omega+2\).

Next we turn to (\ref{BZ_nonemb}).
Assume there is an embedding of \(B + \Z\) into \(G\).
The centralizer of \(c\) in \(B + \Z\) includes \(B\) and is not abelian. 
By (\ref{central_cyclic}) of Lemma \ref{FullWidth} the image of
\(c\) must be in \(N_G\). 
By (\ref{dont_commute}) of Lemma \ref{FullWidth} the image of every
element of \(B\) must be in 
\(N_G\) since otherwise that image would not commute with the image
of \(c\). 
Thus an embedding of \(B + \Z\) into \(G\) has its image in \(N_G\). 
But \(\EA(N_G)=\omega+1\) and \(\EA(B + \Z)=\omega+2\) so this is
not possible.

We will verify (\ref{nonemb_in_BZ}) after a series of claims.

\begin{claim}\label{GBZClaim}
If \(G\) embeds in \(B + \Z\), then
\(G\) embeds in \(B\).
\end{claim}

\begin{proof} If an embedding exists it can be composed with the
projection to \(B\).  
The kernel of this projection consists of all \((1,c^k)\) in \(B + \Z\).
These are all in the center of
\(B + \Z\) and if the image of the embedding intersected this kernel,
then the image would have nontrivial center. 
But by (\ref{dont_commute}) of Lemma \ref{FullWidth},
\(G\) has trivial center.
Thus the composition of the embedding with the projection is one-to-one.
\end{proof}

Next we introduce some tools used in \cite{ATaylor}.
Define the following predicates where the variables are intended to range over
elements of \(B + \Z\) and \(G\):
\[
\begin{split}
C(x,y) &:=\formula{[x,y]=1}, \\
D(x,y) &:= \formula{(\neg C(x,y)) \land C(x,x^y)}, \\
T(x,y,z) &:= \formula{D(x,y)\land D(x,z) \land D(y,z) \land C(x,y^z)}.
\end{split}
\]
We think of \(D(x,y)\) as saying that \(y\) ``dominates'' \(x\)
in that a typical pair that satisfies this is a fast pair of nested
one bump functions with the orbital of \(y\) as the larger of the two.
We think of \(T(x,y,z)\) as saying that \((x,y,z)\) forms
a ``tower'' in that a typical triple that satisfies \(T\) is a fast
triple of nested one bump functions with \(z\) the largest and \(x\)
the smallest.
While these are typical, they are not the only examples and we need
to know a little more about the functions that satisfy these
predicates.

\begin{claim}\label{DomClaim} If \(x,y \in F\) and 
\(D(x,y)\) holds, then:
\begin{enumerate}

\item \label{J_conclusion}
For each orbital \(J\) of \(x\) one of the following holds:
\begin{enumerate}

\item \label{J_disjoint}
\(J\) is disjoint from
all orbitals of \(y\),

\item \label{J_coincide}
\(J\) equals an orbital of \(y\) with
\([x |_J, y |_J]=1\),  or

\item \label{J_inside}
\(J y\) is disjoint from \(J\). 

\end{enumerate}

\item \label{orbital_conclusion}
There is an orbital \(J\) of \(x\) such that \(Jy\) is disjoint from \(J\).

\end{enumerate}

\end{claim}

\begin{proof}
Toward proving (\ref{J_conclusion}), fix an orbital \(J\) of \(x\).
Observe that \(Jy\) is an orbital of \(x^y\).
If \(Jy\) intersects \(J\) but differs from \(J\),
then \(x\) and \(x^y\) can't commute.
If \(Jy\) is disjoint from \(J\), then (\ref{J_inside}) holds.

Now suppose \(Jy = J\).
The chain rule implies that \(x\) and \(x^y\) agree at the endpoints of \(J\).
If \(x |_J \ne x^y |_J\), then Theorem 4.18 of \cite{picric} implies \(x^y |_J\) cannot commute with
\(x |_J\), contradicting \(D(x,y)\).
It follows that \(x |_J\) commutes with \(y |_J\).
Moreover either \(y |_J\) is the identity
or else \(J\) is an orbital of \(y\).
Thus if \(Jy = J\) then either (\ref{J_disjoint}) or (\ref{J_coincide})
hold.

Finally observe that if Conclusion (\ref{orbital_conclusion}) fails, then \([x,y]=1\) by
Conclusion (\ref{J_conclusion}).
This would contradict \(D(x,y)\).
\end{proof}

\begin{claim}\label{TowerClaim}
If \(x, y, z \in F\)
and \(T(x,y,z)\) holds, then if \(J\) is an orbital of \(x\) properly
contained in an orbital \(K\) of \(y\), then there is an orbital
\(L\) of \(z\) that properly contains \(K\). 
\end{claim}

\begin{proof} 
By Claim \ref{DomClaim} applied to \(x\) and \(y\),
we have that \(Jy\) is disjoint from \(J\).
In particular, \(x |_K\) does not commute with \(y |_K\).
By Claim \ref{DomClaim} applied to \(y\), \(z\), and the orbital \(K\), we have
that either \(K\) is an orbital of \(z\) and \(y^z |_K = y |_K\) or else
\(Kz\) is disjoint from \(K\).
The former is impossible since it implies \(\neg C(x |_K , y^z |_K)\) which is contrary to \(C(x,y^z)\).
It follows that \(Kz\) is disjoint from \(K\).
Thus any orbital \(L\) of \(z\) which intersects \(K\) contains all of \(K \cup Kz\)
and hence properly contains \(K\).
\end{proof}

\begin{claim}\label{NoDomAtTop}
Let \(x\) and \(y\) be in \(B \setminus N_B\).
Then \(D(x,y)\) is false. 
\end{claim}

\begin{proof}
Since \(\{a,b\}\) were chosen using (\ref{H_emb}) of Lemma \ref{FullWidth}, 
we know that the e-supports of \(x\) and \(y\) are connected
and identical.
Also note that \(x^y\) has connected e-support equal to that of \(x\).

Suppose for contradiction that \(D(x,y)\) holds:
\(x^y\ne x\) and \([x^y,x]=1\).
Since \(x\) and \(x^y\) commute and have the same connected e-support,
they have identical orbitals.
Thus \(y\) fixes all the transition points of \(x\).
This implies that the derivatives of \(x\) and \(x^y\) agree near the
ends of each orbital of \(x\). 
But commuting bumps on the same
orbital on which the slopes agree near the ends of the orbital must
be identical bumps by Theorem 4.18 of \cite{picric}.  Thus
\(x^y=x\) and \(D(x,y)\) cannot hold. 
\end{proof}

We return to the task of proving (\ref{nonemb_in_BZ}) of Theorem \ref{NonEmbed}.
By Claim \ref{GBZClaim} it suffices to show that \(G\) does not embed in \(B\). 
Suppose for contradiction that there is an embedding \(\phi : G \to B\).
Recall that \(G\) is generated by \(f < g < h\) as illustrated in Figure \ref{G_fig}.

\begin{claim}\label{GInBZLoc} 
The \(\phi\)-image of \(N_G\) is contained in \(N_B\).
\end{claim}

\begin{proof} 
Observe that if \(x\) is in \(B \setminus N_B\), then since \(N_B\) is normal we have
that \(x^z \not \in N_B\).
Thus for all \(z\in B\), \(D(x,x^z)\) is false by Claim \ref{NoDomAtTop}.
Since both \(D(h,h^f)\) and \(D(g,g^f)\) are true,
we have \(\phi(g) \in N_B\) and  \(\phi(h) \in N_B\).
The  conclusion of the claim now follows from the fact
that \(N_G\) is the normal closure of \(\{f,g\}\) in \(G\) and
\(N_B\) is normal in \(B\). 
\end{proof}

For each \(i\), define \(h_i := h^{f^{i}}\).
Observe that 
\[
\supt (h_0) \subsetneq \supt (h_1) \subsetneq \supt (h_2) \subsetneq \cdots \subsetneq \supt (g).
\]
Define \(A:=\{h_i\mid i\ge0\}\cup\{g\}\) and note that all elements
of \(A\) are in \(N_G\) and must have \(\phi\)-images in \(N_B\).

For any triple \(x < y < z\) from \(A\), we have \(T(x,y,z)\) and hence \(T(\phi(x),\phi(y),\phi(z))\).
It follows from (\ref{orbital_conclusion}) of Claim \ref{DomClaim}
and Claim \ref{TowerClaim} that there are intervals
\(
I_0\subsetneq I_1\subsetneq I_2 \subsetneq \cdots \subsetneq J
\)
where each \(I_i\) is an orbital of \(\phi(h_i)\) and \(J\) is an
orbital of \(\phi(g)\).

However the orbitals used in \(N_B\) come in families \(\Jscr_n\)
indexed over \(\Z\) with each orbital in family \(\Jscr_n\) contained in
an orbital in family \(\Jscr_{n+1}\).
Since orbital \(I_0\) of \(\phi(h_0)\)
must come from some \(\Jscr_m\) and orbital \(J\) of \(\phi(g)\) must come
from some \(\Jscr_n\) with \(m<n\), there are only finitely many
different orbitals available in \(N_B\) between \(I_0\) and \(J\).
This contradicts the assumption that there is an embedding of \(N_G\) into \(N_B\).
Since this was shown to follow from the existence of an embedding
of \(G\) into \(B+\Z\), we have completed our proof of (\ref{nonemb_in_BZ}).
\end{proof}

\begin{remark}
Theorem \ref{NonEmbed} does not provide a counterexample to Conjecture \ref{univ_conj}. 
If we let \(a=g_4\) and \(b=f_4\) be the generators of \(G_{\tau_4}\) as shown in
Figure \ref{Gt4Gt5_fig}, then the reader can check that \(A=\{a^2, b^{a^{-1}b^{-1}},
b^{ab}\}\) is fast and that after extraneous bumps are excised
from \(A\), the dynamical diagram for \(A\) is identical to that of
\(\{f,g, h\}\), the generating set for \(G\).
Thus \(G\) embeds in \(G_{\tau_4}\).
\end{remark}

\def\cprime{$'$} \def\cprime{$'$} \def\cprime{$'$} \def\cprime{$'$}

\end{document}